\theoremstyle{plain}
\newtheorem{theorem}[]{Theorem}
\newtheorem{proposition}[theorem]{Proposition}
\newtheorem{lemma}[theorem]{Lemma}
\newtheorem{ex}[theorem]{Example}
\newtheorem{corollary}[theorem]{Corollary}
\newtheorem{conjecture}{Conjecture}
\theoremstyle{definition}
\newtheorem{definition}[theorem]{Definition}
\newtheorem{remark}[theorem]{Remark}
\newcommand{\Z}{\mathbb{Z}}
\newcommand{\R}{\mathbb{R}}
\newcommand{\C}{\mathbb{C}}
\newcommand{\bS}{\mathbb{S}}
\newcommand{\CF}{\mathrm{CF}}
\newcommand{\bL}{\mathbf{L}}
\newcommand{\bT}{\mathbf{T}}
\newcommand{\val}{\textrm{val}\,}
\newcommand{\Fuk}{\mathrm{Fuk}}
\numberwithin{theorem}{section}
\numberwithin{equation}{section}
\setlist[itemize]{leftmargin=2em}
\title[SYZ mirror symmetry for del Pezzo surfaces and affine structures]
{SYZ mirror symmetry for del Pezzo surfaces 
and affine structures}
\author{Siu-Cheong~Lau}
\email{lau@math.bu.edu}
\address{Mathematics and Statistics Department, Boston University, 111 Cummington Mall, Boston MA 02215}
\author[T.-J. Lee]{Tsung-Ju Lee}
\email{tsungju@gs.ncku.edu.tw}
\address{Department of Mathematics, National Cheng Kung University, 
No.~1 Daxue Rd., East District, Tainan 70101, Taiwan}
\author{Yu-Shen~Lin}
\email{yslin@bu.edu}
\address{Mathematics and Statistics Department, Boston University, 111 Cummington Mall, Boston MA 02215}
 \subjclass[2010]{Primary: 32Q25,~53D37. Secondary: 14J27,~14J33.}
\begin{document}
\begin{abstract}
  We prove that the Landau--Ginzburg superpotential of 
  del Pezzo surfaces can be realized as a limit of their hyperK\"ahler rotation 
  toward the large complex structure limit point. 
  As a corollary, we compute the limit of the complex affine structure of 
  the special Lagrangian fibrations constructed by 
  Collins--Jacob--Lin in $\mathbf{P}^1\times \mathbf{P}^1$ 
  \cite{2021-Collins-Jacob-Lin-special-lagrangian-submaniflds-of-log-calabi-yau-manifolds} 
  and compare it with the integral affine structures used in the work of 
  Carl--Pumperla--Siebert \cite{2022-Carl-Pumperla-Siebert-a-tropical-view-of-landau-ginzburg-models}. We also construct the Floer-theoretical Landau--Ginzburg mirrors of smoothing of $A_n$-singularities and monotone del Pezzo surfaces, by using the gluing method of Cho--Hong--Lau \cite{CHL-glue} and Hong--Kim--Lau \cite{HKL}.  They agree with the result of limit of hyperK\"ahler rotations.
\end{abstract}
%\begingroup
%\def\uppercasenonmath#1{}
%\let\MakeUppercase\relax
\maketitle
%\endgroup
\tableofcontents

\section{Introduction} 
\label{sec: intro}
The Strominger--Yau--Zaslow conjecture \cite{1996-Strominger-Yau-Zaslow-mirror-symmetry-is-t-duality} (SYZ conjecture) predicts that 
  \begin{enumerate}
  	\item a Calabi--Yau manifold $X$ near a large complex structure limit, where mirror symmetry is expected to happen, admits a special Lagrangian torus fibration;
  	\item the mirror $\check{X}$ of $X$ can be constructed from the dual torus fibration of $X$, with the complex structure receiving ``quantum corrections'' from the holomorphic discs with boundaries on the special Lagrangian torus fibres in $X$. 
  \end{enumerate}
For the purpose of mirror symmetry, the goal is to understand how the quantum corrections affect the mirror construction.
  
 However, the original conjecture is hard due to the difficulties from analysis: there is very little known about the behavior of the Calabi--Yau metrics. To construct the special Lagrangian fibrations, one would need to first understand the Calabi--Yau metric. To understand the Calabi--Yau metric, the conjecture leads us to the study of holomorphic discs with boundaries on the special Lagrangian fibres. Thus, the problems of Calabi--Yau metrics, existence of special Lagrangian fibrations and quantum corrections from holomorphic discs are closely linked together. 
 
 To avoid the difficulties in analysis in the metric level and due to the interests from mirror symmetry, there are replacements for the original proposal from algebraic geometry and symplectic geometry. On the algebraic side, Kontsevich--Soibelman \cite{2006-Kontsevich-affine-structures-and-non-archimedean-analytic-spaces} and Gross--Siebert programs \cite{2011-Gross-Siebert-from-real-affine-geometry-to-complex-geometry} introduced the notion of scattering diagrams for understanding the quantum correction. 
On the symplectic side, Fukaya proposed the family Floer homology \cite{F} and later studied by Tu \cite{T2}, Abouzaid \cites{A1,A2,A3}, and Yuan \cite{Y2}. There is another approach using relative symplectic cohomology \cite{GV}. 
 Both approaches are designed to capture the quantum corrections in the mirror construction and both have big success in explaining mirror symmetry in an intrinsic way. However, it is still unclear how the two approaches are related to the original conjecture explicitly, again due to the lacking of the existence of genuine SYZ fibrations. Until very recently, T.~Collins, A.~Jacob, and the third author constructed the first non-trivial example\footnote{In the sense that a priori without knowing the existence of elliptic fibration structure after hyperK\"ahler rotation, which is a technical part in analysis.} of genuine special Lagrangian fibrations \cite{2021-Collins-Jacob-Lin-special-lagrangian-submaniflds-of-log-calabi-yau-manifolds}. In this paper, we will study the complex affine structures on the base of the genuine special Lagrangian fibrations in these examples and explain that there can be subtle differences between the SYZ bases and the affine manifold with singularities used in the Gross--Siebert program (in this case is worked out by Carl--Pumperla--Siebert), which is a long-termed folklore conjecture. We refer the readers to \cite{Gross}*{Section 7} and \cite{CL}*{Section 1} and references therein.

Mirror symmetry has been extended to Fano manifolds \cite{Kontsevich}. In which case, the mirror of a Fano manifold $Y$ is a Landau--Ginzburg superpotential $W$. The superpotential $W$ is expected to recover the enumerative geometry of $Y$ in various aspects. For instance, the quantum cohomology $\mathrm{QH}^{\ast}(Y)$ is isomorphic to the Jacobian ring $\mathrm{Jac}(W)$ 
\cites{1993-Batyrev-quantum-cohomology-rings-of-toric-manifolds,2009-Fukaya-Oh-Ohta-Ono-lagrangian-intersection-floer-theory-anomaly-and-obstruction-part-i} and the quantum periods of $W$ give the generating function of the descendant Gromov--Witten invariants \cites{2013-Coates-Corti-Galkin-Golyshev-Kasprzyk-mirror-symmetry-and-fano-manifolds,2016-Coates-Corti-Galkin-Kasprzyk-quantum-periods-for-3-dimensional-fano-manifolds,1998-Givental-a-mirror-theorem-for-toric-complete-intersections,2018-Hong-Lin-Zhao-bulk-deformed-potentials-for-toric-fano-surfaces-wall-crossing-and-periods}. Recently it is proved that superpotential agrees with the open mirror map of the corresponding local Calabi--Yau threefold mirror symmetry in the case of toric del Pezzo surfaces \cite{GRZ}. Auroux--Katzarkov--Orlov \cite{2006-Auroux-Katzarkov-Orlov-mirror-symmetry-for-del-pezzo-surfaces-vanishing-cycles-and-coherent-sheaves} proved homological mirror symmetry, i.e., the Fukaya--Seidel category $FS(W)$ is $A_{\infty}$-equivalent to the derived category of coherent sheaves $D^b\mathrm{Coh}(Y)$. In particular, they showed that the superpotential of a del Pezzo surface of degree $d$ can be topologically compactified as a rational elliptic surface with an $\mathrm{I}_d$ fibre. Therefore, an important task is to derive the superpotential for a given Fano manifold.
In the case of toric del Pezzo surfaces, the superpotential can be read off from the toric data \cites{1995-Givental-homological-geometry-i-projective-hypersurfaces,2000-Hori-Vafa-mirror-symmetry}, which is the generating function of a weighted count of Maslov index two pseudo-holomorphic discs with boundary on SYZ fibres in $Y$ \cite{CO}. The latter is called the Lagrangian Floer-theoretic potential of the moment fibres. A folklore conjecture is that the Lagrangian Floer-theoretic potentials of monotone Lagrangians in $Y$ can be glued via the wall-crossing formula and one obtains the Landau--Ginzburg superpotential of $Y$.

Let $Y$ be a del Pezzo surface of degree $d$ and $D\in |-K_Y|$ be a smooth anti-canonical divsior. There exists a meromorphic $2$-form on $Y$ with a simple pole on $D$ unique up to $\mathbb{C}^*$-scaling and restrict to a nowhere vanishing holomorphic $2$-form on $X=Y\setminus D$. Tian--Yau proved that 
\(X\) is equipped with an exact complete Calabi-Yau metric $\omega_{TY}$ such that $2\omega_{TY}^2=\Omega\wedge \bar{\Omega}$
\cite{1990-Tian-Yau-complete-kahler-manifolds-with-zero-ricci-curvature-i}. Throughout the paper, we will denote denote $\omega=\omega_{\mathrm{TY}}$ the exact Tian--Yau metric for simplicity. 
It is crucial to note that we will use the exact Tian--Yau metric. 
Responding to the SYZ conjecture \cite{1996-Strominger-Yau-Zaslow-mirror-symmetry-is-t-duality}
and conjectures of Auroux \cite{2007-Auroux-morror-symmetry-and-t-duality-in-the-complement-of-an-anticanonical-divisor}, Collins--Jacob--Lin proved that
\begin{theorem}[cf.~\cite{2021-Collins-Jacob-Lin-special-lagrangian-submaniflds-of-log-calabi-yau-manifolds}] \label{CJL}
There exists a special Lagrangian fibration $X\rightarrow B\cong \mathbb{R}^{2}$ 
with respect to $(\omega,\Omega)$. 
\end{theorem}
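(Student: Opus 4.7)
The plan is to prove Theorem~\ref{CJL} via hyperK\"ahler rotation. Since the Tian--Yau data $(\omega,\Omega)$ satisfies $2\omega^{2}=\Omega\wedge\bar\Omega$ and $\omega\wedge\Omega=0$, they endow the underlying real four-manifold of $X$ with a hyperK\"ahler structure. Denoting by $I$ the original complex structure, one obtains a rotated complex structure $J$ with K\"ahler form $\omega_{J}=\mathrm{Re}\,\Omega$ and holomorphic volume form $\Omega_{J}=\omega-i\,\mathrm{Im}\,\Omega$ (up to the usual sign conventions). A direct check with the defining calibration forms shows that a real two-torus $L\subset X$ is special Lagrangian with respect to $(\omega,\Omega)$ if and only if it is a $J$-holomorphic elliptic curve in $X^{\vee}:=(X,J)$. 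In this way the problem is reduced to producing a proper holomorphic elliptic fibration $f\colon X^{\vee}\to\mathbb{C}$.

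I would next compactify $X^{\vee}$ to a rational elliptic surface $\bar X^{\vee}\to\mathbb{P}^{1}$ by adding a distinguished singular fiber at infinity, in such a way that the elliptic fibration restricts to $f$ on $X^{\vee}$. The relevant input is the asymptotic geometry of the Tian--Yau metric near $D$, which is modeled on the Calabi ansatz associated to the normal bundle $(-K_{Y})|_{D}$: near infinity the metric is semi-flat along the unit circle bundle of $(-K_{Y})|_{D}$, with suitably decaying corrections. After hyperK\"ahler rotation, this semi-flat model should correspond to a neighborhood of a singular fiber of Kodaira type $I_{d}$ in a rational elliptic surface. Once this asymptotic match is upgraded to a global biholomorphism, the standard theory of rational elliptic surfaces supplies the elliptic fibration on $\bar X^{\vee}$, and restricting to the complement of the added fiber yields $f$. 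Pulling the smooth fibers of $f$ back through the hyperK\"ahler rotation then produces the sought special Lagrangian fibration of $X$, whose base is naturally identified with $\mathbb{C}\cong\mathbb{R}^{2}$ via $f$.

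The main obstacle is promoting the asymptotic match between the Tian--Yau metric and the Calabi semi-flat model to a genuine biholomorphism between $X^{\vee}$ and the complement of a singular fiber in a rational elliptic surface. This requires sharp decay estimates for $\omega$ relative to the semi-flat model, together with control on the rotated complex structure $J$, going beyond what the original Tian--Yau construction supplies. One also needs to check that for a sufficiently generic smooth anticanonical $D$ the singular fibers of the elliptic fibration on $\bar X^{\vee}$ can be arranged to lie at infinity, so that the resulting special Lagrangian fibration on $X$ is smooth over the interior base $\mathbb{R}^{2}$. All other ingredients---the existence and properness of the global fibration, the base identification, and the calibrated nature of the fibers---are essentially formal once the identification of $X^{\vee}$ with an explicit algebraic surface is in hand.
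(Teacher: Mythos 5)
The paper does not prove Theorem~\ref{CJL} --- it cites it as an external result of Collins--Jacob--Lin. With that caveat, your outline does capture the correct high-level strategy of the cited work: hyperK\"ahler rotate, compactify $\check{X}$ to a rational elliptic surface by matching the end of $X$ with the Calabi semi-flat model, and transport the elliptic fibration back. You also correctly isolate the genuine analytic difficulty, namely upgrading the asymptotic metric comparison between the Tian--Yau metric and the Calabi ansatz to an actual biholomorphism between $\check{X}$ and the complement of an $I_d$ fiber.

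There is, however, a real error in your last paragraph. You propose to ``arrange that the singular fibers of the elliptic fibration on $\bar{X}^{\vee}$ lie at infinity, so that the resulting special Lagrangian fibration on $X$ is smooth over the interior base $\mathbb{R}^2$.'' That cannot be done and is not what the theorem asserts. A rational elliptic surface has Euler characteristic $12$, and the singular fibers contribute $12$ in total (with multiplicity). The $I_d$ fiber placed at infinity contributes only $d \leq 9$, leaving $12-d>0$ worth of singular fibers at finite points of $\mathbb{C}$, hence inside $\check{X}$. Under the hyperK\"ahler rotation these become singular fibers of the special Lagrangian fibration $\pi\colon X\to B$. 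These singular fibers are not an obstruction to be removed; they are essential to the picture. The discriminant locus in $B\cong\mathbb{R}^2$ is exactly what produces the integral affine structure \emph{with singularities} that the rest of the paper studies (the comparison with $B_{\mathrm{CPS}}$, the wall-crossing in Section~\ref{sec:Floer}, and so on). The intended meaning of Theorem~\ref{CJL} is a proper special Lagrangian torus fibration whose generic fiber is a smooth torus and which has finitely many singular fibers; your formulation would be trying to prove a strictly stronger --- and false --- statement. Relatedly, ``pulling back the smooth fibers of $f$'' does not account for all of $X$; the fibration must include the singular fibers as well.
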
	
Actually given any primitive homology class $\alpha\in \mathrm{H}_1(D,\mathbb{Z})$, the homology class $\tilde{\alpha}\in 
\mathrm{H}_2(X,\mathbb{Z})$ represented as a trivial $S^1$-bundle over a geodesic in $D$ in the class $\alpha$ can be realized as the special Lagrangian torus fibre in Theorem \ref{CJL}. Due to the dimensional coincidence, one has $\mathrm{Sp}(1)=\mathrm{SU}(2)$ and thus $X$ is hyperK\"ahler. Let $\check{X}$ be the hyperk\"{a}hler rotation with the Calabi-Yau metric and holomorphic volume form given by 
\begin{align}
\label{eq:hyperkah-rotation}
\begin{split}
\check{\Omega} &= \omega - \mathrm{i}\cdot\operatorname{Im}\Omega,\\
\check{\omega} & = \operatorname{Re}\Omega.
\end{split}
\end{align} Then the special Lagrangian fibration $X\rightarrow B$ becomes an elliptic fibration $\check{X}\rightarrow \mathbb{C}$ after hyperK\"ahler rotation. 
%This procedure exchanges holomorphic cycles and special Lagrangian cycles.
Collins--Jacob--Lin further proved that 
\begin{theorem}[cf.~\cite{2021-Collins-Jacob-Lin-special-lagrangian-submaniflds-of-log-calabi-yau-manifolds}]
\label{theorem:CJL}
The fibration \(\check{X}\to\mathbb{C}\) admits a compactification to a \emph{rational elliptic surface}
\(\check{Y}\to\mathbf{P}^{1}\) by adding 
 an \(\mathrm{I}_{d}\) fibre over \(\infty\), where 
\(d = (-K_{Y})^{2}\). In other words,  we have the following commutative diagram.
\begin{equation*}
\begin{tikzcd}[column sep=2em]
&X\ar[d]\ar[bend left,rrr,"\mathrm{HK~rotation}"]& & & 
\check{X}\ar[d]\ar[r,hook] & (\check{Y},\mathrm{I}_{d})\ar[d]\\
&\mathbb{R}^2 & & & \mathbb{C}\ar[r] & (\mathbf{P}^{1},\infty)
\end{tikzcd}
\end{equation*}
\end{theorem}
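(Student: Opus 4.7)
The plan proceeds in three steps: (1) realize $\check X \to \C$ as an elliptic fibration over a complex base; (2) analyze its behavior near infinity and glue in an $I_d$ singular fiber; (3) verify that the resulting surface $\check Y$ is a rational elliptic surface.

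For step (1), after HK rotation the SLag tori become holomorphic elliptic curves in $(\check X, \check\omega, \check\Omega)$. A base coordinate on $\C$ is produced by integrating $\check\Omega = \omega_{\mathrm{TY}} - \bi\operatorname{Im}\Omega$ along paths transverse to the fibers: the exactness of $\omega_{\mathrm{TY}}$, together with the vanishing of both $\omega_{\mathrm{TY}}$ and $\operatorname{Im}\Omega$ on the SLag fiber class $\tilde\alpha$, yields a single-valued holomorphic submersion $\check X \to \C$ whose level sets are the HK-rotated tori. No appeal to the SYZ base $B \cong \R^2$ is necessary at this stage beyond the existence theorem of Collins--Jacob--Lin.

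For step (2), I would invoke the Tian--Yau asymptotic expansion of the Ricci-flat metric near $D$: on a punctured tubular neighborhood of $D$ in $Y$, the metric $\omega_{\mathrm{TY}}$ is closely modeled by an explicit semi-flat metric on the total space of the normal bundle $\CO_D(D) \to D$. Under HK rotation, this semi-flat model becomes an elliptic fibration over a punctured disc whose monodromy around the puncture is the Dehn twist of order $d = D^{2} = (-K_Y)^{2}$ along the SLag fiber circles, which is precisely the monodromy of a Kodaira $I_d$ fiber. One then fills in a standard $I_d$ fiber at infinity, producing a complex compactification $\check X \hookrightarrow \check Y \to \mathbf{P}^{1}$, provided the semi-flat approximation error decays fast enough for the complex structure to extend.

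For step (3), I would check $\chi(\check Y) = \chi(X) + \chi(I_d) = (12 - d) + d = 12$, consistent with a rational elliptic surface. A holomorphic section is obtained by HK-rotating a Lagrangian section of the SLag fibration, whose existence is visible from the topology of the base $\R^{2}$; Kodaira's classification of elliptic surfaces admitting a section with $\chi = 12$ then forces $\check Y$ to be rational. The main obstacle is step (2): rigorously matching the HK-rotated Tian--Yau geometry to the standard $I_d$ model requires quantitative decay estimates on the difference between $\omega_{\mathrm{TY}}$ and its semi-flat model near $D$, and one must verify that the ambient complex structure extends smoothly across the fiber added at infinity. Upgrading the resulting complex-analytic compactification to a projective rational elliptic surface is an additional subtle point, which can be handled by exhibiting enough algebraic sections or by comparison with a known degeneration of rational elliptic surfaces.
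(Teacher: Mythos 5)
The paper does not prove this statement; it is quoted directly from Collins--Jacob--Lin and the citation ``cf.~[CJL]'' is all that appears. So there is no in-paper argument to compare against, and your proposal must be judged against the actual CJL proof.

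Your three-step outline matches the broad strategy of CJL: (1) the observation that the SLag fibration becomes holomorphic after hyperK\"ahler rotation because $\omega_{\mathrm{TY}}$ is exact and both $\omega_{\mathrm{TY}}$ and $\operatorname{Im}\Omega$ restrict to zero on the fibre class; (2) matching the asymptotic geometry near $D$ to a model with a Dehn twist of power $d = D^2$ as monodromy; (3) a topological/Kodaira-theoretic argument identifying $\check Y$ as rational. The Euler-characteristic bookkeeping $\chi(X) + \chi(I_d) = (12-d) + d = 12$ is also correct.

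That said, there are two concrete gaps. First, in step~(2) the relevant asymptotic model is not a semi-flat metric on the normal bundle $\mathcal{O}_D(D)$ but the Calabi ansatz on the punctured disc bundle over the elliptic curve $D$; moreover, the map $\check X\to\C$ is only a submersion away from the discriminant of the SLag fibration, so one must also control the finitely many singular holomorphic fibres in the interior, not just the behaviour near $\infty$. The required polynomial decay of $\omega_{\mathrm{TY}}$ toward the Calabi model is precisely the analytic core of CJL and is far from a routine gluing. Second, in step~(3), HK-rotating a merely Lagrangian section does not produce a holomorphic section; one would need a \emph{special} Lagrangian section, whose global existence over $\R^2$ is not a consequence of contractibility of the base. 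Without a section, $\chi=12$ alone does not rule out Dolgachev surfaces. The robust replacement is to note that the fibre at $\infty$ is reduced by construction and that the interior singular fibres, being HK rotations of nodal SLag fibres, are of Kodaira type $I_k$ (hence reduced), so there are no multiple fibres; then the canonical bundle formula with $\chi(\mathcal{O}_{\check Y})=1$ gives $-K_{\check Y}$ linearly equivalent to a fibre, so $p_g = q = 0$ and $\kappa = -\infty$, whence $\check Y$ is rational. You should also say a word about relative minimality (no $(-1)$-curves in fibres) before invoking the canonical bundle formula.
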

Note that \(X=\check{X}\) as topological spaces. In particular, 
\(\mathrm{H}_{k}(X;\mathbb{Z})=\mathrm{H}_{k}(\check{X};\mathbb{Z})\). We also remark that in the case of $Y=\mathbb{P}^2$, then $\check{X}$ is exactly the fibrewise compactification of the superpotential of $\mathbb{P}^2$ and a special case of Theorem \ref{main thm} below. 

The third author in \cite{2020-Collins-Jacob-Lin-the-syz-mirror-symmetry-conjecture-for-del-pezzo-surfaces-and-rational-elliptic-surfaces} introduced the notion of large complex structure limits for pairs from the view point of the original SYZ conjecture. 
We say that $(Y_t,D_t)$, \(t\in (0,1]\), a $1$-parameter family of pairs of del Pezzo surfaces $Y_t$ of degree $d$ with a smooth anti-canonical divisor $D_t$ is converging to a large complex structure limit if the limit $Y_t\rightarrow Y_0$ is a del Pezzo surface and $D_t\rightarrow D_0$ with $D_0$ being an irreducible nodal curve. We will denote $\alpha_t\in 
\mathrm{H}_1(D_t,\mathbb{Z})$ the vanishing cycle. Let $\pi_t: X_t=Y_t\backslash D_t\rightarrow B_t$ be the special Lagrangian fibration in Theorem \ref{CJL} with fibre class $\tilde{\alpha}_t$. 
%Under the SYZ mirror symmetry developed in \cite{2020-Collins-Jacob-Lin-the-syz-mirror-symmetry-conjecture-for-del-pezzo-surfaces-and-rational-elliptic-surfaces}, a sequence of pairs of del Pezzo surfaces with smooth anti-canonical divisors $(Y_t,D_t)$ converges to a large complex structure limit if $D_t\cong \mathbb{C}/\mathbb{Z}\oplus \mathbb{Z}\tau_t$ and $\mbox{Im}\tau_t\rightarrow \infty$. For such a convergent sequence, the size of the SYZ fibres collapse and indeed the description of the large complex structure limit in the modified SYZ conjecture 
%\cites{2000-Gross-Wilson-large-complex-structure-limits-of-k3-surfaces,2013-Gross-Tosatti-Zhang-collapsing-of-abelian-fibered-calabi-yau-manifolds,2001-Kontsevich-Soibelman-homological-mirror-symmetry-and-torus-fibrations}. 

Here we explain why this is a reasonable definition of large complex structure limit. Kontsevich--Soibelman \cite{2001-Kontsevich-Soibelman-homological-mirror-symmetry-and-torus-fibrations} modified the original SYZ conjecture: the Calabi--Yau manifolds approaching to a large complex structure limit will Gromov--Hausdorff converge to an affine manifold with singularity. The conjecture is proved for the case of K3 surfaces \cites{2000-Gross-Wilson-large-complex-structure-limits-of-k3-surfaces,OO} and compact hyperK\"ahler manifolds with Lagrangian fibrations \cite{2013-Gross-Tosatti-Zhang-collapsing-of-abelian-fibered-calabi-yau-manifolds} and for Fermat type Calabi--Yau hypersurfaces \cite{Y?}\footnote{However, there are no results in the literature of comparison between the collapsing limits with the affine structures being used in the Gross--Siebert program to the authors knowledge. The result in \cite{Y?} seems pretty close to such a statement but one requires stronger regularity on the solutions of real Monge--Amp\`{e}re equations.}.  
Therefore, the collapsing of the SYZ fibration is usually viewed as the characterization of the large complex structure limit from the SYZ point of view.  Now we compare the above SYZ collapsing picture with the semi-stable degenerations of K3 surfaces \cite{Kulikov} studied by Kulikov. The semi-stable degenerations of K3 surfaces are classified into type I, II, III: the type I degenerations correspond to non-collapsing degenerations with the non-compact version studied in \cite{LSZ}. In the type II and type III degenerations, the metric K3 surfaces have collapsing limits and their Gromov--Hausdorff limits are harder to analyze. On one hand, the type II degeneration would lead to an interval as the  Gromov--Hausdorff limits with respect to suitable scaling of the metrics \cites{HSVZ,2019-Honda-Sun-Zhang-a-note-on-the-collapsing-geometry-of-hyperkahler-four-manifolds}. On the other hand, type III degenerations have their collapsing limits as an integral affine spheres with singularities and is the conjectural description of large complex structure limit \cites{2001-Kontsevich-Soibelman-homological-mirror-symmetry-and-torus-fibrations,OO}. The key distinguishing feature of the latter two cases is that the collapsing limits have different dimensions. While the Tian--Yau spaces, are known as the bubbling limits of K3 surfaces, intuitively can be viewed as non-compact analogues of K3 surfaces. From the local model of the Tian--Yau spaces, one can see the geometry has a $2$-dimensional collapsing limit near infinity when $D_t$ degenerates \cite{2020-Collins-Jacob-Lin-the-syz-mirror-symmetry-conjecture-for-del-pezzo-surfaces-and-rational-elliptic-surfaces}. Moreover, the third author and R. Takahashi recently proved that there is a global collapsing, i.e.~with suitable scaling of the metrics, the Tian--Yau metrics Gromov--Hausdorff converge to a $2$-dimensional affine manifold with singularities \cite{LT}. Therefore, the authors believe that the above definition of large complex structure limits of pairs is a reasonable one from the metric perspective.
 
With the above understanding and the motivation from the Gross--Siebert program, the authors earlier conjectured that   
 \begin{conjecture} \label{naive conj}
 	The limiting complex affine manifold, i.e. the limit of complex affine structure on $B_t$ coincides with the affine manifold with singularity in the work of Carl--Pumperla--Siebert 
 	\cite{2022-Carl-Pumperla-Siebert-a-tropical-view-of-landau-ginzburg-models}. 
 \end{conjecture}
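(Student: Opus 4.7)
The plan is to describe the complex affine structure on $B_t$ via period integrals and compute its $t \to 0$ asymptotics using the hyperk\"ahler rotation of Theorem \ref{theorem:CJL}. At a reference point $b_0 \in B_t$, choose a basis $\{\gamma_1,\gamma_2\}$ of $H_1(L_{b_0};\mathbb{Z})$, with $\gamma_1$ a meridian linking $D_t$ and $\gamma_2$ a lift of the vanishing cycle $\alpha_t$. For nearby $b$ and a path $\eta \subset B_t$ from $b_0$ to $b$, let $\Gamma_1,\Gamma_2$ denote the resulting $2$-chains in $X_t$, so that the complex affine coordinates read $w_i(b) = \operatorname{Im}\int_{\Gamma_i}\Omega_t$.

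By \eqref{eq:hyperkah-rotation} one has $\operatorname{Im}\Omega_t = -\operatorname{Im}\check{\Omega}_t$ up to sign, hence the $w_i$ are imaginary parts of holomorphic periods on the rational elliptic surface $\check{Y}_t \to \mathbf{P}^1$. This recasts the conjecture as a question about the variation of Hodge structure on the family $\{\check{Y}_t\}$: the $\mathrm{I}_d$ fibre over $\infty$ is preserved, while interior fibres acquire additional singularities reflecting the nodal degeneration of $D_t$. As $t \to 0$ the vanishing cycle $\alpha_t$ collapses; Picard--Lefschetz together with the nilpotent orbit theorem then produces a leading $\log|t|^{-1}$ term in the $\gamma_2$-period. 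After rescaling $B_t$ by this factor one expects a well-defined limiting integral affine manifold on a bounded region, with one focus--focus singularity per vanishing cycle collapsed.

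The remaining task is to match this limit with the affine manifold with singularities that Carl--Pumperla--Siebert \cite{2022-Carl-Pumperla-Siebert-a-tropical-view-of-landau-ginzburg-models} attach to the pair $(Y_0,D_0)$: one compares the number and local monodromy of focus--focus points, the invariant directions, and the integral lengths of bounded edges. The case $Y = \mathbf{P}^1 \times \mathbf{P}^1$ advertised in the abstract is the natural prototype, since the Weierstrass model for $\check{Y}_t$ and its degeneration are the most explicit there; the other del Pezzo degrees should then follow by analogous but more involved computations on the corresponding rational elliptic surfaces.

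The main obstacle is the analytic input for the period asymptotics step: one needs uniform control of the Tian--Yau metric $\omega_t$ as $D_t$ acquires a node, strong enough that the Collins--Jacob--Lin fibration $\pi_t$ persists uniformly and that its period map converges on the correct logarithmic scale. A secondary difficulty is performing the comparison with Carl--Pumperla--Siebert basis-independently, since periods are defined only up to the $\operatorname{GL}_2(\mathbb{Z})$-action on $H_1(L;\mathbb{Z})$ whereas their construction fixes coordinates via toric polytope data; reconciling the two should require a deliberate choice of Lagrangian section (and hence of $\gamma_1$) compatible with the degenerate toric model of $(Y_0,D_0)$.
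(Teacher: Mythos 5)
The key problem here is that Conjecture \ref{naive conj} is not a theorem of this paper, and in fact the paper shows that \emph{as literally stated, the conjecture fails for $\mathbf{P}^1\times\mathbf{P}^1$}. Your general framework — express the complex affine coordinates as $\operatorname{Im}\int\Omega_t$ over thimbles, pass to the hyperK\"ahler rotation $\check{Y}_t\to\mathbf{P}^1$, and study the degeneration — matches the paper's strategy (Theorem \ref{thm:period-point} and Section \ref{sec:complex-affine-structure-of-slf}). The paper even uses the exactness of the Tian--Yau metric precisely to reduce everything to holomorphic periods, which sidesteps the metric estimates you flag as the ``main obstacle.'' But your Step 3 — ``match this limit with the affine manifold with singularities that Carl--Pumperla--Siebert attach to $(Y_0,D_0)$'' — is exactly where the paper's computation diverges from the conjecture. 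For $\mathbf{P}^1\times\mathbf{P}^1$ the limiting rational elliptic surface $\check{Y}_e$ is extremal with singular configuration $\mathrm{I}_8\mathrm{I}_2\mathrm{I}_1^2$: there are only \emph{three} singular points in the finite base (one of type $\mathrm{I}_2$ at the origin and two of type $\mathrm{I}_1$ at $\pm 4$), whereas $B_{CPS}$ has \emph{four} focus--focus singularities at $(\pm\tfrac12,\pm\tfrac12)$. The limit affine structure is instead identified with a degeneration $B'_{CPS}$ in which two of the CPS singularities have collapsed to a single $\mathrm{I}_2$ point, and the paper explicitly remarks that $B$ and $B_{CPS}$ need not even be related by moving worms. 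So the comparison you hope to carry out does not close: the number, type, and position of the focus--focus points genuinely differ.

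Concretely, when you write ``with one focus--focus singularity per vanishing cycle collapsed,'' this under-counts the singular fibres of $\check{Y}_e$ and is inconsistent with the $\mathrm{I}_8\mathrm{I}_2\mathrm{I}_1^2$ configuration. The correct statement the paper proves (via the Torelli theorem for Looijenga pairs and Lemma \ref{local calculation}) is that the period point of $\check{Y}_t$ tends to the trivial homomorphism, forcing convergence to the distinguished rational elliptic surface $\check{Y}_e$; the affine structure on its base is then computed explicitly via the superpotential $W=t_1+t_2+t_1^{-1}+t_2^{-1}$ and shown to be $B'_{CPS}$, not $B_{CPS}$. If you want a provable statement along these lines, you should reformulate the conjecture to assert convergence to $B'_{CPS}$, or — following the paper's closing remark — to assert agreement only at the level of tropical counts and theta-function algebras rather than of the underlying integral affine manifolds.
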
 In the previous work, the authors proved
 \begin{theorem} \cite{2022-Lau-Lee-Lin-on-the-complex-affine-structures-of-syz-fibration-of-del-pezzo-surfaces}
 	Conjecture \ref{naive conj} holds in the case when $d=9$.
 \end{theorem}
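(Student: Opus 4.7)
When $d=9$ we have $Y_t=\mathbf{P}^2$ and $D_t$ a smooth plane cubic degenerating to a nodal cubic $D_0$. The plan is to extract the complex affine structure on $B_t$ from period integrals of $\Omega$, control the $t\to 0$ limit using the asymptotic description of the Tian--Yau metric near $D_t$, and then match the resulting integral affine manifold with the one that Carl--Pumperla--Siebert attach to the Looijenga pair $(\mathbf{P}^2,D_0)$.

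First I set up explicit affine coordinates on $B_t$. By Theorems \ref{CJL}--\ref{theorem:CJL}, the special Lagrangian fibration $\pi_t\colon X_t\to B_t$ is, near infinity, asymptotic to a semi-flat $S^1$-bundle over a tubular neighborhood of $D_t$, with fiber class $\tilde\alpha_t$ realized as a trivial $S^1$-bundle over a geodesic loop of class $\alpha_t\in\mathrm{H}_1(D_t,\mathbb{Z})$. A complex flat coordinate is $z_\gamma(b)=\int_{\Gamma(\gamma)}\Omega$, where $\gamma$ is a path in $B_t$ from a base point $b_0$ to $b$ and $\Gamma(\gamma)\subset X_t$ is the cylinder swept out by the $\alpha_t$-cycles over $\gamma$. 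Via \eqref{eq:hyperkah-rotation}, these periods translate into periods of $\check\Omega$ on the elliptic fibration $\check X_t\to\mathbb{C}$, which can be read off any Weierstrass model for $\check Y_t$ with an $I_9$ fiber at infinity.

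Next I study the degeneration $t\to 0$. Two effects compete: the Tian--Yau metric concentrates along $D_t$, and $D_t$ collapses its vanishing cycle $\alpha_t$. I would combine the semi-flat approximation on the complement of a tubular neighborhood of $D_t$ with the known asymptotics of the Tian--Yau metric near $D_t$ to obtain an explicit asymptotic for $z_\gamma$: a logarithmic contribution along the $\alpha_t$-direction, accounting for monodromy at infinity, together with a bounded contribution transverse to it. Passing to the limit yields an integral affine structure on $B_0$ with three focus--focus singularities---coming from the three $I_1$ fibers of $\check Y_t$ that survive the hyperK\"ahler rotation---and monodromy at infinity conjugate to $\bigl(\begin{smallmatrix}1 & 9\\ 0 & 1\end{smallmatrix}\bigr)$, as forced by the $I_9$ fiber.

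Finally I match this limit with the CPS affine manifold for $(\mathbf{P}^2,D_0)$, which is $\mathbb{R}^2$ equipped with three focus--focus singularities whose combined monodromy reproduces the fan of $\mathbf{P}^2$; the matching reduces to identifying the positions and invariant directions of the three singularities, which amounts to tracking the vanishing cycles of the three $I_1$ fibers of $\check Y_t$. The hardest part is not this combinatorial check but the analytic control of the positions of the three $I_1$ fibers of $\check Y_t$ as $t\to 0$: Collins--Jacob--Lin's argument produces these fibers only implicitly, so establishing that their affine positions converge to the prescribed CPS locations is the main obstacle and likely requires a new uniform estimate beyond \cite{2021-Collins-Jacob-Lin-special-lagrangian-submaniflds-of-log-calabi-yau-manifolds}.
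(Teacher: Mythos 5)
This theorem is a citation of your earlier paper \cite{2020-Lau-Lee-Lin-on-the-complex-affine-structures-of-syz-fibration-of-del-pezzo-surfaces}; the present article does not reprove it, but Sections~3--4 reprise and extend the strategy. Your sketch starts along the right lines---complex affine coordinates from periods of $\Omega$, passage through the hyperK\"ahler rotation so that the affine structure becomes that of the elliptic fibration $\check{Y}_t\to\mathbf{P}^1$, and the identification of the three focus--focus points with the three $I_1$ fibers of the $I_9 I_1^3$ configuration---but you correctly flag that your route bottoms out at a hard uniform estimate on the \emph{positions} of those $I_1$ fibers, which Collins--Jacob--Lin do not supply. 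That obstacle is genuine in your formulation, and the missing idea is that it can be circumvented entirely.

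The paper's route (Theorem~\ref{thm:period-point} together with Section~4) does not track the $I_1$ fibers at all. Instead it uses the Gross--Hacking--Keel Torelli theorem for Looijenga pairs (Theorem~\ref{Torelli}): since the Tian--Yau metric is \emph{exact}, $\int_\delta\check\Omega_t = -\mathrm{i}\,\mathrm{Im}\int_\delta\Omega_t$ for every $2$-cycle $\delta$, so the period point $\phi_{(\check Y_t,\check D_t)}$ is computed purely from residue periods of $\Omega_t$. A local residue calculation in a neighborhood of the node of $D_0$ (Lemma~\ref{local calculation}) shows $\int_{\tilde\beta_t}\Omega_t\to\infty$ while the remaining periods stay bounded, so after the GHK normalization the period point tends to the trivial homomorphism, and Torelli then pins down the limit $\check Y_e$ as the unique extremal rational elliptic surface with configuration $I_9I_1^3$ and trivial period point. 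Only after the limiting complex surface is identified does one compute its affine structure exactly (in the style of Lemmas~\ref{lemma:affine-line-OA}--\ref{affine cut 2}) and match it with the CPS base. In other words: you do not need the asymptotics of the Tian--Yau metric near $D_t$, only its exactness, and the ``positions of the $I_1$ fibers'' are determined a posteriori as an algebraic feature of $\check Y_e$ rather than estimated analytically. I'd recommend reorganizing your argument around the Torelli step; as written, your proposal identifies the right endpoint but leaves the decisive reduction unaddressed.
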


In this paper, we will extend the SYZ mirror symmetry of \cite{2020-Collins-Jacob-Lin-the-syz-mirror-symmetry-conjecture-for-del-pezzo-surfaces-and-rational-elliptic-surfaces} to its compactification: providing the recipe for superpotentials of del Pezzo surfaces from the SYZ fibration constructed in \cite{2021-Collins-Jacob-Lin-special-lagrangian-submaniflds-of-log-calabi-yau-manifolds}. 
\begin{theorem}[=Theorem \ref{thm:period-point}] 
\label{main thm}
		Let $(Y_t,D_t)$, $t\in [0,1]$, be a $1$-parameter family of pairs of del Pezzo surfaces $Y_t$ and smooth anti-canonical divisors $D_t$. Assume that $Y_0$ is a smooth del Pezzo surface and $D_0$ is an irreducible nodal anti-canonical divisor of $Y_0$. Set $X_t=Y_t\setminus D_t$. Then the rational elliptic surfaces $\check{Y}_t$, $t\neq 0$ from Theorem \ref{theorem:CJL} converge to the distinguished rational elliptic surface $\check{Y}_e$, which is the minimal smooth compactification of the Landau--Ginzburg mirror superpotential of the del Pezzo surfaces, viewed as monotone symplectic manifolds.
	\end{theorem}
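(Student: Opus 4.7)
The plan is to reduce the theorem to a period computation in the moduli space of rational elliptic surfaces with a distinguished $\mathrm{I}_d$ fibre at infinity. By Torelli-type results for such surfaces, an isomorphism class is pinned down by its period point, so it suffices to (a) compute the periods of $\check\Omega$ on $\check{Y}_t$ for $t \neq 0$, (b) extract the limit as $t\to 0$, and (c) match that limit with the period point of the compactified Landau--Ginzburg superpotential described by Auroux--Katzarkov--Orlov in \cite{2006-Auroux-Katzarkov-Orlov-mirror-symmetry-for-del-pezzo-surfaces-vanishing-cycles-and-coherent-sheaves}.

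For step (a), I would use the hyperK\"ahler rotation identity $\check\Omega = \omega - \mathrm{i}\operatorname{Im}\Omega$ together with the exactness $\omega_{\mathrm{TY}} = d\lambda$ of the Tian--Yau metric to split each period $\int_\gamma \check\Omega$ into a Stokes boundary term $\int_{\partial\gamma}\lambda$ from the symplectic side, plus an intrinsic complex-geometric term involving periods of $\Omega$ on $X_t$. A natural basis of relative 2-cycles is furnished by the thimbles over paths in the base $\mathbb{C}$ of $\check{X}_t\to\mathbb{C}$ running to the $d$ critical values of the elliptic fibration, together with a section class and the torus fibre class $\tilde\alpha_t$. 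The boundary terms on this basis are read off from the K\"ahler class of $Y_t$, while the bulk terms reduce to periods of $\Omega$ on $D_t$ via residue along the anti-canonical divisor.

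For step (b), the assumption that $D_t$ degenerates to an irreducible nodal divisor forces $\operatorname{Im}\tau_t \to \infty$ for the modulus $\tau_t$ of the elliptic curve $D_t$, i.e.\ the large complex structure limit. In this regime the periods of $\check\Omega$ diverge in an explicit essentially-linear-in-$\tau_t$ fashion, but the ratio defining the modulus of $\check{Y}_t$ stabilises; after normalising by the period of the torus fibre one obtains a well-defined limit point in moduli, and hence a distinguished rational elliptic surface $\check{Y}_e$ carrying an $\mathrm{I}_d$ fibre.

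The main obstacle is step (c): identifying this intrinsic geometric limit with the LG mirror. The superpotential $W$ is defined via generating functions of Maslov index two holomorphic discs on monotone Lagrangians in $Y_t$, so the matching must go through comparing these disc counts with the boundary integrals of $\lambda$ against the vanishing thimbles above. The required identification is essentially that holomorphic disc boundaries are, after hyperK\"ahler rotation, homologous to the thimble boundaries of the critical points of $W$---which is exactly the Floer-theoretic input provided by the gluing constructions of \cite{CHL-glue} and \cite{HKL} invoked in the abstract. Completing this comparison, together with uniform control at the boundary of the moduli of rational elliptic surfaces where $\mathrm{I}_d$ configurations could a priori degenerate further, will constitute the heart of the proof.
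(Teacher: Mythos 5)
Your reduction to a Torelli-type period computation is exactly the paper's strategy, and your step~(b) observation that the degeneration of $D_t$ to a nodal curve forces the large complex structure limit is also correct. However, there are two substantive divergences from the paper's actual argument, one of which is a genuine gap.

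First, your step~(a) proposes computing periods over \emph{thimbles} (relative $2$-cycles) so that $\int_\gamma\omega$ leaves a Stokes boundary term $\int_{\partial\gamma}\lambda$ ``read off from the K\"ahler class.'' The paper instead works with \emph{absolute} $2$-cycles: following Friedman, each $\gamma\in[\check D]^\perp$ deforms to a cycle $\tilde\gamma$ in $X_t$ disjoint from $D_t$, and Lemma~\ref{lemma:deform-cycles} and the sequence \eqref{equation:fund-seq-1} supply a basis $\{\tilde\alpha_t,\tilde\beta_t,\tilde\gamma_i\}$ of $H_2(X_t;\mathbb{Q})$. For a closed cycle $\delta$, exactness of $\omega_{\mathrm{TY}}$ gives $\int_\delta\omega=0$ \emph{outright} --- no boundary term survives --- so $\int_\delta\check\Omega=-\mathrm{i}\operatorname{Im}\int_\delta\Omega_t$. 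This is both cleaner and necessary: the period point of the Torelli theorem (Theorem~\ref{Torelli}) is defined on $[\check D]^\perp\subset H_2(\check Y;\mathbb{Z})$, not on thimbles, so a thimble computation would require an extra (unstated) translation. The actual asymptotics then come from the residue computation in Lemma~\ref{local calculation}: $\int_{\tilde\beta_t}\Omega_t=2\pi\int_{\beta_t}\mu_t\to\infty$ while $\int_{f_t(\tilde\gamma_i)}\Omega_t$ stays bounded, so after normalizing $\Omega_t$ by $\int_{\tilde\beta_t}\Omega_t$ (which is the GHK normalization \eqref{normalization GHK}) all periods in $[\check D]^\perp$ tend to zero.

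Second, and more importantly, your step~(c) is not how the paper identifies the limit with the LG compactification, and the proposed route would be circular here. You propose invoking the Floer-theoretic gluing of \cite{CHL-glue,HKL} to match thimble boundaries with holomorphic disc boundaries. But that material is the content of Section~\ref{sec:Floer}, which is an \emph{independent} construction of the mirror and is not an ingredient in the proof of Theorem~\ref{thm:period-point}. The paper's identification is purely algebro-geometric: having shown the limiting period point is trivial, one only needs to show that the minimal compactification of the LG superpotential $W=\sum_{n\in\nabla_0}t^n$ also has trivial period point. This is Proposition~\ref{prop:-1-point} and Corollary~\ref{cor:trivial-period-points}: $\{W=0\}$ misses all torus-fixed points and meets each toric divisor exactly at the distinguished point $-1\in\mathbb{C}^*$, so the blow-up $(\check Z,\check F)$ satisfies the GHK characterization of trivial period point. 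Torelli then pins down the limit uniquely. Your proposal is missing this explicit period computation for the compactified superpotential, and replacing it with Floer-theoretic disc counting would require a substantial separate argument (essentially re-proving Section~\ref{sec:Floer} and then a comparison theorem), whereas the paper gets the identification essentially for free from the Batyrev-style toric model.
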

In particular, the limiting affine structure of the special Lagrangian fibration in the del Pezzo surfaces can be computed from $\check{Y}_e$. 
We further compute explicitly the limiting complex affine manifolds for the case del Pezzo surfaces of degree eight in Section \ref{sec: dP8'} and for degree three and four in Section \ref{sec: dP3dP4}. 
\begin{remark}
	The slogan is ``the mirror is given 
	by the limit of the hyperK\"ahler rotations towards the large complex structure limit.'' 
	This seems to be a new phenomenon and doesn't appear in the K3 cases with respect to 
	the mirror map in Gross--Wilson \cite{1997-Gross-Wilson-mirror-symmetry-via-3-tori-for-a-class-of-calabi-yau-threefolds}. 
\end{remark}
%	\begin{remark} 
%		The geometry of the Tian--Yau space $X_t$ 
%    near $D_t$ is modeled by the Calabi ansatz. Assuming that $D_t$ is degenerating to a nodal curve, then the special Lagrangian fibration on the Calabi ansatz is collapsing to its base near infinity, which is exactly the characterization of the large complex structure limit of the SYZ geometry \cite{2020-Collins-Jacob-Lin-the-syz-mirror-symmetry-conjecture-for-del-pezzo-surfaces-and-rational-elliptic-surfaces}.  
%			It is expected that, metrically, $(Y_t,D_t)$ converge in the
%    Gromov--Hausdorff sense to an integral affine manifold globally with 
%    singularities of dimension $2$. 
%\end{remark}
%\begin{remark}
%		There is an analogue of Theorem \ref{main thm} 
%    when the collapsing dimension is one by Honda--Sun--Zhang 
%    \cite{2019-Honda-Sun-Zhang-a-note-on-the-collapsing-geometry-
%    of-hyperkahler-four-manifolds}. 
%\end{remark}
\begin{remark}
 Ruddat and Siebert proved that Friedman's period points give monomial functions in the canonical coordinates of the base of the large complex structure limit degeneration \cite{RS}, which may give another interpretation of Theorem \ref{thm:period-point}. On the other hand, we point out that the limit of complex affine structures of the SYZ fibrations towards the large complex structure limit in the sense of \cite{2020-Collins-Jacob-Lin-the-syz-mirror-symmetry-conjecture-for-del-pezzo-surfaces-and-rational-elliptic-surfaces} are different from the integral affine structure used in \cite{2022-Carl-Pumperla-Siebert-a-tropical-view-of-landau-ginzburg-models} except for the $\mathbf{P}^2$ case, which is a coincidence.
    \end{remark}
The above theorem provides an understanding of SYZ mirror symmetry from the perspective of the metrics and hyper-K\"ahler rotation. An interesting feature of the result is the following: the superpotentials (together with the correct complex structures) of the mirror involves the enumeration of holomorphic discs and thus depends on the global geometry of the del Pezzo surfaces. On the other hand, the Tian--Yau metric and the special Lagrangian fibration only depend on the non-compact part. This is a weak evidence that the Tian--Yau metric may detect some global geometry for some specific compactification. This kind of statement may help to understand the behavior of the Calabi-Yau metrics and the authors would pursue it in the future.

Now we explain why the limit $\check{Y}_e$ is indeed the correct mirror. 
Recall that from the Torelli theorem of the Looijenga pairs \cite{2015-Gross-Hacking-Keel-moduli-of-surfaces-with-an-anti-canonical-cycle}, there exists a distinguished complex structure $(\check{Y}_e,\check{D}_e)$ which has trivial periods in the deformation family of Looijenga pairs. Hacking--Keating \cite{2020-Hacking-Keating-homological-mirror-symmetry-for-log-calabi-yau-surfaces} explains how to construct the mirror of a Lefschetz fibration $w\colon M\rightarrow \mathbb{C}$, where $M$ is a Weinstein manifold. Furthermore, various versions of homological mirror symmetry are proved with A-side categories of $M$ and B-side categories of $(\check{Y}_e,\check{D}_e)$. In the case we consider in this paper, $\check{Y}_e$ is a rational elliptic surface with $\check{D}_e$ being an $I_d$-fibre, then $M$ can be realized as a complement of a smooth anti-canonical divisor in a del Pezzo surface. On the other hand, T.~Collins, A.~Jacob, and the third author constructed special Lagrangian fibrations on the complement of a smooth anti-canonical divisor in del Pezzo surfaces of degree $d$ and on the complement of an $\mathrm{I}_d$-fibre in rational elliptic surfaces. Moreover, the special Lagrangian fibrations on the two geometries are dual to each other with respect to a suitable mirror map when one chooses the exact Tian--Yau metric on the complement in the del Pezzo surface and the distinguished complex structure on the rational elliptic surface \cites{2021-Collins-Jacob-Lin-special-lagrangian-submaniflds-of-log-calabi-yau-manifolds, 2020-Collins-Jacob-Lin-the-syz-mirror-symmetry-conjecture-for-del-pezzo-surfaces-and-rational-elliptic-surfaces}. This is mirror symmetry incorporating both A and B models on both geometries. 

In the second part of this paper, we construct the mirrors for del Pezzo surfaces from the perspective in symplectic geometry, using the gluing scheme of \cites{CHL-glue,HKL} based on immersed Lagrangian Floer theory \cites{2009-Fukaya-Oh-Ohta-Ono-lagrangian-intersection-floer-theory-anomaly-and-obstruction-part-i,2009-Fukaya-Oh-Ohta-Ono-lagrangian-intersection-floer-theory-anomaly-and-obstruction-part-ii,Seidel-g2,2010-Akaho-Joyce-immersed-lagrangian-floer-theory}.  The immersed Lagrangians that we use are essentially the nodal singular fibers of the SYZ fibration (up to Lagrangian isotopy), which form the sources of quantum corrections and play a crucial role in (partial) compactification of the incomplete SYZ mirror constructed from merely smooth SYZ fibers.  

In \cite{CHL-glue}, Cho, Hong and the first author found a method of constructing mirrors by gluing deformation spaces of immersed SYZ Lagrangian fibers.  A key ingredient is to establish and solve equations for Fukaya isomorphisms between the objects.  The upshot is that there exists a canonical $A_\infty$ functor from the Fukaya category to the matrix factorization category of the resulting LG model.  The method was applied to construct the mirrors of punctured Riemann surfaces using Seidel's immersed Lagrangians.  In \cite{HKL}, Hong, Kim and the first author studied the deformation and obstruction for nodal immersed spheres, and  applied the method to construct the mirrors of two-plane Grassmannians.

In this paper, we consider del Pezzo surfaces of degree $\geq 3$ which comes from smoothings of toric Gorenstein Fano surfaces.  These are smoothings of $A_n$ singularities, and we can construct a collection of immersed Lagrangians using $A_n$ local models.  

The case of $n=0$ corresponds to $\C^2$, which has a special Lagrangian fibration with exactly one nodal singular fiber.  The fibration can be understood as  `pushing in the corner' of the toric moment image of $\C^2$.  Mirror construction using such a fibration for $\C^2$ were found by the celebrated work of Auroux \cite{2007-Auroux-morror-symmetry-and-t-duality-in-the-complement-of-an-anticanonical-divisor} in the study of SYZ mirrors for various interesting examples of toric surfaces. 

As in \cites{CHL-glue,HKL}, the mirror is constructed by gluing the unobstructed deformation spaces of these immersed Lagrangians, which will be denoted as $\bL_{i,j}$ for certain indices $i,j$.  Due to the nature of mirror construction from symplectic geometry, the result is a rigid analytic variety. What is important is that such a mirror is exactly the analytification of $\check{X}_e$.  This is the reverse direction of mirror symmetry statement comparing to the work of Hacking--Keating \cite{HK2}. More precisely, we prove the following theorem:
 
%In Section \ref{sec:Floer}, we construct immersed Lagrangians $\bL_{i,j}$ (which are deformations of the nodal SYZ fibres) for del Pezzo surfaces coming from smoothings of two-dimensional toric Fano varieties $X_\Delta$, we glue their deformation spaces together, and we obtain a Floer theoretical mirror.  Here, $i$ indexes the toric fixed points of $X_\Delta$, and $j=1,\ldots,n_i$ where $n_i$ is the multiplicity of the $i$-th fixed point.  The main idea is to deform and replace the singular SYZ fibres with immersed Lagrangians together with pre-isomorphisms (that is, degree-zero Floer cochains) between them, and to find gluing maps between their deformation spaces that solve the isomorphism equations 
%(see \eqref{eq:isom}). This extends the mirror construction in \cite{2020-Lau-Lee-Lin-on-the-complex-affine-structures-of-syz-fibration-of-del-pezzo-surfaces} to del Pezzo surfaces with degree greater than two.

\begin{theorem}[Theorem \ref{thm:mir-del-Pezzo}]
	Let $X$ be a symplectic smoothing of a toric Gorenstein Fano surface with a monotone symplectic form, and let $D$ be a smooth anti-canonical divisor.  Let $\check{X}$ be the complex surface constructed as follows.  First, we take the (multiple) blowing up of the toric variety $X_{\check{\Delta}}$ at every special point $z_i=-1$ in the $i$-th toric divisor for $(n_i+1)$-times (where $n_i+1$ is the multiplicity of  the $i$-th toric fixed point of $X_\Delta$).  We define $\check{X}$ to be the complement of the strict transform of all the toric divisors of $X_{\check{\Delta}}$.
	Then there is an $A_\infty$ functor $\Fuk(X-D) \to D^b(\check{X}+\Lambda_+^2)$ that is injective on the morphism spaces of $\bL_{i,j}$.
\end{theorem}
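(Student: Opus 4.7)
The plan is to apply the gluing framework of Cho--Hong--Lau \cite{CHL-glue} and Hong--Kim--Lau \cite{HKL}: each immersed Lagrangian $\bL_{i,j}$ contributes a local chart in the mirror via its weakly unobstructed formal deformation space, the charts are glued along pre-isomorphism cochains whose Fukaya isomorphism equations supply the transition maps, and the assembled Landau--Ginzburg model comes with a canonical $A_\infty$ functor from $\Fuk(X-D)$ by the general functoriality theorem of \cite{CHL-glue}.

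First, I would construct each $\bL_{i,j}$ as an immersed Lagrangian smoothing of the nodal SYZ fibre near the $i$-th toric fixed point of $X_\Delta$, following the nodal-sphere template of \cite{HKL}. For each $(i,j)$ I would compute the weakly unobstructed deformation space $\bM_{i,j}$ and its disc potential $W_{i,j}$ by enumerating Maslov-index-two $J$-holomorphic discs in $X$ with boundary on $\bL_{i,j}$; the hypothesis that $D$ is disjoint from every $\bL_{i,j}$ and from $\bL_0$ ensures that these counts actually live in $X-D$, and monotonicity keeps the resulting potential convergent over the Novikov field. The monotone torus $\bL_0$ provides the remaining chart, an algebraic $(\Lambda^*)^2$-torus carrying its standard Lagrangian Floer potential.

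Next, I would identify degree-zero pre-isomorphism Floer cochains between consecutive pairs $(\bL_{i,j},\bL_{i,j+1})$ and between the terminal Lagrangians in each chain and the monotone torus $\bL_0$, then solve the isomorphism equations \eqref{eq:isom} in the bulk-deformed Fukaya subcategory. The resulting polynomial identities among the parameters of the $\bM_{i,j}$ and $\bM_0$ define birational transition maps between the local charts. Organizing these globally, I would check that each chain of immersed Lagrangians $\bL_{i,1},\dots,\bL_{i,n_i}$ over the $i$-th toric fixed point produces exactly the prescribed $(n_i+1)$-fold iterated blow-up of $X_{\check{\Delta}}$ at the special point $z_i=-1$ on the $i$-th toric divisor, and that the weakly unobstructed locus matches the complement of the strict transform of the toric boundary. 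By construction, the glued space is then $\check{X}$ equipped with the global superpotential assembled from the $W_{i,j}$, and the $A_\infty$ functor to $D^b(\check{X}+\Lambda_+^2)$ is produced by the Cho--Hong--Lau formalism sending each $\bL_{i,j}$ to the sheaf associated with its own deformation chart.

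The hard part will be the explicit disc-counting and isomorphism-equation step: one must exhibit the pre-isomorphism cochains, control the tower of higher-order obstruction contributions to $m_1^{b,b'}$ and $m_2^{b,b'}$, and verify that the polynomial transition maps reproduce exactly the prescribed blow-up combinatorics with no spurious extra exceptional components. Once this explicit matching is in place, injectivity on the morphism spaces of $\bL_{i,j}$ is automatic, since each $\bL_{i,j}$ is mapped to an object whose derived endomorphisms recover its immersed Floer cohomology by design of the gluing.
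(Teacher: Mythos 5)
Your outline follows the same Cho--Hong--Lau / Hong--Kim--Lau gluing scheme the paper uses, and the overall architecture (immersed nodal spheres over each singular toric fixed point, pre-isomorphism cochains between consecutive spheres and the monotone torus $\bL_0$, isomorphism equations yielding transition maps, and the CHL functoriality theorem producing the $A_\infty$ functor) matches the paper's organization, with the local $A_n$ computation of Section~\ref{sec:An} doing the disc-counting work you defer.

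There is, however, one point your proposal does not address that is essential to the precise statement you are proving. You claim the target is $D^b(\check{X}+\Lambda_+^2)$, i.e.\ a Novikov-ring extension of a bona fide \emph{complex} surface $\check{X}$; but in general, gluing Lagrangian deformation spaces via Fukaya isomorphism equations produces transition maps involving nontrivial powers of the formal parameter $\bT$, and the resulting mirror is only a rigid-analytic space over $\Lambda$ with no underlying complex variety. The reason the $\bT$-powers drop out here is the area-balancing conditions imposed on the circles $C$ and $C'_i$ in the reduced base (the requirement that the two strips bounded by $C$ and $C'_i$ have equal reduced symplectic area, and the consequent equal-area property for all pairs $C_i$, $C_j$), which force the gluing relations to take the $\bT$-free form $v_i = u_{i+1}^{-1}$, $u_i = u_{i+1}^2 v_{i+1}$, $w=u_1$, $z=u_1v_1-1$ of Theorem~\ref{thm:iso-An}. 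Your proposal takes the existence of suitable $\bL_{i,j}$ as given and does not record this constraint; without it, the statement ``the Floer-theoretical mirror is $\check{X}+\Lambda_+^2$'' would simply be false, and the blow-up combinatorics would be distorted by extra valuation shifts. The subsequent Novikov-ring bookkeeping (Definition~\ref{def:blowup}, Corollary~\ref{cor:MisA_n}, Propositions~\ref{prop:C^2} and~\ref{prop:mult-blowup}) is then what converts the union of charts into the asserted ``complex surface plus $\Lambda_+^2$'' form; this step too deserves an explicit argument rather than being folded into ``the glued space is then $\check{X}$.''
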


\begin{remark}
	The multiple blowing up of $X_{\check{\Delta}}$ above is a rational elliptic surface as in Theorem \ref{theorem:CJL}.  Thus, we have shown that the Lagrangian Floer construction agrees with Theorem \ref{thm:period-point} for these cases.
\end{remark}

The existence of the $A_\infty$ functor and the injectivity on reference Lagrangians follow from the general theory in \cite{CHL-glue}.  The main additional ingredient here is the local mirror construction for smoothings of $A_n$ singularities (see Section \ref{sec:An}) and blowing up over the Novikov ring.

The notations for the Novikov ring, its maximal ideal and the Novikov field are:
\begin{align*}
	\Lambda_0 :=& \left\{\sum_{i=0}^\infty a_i \bT^{A_i} \mid a_i \in \C, A_i \geq 0 \textrm{ and increases to } +\infty \right\} ,\\
	\Lambda_+ :=& \left\{\sum_{i=0}^\infty a_i \bT^{A_i} \mid a_i \in \C, A_i > 0 \textrm{ and increases to } +\infty \right\} ,\\
	\Lambda :=& \left\{\sum_{i=0}^\infty a_i \bT^{A_i} \mid a_i \in \C, A_i \textrm{ increases to } +\infty \right\}.
\end{align*}
The group of elements invertible in $\Lambda_0$ will also be important:
$$ \Lambda_0^\times := \left\{\sum_{i=0}^\infty a_i \bT^{A_i} \in \Lambda_0\mid A_0 = 0 \textrm{ and } a_0\not=0 \right\}. $$
A Lagrangian torus is equipped with flat $(\Lambda_0^\times)^2$-connections; a Lagrangian nodal sphere is equipped with boundary deformations in $\{(u,v)\in \Lambda_0: \val(uv)>0\}$, where $u$ and $v$ are associated with the two immersed sectors at the nodal self-intersection.
In general, the gluing of these charts results in a variety defined over $\Lambda$.  On the other hand, when the immersed Lagrangians are constructed nicely so that they are in the same energy level, the gluing between them that solves the isomorphism equations does not involve the formal $\bT$ parameter, and the resulting space is an $\Lambda_0$-extension of a variety over $\C$.  This is why we have $\check{X}+\Lambda_+^2$ in the above theorem.
 
\subsection*{Acknowledgment} 
The authors would like to thank S.-T.~Yau and B.~Lian for their
constant encouragement and support. They also thank CMSA at Harvard for providing a warm and
comfortable working environment.
The first author express his deep gratitude to Cheol-Hyun Cho, Hansol Hong and Yoosik Kim on several related collaborative works and a lot of motivating ideas. The third author would like to thank P.~Hacking, Yuji Okada, and B.~Siebert
for the explanation on their related works. The third author also wants to thank T.~Collins and A.~Jacob for discussion on the earlier collaborations which motivated the project. The authors would all want to thank the anonymous reviewer for the precious comments and suggestions. 
The first author is supported by the Simons Collaboration Grant \#580648. 
The second author is supported by the Simons Collaboration on HMS Grant
and Taiwan NSTC \#112-2115-M-006-016-MY3. 
The third author is supported by the Simons Collaboration Grant  
\#635846 and NSF grant DMS \#2204109.

\section{Torelli theorem of log Calabi--Yau surfaces}
In this section, we recall basic notions and properties 
of Looijenga pairs which will be used throughout this paper.
\begin{definition}[cf.~\cites{2015-Gross-Hacking-Keel-moduli-of-surfaces-with-an-anti-canonical-cycle,2020-Hacking-Keating-homological-mirror-symmetry-for-log-calabi-yau-surfaces}]
A \emph{Looijenga pair} \((\check{Y},\check{D})\) 
is a pair consisting of a smooth 
projective surface \(\check{Y}\) and a \emph{singular} 
cycle \(\check{D}\in |-K_{\check{Y}}|\). By adjunction, 
the arithmetic genus \(p_{a}(\check{D})=1\), which implies that
\(\check{D}\) is either an irreducible nodal \(\mathbf{P}^{1}\)
or a cycle of smooth rational curves. 
A \emph{log Calabi--Yau surface with maximal boundary} 
is a Looijenga pair \((\check{Y},\check{D})\)
where \(\check{Y}\) is a smooth rational projective surface.
\end{definition}

For a log Calabi--Yau pair \((\check{Y},\check{D})\), it has been proven in 
\cite{2015-Gross-Hacking-Keel-moduli-of-surfaces-
with-an-anti-canonical-cycle}*{Lemma 1.3}
that after blowing up
some nodes in \(\check{D}\), the resulting pair \((\check{Z},\check{F})\),
where \(\check{F}\) is the reduced inverse 
image of \(\check{D}\), admits a toric
model \((\check{Y}_{\mathrm{tor}},\check{D}_{\mathrm{tor}})\);
namely, there is a birational morphism of pairs
\((\check{Z},\check{F})\to 
(\check{Y}_{\mathrm{tor}},\check{D}_{\mathrm{tor}})\)
such that 
\begin{itemize}
\item \(\check{Y}_{\mathrm{tor}}\) is a smooth projective toric variety and 
\(\check{D}_{\mathrm{tor}}\) is the toric boundary;
\item the pair \((\check{Z},\check{F})\) is obtained by successive blowups 
at interior points of the components of \(\check{D}_{\mathrm{tor}}\), 
and \(\check{F}\) is the strict transform of \(\check{D}_{\mathrm{tor}}\). 
%\(\check{F}\to \check{D}_{\mathrm{tor}}\) is an isomorphism.
\end{itemize}

\begin{definition}
The data 
\((\check{Y},\check{D})\leftarrow (\check{Z},\check{F})\rightarrow 
(\check{Y}_{\mathrm{tor}},\check{D}_{\mathrm{tor}})\) is called
\emph{a toric model} of \((\check{Y},\check{D})\).
\end{definition}

\subsection{Torelli theorem for log CY surfaces with maximal boundary}
\label{subsection:torelli}
%The aim of this paragraph is to recall Torelli theorem for 
%pairs we are interested in.
We begin by recalling a standard cohomology computation.
Let \((\check{Y},\check{D})\) be a log Calabi--Yau 
surface with maximal boundary.
Put \(\check{X}:=\check{Y}\setminus\check{D}\).
Consider the long exact sequence of the relative 
homology for \((\check{Y},\check{X})\)
\begin{equation}
\label{seq:relative-homology}
0\to \mathrm{H}_{3}(\check{Y},\check{X};\mathbb{Z})\to 
\mathrm{H}_{2}(\check{X};\mathbb{Z})
\to\mathrm{H}_{2}(\check{Y};\mathbb{Z})\to 
\mathrm{H}_{2}(\check{Y},\check{X};\mathbb{Z})
\to \mathrm{H}_{1}(\check{X};\mathbb{Z})\to 0.
\end{equation}
Both sides in \eqref{seq:relative-homology} are zero since
\(\mathrm{H}_{1}(\check{Y};\mathbb{Z})=
\mathrm{H}_{3}(\check{Y};\mathbb{Z})=0\).
Via Lefschetz--Poincar\'{e} duality 
\(\mathrm{H}_{3}(\check{Y},\check{X};\mathbb{Z})
\simeq \mathrm{H}^{1}(\check{D};\mathbb{Z})\), we see that
the sequence \eqref{seq:relative-homology} is transformed into
\begin{equation} \label{rel les}
0\to \mathrm{H}^{1}(\check{D};\mathbb{Z})\to 
\mathrm{H}_{2}(\check{X};\mathbb{Z})
\to\mathrm{H}_{2}(\check{Y};\mathbb{Z})\to \mathrm{H}^{2}(\check{D};\mathbb{Z})
\to \mathrm{H}_{1}(\check{X};\mathbb{Z})\to 0.
\end{equation}
Under the isomorphism \(\mathrm{H}^{2}(\check{D};\mathbb{Z})
\cong\mathbb{Z}^{d}\), the map 
\(\mathrm{H}_{2}(\check{Y};\mathbb{Z})\to 
\mathrm{H}^{2}(\check{D};\mathbb{Z})\)
is identified with taking the signed intersection 
\begin{equation}
\mathrm{H}_{2}(\check{Y};\mathbb{Z})\to \mathbb{Z}^{d},~
[\gamma]\mapsto ([\gamma]\cdot [\check{D}_{i}])_{i=1}^{d}.
\end{equation}

For each \(1\le i\le d\), let \([\check{D}_{i}]^{\perp}:=
\left\{[\gamma]\in \mathrm{H}_{2}
(\check{Y};\mathbb{Z})~\left|~[\gamma]\cdot[\check{D}_{i}]=0
\right.\right\}\). Put \([\check{D}]^{\perp}:= 
\cap_{i=1}^{d} [\check{D}_{i}]^{\perp}\).
We then have a short exact sequence
\begin{equation}
\label{equation:fund-seq-1}
0\to \mathrm{H}^{1}(\check{D};\mathbb{Z})\to 
\mathrm{H}_{2}(\check{X};\mathbb{Z})
\to [\check{D}]^{\perp}\to 0.
\end{equation}
%Notice that
%the map \(\mathrm{H}^{1}(\check{D};\mathbb{Z})\to 
%\mathrm{H}_{2}(\check{X};\mathbb{Z})\)
%is the dual of the residue homomorphism. 

Since \((\check{Y},\check{D})\) is log Calabi--Yau, there 
exists a unique (up to a constant) holomorphic two form 
\(\check{\Omega}\) on \(\check{X}\)
with a simple pole along \(\check{D}\).
We then fix a basis \([\beta]\in\mathrm{H}_{1}(\check{D};\mathbb{Z})\) by demanding 
\begin{equation} \label{normalization GHK}
\int_{\beta} \mathrm{Res}~\check{\Omega} = 1.
\end{equation}
Following Friedman (see also the proof of Lemma \ref{lemma:deform-cycles}), 
every cycle \([\gamma]\in [\check{D}]^{\perp}\) can be deformed into 
a cycle \([\tilde{\gamma}]\in \mathrm{H}_{2}(\check{X};\mathbb{Z})\). 
Moreover, if \([\tilde{\gamma}']\) is an
element in \(\mathrm{H}_{2}(\check{X};\mathbb{Z})\)
whose image in \([\check{D}]^{\perp}\) is equal to \([\gamma]\), then
\begin{equation}
\int_{[\tilde{\gamma}']}\check{\Omega}-
\int_{[\tilde{\gamma}]}\check{\Omega}\in\mathbb{Z}.
\end{equation}
In particular, the map
\begin{equation}
\label{eq:period-map}
\phi\colon [\check{D}]^{\perp} \to \mathbb{C}^{\ast},~[\gamma]\mapsto 
\exp\left(2\pi\mathrm{i}\cdot\int_{\tilde{\gamma}}\check{\Omega}\right)
\end{equation}
is well-defined.
\begin{definition}
For a log Calabi--Yau surface with maximal boundary \((\check{Y},\check{D})\),
the map \(\phi_{(\check{Y},\check{D})}\in\mathrm{Hom}
([\check{D}]^{\perp},\mathbb{C}^{\ast})\)
obtained in \eqref{eq:period-map} is called \emph{the period point 
associated with \((\check{Y},\check{D})\)}.
\end{definition}
The period map has another incarnation. 
For each line bundle $L$ on $\check{Y}$ such that $L|_{\check{D}_i}\cong \mathcal{O}_{\check{D}_i}$, i.e.~$L\in [\check{D}]^{\perp}$, its restriction to $\check{D}$ has $L_{\check{D}}\in \mbox{Pic}^0(\check{D})\cong \mathbb{C}^*$. Therefore, one naturally associate an element in $\mbox{Hom}(\check{D}^{\perp},\mathbb{C}^*)$ for each pair $(\check{Y},\check{D})$. From the discussion in \cite{2015-Friedman-on-the-geometry-of-anticanonical-pairs}*{p.21}, the two notations of the periods actually coincide. Fix a deformation family of Looijenga pairs and a reference Looijenga pair $(\check{Y}_0,\check{D}_0)$. We identify $\mbox{Hom}(\check{D}^{\perp},\mathbb{C}^*)\cong \mbox{Hom}(\check{D}_0^{\perp},\mathbb{C}^*)$ via parallel transport for any Looijenga pair $(\check{Y},\check{D})$ within the same deformation family. The weak Torelli theorem states that the periods classify the complex structure of the pairs within its deformation family.
\begin{theorem}[Torelli theorem 
\cite{2015-Gross-Hacking-Keel-moduli-of-surfaces-with-an-anti-canonical-cycle}] \label{Torelli}
The period \(\phi_{(\check{Y},\check{D})}\in 
\mathrm{Hom}([\check{D}_0]^{\perp},\mathbb{C}^{\ast})\)
determines the pair \((\check{Y},\check{D})\) 
uniquely within its deformation type. 
\end{theorem}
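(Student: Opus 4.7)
The plan is to combine a reduction to a toric model, which makes the moduli question concrete, with the line-bundle reinterpretation of the period map, which turns each period value into the position of a blowup point on the toric boundary. First, I would use the toric model $(\check{Y},\check{D})\leftarrow (\check{Z},\check{F}) \to (\check{Y}_{\mathrm{tor}},\check{D}_{\mathrm{tor}})$ recalled above. If $\check{D}$ is an irreducible nodal $\mathbf{P}^{1}$, I start by blowing up the node so that the boundary is a cycle of smooth rational curves; this is a topologically fixed operation inside the deformation family and does not affect the statement. Within a fixed deformation family, the discrete data entering the toric model---the toric variety $\check{Y}_{\mathrm{tor}}$, the boundary component on which each interior blowup is placed, and the order of infinitely near blowups---are constant. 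The only moduli are the positions of the blowup points on the smooth loci of the components of $\check{D}_{\mathrm{tor}}$, modulo the automorphism group of $(\check{Y}_{\mathrm{tor}},\check{D}_{\mathrm{tor}})$, which acts as an algebraic torus on each component.

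Next I would use the line-bundle interpretation of the period recalled from \cite{2015-Friedman-on-the-geometry-of-anticanonical-pairs}*{p.\ 21}: for $[\gamma]\in[\check{D}]^{\perp}$, pick a line bundle $L$ on $\check{Y}$ with $c_{1}(L)=[\gamma]$; then $L|_{\check{D}_{i}}$ has degree zero for every $i$, so $L|_{\check{D}}\in \mathrm{Pic}^{0}(\check{D})\cong \mathbb{C}^{\ast}$, and this assignment equals $\phi_{(\check{Y},\check{D})}$. Writing $E_{j}$ for the exceptional divisor of the $j$-th blowup at a point $p_{j}\in\check{D}_{\mathrm{tor},i_{j}}$, a suitable correction of $[E_{j}]$ by classes pulled back from $\check{Y}_{\mathrm{tor}}$ produces a distinguished $e_{j}\in[\check{D}]^{\perp}$ whose restriction is $\mathcal{O}(p_{j}-q_{j})$ on the $i_{j}$-th component, for some fixed reference point $q_{j}$. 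Under the uniformization $\mathrm{Pic}^{0}(\check{D}_{\mathrm{tor},i_{j}})\cong\mathbb{C}^{\ast}$ furnished by the residue normalization \eqref{normalization GHK}, the value $\phi_{(\check{Y},\check{D})}(e_{j})$ is exactly the position of $p_{j}$.

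Combining the two steps: given two pairs $(\check{Y},\check{D})$ and $(\check{Y}',\check{D}')$ in the same deformation family with equal period points, the parallel-transport identification $\mathrm{Hom}([\check{D}]^{\perp},\mathbb{C}^{\ast})\cong \mathrm{Hom}([\check{D}_{0}]^{\perp},\mathbb{C}^{\ast})\cong \mathrm{Hom}([\check{D}']^{\perp},\mathbb{C}^{\ast})$ matches the distinguished classes $e_{j}$ and $e_{j}'$, so the recovered blowup positions $p_{j}$ and $p_{j}'$ agree in $\mathrm{Pic}^{0}$ of the corresponding boundary component; hence they agree modulo the torus automorphism of the toric model, which is exactly the ambiguity in choosing a representative. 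So the two pairs are isomorphic.

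The step where I expect the main technical effort is the second one: verifying that the classes $\{e_{j}\}$ together with the image of $\mathrm{H}^{1}(\check{D};\mathbb{Z})$ and the classes pulled back from $\mathrm{H}_{2}(\check{Y}_{\mathrm{tor}};\mathbb{Z})$---on which $\phi$ is forced to be trivial because the corresponding line bundles restrict trivially to each toric boundary component---exhaust all of $[\check{D}]^{\perp}$ via the sequence \eqref{equation:fund-seq-1}, and that the identification $\phi(e_{j})=p_{j}$ is exact rather than merely up to a torus factor. The latter reduces to a residue computation on a Lefschetz-type representative $\tilde{e}_{j}\in \mathrm{H}_{2}(\check{X};\mathbb{Z})$ obtained from the deformation procedure in Lemma \ref{lemma:deform-cycles}, matched against the normalization \eqref{normalization GHK}. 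A secondary subtlety is to choose a coherent marking of the exceptional curves $E_{j}$ across the family, so that parallel transport compares the classes $e_{j}$ in the same order; this is possible because the topological type is constant in the family.
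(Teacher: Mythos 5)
The paper does not prove this statement---Theorem~\ref{Torelli} is quoted as a black box from Gross--Hacking--Keel, with the surrounding text only setting up the marking convention (parallel transport within a fixed deformation family). So there is no ``paper's own proof'' to compare against; what you have written is an independent proof sketch, and the question is whether it is correct.

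Your overall strategy---reduce to a toric model, interpret the period as positions of interior blowup points via the line-bundle description, recover those positions from $\phi$, and conclude isomorphism---is the right \emph{shape} for an argument, and it is indeed close in spirit to what Gross--Hacking--Keel do. However, there are genuine gaps. The most serious is the sentence ``Within a fixed deformation family, the discrete data entering the toric model \dots are constant.'' This is precisely the non-trivial content of the theorem, not a freely available input. A toric model is produced by contracting a collection of disjoint interior $(-1)$-curves; that collection is not canonical, and as the complex structure varies, interior $(-1)$-curves can break, move to the boundary, or become reducible. The Weyl group of the lattice $[\check{D}]^{\perp}$ (generated by reflections in internal $(-2)$-classes) acts on the set of such choices, and a substantial portion of the GHK argument is devoted to controlling this action and showing that a consistent choice of toric model (equivalently, an admissible marking) exists and is essentially unique. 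Without this, your step 3 (``match $e_j$ with $e_j'$ under parallel transport'') is circular: the classes $e_j$ are only well-defined once you know the toric model propagates coherently across the family.

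A secondary but concrete error is in the passage where you want the classes $\{e_j\}$ ``together with the image of $\mathrm{H}^{1}(\check{D};\mathbb{Z})$ and the classes pulled back from $\mathrm{H}_{2}(\check{Y}_{\mathrm{tor}};\mathbb{Z})$'' to exhaust $[\check{D}]^{\perp}$ via \eqref{equation:fund-seq-1}. That sequence exhibits $\mathrm{H}^{1}(\check{D};\mathbb{Z})$ as the kernel of $\mathrm{H}_{2}(\check{X};\mathbb{Z}) \to [\check{D}]^{\perp}$, so its image lands in $\mathrm{H}_{2}(\check{X};\mathbb{Z})$ and contributes \emph{nothing} to $[\check{D}]^{\perp}$; it is relevant to why $\phi$ is well-defined (choice of lift), not to generating $[\check{D}]^{\perp}$. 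The statement you actually need is that $[\check{D}]^{\perp}$ is generated by the corrected exceptional classes $e_j$ together with the pullback of $[\check{D}_{\mathrm{tor}}]^{\perp} \subset \mathrm{H}_{2}(\check{Y}_{\mathrm{tor}};\mathbb{Z})$, with $\phi$ vanishing on the latter because line bundles on a toric surface orthogonal to all boundary components restrict trivially to each. Finally, even granting both fixes, recovering equal blowup positions only yields an isomorphism of the boundary configurations; promoting that to an isomorphism of pairs still requires aligning the two birational contractions to their toric models, which again feeds back to the first gap.
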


From \cite{2015-Friedman-on-the-geometry-of-
anticanonical-pairs}*{Proposition 9.15 
and Proposition 9.16},
there are $10$ deformation families of rational 
elliptic surfaces with $I_d$ fibres, $d=1,\ldots, 9$. 
More precisely, there is exactly one family for 
each $d\neq 8$ and two for $d=8$. These correspond 
to $10$ families of del Pezzo surfaces. Recall that 
the del Pezzo surfaces are either blowups of $\mathbf{P}^2$ 
at $d$ points with $0\leq d\leq 8$ in general position 
or $\mathbf{P}^1\times \mathbf{P}^1$. 
We will explain which deformation families of rational 
elliptic surfaces with an $I_8$ fibre correspond 
to $\mathbb{F}_{0}:=\mathbf{P}^1\times \mathbf{P}^1$ 
versus the Hirzebruch surface $\mathbb{F}_1$ later. 

Within each deformation family, there is a 
distinguished rational elliptic surface 
whose period point is trivial by the Torelli theorem. 
The rational elliptic surface in each deformation with trivial periods can be characterized as follows.
Let \((\check{Y},\check{D})\) be a rational elliptic surface with trivial period point.
%namely \(\phi_{\check{Y}}\) is the trivial homomorphism.
Let \((\check{Y},\check{D})\leftarrow (\check{Z},\check{F})\rightarrow 
(\check{Y}_{\mathrm{tor}},\check{D}_{\mathrm{tor}})\) be a toric model of
\((\check{Y},\check{D})\).
Notice that \((\check{Z},\check{F})\) and \((\check{Y},\check{D})\) have
the same period point under the identification 
\(\mathrm{Hom}([\check{D}]^{\perp},\mathbb{C}^{\ast})\cong
\mathrm{Hom}([\check{F}]^{\perp},\mathbb{C}^{\ast})\). 
Then \((\check{Y},\check{D})\) has trivial period point
if and only if \((\check{Z},\check{F})\) does. Let
\(\check{D}_{\mathrm{tor},1},\ldots,\check{D}_{\mathrm{tor},p}\) 
be the irreducible components of 
\(\check{D}_{\mathrm{tor}}\). 
\((\check{Z},\check{F})\)
has trivial period point if and only if \((\check{Z},\check{F})\)
is obtained by blowing up the points 
\(-1\in\mathbb{C}^{\ast} = \check{D}_{\mathrm{tor},i}\setminus 
\bigcup_{j\ne i}\check{D}_{\mathrm{tor},j}\) in the toric coordinates
on \((\check{Y}_{\mathrm{tor}},\check{D}_{\mathrm{tor}})\)
(cf.~\cite{2015-Gross-Hacking-Keel-moduli-of-surfaces-with-an-anti-canonical-cycle}).

This distinguished complex structure plays an essential role in the context of mirror symmetry.
Hacking--Keating proved the homological mirror symmetry: let $\check{X}\rightarrow \mathbb{C}$ be the Landau-Ginzburg mirror of the Looijenga pair $(Y,D)$. When $\check{X}$ is equipped with the exact symplectic form and $(Y,D)$ has the distinguished complex structure, then the wrapped Fukaya category of $\check{X}$ is isomorphic to the derived category of coherent sheaves on $X$ \cite{2020-Hacking-Keating-homological-mirror-symmetry-for-log-calabi-yau-surfaces}.
Collins--Jacob--Lin also proved SYZ mirror symmetry between del Pezzo surfaces and 
rational elliptic surfaces with such distinguished complex structures. 
Here in this paper, we will explain how such distinguished complex structures 
arise naturally from the large complex structure limit. 

Recall that rational elliptic surface $\check{Y}$ is extremal if its Modell--Weil group is finite. All the extremal rational elliptic surfaces are classified 
(see \cite{1989-Miranda-the-basic-theory-of-elliptic-surfaces}*{p.77}). 

\subsection{Periods of certain toric surfaces}
In this subsection, we will compute the period point of 
certain log Calabi--Yau surfaces with maximal boundary \((\check{Z},\check{F})\)
which arise naturally from mirror symmetry. Let 
us fix the following notation.
\begin{itemize}
\item Let \(N=\mathbb{Z}^{2}\) and \(M=\mathrm{Hom}(N,\mathbb{Z})\) be the dual lattice.
Let \(N_{\mathbb{R}}=N\otimes\mathbb{R}\) and \(M_{\mathbb{R}}=M\otimes\mathbb{R}\) be
the scalar extension of \(N\) and \(M\) respectively.
\item Let \(\Delta \subset M_{\mathbb{R}}\) be a reflexive polytope. Denote by \(\Sigma_{\Delta}\)
the normal fan of \(\Delta\)
and by \(\mathbf{P}_{\Delta}\) the toric variety associated with \(\Sigma_{\Delta}\).
Let \(\nabla=\Delta^{\vee}\subset N_{\mathbb{R}}\) be the dual polytope 
of \(\Delta\). Note that \(\Sigma_{\Delta}\)
is equivalent to the face fan of \(\nabla\).
\item Let \(\Delta_{0}\) (resp.~\(\nabla_{0}\)) 
be the set of vertices of \(\Delta\) (resp.~\(\nabla\)).
\end{itemize}
We will tacitly assume that \(\mathbf{P}_{\Delta}\) is \emph{smooth};
in other words, \(\nabla_{0} = \nabla\cap N\setminus\{\mathbf{0}\}\). 
Let \(t_{1},t_{2}\) be the coordinate on \((\mathbb{C}^{\ast})^{2}\).
Consider the superpotential 
\begin{equation*}
W\colon (\mathbb{C}^{\ast})^{2}\to \mathbb{C},~(t_{1},t_{2})\mapsto
%\sum_{n\in \nabla\cap N} t^{n}=1+
\sum_{n\in \nabla_{0}} t^{n}.~
(\mbox{Here}~t^{n}:=t_{1}^{n_{1}}t_{2}^{n_{2}}.)
\end{equation*}
We may regard \(W\) as a \emph{section} of the anti-canonical bundle on \(\mathbf{P}_{\nabla}\),
that is, we compactify the torus \((\mathbb{C}^{\ast})^{2}\) into a projective toric surface
\(\mathbf{P}_{\nabla}\)
in a way such that \(W\) extends to a section of the anti-canonical bundle on \(\mathbf{P}_{\nabla}\).
Let \(\check{Y}_{\mathrm{tor}}\to\mathbf{P}_{\nabla}\) be the maximal projective crepant partial
desingularization. In the present case, 
the resolution is achieved by taking a sequence of weighted blow ups at codimension two strata
(which are torus invariant points)
and \(\check{Y}_{\mathrm{tor}}\) is a smooth semi-Fano toric surface.
\begin{lemma}
\label{lem:do-not-pass-toric-points}
\(\{W=0\}\) does not meet any
torus fixed point on \(\mathbf{P}_{\nabla}\).
\end{lemma}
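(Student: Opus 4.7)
The plan is to evaluate $W$ at each torus fixed point of $\mathbf{P}_{\nabla}$ in a local trivialization of $-K_{\mathbf{P}_{\nabla}}$ adapted to that point, using only the standard toric dictionary together with the reflexivity of $\nabla$.

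With $N$ playing the role of the character lattice, $\mathbf{P}_{\nabla}$ is the Gorenstein toric surface whose anti-canonical polytope equals $\nabla$; its torus fixed points are in bijection with the vertices of $\nabla$, i.e.~with $\nabla_{0}$. Denote by $p_{v}$ the fixed point associated with $v \in \nabla_{0}$, and by $U_{v} \subset \mathbf{P}_{\nabla}$ the open affine toric chart containing $p_{v}$. Each lattice point $n \in \nabla \cap N$ gives a global section $\chi^{n}=t^{n}$ of $-K_{\mathbf{P}_{\nabla}}$, and by construction $W = \sum_{v\in\nabla_{0}}\chi^{v}$. The heart of the argument rests on two observations. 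First, $\chi^{v}$ is a local trivialization of $-K_{\mathbf{P}_{\nabla}}$ on $U_{v}$: every ray generator $u_{\rho}$ of the fan of $\mathbf{P}_{\nabla}$ adjacent to the maximal cone at $p_{v}$ satisfies $\langle u_{\rho}, v\rangle = -1$ (since $v$ lies on the corresponding facets of $\nabla$), so the order of vanishing of $\chi^{v}$ along every toric divisor through $p_{v}$ is zero. Second, for any $n \in \nabla \cap N$ with $n \ne v$, the ratio $\chi^{n}/\chi^{v} = \chi^{n-v}$ extends to a regular function on $U_{v}$ and vanishes at $p_{v}$, because $n-v$ lies in the tangent cone of $\nabla$ at $v$ and is nonzero, so $\chi^{n-v}$ is a non-constant monomial in the local toric coordinates at $p_{v}$.

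Combining the two observations and dividing $W$ by the local frame $\chi^{v}$ gives
\begin{equation*}
W/\chi^{v} \;=\; 1 \;+\; \sum_{\substack{v' \in \nabla_{0} \\ v' \ne v}} \chi^{v' - v},
\end{equation*}
and every summand on the right vanishes at $p_{v}$. Hence $W/\chi^{v}$ takes the value $1$ at $p_{v}$, so $W(p_{v}) \ne 0$.

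No step in this proof is truly difficult; the argument is essentially routine toric bookkeeping. The one subtlety worth flagging is that $\mathbf{P}_{\nabla}$ may be singular at $p_{v}$ (a finite cyclic quotient singularity, coming from the edge generators of the tangent cone $C_{v}\nabla$ failing to be a $\mathbb{Z}$-basis of $N$); however reflexivity of $\nabla$ ensures that $-K_{\mathbf{P}_{\nabla}}$ is Cartier, so the local trivialization by $\chi^{v}$ makes sense, and the monomial computation above goes through unchanged (equivalently, one may pull back to the smooth local toric cover of $p_{v}$ and check the vanishing there).
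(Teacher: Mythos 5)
Your proof is correct, and it takes a genuinely different route from the paper's. The paper works in Cox homogeneous coordinates $\mathbb{C}[w_\rho : \rho \in \Sigma(1)]$, writes each monomial $t^n$ as $\prod_\rho w_\rho^{\langle n,\rho\rangle + 1}$, restricts to the fixed point $D_\rho \cap D_\mu$ (i.e., $w_\rho = w_\mu = 0$), observes that exactly one monomial survives, and then argues the surviving monomial is nonzero because a third vanishing coordinate $w_\kappa$ would force the point into the irrelevant locus $Z(\Sigma)$ via a primitive collection. Your proof instead works directly in the affine chart $U_v = \mathrm{Spec}\,\mathbb{C}[\sigma_v^\vee \cap N]$, uses $\chi^v$ as a local frame for $-K_{\mathbf{P}_\nabla}$ (valid since $\langle v, \rho\rangle = -1$ on every ray $\rho$ of $\sigma_v$, by reflexivity), and shows $W/\chi^v = 1 + \sum_{v'\neq v}\chi^{v'-v}$ with every correction term vanishing at $p_v$ because $v' - v$ is a nonzero element of $\sigma_v^\vee \cap N$. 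Both arguments hinge on the same combinatorial fact — each vertex $v$ of $\nabla$ is the unique lattice point with $\langle v,\rho\rangle = -1$ for the two rays of $\sigma_v$ — but your local-chart version bypasses the Cox quotient and the irrelevant-locus bookkeeping, making it somewhat shorter and arguably more transparent. Your remark on the Gorenstein/Cartier point correctly addresses the only subtlety (possible quotient singularities at $p_v$).
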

\begin{proof}
To simply the notation, put \(\Sigma=\Sigma_{\nabla}\). Denote by \(\Sigma(1)\)
the set of \(1\)-cones in \(\Sigma\). For \(\rho\in\Sigma(1)\), 
by abuse of notation, we also denote 
its primitive generator (in the lattice \(M\)) by the same symbol \(\rho\).
Consider the 
homogeneous coordinate ring \(\mathbb{C}[w_{\rho}:\rho\in\Sigma(1)]\).
Then every section \(t^{n}\in\mathrm{H}^{0}(\mathbf{P}_{\nabla},-K_{\mathbf{P}_{\nabla}})\)
can be identified with a monomial via
\begin{equation}
\label{eq:sec-monomial}
t^{n}\mapsto \prod_{\rho\in\Sigma(1)} w_{\rho}^{\langle n,\rho\rangle+1}.
\end{equation}
Put \(p=|\Sigma(1)|\) and \(U_{\Sigma}=\mathbb{C}^{p}\setminus Z(\Sigma)\).
Since \(\mathbf{P}_{\nabla}\) 
is simplicial, \(U_{\Sigma}\slash G\cong\mathbf{P}_{\nabla}\) is a 
geometric quotient with \(G=\mathrm{Hom}(\mathrm{Cl}(\mathbf{P}_{\nabla}),\mathbb{C}^{\ast})\)
(cf.~\cite{2011-Cox-Little-Schenck-toric-varieties}*{Chapter 5}). 
The symbol \(w_{\rho}\) can be thought of as
a coordinate function on \(U_{\Sigma}\) and 
the monomial in the right hand side of \eqref{eq:sec-monomial}
is a \(G\)-equivariant function with respect to a certain character of \(G\).
The toric divisor \(D_{\rho}\) is the image of \(\{w_{\rho}=0\}\) under
the projection 
\begin{equation*}
\pi\colon U_{\Sigma}\to U_{\Sigma}\slash G.
\end{equation*}

Since \(\mathbf{P}_{\nabla}\) is simplicial, each torus invariant
point is an intersection of two toric divisors. \(D_{\rho}\cap D_{\mu}\ne\emptyset\) if and 
only if \(\rho\) and \(\mu\) span a \(2\)-cone in \(\Sigma\). Recall that 
\begin{equation*}
\nabla = \{n\in N_{\mathbb{R}}~|~\langle n,\rho\rangle\ge -1,~\forall\rho\in\Sigma(1)\}.
\end{equation*}
It follows that \(n\in\nabla_{0}\) if and only if there are exactly two \(1\)-cones,
say \(\rho\) and \(\mu\), such that \(\langle n,\rho\rangle=\langle n,\mu\rangle=-1\)
(since \(\mathbf{P}_{\nabla}\) is Fano).
Fix \(n\in \nabla_{0}\) and pick \(\rho\) and \(\mu\) as above. Now in the superpotential
\begin{equation*}
W = \sum_{n\in \nabla_{0}\cap N} \prod_{\rho\in\Sigma(1)} w_{\rho'}^{\langle n,\rho'\rangle+1},
\end{equation*}
there exists a unique monomial which contains neither \(w_{\rho}\) nor \(w_{\mu}\).
Consequently
\begin{equation*}
\left.W\right|_{w_{\rho}=w_{\mu}=0} = 
%\prod_{\rho'\in\Sigma(1)} 
%w_{\rho'}^{\langle n,\rho'\rangle+1} =
\prod_{\substack{\rho'\in\Sigma(1)\\\rho'\notin\{\rho,\mu\}}} 
w_{\rho'}^{\langle n,\rho'\rangle+1}.
\end{equation*}
If there is a point \(w\in\mathbb{C}^{p}\) such that \(w_{\kappa}=w_{\rho}=w_{\mu}=0\) 
with \(\kappa\ne \rho\) and \(\kappa\ne \mu\), 
then \(\{\kappa,\rho,\mu\}\) contains
a primitive collection of \(\Sigma\) which implies that \(w\in Z(\Sigma)\).
We thus conclude that \(W(w)\ne 0\) for any \(w\in\pi^{-1}(D_{\rho}\cap D_{\mu})\).
\end{proof}

Let \(\Sigma_{\nabla}(1)=\{\rho_{1},\ldots,\rho_{p}\}\). Write 
\(\rho_{i}=\begin{bmatrix} a_{1i} & a_{2i} \end{bmatrix}^{\intercal}\). 
The matrix
\begin{equation*}
A=\begin{bmatrix}
a_{11} & a_{12} & \cdots & a_{1p}\\
a_{21} & a_{22} & \cdots & a_{2p}
\end{bmatrix}
\end{equation*}
gives rise to a map \(A\colon \mathbb{Z}^{p}\to M,~e_{i}\mapsto \rho_{i}\)
whose cokernel is finite.
The dual \(A^{\intercal}\colon N\to \mathbb{Z}^{p}\) induces for each \(i\) an injection
\begin{equation*}
\mathbb{C}[\rho_{i}^{\perp}\cap N]\to\mathbb{C}[e_{i}^{\perp}\cap \mathbb{Z}^{p}],~
s_{i} \mapsto \prod_{j=1}^{p} w_{\rho_{j}}^{\langle n_{i},\rho_{j}\rangle}
\end{equation*}
where \(n_{i}\) is a primitive generator of 
\(\rho_{i}^{\perp}\cap N\cong\mathbb{Z}\) and
\(s_{i}\) is the corresponding coordinate function 
on \(\mathbb{C}^{\ast}=\mathrm{Spec}(\mathbb{C}[\rho_{i}^{\perp}\cap N])\).

Fix \(i\) from now on. We will write a point in \(\mathbb{C}^{p}\) as \((w_{\rho_{i}},\mathbf{w})\), 
where \(\mathbf{w}\) is a vector indexed by \(\Sigma(1)\setminus \{\rho_{i}\}\).
Let us compute the intersection \(\{W=0\}\cap D_{\rho_{i}}\). 
As in the proof of Lemma \ref{lem:do-not-pass-toric-points}, we write 
\begin{equation}
\label{eq:w-homogeneous}
W = \sum_{n\in \nabla_{0}\cap N} \prod_{j=1}^{p} w_{\rho_{j}}^{\langle n,\rho_{j}\rangle+1}
\end{equation}
and regard \(W\) as a function on \(U_{\Sigma}\).
Restricting \(W\) to \(w_{\rho_{i}}=0\), we see that 
only two terms in \eqref{eq:w-homogeneous} survive:
the monomials \(t^{n}\) with \(\langle n,\rho_{i}\rangle=-1\).
We denote them by \(t^{q_{1}}\) and \(t^{q_{2}}\).
Then \(\{W=0\}\cap D_{\rho_{i}}\) is equal to
\begin{equation*}
\left\{(0,\mathbf{w})\in U_{\Sigma}~\Big|~
\prod_{j=1}^{p} w_{\rho_{j}}^{\langle q_{1},\rho_{j}\rangle+1}+
\prod_{j=1}^{p} w_{\rho_{j}}^{\langle q_{2},\rho_{j}\rangle+1}=0\right\}\Big\slash G
\end{equation*}
By our assumption, the face fan of \(\nabla\) is smooth.
It follows that \(q_{1}-q_{2}\) is a primitive generator of \(\rho_{i}^{\perp}\cap N\)
and therefore
\begin{align*}
&~\prod_{j=1}^{p} w_{\rho_{j}}^{\langle q_{1},\rho_{j}\rangle+1}+
\prod_{j=1}^{p} w_{\rho_{j}}^{\langle q_{2},\rho_{j}\rangle+1}=0\\
\Leftrightarrow&~
\prod_{j=1}^{p} w_{\rho_{j}}^{\langle q_{1}-q_{2},\rho_{j}\rangle}=-1\\
\Leftrightarrow&~s_{i}=-1.
\end{align*}
We summarize this into the following proposition.
\begin{proposition}
\label{prop:-1-point}
Assume that \(\mathbf{P}_{\Delta}\) is smooth.
\(\{W=0\}\) and \(D_{\rho_{i}}\) can only meet at \(-1\in\mathbb{C}^{\ast}=
\mathrm{Spec}(\mathbb{C}[\rho_{i}^{\perp}\cap N])\subset D_{\rho_{i}}\).
\end{proposition}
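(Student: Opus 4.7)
The plan is to read off the intersection $\{W=0\}\cap D_{\rho_i}$ directly from the Cox homogeneous coordinate presentation of $\mathbf{P}_\nabla$, formalizing the computation already carried out in this subsection. First, I use the identification \eqref{eq:sec-monomial} to express $W$ as in \eqref{eq:w-homogeneous} and restrict to $\{w_{\rho_i}=0\}$. Because every vertex $n\in\nabla_0$ of the reflexive polytope $\nabla$ satisfies $\langle n,\rho_j\rangle+1\ge 0$ for all $j$, the monomial indexed by $n$ survives the restriction precisely when $\langle n,\rho_i\rangle=-1$, i.e.\ when $n$ lies on the face of $\nabla$ dual to the ray $\rho_i$.

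Next I invoke the smoothness hypothesis on $\mathbf{P}_\Delta$, equivalently on the face fan of $\nabla$: the face of $\nabla$ dual to $\rho_i$ is an edge with exactly two vertices $q_1,q_2\in\nabla_0$, and $q_1-q_2$ is a primitive generator of $\rho_i^\perp\cap N$. Consequently $W|_{w_{\rho_i}=0}$ reduces to a binomial, and setting it to zero gives
\begin{equation*}
\prod_{j=1}^{p} w_{\rho_j}^{\langle q_1,\rho_j\rangle+1}
+\prod_{j=1}^{p} w_{\rho_j}^{\langle q_2,\rho_j\rangle+1}=0,
\end{equation*}
which after dividing is equivalent to
$\prod_{j}w_{\rho_j}^{\langle q_1-q_2,\rho_j\rangle}=-1$.

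Finally I translate back to the intrinsic toric coordinate on $D_{\rho_i}$. Using the injection $\mathbb{C}[\rho_i^\perp\cap N]\hookrightarrow \mathbb{C}[e_i^\perp\cap\mathbb{Z}^p]$ that sends the generator $s_i$ (associated to the primitive vector $n_i=q_1-q_2$) to $\prod_j w_{\rho_j}^{\langle n_i,\rho_j\rangle}$, the binomial equation reads $s_i=-1$. Lemma \ref{lem:do-not-pass-toric-points} already excludes the two torus fixed points of $D_{\rho_i}$ from $\{W=0\}$, so the intersection lies inside the open torus $\mathrm{Spec}(\mathbb{C}[\rho_i^\perp\cap N])\subset D_{\rho_i}$, where the coordinate $s_i$ is defined, and we conclude that the unique intersection point is $s_i=-1$.

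The main technical subtlety is the passage between Cox coordinates on $U_\Sigma$ and the intrinsic coordinate $s_i$ on the open stratum of $D_{\rho_i}$; smoothness of the face fan of $\nabla$ enters precisely at the step that guarantees $q_1-q_2$ is primitive in $\rho_i^\perp\cap N$. Without this primitivity one would only obtain $s_i^k=-1$ for some $k\ge 1$, producing several intersection points instead of a single one. The remaining ingredients are routine combinatorics of reflexive polytopes and standard Cox ring formalism.
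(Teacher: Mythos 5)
Your proof is correct and follows essentially the same route as the paper: restrict $W$ in Cox coordinates to $w_{\rho_i}=0$, observe by smoothness of $\Sigma_\Delta$ that exactly two monomials survive with primitive difference $q_1-q_2$, divide to obtain $s_i=-1$, and invoke Lemma~\ref{lem:do-not-pass-toric-points} to confine the intersection to the open torus. Your explicit remark that non-primitivity would instead give $s_i^k=-1$ (hence multiple points) is a useful clarification of where the smoothness hypothesis is genuinely used, but the structure of the argument is the same as in the text preceding the proposition.
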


Let us focus on the MPCP desingularization now.
Let \(\check{D}_{\mathrm{tor}}\) be the union of toric divisors on 
\(\check{Y}_{\mathrm{tor}}\).
Since \(\check{Y}\to \mathbf{P}_{\nabla}\) is crepant, 
\(W\) can be thought of as a section of \(-K_{\check{Y}}\) as well.
One immediately obtains the following corollary.
\begin{corollary}
\label{cor:trivial-period-points}
Assume that \(\mathbf{P}_{\Delta}\) is smooth as before and
regard \(W\) as a section of the anti-canonical bundle on \(\check{Y}_{\mathrm{tor}}\).
Let \((\check{Z},\check{F})\) be the blow up of \((\check{Y}_{\mathrm{tor}},\check{D}_{\mathrm{tor}})\)
at \(\{W=0\}\cap \check{D}_{\mathrm{tor}}\)
and let \(\check{F}\) be the reduced inverse image of \(\check{D}_{\mathrm{tor}}\).
Then \((\check{Z},\check{F})\) has trivial period point.
\end{corollary}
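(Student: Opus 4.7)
The plan is to combine the characterization of trivial-period toric-model pairs recalled just before Proposition \ref{prop:-1-point}---namely, that $(\check Z,\check F)$ has trivial period point iff every interior blowup of $(\check Y_{\mathrm{tor}},\check D_{\mathrm{tor}})$ sits at the $-1$ point of some toric divisor in toric coordinates---with Lemma \ref{lem:do-not-pass-toric-points} and Proposition \ref{prop:-1-point}. The entire job is thus to locate the scheme $\{W=0\}\cap\check D_{\mathrm{tor}}$ on $\check Y_{\mathrm{tor}}$ and verify that every blowup center lies at such a $-1$ point.

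First I would transfer Proposition \ref{prop:-1-point} from $\mathbf P_\nabla$ to its MPCP resolution $\check Y_{\mathrm{tor}}$. In dimension two, the crepant toric resolution $\check Y_{\mathrm{tor}}\to\mathbf P_\nabla$ is an isomorphism away from torus fixed points of $\mathbf P_\nabla$, and every new toric divisor of $\check Y_{\mathrm{tor}}$ is an exceptional curve lying over such a fixed point. Lemma \ref{lem:do-not-pass-toric-points} guarantees that $\{W=0\}\subset\mathbf P_\nabla$ avoids all those fixed points, so viewing $W$ as a section of $-K_{\check Y_{\mathrm{tor}}}$ via crepant pullback, its zero locus agrees with the strict transform and misses every new exceptional toric divisor. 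Hence $\{W=0\}\cap\check D_{\mathrm{tor}}$ is contained in the union of the strict transforms of the original toric divisors $D_{\rho_i}$, and on each of these, Proposition \ref{prop:-1-point} forces the intersection to lie only at $-1\in\mathbb{C}^\ast=\mathrm{Spec}(\mathbb{C}[\rho_i^\perp\cap N])$, a point preserved isomorphically by the MPCP.

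With this information in hand, the pair $(\check Z,\check F)$ is produced by a (possibly iterated, if $\{W=0\}$ is tangent to some $D_{\rho_i}$) blowup centered at $-1$ points on various toric boundary components of $\check Y_{\mathrm{tor}}$, and so falls squarely within the class of pairs characterized as having trivial period in \cite{2015-Gross-Hacking-Keel-moduli-of-surfaces-with-an-anti-canonical-cycle}. The only step needing care is the iterated-blowup clause in the tangential case: each successive blowup of the strict transform must again be centered at a $-1$ point on the new exceptional curve, but this is immediate because the $-1$ locus on the strict transform of a toric divisor is canonically transported across successive blowups of a single point of its $\mathbb{C}^\ast$-stratum. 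I expect no substantive obstacle; the content of the corollary is essentially Proposition \ref{prop:-1-point} repackaged through the Gross--Hacking--Keel characterization.
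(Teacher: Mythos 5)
Your proposal is correct and follows essentially the same path as the paper's proof: both combine Lemma \ref{lem:do-not-pass-toric-points} (so that the MPCP is an isomorphism near $\{W=0\}$), Proposition \ref{prop:-1-point}, and the Gross--Hacking--Keel characterization of trivial-period toric models quoted just before the corollary. Your extra discussion of the iterated-blowup case and the explicit transfer of Proposition \ref{prop:-1-point} to $\check{Y}_{\mathrm{tor}}$ merely fills in details that the paper's proof leaves implicit (``Since $\check{Z}$ is obtained by a sequence of blow ups at those intersections\ldots'').
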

\begin{proof}
Since \(\{W=0\}\) contains no torus fix points in \(\mathbf{P}_{\nabla}\),
the maximal projective crepant partial desingularization \(\check{Y}\to\mathbf{P}_{\nabla}\)
induces an isomorphism in a neighborhood of \(\{W=0\}\),
because \(\{W=0\}\) only can intersect \(\check{D}_{\mathrm{tor}}\) at interior points of
each irreducible component of \(\check{D}_{\mathrm{tor}}\).
Combined with Proposition \ref{prop:-1-point}, we deduce that if they intersect, they must meet at 
\(-1\in\mathbb{C}^{\ast}\). Since
\(\check{Z}\) is obtained by a sequence of blow ups at those intersections, we conclude that
\((\check{Z},\check{F})\) must have trivial period point (cf.~\cite{2015-Gross-Hacking-Keel-moduli-of-surfaces-with-an-anti-canonical-cycle}*{Lemma 2.8
and Proposition 2.9}). 
\end{proof}

\section{From SYZ fibrations to LG superpotentials}
\label{subsection:period-calculation}
In this section, we will prove the slogan ``the limit of hyperK\"ahler rotations towards the large complex structure limit gives the mirror.'' Recall that del Pezzo surfaces are either $\mathbf{P}^1\times \mathbf{P}^1$ or blow up of $\mathbf{P}^2$ at $d$ generic points, where $0\leq d\leq 8$. Now given a holomorphic family of del Pezzo surfaces $p:\mathcal{Y}\rightarrow C$, there is a natural polarization $\mathcal{L}$ coming from the anti-canonical divisors. From the Riemann--Roch theorem, one gets that $h^0(Y,-K_Y)$ doesn't depend on the complex structure of the del Pezzo surface $Y$ and $\mathrm{H}^i(Y,-K_Y)$ always vanishes. One deduces that $R^0p_{\ast}\mathcal{L}$ is a vector bundle over $C$ with fibre $\mathrm{H}^0(Y_c,-K_{Y_c})$ over $c$ with $Y_c:=p^{-1}(c)$. Then the total space of $R^0p_{\ast}\mathcal{L}$ parametrizes the pairs of del pezzo surfaces with anti-canonical divisors. Outside a complex codimension one subset, a point in the total space of $R^0p_{\ast}\mathcal{L}$ corresponds to a pair with smooth anti-canonical divisors. Now choosing a path $[0,1]$ in the total space $R^0p_{\ast}\mathcal{L}$ induces a \(1\)-parameter family of pairs of del Pezzo surfaces $Y_t$ of degree \(d\) 
with anti-canonical divisors $D_t$. Through out this section, we will assume that $D_t$ are smooth for $t\neq 0$ and $D_0$ is irreducible and nodal\footnote{Notice that generic del Pezzo surfaces admits an irreducible nodal anti-canonical divisor.}.

Assume that \(D_t\) degenerates to an irreducible nodal curve \(D_{0}\).
Let \(\alpha_t\in \mathrm{H}_1(D_t,\mathbb{Z})\) be the vanishing cycle. 
Let \(\pi_t\colon X_t=Y_t\setminus D_t\rightarrow B_t\) be the special 
Lagrangian fibration produced in 
Theorem \ref{theorem:CJL} with fibre class $\tilde{\alpha_t}$.
Applying hyperk\"{a}hler rotation to \(X_{t}\) 
yields another family \(\check{X}_{t}\to \mathbb{C}\).
By Theorem \ref{theorem:CJL}, we obtain a \(1\)-parameter 
family of rational elliptic surfaces \(\check{Y}_{t}\), \(t\in (0,1]\). 
This section is devoted to proving that 
\begin{theorem} 
\label{thm:period-point}
Assume that $Y_0$ is a generic del Pezzo surface. Then
	the rational elliptic surface $\check{Y}_t$ converges to the distinguished rational elliptic surface $\check{Y}_e$ in its deformation family, as $t\rightarrow 0$. 
\end{theorem}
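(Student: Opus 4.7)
The plan is to use the Torelli theorem (Theorem \ref{Torelli}) to reduce convergence of $\check Y_t$ to convergence of its period point $\phi_t := \phi_{(\check Y_t, \check D_t)} \in \mathrm{Hom}([\check D_t]^\perp, \mathbb C^*)$ (using parallel transport to identify $[\check D_t]^\perp$ with $[\check D_0]^\perp$ along the family), and then to verify that $\phi_t \to 1$ as $t \to 0$. Once established, Torelli combined with Corollary \ref{cor:trivial-period-points} identifies the limit with the unique surface in the deformation family having trivial period point, which is exactly $\check Y_e$.

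For the period computation, fix $[\gamma] \in [\check D_0]^\perp$ and transport it to $[\gamma_t] \in [\check D_t]^\perp$. Apply Friedman's construction (as in \eqref{eq:period-map}) to lift to a continuous family of cycles $\tilde\gamma_t \in H_2(\check X_t; \mathbb Z)$. Using the hyperk\"ahler rotation formula $\check\Omega_t = \omega_t - i \operatorname{Im} \Omega_t$, split
\[
\int_{\tilde\gamma_t} \check\Omega_t = \int_{\tilde\gamma_t} \omega_t - i \int_{\tilde\gamma_t} \operatorname{Im} \Omega_t.
\]
Since $\omega_t$ is the Tian--Yau K\"ahler form, which is exact on $X_t = \check X_t$, and $\tilde\gamma_t$ is a closed $2$-cycle, the first integral vanishes. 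Therefore the period reduces to computing $\int_{\tilde\gamma_t} \operatorname{Im} \Omega_t$, modulo the $\mathbb Z$-ambiguity coming from the extension \eqref{equation:fund-seq-1} (which, by the normalization $\int_\beta \mathrm{Res}\,\check\Omega_t = 1$, contributes integers to the exponent of $\phi_t$).

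The asymptotic analysis proceeds via the Calabi ansatz, which models $(X_t, \omega_t, \Omega_t)$ near $D_t$: in a collar neighborhood of $D_t$, $\Omega_t$ has an explicit form depending on the modulus $\tau_t$ of $D_t$. I would choose the lifts $\tilde\gamma_t$ to be compatible with the special Lagrangian fibration $\pi_t \colon X_t \to B_t$, then decompose $\tilde\gamma_t$ into a compact piece (varying continuously in $t$ and contributing a bounded value) and a collar piece near $D_t$, lying in the collapsing region described in the second Remark following Theorem \ref{main thm}. Under the hypothesis that $D_t$ degenerates to an irreducible nodal curve, $\operatorname{Im}\tau_t \to \infty$, and the collar contribution tends to an integer, while the compact piece contributes the analog value for $\check Y_e$. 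The main obstacle is this collar analysis: one must select representatives $\tilde\gamma_t$ that interact cleanly with the SYZ fibration and carefully track how the vanishing-cycle degeneration of $D_t$ affects the imaginary-part integral. A natural simplification is to work cycle-by-cycle on a generating set of $[\check D_0]^\perp$ adapted to the toric model of $\check Y_e$ from Corollary \ref{cor:trivial-period-points}, reducing the calculation to explicit periods of classes supported near the $-1$ blow-up loci on the toric divisors, as was carried out for $d = 9$ in the authors' prior work.
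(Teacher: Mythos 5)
Your framing is right at the top level (Torelli, exactness of the Tian--Yau form, reduction to $\int_{\tilde\gamma_t}\operatorname{Im}\Omega_t$), but the heart of the argument is missing, and the route you sketch in its place would not close. The crux is that the period point formula \eqref{eq:period-map} requires the GHK normalization \eqref{normalization GHK}: $\int_\beta \mathrm{Res}\,\check\Omega_t = 1$. You mention this only as the source of the $\mathbb Z$-ambiguity, but its real role is to force a rescaling $\check\Omega_t \mapsto (\int_{\tilde\beta_t}\Omega_t)^{-1}\check\Omega_t$ whose normalizing factor blows up as $t\to 0$. That divergence is the content of Lemma \ref{local calculation}: item (1) gives $\frac{\mathrm i}{2}\int_{D_t}\mu_t\wedge\bar\mu_t\to\infty$, item (2) bounds $\int_{\alpha_t}\mu_t$, and the Riemann bilinear relations then force $\int_{\beta_t}\mu_t\to\infty$, hence $\int_{\tilde\beta_t}\Omega_t = 2\pi\int_{\beta_t}\mu_t\to\infty$. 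On the other hand, the deformed cycles $f_t(\tilde\gamma_i)$ can be chosen (Lemma \ref{lemma:deform-cycles}) to live in a fixed compact region disjoint from a neighborhood of $D_t$, so their $\Omega_t$-periods converge to those of $\Omega_0$ on $X_0$ and are merely \emph{bounded}, not integral. Dividing bounded quantities by the divergent normalizing factor sends all the normalized periods to zero, hence $\phi_{\check Y_t}\to e$ by Torelli.

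Your proposed collar decomposition does not substitute for this. First, by Lemma \ref{lemma:deform-cycles} the representatives $\tilde\gamma_t$ can be taken entirely away from $D_t$, so there is no collar piece to isolate. Second, and more seriously, without renormalization the unnormalized periods $\int_{\tilde\gamma_t}\check\Omega_t$ converge to finite nonzero values determined by $\Omega_0$ on $X_0$, not to integers; the claim that ``the collar contribution tends to an integer, while the compact piece contributes the analog value for $\check Y_e$'' presupposes the conclusion (that the limit period point is trivial) rather than establishing it. What you need, and what the paper supplies, is (i) a proof that one of the periods, the one over $\tilde\beta_t$, diverges, and (ii) the observation that the GHK normalization divides by precisely this diverging quantity, after which every surviving period vanishes in the limit.
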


From the Torelli theorem in the previous section, it suffices to prove that:
\begin{equation*} 
\tag{\(\ast\)}
\label{theorem:period-limit}
\mbox{The period point \(\phi_{\check{Y}_{t}}\) converges to the trivial homomorphism 
\(e\) as \(t\to 0\)}. 
\end{equation*}
In other words, the pair \((\check{Y}_{t},\check{D}_{t})\) converges to 
\((\check{Y}_{e},\check{D}_{e})\),
the log Calabi--Yau surface with trivial period point.

%We first claim that it suffices to prove the statement for the case $Y_t=Y$ is fixed. Indeed,....

%Next we will reduce to the case when $\check{D}_0$ is irreducible.  Assume that $D_0$ is a cycle of $d$ rational curves. Notice that a cycle of $d$ rational curves as a degenerate elliptic curve and the irreducible nodal curve represents the same point in the moduli space of pointed elliptic curves. 
%\begin{lemma}\label{existence of section}
%	existence of local sections.
%\end{lemma}		
%Forgetting the del Pezzo surfaces, there exists a holomorphic map $f\colon\Delta^{\ast}\rightarrow \mathcal{M}_{1,1}$, where $\mathcal{M}_{1,1}$ is the moduli space of elliptic curves by Lemma \ref{existence of section}. From \cite{2011-Hain-lectures-on-moduli-spaces-of-elliptic-curves}*{Theorem 5.9}, 
%the holomorphic map $f$ extends holomorphically to $\bar{f}\colon\Delta\rightarrow \bar{\mathcal{M}}_{1,1}$. In other words, we have $E_{f(\phi(t))}$ is the irreducible nodal elliptic curve.

%From now on, we will assume that $\check{Y}_t=\check{Y}$ is fixed and $\check{D}_0$ is irreducible. 

Recall the following construction in 
\cite{2015-Friedman-on-the-geometry-of-anticanonical-pairs}*{\S3}
\begin{lemma}
\label{lemma:deform-cycles}
Let \(Y\) be a del Pezzo surface.
Let \(D\in |-K_{Y}|\) be an irreducible nodal curve or smooth curve. Then for each 
\(\gamma\in [D]^{\perp}\), there exists 
a $2$-cycle \(\tilde{\gamma}\) of $Y$ 
such that $[\tilde{\gamma}]=\gamma$ in \(Y\) 
and \(\tilde{\gamma}\cap D=\emptyset\).
\end{lemma}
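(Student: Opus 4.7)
The plan is to derive this directly from the long exact sequence of the pair $(Y,X)$ with $X=Y\setminus D$, paralleling the derivation of \eqref{rel les}. Since $Y$ is a del Pezzo surface it is simply connected with $\mathrm{H}_{3}(Y;\mathbb{Z})=0$, and Lefschetz--Poincar\'{e} duality identifies $\mathrm{H}_{k}(Y,X;\mathbb{Z})\cong \mathrm{H}^{4-k}(D;\mathbb{Z})$, yielding an exact sequence
\[
0 \to \mathrm{H}^{1}(D;\mathbb{Z}) \to \mathrm{H}_{2}(X;\mathbb{Z}) \to \mathrm{H}_{2}(Y;\mathbb{Z}) \xrightarrow{\partial} \mathrm{H}^{2}(D;\mathbb{Z}) \to \mathrm{H}_{1}(X;\mathbb{Z}) \to 0,
\]
in which $\partial$ is the signed intersection with the fundamental class $[D]$.

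Because $D$ is irreducible, $\mathrm{H}^{2}(D;\mathbb{Z})\cong\mathbb{Z}$ (in the nodal case $D$ is homotopy equivalent to $S^{2}\vee S^{1}$), and by definition $[D]^{\perp}$ is precisely $\ker \partial$. Hence any class $\gamma\in [D]^{\perp}$ admits a lift $[\tilde{\gamma}]\in \mathrm{H}_{2}(X;\mathbb{Z})$. Choosing any singular cycle representative $\tilde{\gamma}\subset X$ of this lift produces a 2-cycle in $Y$ whose image in $\mathrm{H}_{2}(Y;\mathbb{Z})$ is $\gamma$ and which, being supported in $X=Y\setminus D$, is automatically disjoint from $D$.

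The one delicate point is justifying Lefschetz--Poincar\'{e} duality in the nodal case, since $D$ is then not a topological submanifold at the node. I would handle this by passing to a regular closed neighborhood $N(D)\subset Y$: away from the node $N(D)$ is the normal disk bundle of the smooth locus of $D$, and near the node it is a small closed 4-ball in which $D$ appears as two transversely meeting holomorphic disks. Thus $N(D)$ is a compact oriented 4-manifold with boundary that deformation retracts onto $D$; excision gives $\mathrm{H}_{k}(Y,X)\cong \mathrm{H}_{k}(N(D),\partial N(D))$, and classical Lefschetz duality on $N(D)$ supplies $\mathrm{H}_{k}(N(D),\partial N(D))\cong \mathrm{H}^{4-k}(N(D))\cong \mathrm{H}^{4-k}(D;\mathbb{Z})$. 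The identification of $\partial$ with intersection against $[D]$ then follows from its Thom-class description over the smooth locus of $D$, after which the argument above concludes the proof uniformly in both the smooth and nodal cases.
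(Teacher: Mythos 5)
Your proof is correct, but it takes a genuinely different route from the paper's. The paper follows Friedman's explicit geometric construction: write $\gamma=\sum_i a_i [C_i]$ with $a_i=\pm 1$ and $C_i$ smooth curves meeting $D$ transversally at distinct points of $D^{\mathrm{reg}}$, rearrange the signed intersection points in pairs $(p_j,q_j)$ (possible precisely because $\gamma\cdot[D]=0$), join each pair by a path $\sigma_j\subset D^{\mathrm{reg}}$, and replace the small disks around $p_j,q_j$ by the boundary tube $\tau(\sigma_j)$ over $\sigma_j$. Your argument instead reads off the existence of the lift abstractly from exactness of the long exact sequence of the pair $(Y,X)$ --- essentially \eqref{rel les} specialized to $(Y,D)$ --- after identifying $[D]^\perp$ with the kernel of the intersection map $\mathrm{H}_2(Y;\mathbb{Z})\to\mathrm{H}^2(D;\mathbb{Z})$, together with a careful treatment of Lefschetz duality across the node via a regular neighborhood. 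Both proofs are valid, and yours is shorter and self-contained at the level of homological algebra. What the paper's construction buys is a concrete geometric representative $\tilde\gamma$: the tubed cycle makes transparent the subsequent claim that replacing $\tilde\gamma$ by another lift changes $\int_{\tilde\gamma}\check\Omega$ by an integer under the normalization \eqref{normalization GHK}, which is what makes the period point $\phi$ in \eqref{eq:period-map} well defined. Your proof settles the lemma as stated but does not on its own supply that period-theoretic picture.
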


\begin{proof}
One can write \(\gamma=\sum_{i} a_{i}[C_{i}]\) with \(a_{i}=\pm 1\) and
\(C_{i}\) being smooth curves intersecting transversally 
at distinct points on \(D^{\mathrm{reg}}\). 
We can rearrange the signed intersections as
\(\sum (p_{j}-q_{j})\). Let \(\sigma_{j}\) be a smooth curve in \(D^{\mathrm{reg}}\)
going from \(q_{j}\) to \(p_{j}\) and \(\tau(\sigma_{j})\) be 
a tube over \(\sigma_{j}\) in \(Y\).
We can glue \(\tau(\sigma_{j})\) with \(\gamma\setminus\{p_{j},q_{j}\}\) inductively
and obtain \(\tilde{\gamma}\).
\end{proof}

One can find a $1$-parameter family of diffeomorphisms $f_t\colon Y_0\cong Y_t$ with $t\in [0,1]$. Fix a neighborhood $N\subseteq Y_0$ of $D_0$. There exists an $\epsilon>0$ such that $D_t\subseteq f_t(N)$ for $t<\epsilon$. 
%we may assume that \(\tilde{\gamma}\cap D_{t}=\emptyset\) for \emph{all} \(t\).
Let \(\{\gamma_{1},\ldots,\gamma_{s}\}\) be a basis of \([D_{0}]^{\perp}
\subset \mathrm{H}_{2}(Y;\mathbb{Z})\) and 
\(\{\tilde{\gamma}_{1},\ldots,\tilde{\gamma}_{s}\}\) be
the corresponding deformed cycles in \(X_{0}:=Y_0\setminus D_{0}\).
We can also regard \(\tilde{\gamma}_{i}\) as a cycle in \(X_{t}\).
By shrinking the neighborhood $N$ if necessary, we may assume that $f_t(\tilde{\gamma}_i)\cap D_t=\emptyset$ for $t<\epsilon$. 
\begin{lemma}
Let \(D_{t}\) be the degeneration as above and \(X_{t}:=Y_t\setminus D_{t}\).
Let \(\alpha_{t}\in
\mathrm{H}_{1}(D_{t};\mathbb{Z})\) be the homology class of the vanishing cycle.
Let \(\beta_{t}\in \mathrm{H}_{1}(D_{t};\mathbb{Z})\) such that 
\(\{\alpha_{t},\beta_{t}\}\) is a symplectic basis of \(\mathrm{H}_{1}(D_{t};\mathbb{Z})\). 
Then \(\{[\tilde{\alpha}_{t}],[\tilde{\beta}_{t}],[f_t(\tilde{\gamma}_{1})],
\ldots,[f_t(\tilde{\gamma}_{s})]\}\) is 
a basis of \(\mathrm{H}_{2}(X_{t};\mathbb{Q})\) for all $t\in [0,\epsilon]$.
\end{lemma}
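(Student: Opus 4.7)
My plan is to apply the Friedman short exact sequence \eqref{equation:fund-seq-1},
\begin{equation*}
0\to \mathrm{H}^{1}(D_t;\mathbb{Z})\to \mathrm{H}_{2}(X_t;\mathbb{Z})\to [D_t]^{\perp}\to 0,
\end{equation*}
and to recognize the list of cycles as a basis by exhibiting a basis of the kernel via $\tilde{\alpha}_t,\tilde{\beta}_t$ and a basis of the cokernel via the images of the $f_t(\tilde{\gamma}_i)$. Tensoring the sequence with $\mathbb{Q}$ splits it, so once both halves are matched, the concatenated list is forced to be a $\mathbb{Q}$-basis.

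For $t\in(0,\epsilon]$, $D_t$ is a smooth elliptic curve, and Poincar\'e duality on $D_t$ gives $\mathrm{H}^{1}(D_t;\mathbb{Z})\cong \mathrm{H}_{1}(D_t;\mathbb{Z})$, with $\{\alpha_t,\beta_t\}$ as a symplectic $\mathbb{Z}$-basis. The injection $\mathrm{H}^{1}(D_t)\hookrightarrow \mathrm{H}_{2}(X_t)$ coming from the long exact sequence of the pair $(Y_t,X_t)$ is geometrically the tube map $c\mapsto\tilde{c}$, sending a $1$-cycle $c\subset D_t$ to the $S^{1}$-bundle over $c$ in the boundary of a tubular neighborhood of $D_t$ in $Y_t$. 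By definition this sends $\alpha_t\mapsto[\tilde{\alpha}_t]$ and $\beta_t\mapsto[\tilde{\beta}_t]$, so $\{[\tilde{\alpha}_t],[\tilde{\beta}_t]\}$ is a $\mathbb{Z}$-basis of the kernel.

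For the cokernel side, I use that $f_t(\tilde{\gamma}_i)\subset X_t$ by the choice of the neighborhood $N$ and $\epsilon$, so the class $[f_t(\tilde{\gamma}_i)]\in\mathrm{H}_{2}(X_t;\mathbb{Z})$ projects under $\mathrm{H}_{2}(X_t)\to\mathrm{H}_{2}(Y_t)$ to $(f_t)_\ast[\tilde{\gamma}_i]=(f_t)_\ast[\gamma_i]$. Because $f_t$ is a continuous $1$-parameter family of diffeomorphisms with $f_0=\mathrm{id}$, the map $(f_t)_\ast$ is the Gauss--Manin parallel transport and preserves the intersection pairing; since $K_{Y_t}$ varies continuously in the family, it sends $[D_0]^{\perp}$ isomorphically onto $[D_t]^{\perp}$. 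Hence $\{(f_t)_\ast\gamma_i\}_{i=1}^{s}$ is a $\mathbb{Z}$-basis of $[D_t]^{\perp}$, and combined with the previous step the exact sequence gives the claimed $\mathbb{Q}$-basis of $\mathrm{H}_{2}(X_t;\mathbb{Q})$ for $t\in(0,\epsilon]$.

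The main obstacle I foresee is the boundary case $t=0$: the nodal curve $D_0$ has $\mathrm{H}^{1}(D_0;\mathbb{Z})=\mathbb{Z}$, so the vanishing class $\alpha_0$ drops out of $\mathrm{H}_{1}(D_0)$ and the naive Friedman sequence computes $\mathrm{rank}\,\mathrm{H}_{2}(X_0;\mathbb{Q})=1+s$ rather than $2+s$. Handling $t=0$ therefore requires interpreting $[\tilde{\alpha}_0]$ as the limit of the linking tori around the node (the generator of the local $\mathrm{H}_{2}((\mathbb{C}^{\ast})^{2})$ at the nodal singularity) and separately checking its independence from $[\tilde{\beta}_0]$ and the $[f_0(\tilde{\gamma}_i)]$ via a local-to-global comparison. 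In the subsequent period computation the lemma is used only in the interior $t\in(0,\epsilon]$, where the argument above applies cleanly and the behavior at $t=0$ is recovered as a limit.
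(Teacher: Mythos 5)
Your argument is the same as the paper's: apply the Friedman exact sequence \eqref{equation:fund-seq-1} adapted to $(Y_t,D_t)$, use $[D_t]=[D_0]$ (preserved by the parallel transport $f_t$) to identify $[D_t]^\perp \cong [D_0]^\perp$, split over $\mathbb{Q}$, and match the kernel with the tube classes and the cokernel with the $(f_t)_\ast\gamma_i$. The paper's own proof is exactly this, compressed to one line.

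Where you go beyond the paper is in flagging the endpoint $t=0$, and your concern is well-founded — in fact it is more serious than you let on. Since $D_0$ is irreducible nodal, $\mathrm{H}^1(D_0;\mathbb{Z})\cong\mathbb{Z}$, so the same exact sequence forces
$\operatorname{rank}_{\mathbb{Q}}\mathrm{H}_2(X_0;\mathbb{Q}) = 1+s$, one \emph{less} than for $t>0$. The proposed list has $2+s$ elements, so it simply cannot be a $\mathbb{Q}$-basis of $\mathrm{H}_2(X_0;\mathbb{Q})$: no ``local-to-global comparison'' can rescue independence when the ambient space has too small a rank. Geometrically, $\tilde\alpha_0$ (the linking torus of the node) does give a nontrivial class, but it already lies in the one-dimensional image of $\mathrm{H}^1(D_0;\mathbb{Q})$, hence is $\mathbb{Q}$-dependent on $\tilde\beta_0$. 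So the suggested fix should be dropped; the honest statement is that the lemma holds only for $t\in(0,\epsilon]$. That is all that is used in the proof of Theorem~\ref{thm:period-point} (the normalization by $\int_{\tilde\beta_t}\Omega_t$ and the limiting argument are carried out for $t\neq 0$), so the conclusions of the section are unaffected; the paper's phrase ``for all $t\in[0,\epsilon]$'' and the one-line proof both gloss over the endpoint. Aside from that, your interior argument is clean and complete.
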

\begin{proof}
Adapting the discussion in \S\ref{subsection:torelli} to \((Y_t,D_{t})\),
we obtain an exact sequence 
%(cf.~\eqref{equation:fund-seq-1})
\begin{equation}
0\to \mathrm{H}^{1}(D_{t};\mathbb{Z})\to \mathrm{H}_{2}(X_{t};\mathbb{Z})
\to [D_{t}]^{\perp}\to 0.
\end{equation}
Notice that \([D_{t}]=[D_{0}]\) in \(\mathrm{H}^{2}(Y_t;\mathbb{Z})\). The result follows immediately.
\end{proof}

We will now study the period point associated with
\((\check{Y}_{t},D_t)\) as $t$ approaches zero. From the setup of $(Y_t,D_t)$, there is a smooth family of holomorphic $2$-forms  \(\Omega_{t}\) on \(X_{t}\) for $t\in [0,1]$.  
Notice that $f_t^*\Omega_t|_{Y_0\setminus N}$ converges to $\Omega_0$. In particular, this implies that
the period integrals
$\int_{f_t^*(\tilde{\gamma}_{i})} \Omega_{t}$ to $\int_{f_t^*(\tilde{\gamma}_{i})} \Omega_{0}$ as $t\rightarrow 0$. In particular, $\int_{f_t^*(\tilde{\gamma}_t)}\Omega_t$ is bounded. 
Recall that 
\begin{align*}
\begin{split}
\check{\Omega}_{t} = \omega_{t} - \mathrm{i}\cdot\operatorname{Im}\Omega_{t}.
\end{split}
\end{align*} 
The exactness of the Tian--Yau metric \(\omega_{t}\) implies that  
\begin{equation}
\int_{\delta} \check{\Omega}_{t} = 
\int_{\delta} \omega_{t} - \mathrm{i}\cdot\int_{\delta}\operatorname{Im}\Omega_{t} =
- \mathrm{i}\cdot\operatorname{Im}\int_{\delta}\Omega_{t}
\end{equation}
for all \(\delta\in\mathrm{H}_{2}(X_{t};\mathbb{Z})\).
Hence it suffices to compute the integrals
\begin{equation*}
\int_{\delta}\Omega_{t},~\delta\in \mathrm{H}_{2}(X_{t};\mathbb{Z}),
\end{equation*}
on the Tian--Yau space.

\begin{lemma} \label{local calculation}
Let \(\mu_{t}:=\mathrm{Res}_{D_{t}} \Omega_{t}\). We have
%be a
%holomorphic \(1\)-form on the elliptic curve \(D_{t}\).
\begin{itemize}
	\itemsep=5pt
\item[(1)] \(\displaystyle\frac{\mathrm{i}}{2}\int_{D_{t}} \mu_{t}\wedge\bar{\mu}_{t} \to \infty\)
as \(t\to 0\);
\item[(2)] \(\displaystyle\int_{\alpha_{t}} \mu_{t}\)
is bounded.
\end{itemize}
\end{lemma}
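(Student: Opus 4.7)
My plan is to combine a local residue calculation near the node of $D_{0}$ with uniformization of the smooth elliptic fibres $D_{t}$ to establish both parts together.

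For part (2), I would use smoothness of the family $(Y_{t},D_{t})$ to choose holomorphic coordinates $(x,y)$ on a fixed neighborhood $U$ of the node of $D_{0}$ in which $D_{t}\cap U = \{xy = \epsilon_{t}\}$ with $\epsilon_{t}\to 0$. Because $\Omega_{t}$ has a simple pole along $D_{t}$, on $U$ it takes the form
\[\Omega_{t} = g_{t}(x,y)\,\frac{dx\wedge dy}{xy-\epsilon_{t}},\]
with $g_{t}$ holomorphic in $(x,y)$ and smoothly varying in $t$. The Poincar\'e residue formula then gives $\mu_{t}|_{D_{t}\cap U} = g_{t}\,\frac{dy}{y}= -g_{t}\,\frac{dx}{x}$. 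Representing the vanishing cycle $\alpha_{t}$ by the loop $\{|x|=r\}\cap D_{t}$ for any fixed $r$ with $|\epsilon_{t}|^{1/2}< r\ll 1$, the residue theorem yields
\[\int_{\alpha_{t}}\mu_{t} = -\int_{|x|=r} g_{t}(x,\epsilon_{t}/x)\,\frac{dx}{x}\longrightarrow -2\pi\mathrm{i}\,g_{0}(0,0)\]
as $t\to 0$, proving the boundedness claimed in (2). A key additional input is that $g_{0}(0,0)\ne 0$: by adjunction, $\mathrm{Res}_{D_{0}}\Omega_{0}$ is a nonzero section of the dualizing sheaf $\omega_{D_{0}}$ of the irreducible nodal rational curve $D_{0}$. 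Pulled back to the normalization $\mathbf{P}^{1}$, any such section is a meromorphic 1-form with simple poles of opposite nonzero residues at the two preimages of the node (otherwise it would be a nonzero regular 1-form on $\mathbf{P}^{1}$), and these residues are precisely $\pm g_{0}(0,0)$.

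For part (1), I would uniformize the smooth elliptic curve $D_{t}$ as $\mathbb{C}/(\mathbb{Z}+\mathbb{Z}\tau_{t})$ so that $\alpha_{t}$ corresponds to the period $1$ (possible since $(\alpha_{t},\beta_{t})$ is a symplectic basis of $\mathrm{H}_{1}(D_{t};\mathbb{Z})$). Since $\mathrm{H}^{0}(D_{t},\Omega^{1})$ is one-dimensional, $\mu_{t}=a_{t}\,dz$ with $a_{t}=\int_{\alpha_{t}}\mu_{t}$, and a direct integration of $|a_{t}|^{2}\,dz\wedge d\bar{z}$ over a fundamental parallelogram gives
\[\frac{\mathrm{i}}{2}\int_{D_{t}}\mu_{t}\wedge\bar{\mu}_{t} = |a_{t}|^{2}\,\mathrm{Im}\,\tau_{t}.\]
Part (2) shows $|a_{t}|\to 2\pi|g_{0}(0,0)|>0$, so $|a_{t}|$ is bounded below, while the degeneration of $D_{t}$ to the nodal rational curve $D_{0}$ drives the modular parameter to the cusp, hence $\mathrm{Im}\,\tau_{t}\to\infty$. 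Multiplying yields (1).

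The main obstacle I anticipate is the careful setup of the local smooth trivialization around the node: one needs both that the family of holomorphic functions $g_{t}$ genuinely specializes to a well-defined $g_{0}$, and that $g_{0}(0,0)\ne 0$. The nonvanishing is the conceptual heart of the argument, tying together the simple-pole condition on $\Omega_{0}$ with the one-dimensional structure of $\mathrm{H}^{0}(D_{0},\omega_{D_{0}})$; the subsequent residue computation and the fundamental-domain integration are routine.
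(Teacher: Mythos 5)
Your proof is correct, and for item (2) it runs on essentially the same local residue computation as the paper (the paper parameterizes $\alpha_t$ by $r_t(\theta)=(x_0e^{\mathrm{i}\theta},y_0e^{-\mathrm{i}\theta})$ and integrates $f$ directly, while you invoke the residue theorem on $\{|x|=r\}$; the content is identical). You do add one thing the paper merely asserts: a proof that $g_0(0,0)\neq 0$, by identifying $\mathrm{Res}_{D_0}\Omega_0$ with a nonzero section of the dualizing sheaf of the nodal curve and pulling back to the normalization $\mathbf{P}^1$, where $H^0(\Omega^1_{\mathbf{P}^1})=0$ forces nonzero residues at the node. That is a genuine improvement in rigor.

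For item (1), however, you take a genuinely different route. The paper stays entirely local: it restricts the integral $\frac{\mathrm{i}}{2}\int_{D_t}\mu_t\wedge\bar\mu_t$ to a small punctured disc $E_{t,\epsilon}$ near the node, writes $\mu_t\sim f\,\frac{dx}{x}$ there, and bounds the integral below by $C\int_{t/\sqrt\epsilon}^{\sqrt\epsilon}\frac{dr}{r}\sim C\log(1/|t|)\to\infty$; item (1) is thus proved independently of item (2). You instead use the global uniformization $D_t\cong\mathbb{C}/(\mathbb{Z}+\mathbb{Z}\tau_t)$ normalized so that $\alpha_t$ has period 1, compute $\frac{\mathrm{i}}{2}\int_{D_t}\mu_t\wedge\bar\mu_t=|a_t|^2\operatorname{Im}\tau_t$, get $|a_t|$ bounded away from zero from item (2), and appeal to $\operatorname{Im}\tau_t\to\infty$ because $\alpha_t$ is the vanishing cycle of a nodal degeneration. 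Both are valid. The trade-off: the paper's version is more elementary and self-contained (only a local integral estimate, no monodromy or moduli input), while yours is shorter and more structural, but (i) it creates a logical dependency of (1) on (2), and (ii) the assertion $\operatorname{Im}\tau_t\to\infty$, while standard, implicitly uses Picard--Lefschetz unipotent monodromy around $t=0$ and that $j(D_t)\to\infty$, so a careful write-up would need to cite or establish this. Your approach also happens to give a quantitative rate ($\operatorname{Im}\tau_t\sim\frac{1}{2\pi}\log\frac{1}{|t|}$), matching the paper's $\log(1/|t|)$ divergence.
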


\begin{proof}
Let \(p\in D_{0}^{\mathrm{sing}}\). Choose a coordinate chart \(V\) of \(Y\) around \(p\)
such that the degeneration \(D_{t}\to D_{0}\)
is given by \(xy=t\) in \(\mathbb{C}^{2}\).
Set \(E_{t}=\{xy=t+O(t)\}\). For \(t\ne 0\), we may write 
\begin{equation*}
\frac{\mathrm{d}x\wedge\mathrm{d}y}{xy-t}=
\frac{\mathrm{d}(xy-t)}{xy-t}\wedge\eta_{t}.
\end{equation*}
We see that 
\begin{equation*}
\mathrm{Res}_{E_{t}}\left(\frac{\mathrm{d}x\wedge\mathrm{d}y}{xy-t}\right)
=\left.\eta_{t}\right|_{E_{t}} = 
-\left.\frac{\mathrm{d}x}{x}\right|_{E_{t}}+O(t) = 
\left.\frac{\mathrm{d}y}{y}\right|_{E_{t}}+O(t).
\end{equation*}
Let \(\epsilon>0\) be small and set
\begin{equation*}
E_{t,\epsilon}:=\{(x,y)~|~|x|^{2}+|y|^{2}<\epsilon\}\cap E_t.
\end{equation*}
Since \(\lim_{t\to 0}\Omega_{t}=\Omega_{0}\) exists, there is a holomorphic function
\(f(x,y)\) with \(f(0,0)\ne 0\) such that
\begin{equation*}
\left.\Omega_{t}\right|_{V} = \frac{f(x,y)\mathrm{d}x\wedge\mathrm{d}y}{xy-t}+O(t).
\end{equation*}
It follows that
\begin{align*}
\frac{\mathrm{i}}{2}\int_{D_{t}} \mu_{t}\wedge\bar{\mu}_{t}
&\ge\frac{\mathrm{i}}{2}\int_{E_{t,\epsilon}} \mu_{t}\wedge\bar{\mu}_{t}+O(t)\\
&\ge\frac{\mathrm{i}}{2}\int_{E_{t,\epsilon}} 
|f(x,y)|^{2} \frac{\mathrm{d}x\wedge\mathrm{d}\bar{x}}{|x|^{2}}+O(t)\\
&\ge C \int_{0}^{2\pi}\int_{r=t/\sqrt{\epsilon}}^{\sqrt{\epsilon}}
\frac{\mathrm{d}r \mathrm{d}\theta}{r}+O(t) \to \infty
\end{align*}
as \(t\to 0\). This proves the item (1). 

Now we prove the item (2). For \(\epsilon>0\), we 
fix a point \((x_{0},y_{0})\in E_{t,\epsilon}\). Then 
\begin{equation*}
r_{t}(\theta):=(x_{0}e^{\mathrm{i}\theta},y_{0}e^{-\mathrm{i}\theta}),~\theta\in [0,2\pi),
\end{equation*}
is a closed curve in \(E_{t,\epsilon}\) representing \(\alpha_{t}\). We can compute
\begin{equation*}
\int_{\alpha_{t}}\mu_{t}=
\int_{0}^{2\pi} f(r_{t}(\theta))\mathrm{d}\theta
\end{equation*}
and conclude that it is bounded for \(\epsilon\ll 1\) 
since \(f\) is continuous at \((0,0)\).
\end{proof}
%Our choice of \(\Omega_{t}\) leads to the vanishing
%\begin{equation}
%\label{equation:integral-alpha}
%\int_{\tilde{\alpha}_{t}} \check{\Omega}_{t} =0.
%\end{equation}
%Put
%\begin{equation}
%c_{t}:=\int_{\tilde{\beta}_{t}} \check{\Omega}_{t}
%=\mathrm{i}\cdot\int_{\tilde{\beta}_{t}} \operatorname{Im}~\Omega_{t}.
%\end{equation}
%It is known that \(\lim_{t\to 0} |c_{t}|=\infty\).
%We renormalize \(\check{\Omega}_{t}\) such that
%\begin{equation}
%\int_{\tilde{\beta}_{t}} \check{\Omega}_{t}=1
%\end{equation}
%and still denote it by \(\check{\Omega}_{t}\).
%Then \(\check{\Omega}_{t}\) fits the need in the Torelli theorem. It follows from
%\eqref{equation:integral-alpha} and Lemma \ref{lemma:period-finite} that
%\begin{equation}
%\int_{\tilde{\alpha}_{t}} \check{\Omega}_{t}=0~\mbox{and}~
%\int_{\tilde{\gamma}_{i}} \check{\Omega}_{t}\to 0~\mbox{as}~t\to 0~\mbox{for all \(i\)}.
%\end{equation}
Now we are ready to prove Theorem \ref{thm:period-point}.
\begin{proof}[Proof of Theorem \ref{thm:period-point}]
 One first observe that  
	  \begin{align*}
	  \int_{\tilde{\beta}_t}\Omega_t=2\pi\int_{\beta_t}\mu_t \rightarrow \infty,
	  \end{align*} 
    where the first equality comes from residue calculation and Lemma \ref{local calculation} implies the asymptotics. Normalize $\Omega_t$ to $\tilde{\Omega}:=(\int_{\tilde{\beta}_t}\Omega_t)^{-1}\Omega_t$ for $t\neq 0$ and rescale the Tian--Yau metric accordingly, which does not change the complex structure of the hyperK\"ahler rotation $\check{X}_t$ and thus does not change $\check{Y}_t$. Notice that $\tilde{\beta}_t$ is the generator of the image of $\mathrm{H}^1(\check{D}_t,\mathbb{Z})$ in \eqref{rel les}, and thus the normalization here is exactly the one in \eqref{normalization GHK}. Therefore, all the periods of $\check{Y}_t$ converge to zero as $t\rightarrow 0$ under the normalization.
    In fact, note that the cycle \(f_{t}(\tilde{\gamma}_{i})\)
    is only defined up to an element in \(\mathrm{Im}(\mathrm{H}^{1}(D_{t},\mathbb{C})
    \to\mathrm{H}_{2}(X_{t},\mathbb{C}))=\mathrm{Span}_{\mathbb{C}}\{\tilde{\alpha}_{t},
    \tilde{\beta}_{t}\}\). Under the normalization, we see that
    \begin{eqnarray*}
    \exp\left(2\pi\mathrm{i}\cdot\int_{f_{t}(\tilde{\gamma}_{i})}\tilde{\Omega}_{t} \right)
    \to 1~\mbox{as}~t\to 0
    \end{eqnarray*}
    due to the fact that
    \begin{eqnarray*}
    \int_{f_{t}(\tilde{\alpha})}\Omega_{t}\left\slash\int_{f_{t}(\tilde{\beta})}\Omega_{t}\right.\to 0,~\mbox{as}~t\to 0.
    \end{eqnarray*}
    We conclude by 
    Lemma \ref{local calculation} and the observation above that all periods 
    converge to zero under the normalization. This also implies that $\check{Y}_t$ converges to the unique rational elliptic surface $\check{Y}_e$ with trivial periods from Theorem \ref{Torelli}.
 \end{proof}

The following is proved in Appendix \ref{ext res}
\begin{theorem}\label{extremal-LG} Consider a del Pezzo surface of degree $d$ as a monotone symplectic manifold and let $W\colon (\mathbb{C}^{\ast})^2\rightarrow \mathbb{C}$ be the mirror Landau--Ginzburg superpotential. Then it can be compactified to a rational elliptic surface $\check{Y}$ with an $I_d$-fibre and with trivial periods. In particular, $\check{Y}$ is extremal when $d=9, 8, 6, 3,2,1$ with singular configuration
$\mathrm{I}_9\mathrm{I}_1^3, \mathrm{I}_8\mathrm{I}_2\mathrm{I}_1^2, 
\mathrm{I}_6\mathrm{I}_3\mathrm{I}_2\mathrm{I}_1, \mathrm{I}_3\mathrm{IV}^*\mathrm{I}_1, 
\mathrm{I}_2\mathrm{III}^*\mathrm{I}_1, \mathrm{I}_1\mathrm{II}^*\mathrm{I}_1$. 
\end{theorem}

\section{Limiting Complex affine structure of special Lagrangian fibration in 
\texorpdfstring{$\mathbf{P}^1\times \mathbf{P}^1$}{}}
\label{sec:complex-affine-structure-of-slf}
 We first review the definition of complex affine structures
\cite{1997-Hitchin-the-moduli-space-of-special-lagrangian-submanifolds}. With the notation in Section \ref{sec: intro}, recall that
 $X\subseteq Y$ is a Calabi--Yau surface with a holomorphic 
volume form $\Omega$ and let
$\pi_{\alpha}\colon X\rightarrow B$ be a special Lagrangian fibration constructed in 
\cite{2021-Collins-Jacob-Lin-special-lagrangian-submaniflds-of-log-calabi-yau-manifolds} with a choice of $\alpha\in H_1(D,\mathbb{Z})$. 
Denote by $B_0$ the complement of the discriminant locus. 
Fix a reference point $u_0\in B_0$ and $\gamma\in  \mathrm{H}_{1}(L_{u_0},\mathbb{Z})$. 
Let $u\in B_0$ and $\phi\colon [0,1]\rightarrow B_0$ be a path 
with $\phi(0)=u_0$ and $\phi(1)=u$. Denote by $C_{\gamma,\phi}$ the ($n-1$)-cycle 
fibration over $\phi([0,1])$ such that fibre over $\phi(t)$ is the $1$-cycle in $L_{\phi(t)}$ 
which is the parallel transport of $\gamma$ along $\phi$. We define
\begin{align*}
x_{\gamma}(u):=\int_{C_{\gamma}}\operatorname{Im}\Omega.
\end{align*} 
If $\{\gamma_i\}$ is a basis of $\mathrm{H}_{1}(L_{u_0},\mathbb{Z})$, 
then $x_{\gamma_i}(u)$, $i=1,2$, form a coordinate system near 
the reference point $u_0$.
It is straightforward to check that the transition functions 
fall in $\mbox{GL}(2,\mathbb{Z})\rtimes\mathbb{R}^{2}$, and thus they define 
an integral affine structure on $B_0$. 
It is worth noticing that given an affine line (with rational slope) passing 
through $u\in B_0$, its tangent vector determines 
a cycle $\gamma\in \mathrm{H}_1(L_u,\mathbb{R})$ 
(or $\mathrm{H}_1(L_u,\mathbb{Z})$) and vice versa. 
Therefore, we will denote such an affine line 
by $l_{u,\gamma}$ or simply by $l_{\gamma}$ if no confusion occurs. 
\begin{remark}\label{singular ref}
	If one chooses a sequence of points $u_i\rightarrow u_{\ast}$ from a sector, 
	where $u_{\ast}$ falls in the discriminant locus, 
	then $\lim_{i\rightarrow \infty}x_{\gamma_i}(u_i)$ exists. Therefore, 
	one may take $u_{\ast}$ as the reference point as well. 
\end{remark}

\begin{remark} \label{germ}
	The germ of affine structures on a punctured disc is determined 
	by the affine monodromy around the puncture. In particular, if the affine structure 
	comes from a special Lagrangian fibration as above, the germ only depends 
	on the monodromy of the fibration.
\end{remark}
For the case $Y=\mathbf{P}^2$, the suitable hyperK\"ahler rotation $\check{X}$ of $X$ can always be compactified to an extremal rational elliptic surface \cite{2020-Collins-Jacob-Lin-the-syz-mirror-symmetry-conjecture-for-del-pezzo-surfaces-and-rational-elliptic-surfaces}. In particular, the base $B$ with the complex affine structure of the special Lagrangian fibration $\pi_{\alpha}$ as an affine manifold with singularities is independent of the choice of $\alpha \in \mathrm{H}_1(D,\mathbb{Z})$. Actually, this is true in a more general setting. 
\begin{proposition} Fix a pair $(Y,D)$ and $\alpha,\alpha'\in \mathrm{H}_1(D,\mathbb{Z})$. The bases $B,B'$ of the special Lagrangian fibrations $\pi_{\alpha},\pi_{\alpha'}$ with their complex affine coordinates are isomorphic as affine manifolds with singularities.  
\end{proposition}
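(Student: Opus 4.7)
The plan is to exploit the hyperK\"ahler structure to package both special Lagrangian fibrations as holomorphic data, and then compare the resulting affine structures using Remark~\ref{germ} and the Torelli theorem.

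First I would observe that, for a fixed pair $(Y,D)$, the hyperK\"ahler triple $(\omega,\operatorname{Re}\Omega,\operatorname{Im}\Omega)$ is determined by $(\omega,\Omega)$ alone and is in particular independent of the choice of $\alpha\in \mathrm{H}_1(D,\mathbb{Z})$. Hence the rotation \eqref{eq:hyperkah-rotation} produces the same complex manifold $\check{X}$ for every $\alpha$, and the fibrations $\pi_\alpha,\pi_{\alpha'}$ become two proper holomorphic elliptic fibrations $\check\pi_\alpha,\check\pi_{\alpha'}\colon \check{X}\to\mathbb{C}$ on this common $\check{X}$, with generic fibres in the classes $\tilde\alpha$ and $\tilde{\alpha'}$ respectively. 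By Theorem~\ref{theorem:CJL} each extends to a rational elliptic surface with an $\mathrm{I}_d$ fibre at infinity.

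Next I would give an intrinsic description of the complex affine coordinates on $B_0$ in terms of the complex coordinate $w$ on the base of $\check\pi_\alpha$. Since $\operatorname{Im}\Omega=-\operatorname{Im}\check\Omega$ and $\check\Omega$ splits locally as $\mathrm{d}w\wedge \nu$ for a relative holomorphic one-form $\nu$ along the fibres of $\check\pi_\alpha$, the coordinate functions $x_{\gamma}(u)=\int_{C_\gamma}\operatorname{Im}\Omega$ become affine-linear combinations of $\operatorname{Re}w$ and $\operatorname{Im}w$ with coefficients given by the integer periods $\int_\gamma \nu$ over the $S^1$-fibre directions in $L_u$. Carrying out the same computation for $\alpha'$ reduces the proposition to matching the two $\mathrm{I}_d$-compactifications of $\check{X}$ together with their complex-coordinate data.

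For this matching, I would combine Remark~\ref{germ} with the classification of singular fibres: all singular fibres of $\pi_\alpha,\pi_{\alpha'}$ are focus--focus (type $\mathrm{I}_1$), and there are $12-d$ of them in each base by the Euler characteristic count on the rational elliptic surface. Since both $B$ and $B'$ are diffeomorphic to $\mathbb{R}^{2}$ with $12-d$ interior punctures, the coincidence of germs together with a match of the monodromy representations $\pi_1(B_0)\to \mathrm{GL}(2,\mathbb{Z})\ltimes \mathbb{R}^{2}$ and of the complex base coordinate (ensured by Theorem~\ref{Torelli} applied to $\check{X}$) should suffice to produce the desired isomorphism of integral affine manifolds with singularities.

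The main obstacle I anticipate is this last step---upgrading the pointwise match of germs and equality of monodromy to a genuine global affine isomorphism $B\to B'$. Monodromy data alone do not in general rigidify an integral affine manifold with singularities, so one must feed in the additional structure carried by the complex coordinate on $\mathbb{C}$ from each $\check\pi_\bullet$. The crux is to show that the two identifications $B\hookrightarrow\mathbb{C}$ and $B'\hookrightarrow\mathbb{C}$ can be reconciled, either via an explicit automorphism of $\check{X}$ intertwining the two fibrations, or indirectly by comparing period data---this being the setting for which Theorem~\ref{Torelli} is tailor-made.
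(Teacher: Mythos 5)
Your proposal has two concrete flaws and one acknowledged but unfilled gap, and the paper's route is genuinely different.

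First, the opening observation is incorrect: the hyperK\"ahler rotation $\check{X}$ is \emph{not} independent of $\alpha$. The Collins--Jacob--Lin fibration with fibre class $\tilde\alpha$ requires normalizing the holomorphic volume form so that $\int_{\tilde\alpha}\Omega\in\mathbb{R}_+$ (equivalently, so that $\operatorname{Im}\Omega$ vanishes on the $\tilde\alpha$-fibres). For a different $\alpha'$, one must use a phase rotation $\Omega'=e^{\mathrm{i}\theta}\Omega$. Feeding $\Omega$ versus $\Omega'$ into \eqref{eq:hyperkah-rotation} produces different $(2,0)$-forms and hence different complex structures on the underlying four-manifold. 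Only one of $\pi_\alpha,\pi_{\alpha'}$ is holomorphic on each of these; your reduction to two holomorphic elliptic fibrations on a common $\check{X}$ does not hold. (Indeed the paper's proof explicitly introduces two distinct forms $\Omega$ and $\Omega'$.)

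Second, as you yourself flag, agreement of germs (Remark~\ref{germ}) plus equality of the monodromy representation $\pi_1(B_0)\to\mathrm{GL}(2,\mathbb{Z})\ltimes\mathbb{R}^2$ does not rigidify an integral affine manifold with singularities. Your plan to ``feed in the complex coordinate'' is the right instinct but is not an argument, and the Torelli theorem you cite (Theorem~\ref{Torelli}, Gross--Hacking--Keel) only asserts uniqueness of the pair $(\check{Y},\check{D})$ within its deformation type given its period point; it does not manufacture a diffeomorphism $B\to B'$ respecting affine structures.

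The paper's proof takes a different and more direct route. It never compares affine data on two different $\check{X}$'s. Instead, it stays on the Tian--Yau space $X$ and seeks a diffeomorphism $\psi\colon X\to X$ with $\pi_\alpha=\pi_{\alpha'}\circ\psi$; the induced map on bases is then automatically an affine isomorphism. To build $\psi$: a loop in the moduli of pairs $(Y_t,D_t)$ based at $(Y,D)$ whose parallel transport sends $\alpha'$ to $\alpha$ produces a diffeomorphism $\phi$ of $(X,\omega_{TY})$ with $\phi_*[\tilde\alpha]=[\tilde\alpha']$ and $\phi^*[\omega_{TY}]=[\omega_{TY}]$, $\phi^*[\Omega]=[\Omega']$. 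The key tool is then the Torelli theorem for $\mathrm{ALH}^*$ gravitational instantons \cite{CJL3}, which is a \emph{strong} Torelli: it upgrades cohomological agreement to an actual diffeomorphism $\phi'$ with $\phi'^*\phi^*\omega_{TY}=\omega_{TY}$ and $\phi'^*\phi^*\Omega=\Omega'$. Setting $\psi=\phi\circ\phi'$ and using that every special Lagrangian torus in a given class must be a fibre of the corresponding fibration, one concludes that $\psi$ intertwines $\pi_\alpha$ and $\pi_{\alpha'}$. This is exactly the ``explicit automorphism of $\check{X}$ intertwining the two fibrations'' you hoped for in your last sentence, but the theorem that produces it is the ALH$^*$ Torelli on $X$ itself, not the log Calabi--Yau Torelli for $(\check{Y},\check{D})$, and the construction never passes through comparing two hyperK\"ahler rotations.
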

\begin{proof}
	It suffices to prove that there exists a diffeomorphism $\psi\colon X\rightarrow X$ such that $\pi_{\alpha}=\pi_{\alpha'}\circ \psi$. Then the induced map of $\psi$ gives a diffeomorphism $B\cong B'$ identifying the complex affine structures. In particular, this answers a question of Hacking--Keating \cite{HK2}*{\S7.4}. 
	
	For any $\alpha,\alpha'\in \mathrm{H}_1(D,\mathbb{Z})$, there exists $1$-parameter family of pairs $(Y_t,D_t), t\in [0,1]$ such that $(Y_0,D_0)=(Y_1,D_1)=(Y,D)$ and the parallel transport sends $\alpha'$ to $\alpha$. Such a $1$-parameter family induces a diffeomorphism $\phi$ of $(X,\omega_{TY})$ for the exact Tian-Yau metric $\omega_{TY}$, sending the $[\tilde{\alpha}]$ to $[\tilde{\alpha}']$. Let $\Omega$ (and $\Omega'$) be the meromorphic $2$-form on $Y$ with simple pole along $D$ and $\int_{[\tilde{\alpha}]}\Omega\in \mathbb{R}_+$ (and $\int_{[\tilde{\alpha}']}\Omega'\in \mathbb{R}_+$ respectively). Then $\phi^{\ast}\Omega$ is a closed $2$-form with vanishing self-wedge and thus defines an integrable complex structure. 
	Moreover, the same underlying space $\underline{X}$ as $X$ with K\"ahler form $\phi^*\omega_{TY}$ and holomorphic volume form $\phi^*\Omega$ is an $\mathrm{ALH}^*$ gravitational instanton since the volume growth is still $r^{\frac{4}{3}}$. Now we have two $\mathrm{ALH}^*$ gravitational instantons $(\underline{X},\omega_{TY},\Omega)$ and $(\underline{X},\omega_{TY},\phi^*\Omega)$ as well as a morphism $\phi\colon X\rightarrow X$ 
  satisfying $\phi^*[\omega_{TY}]=[\omega_{TY}]$ and $\phi^*[\Omega]=[\Omega']$.
	From the Torelli theorem of the $ALH^*$ gravitational instantons \cite{CJL3}*{Theorem 3.9}, there exists a diffeomorphism $\phi':X\rightarrow X$ such that $\phi'^*\phi^*{\omega}_{TY}=\omega_{TY}$ and $\phi'^*\phi^*\Omega=\Omega'$. Taking $\psi=\phi\circ \phi'$, then $\psi_*[\tilde{\alpha}]=[\tilde{\alpha}']$. Any special Lagrangian torus of class $[\tilde{\alpha}]$ (and $[\tilde{\alpha}']$) must be a fibre of the special Lagrangian fibration with fibre class $[\tilde{\alpha}]$ (and $[\tilde{\alpha}']$ respectively). 
	  Thus, $\psi$ sends the special Lagrangian fibration in $X$ with fibre class $[\tilde{\alpha}]$ to the special Lagrangian fibration in $X$ with fibre class $[\tilde{\alpha}']$ as claimed.

\end{proof}

\subsection{The integral affine structure from Carl--Pumperla--Siebert}
Let $Y$ be a del Pezzo surface and $D$ be a smooth anti-canonical divisor.   
Carl--Pumperla--Siebert \cite{2022-Carl-Pumperla-Siebert-a-tropical-view-of-landau-ginzburg-models}
construct the mirror for the pair $(Y,D)$. 
We now describe the integral affine manifold with singularities, 
denoted as $B_{CPS}$, used in their
construction when $Y=\mathbf{P}^1\times \mathbf{P}^1$. 

We first start with $\mathbb{R}^2$ with the standard 
integral affine structure. There are four singularities 
located at \((\pm1/2,\pm1/2)\)
%$(\frac{1}{2},\frac{1}{2}),(\frac{1}{2},-\frac{1}{2}),
%(-\frac{1}{2},\frac{1}{2}),(-\frac{1}{2},-\frac{1}{2})$ 
with the monodromy around each of the singularities conjugate to 
\(
\begin{pmatrix}
1 & 1 \\  0 & 1
\end{pmatrix}	
\).
    
To cooperate with the standard affine structure on $\mathbb{R}^2$, 
the branch cuts for the singularities are put at the following locations:
\begin{equation*}
\begin{aligned}[c]
l_1^+&=\left\{\left(\frac{1}{2}+t,\frac{1}{2}\right)\Big|~t\geq 0\right\},\\
l_2^+&=\left\{\left(-\frac{1}{2},\frac{1}{2}+t\right)\Big|~t\geq 0\right\},\\ 
l_3^+&=\left\{\left(-\frac{1}{2}+t,-\frac{1}{2}\right)\Big|~t\leq 0\right\},\\
l_4^+&=\left\{\left(\frac{1}{2},-\frac{1}{2}+t\right)\Big|~t\leq 0\right\}, 
\end{aligned}
\quad\quad
\begin{aligned}[c]
l_1^-&=\left\{\left(\frac{1}{2},\frac{1}{2}+t\right)\Big|~t\geq 0\right\},\\
l_2^-&=\left\{\left(-\frac{1}{2}+t,\frac{1}{2}\right)\Big|~t\leq 0\right\},\\
l_3^-&=\left\{\left(-\frac{1}{2},-\frac{1}{2}+t\right)\Big|~t\leq 0\right\},\\
l_4^-&=\left\{\left(\frac{1}{2}+t,-\frac{1}{2}\right)\Big|~t\geq 0\right\}.
\end{aligned}  
\end{equation*} 
$B_{CPS}$ is constructed by discarding the sectors 
bounded by $l_i^{\pm}$ in $\mathbb{R}$ and 
gluing the branch cuts $l_i^{\pm}$ with the affine transformations shown in Figure
\ref{fig:CPS-affine-base}.
\begin{figure}[ht]
\includegraphics[scale=0.5]{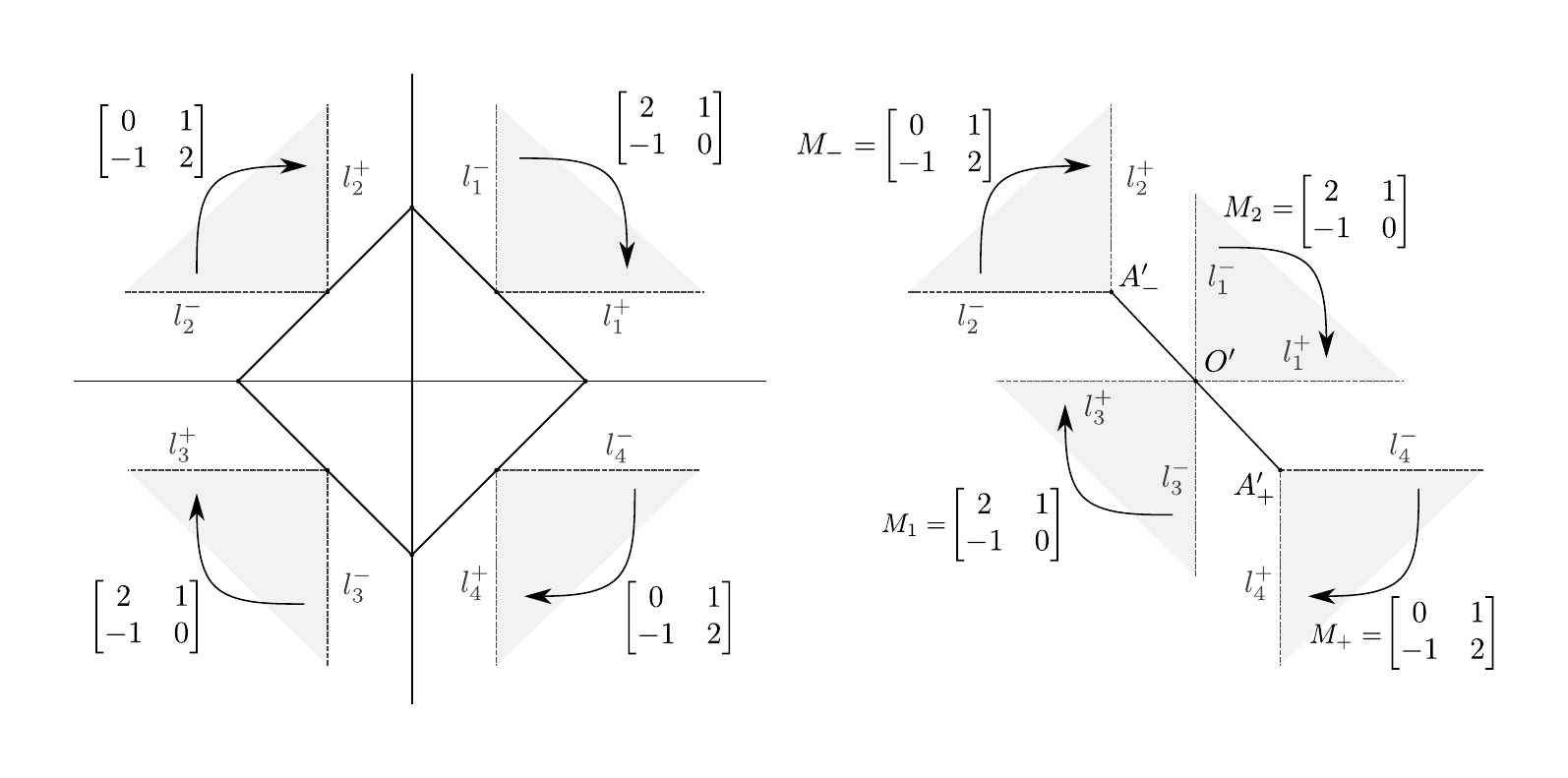}
\caption{The affine structure on \(B_{CPS}\) and its limit \(B_{CPS}'\).}
\label{fig:CPS-affine-base}
\end{figure}
However, this will not be the limit of 
the base of the special Lagrangians constructed in 
\cite{2021-Collins-Jacob-Lin-special-lagrangian-submaniflds-of-log-calabi-yau-manifolds}. 
We will compare the latter to a degeneration 
of $B_{CPS}$, denoted by $B_{CPS}'$. 
Roughly speaking, $B_{CPS}'$ is the integral 
affine manifold obtained by collapsing two of 
the singularities together. Let $A'_-(-1/2,1/2), O'=(0,0), 
A'_+=(1/2,-1/2)$ be the three singularities. 
We will choose the branch cuts as 
\begin{equation*}
\begin{aligned}[c]
l_1^+&=\left\{\left(t,0\right)\Big|~t\geq 0\right\},\\
l_2^+&=\left\{\left(-\frac{1}{2},\frac{1}{2}+t\right)\Big|~t\geq 0\right\},\\ 
l_3^+&=\left\{\left(t,0\right)\Big|~t\leq 0\right\},\\
l_4^+&=\left\{\left(\frac{1}{2},-\frac{1}{2}+t\right)\Big|~t\leq 0\right\}, 
\end{aligned}
\quad\quad
\begin{aligned}[c]
l_1^-&=\left\{\left(0,t\right)\Big|~t\geq 0\right\},\\
l_2^-&=\left\{\left(-\frac{1}{2}+t,\frac{1}{2}\right)\Big|~t\leq 0\right\},\\
l_3^-&=\left\{\left(0,t\right)\Big|~t\leq 0\right\},\\  
l_4^-&=\left\{\left(\frac{1}{2}+t,-\frac{1}{2}\right)\Big|~t\geq 0\right\}.
\end{aligned}  
\end{equation*} 

Then $B_{CPS}'$ is defined similarly as the complement of the sectors 
bounded by $l_{i}^{\pm}$ in $\mathbb{R}^2$ with the standard integral affine structure, 
where we glue the branch cuts $l_{i}^{\pm}$ with respect to the affine structures as in 
Figure \ref{fig:CPS-affine-base}. 
Notice that it is not clear that $B_{CPS}$ and $B_{CPS}'$ are related by moving worms 
introduced in \cite{2006-Kontsevich-affine-structures-and-non-archimedean-analytic-spaces}.

\subsection{Explicit calculation of the complex affine structure}\label{sec: dP8'}
In this section, we will compute the limit of the complex affine 
structure of the special Lagrangian fibration of 
$X_t=\mathbf{P}^1\times \mathbf{P}^1\setminus D_t$, 
where $D_t$ are smooth anti-canonical divisors 
degenerating to a nodal curve \(D_{0}\).

\begin{lemma}
 	Assume that $\sigma$ is a fibrewise involution on $X$. 
	Then $\sigma$ induces an 
 	involution $\underline{\sigma}$ on $B$. If furthermore
 	$\sigma^{\ast}(\operatorname{Im}\Omega)=-\operatorname{Im}\Omega$, then the fixed locus
 	$\mbox{Fix}(\underline{\sigma})\subseteq B$ defines an affine line with a rational slope. 
\end{lemma}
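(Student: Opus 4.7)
My plan is to show that $\underline{\sigma}$ is an affine involution of the complex affine structure on $B_0$ whose linear part lies in $\mathrm{GL}(2,\mathbb{Z})$; its fixed locus is then automatically an affine subspace with rational slope, and a classification of involutions in $\mathrm{GL}(2,\mathbb{Z})$ identifies the $1$-dimensional case as the one producing a line.

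First, since $\sigma$ is fibrewise it permutes the special Lagrangian fibres, so $\underline{\sigma}\colon B\to B$, $u\mapsto \pi(\sigma(L_u))$, is a well-defined involution. Pick a regular reference point $u_0\in \mathrm{Fix}(\underline{\sigma})\cap B_0$ (passing to a limit as in Remark~\ref{singular ref} if $\mathrm{Fix}(\underline{\sigma})$ only meets the discriminant). Then $\sigma$ restricts to an involution of $L_{u_0}$, inducing an involutive automorphism $\sigma_\ast\in\mathrm{GL}(2,\mathbb{Z})$ of $\mathrm{H}_1(L_{u_0},\mathbb{Z})$. Fix a basis $\{\gamma_1,\gamma_2\}$ with matrix $A$ for $\sigma_\ast$, and use the affine coordinates $x_i:=x_{\gamma_i}$ in a neighborhood of $u_0$.

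The heart of the argument is the following cycle-chasing identity. For $u\in B_0$ near $u_0$ and a path $\phi$ from $u_0$ to $u$, the map $\sigma$ carries the tube $C_{\gamma,\phi}$ to the tube $C_{\sigma_\ast\gamma,\underline{\sigma}\circ\phi}$ over the path $\underline{\sigma}\circ\phi$ from $u_0$ to $\underline{\sigma}(u)$, because $\sigma$ intertwines the parallel transport of cycles on fibres with the map $\underline{\sigma}$ on the base. Using the hypothesis $\sigma^\ast \operatorname{Im}\Omega=-\operatorname{Im}\Omega$,
\begin{equation*}
x_{\sigma_\ast\gamma}\bigl(\underline{\sigma}(u)\bigr)=\int_{C_{\sigma_\ast\gamma,\underline{\sigma}\circ\phi}}\operatorname{Im}\Omega=\int_{C_{\gamma,\phi}}\sigma^\ast\operatorname{Im}\Omega=-x_\gamma(u).
\end{equation*}
Equivalently $A^T(x\circ\underline{\sigma})=-x$, and since $A^2=I$ we obtain $x\circ\underline{\sigma}=-A^T x$. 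Thus $\underline{\sigma}$ is $\mathbb{Z}$-linear in the complex affine coordinates centered at $u_0$.

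The fixed locus is therefore $\ker(A^T+I)\subset \mathbb{R}^2$, i.e.\ the $(-1)$-eigenspace of $A^T$. An involution $A\in \mathrm{GL}(2,\mathbb{Z})$ is conjugate to one of $\pm I$ or $\mathrm{diag}(1,-1)$; the first two give a point or the whole plane respectively, and only the last produces a $1$-dimensional eigenspace. In that non-degenerate case the eigenspace has rational slope because $A\in \mathrm{GL}(2,\mathbb{Z})$, so the local fixed locus is an affine line with rational slope; since $\underline{\sigma}$ is a genuine affine involution of $B_0$, this local affine line extends to a global affine line in the fixed locus. The main obstacle is verifying the cycle-chasing identity $\sigma_\ast C_{\gamma,\phi}=C_{\sigma_\ast\gamma,\underline{\sigma}\circ\phi}$ cleanly, which requires tracking the base point, the parallel-transported homology class, and ensuring that $u_0$ being $\underline{\sigma}$-fixed eliminates a possible monodromy contribution; once this is granted, the rest is an elementary exercise about real involutions with integer matrix.
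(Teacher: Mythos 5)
Your proposal is correct in substance, and it arrives at the result by a genuinely more explicit and computational route than the paper. The paper's own proof is shorter and more geometric: it starts from a point $p \in \mathrm{Fix}(\underline{\sigma})$, takes the affine line $l_\gamma$ tangent to $\mathrm{Fix}(\underline{\sigma})$ at $p$, and observes that $\underline{\sigma}(l_\gamma) = l_{\sigma_*\gamma} = l_\gamma$, whence $l_\gamma \subseteq \mathrm{Fix}(\underline{\sigma})$; rationality of the slope comes, as in your argument, from $\sigma_*$ acting integrally on $H_1(L_p,\mathbb{Z})$. You instead compute the full action of $\underline{\sigma}$ in the complex affine coordinates, obtaining $x \circ \underline{\sigma} = -A^{\intercal}x$ from the cycle-chasing identity $\sigma(C_{\gamma,\phi}) = C_{\sigma_*\gamma,\,\underline{\sigma}\circ\phi}$ together with $\sigma^*\operatorname{Im}\Omega = -\operatorname{Im}\Omega$. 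What this buys you is that the linearity of $\underline{\sigma}$ in affine coordinates is now manifest, and the fixed locus is determined globally as $\ker(A^{\intercal}+I)$ rather than being bootstrapped from its tangent line; in particular, you do not need to presuppose that $\mathrm{Fix}(\underline{\sigma})$ is a one-dimensional submanifold, which the paper does implicitly. The trade-off is that you must then argue separately that the nondegenerate case $A \neq \pm I$ holds, which you flag but do not resolve (the paper simply posits $\sigma \neq \mathrm{id}$ and takes one-dimensionality for granted); so on this point the two proofs have the same residual gap.

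Two small remarks. First, your classification of involutions in $\mathrm{GL}(2,\mathbb{Z})$ is slightly incomplete: besides $\pm I$ and $\mathrm{diag}(1,-1)$ there is the class of $\begin{pmatrix}0&1\\1&0\end{pmatrix}$ (whose $\pm 1$-eigenvectors span only an index-two sublattice). This does not affect your conclusion, since every nonscalar involution still has a one-dimensional $(-1)$-eigenspace over $\mathbb{R}$ defined over $\mathbb{Q}$, but the statement as written is inaccurate. Second, the cycle-chasing identity you single out as the main obstacle is indeed correct and is exactly the content the paper buries inside the equality $\underline{\sigma}(l_\gamma) = l_{\sigma_*\gamma}$; you should convince yourself that choosing the reference point $u_0$ to be $\underline{\sigma}$-fixed makes the endpoints of the tubes match up, and that the orientation conventions on $C_{\gamma,\phi}$ and $\sigma_*\gamma$ are compatible so that no extraneous sign appears.
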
	
\begin{proof}
	The first part of the lemma is straightforward. Notice that 
	$\mbox{Fix}(\underline{\sigma})$ is a manifold. Let $p\in \mbox{Fix}(\underline{\sigma})$ 
	and take $p$ as the reference point of the local affine coordinate chart. 
	Let $l_{\gamma}$ be the affine line tangent to $p\in \mbox{Fix}(\underline{\sigma})$ 
	with $\gamma\in \mathrm{H}_1(L_p,\mathbb{R})$. Then one 
	has $l_{\sigma_*(\gamma)}=\underline{\sigma}(l_{\gamma})=l_{\gamma}$. 
	In particular, $l_{\gamma}\subseteq \mbox{Fix}(\underline{\sigma})$, and thus the 
	fixed locus of $\underline{\sigma}$ is simply the affine line $l_{\gamma}$. 
	It is worth noticing that the induced action of $\sigma$ on $\mathrm{H}_{1}(L_{u},\mathbb{R})$ 
	is defined over $\mathbb{Z}$. In particular, if $n=2$ and
	$\sigma\neq \mathrm{id}$, then $l_{\gamma}$ has a rational slope. 
\end{proof}
 
The hyperK\"{a}hler rotation $\check{X}_{t}$ 
can be compactified into a rational elliptic surface $\check{Y}_{t}$ 
and $\lim_{t\rightarrow 0}\check{Y}_{t}=:\check{Y}$ converges in the moduli. 
Together with the hyperK\"{a}hler rotation relation \eqref{eq:hyperkah-rotation}
and the fact that the Tian--Yau metric is exact, it is sufficient to 
compute the affine structure induced by $\operatorname{Im}\check{\Omega}$.

For the case $Y=\mathbf{P}^1\times \mathbf{P}^1$ and \(D_{t}\) a family of 
smooth elliptic curves
degenerating to a nodal curve \(D_{0}\), as we have explained, 
$\check{Y}=\lim_{t\to 0}\check{Y}_{t}$ is the 
extremal rational elliptic surface with singular configuration $I_8I_2I_1^2$ 
with $\check{D}$ being the $I_8$ fibre at infinity. 
The toric model \((\check{Y}_{\mathrm{tor}},\check{D}_{\mathrm{tor}})\)
of \((\check{Y},\check{D})\)
is given by the maximal projective crepant partial 
desingularization \(\check{Y}_{\mathrm{tor}}\to \mathbf{P}_{\nabla}\), where 
\begin{equation*}
\nabla=\mathrm{Conv}\{(1,0),(0,1),(-1,0),(0,-1)\}.
\end{equation*}
The superpotential in this case is 
\begin{equation}
\label{eq:superpotential-p1xp1}
W\colon(\mathbb{C}^{\ast})^{2}\to\mathbb{C},~
(t_{1},t_{2})\mapsto t_{1}+t_{2}+\frac{1}{t_{1}}+\frac{1}{t_{2}}.
\end{equation}
Regarding \(W\) as an element in 
\(\mathrm{H}^{0}(\check{Y}_{\mathrm{tor}},-K_{\check{Y}_{\mathrm{tor}}})=
\mathrm{H}^{0}(\mathbf{P}_{\nabla},-K_{\mathbf{P}_{\nabla}})\), 
we see that  \(\check{Y}=\check{Z}\) is obtained by blowing up 
at \(\{W=0\}\cap \check{D}_{\mathrm{tor}}\).

The critical values of the superpotential \(W\)
are $0, \pm 4\in B:=\mathbb{C}$, 
which will be denoted by $O, A_{\pm}$ respectively in the sequel. 
Note that the fibre over $O$ is an $I_2$ fibre, while
the fibres over \(A_{\pm}\) are \(I_{1}\) fibres. 
Let $\check{\Omega}$ be the unique (up to a constant) 
meromorphic $(2,0)$-form on $\check{Y}$ 
with a simple pole along $\check{D}$ such that the complex conjugation 
on $\check{Y}$ is a fibre-preserving involution and thus the fixed locus 
of the induced action on $B$ is an affine line. 
Explicitly, \(\check{\Omega}\) is the pullback
of a meromorphic top form 
under \(\check{Y}\to\check{Y}_{\mathrm{tor}}\)
which comes from
a sequence of blowups at smooth points in \(\check{D}_{\mathrm{tor}}\).
In what follows, we will compute integrations of \(\check{\Omega}\)
over certain Lefschetz thimbles which are not contained in 
the exceptional locus of \(\check{Y}\to\check{Y}_{\mathrm{tor}}\).
We thus can compute these integrals on the 
maximal torus \((\mathbb{C}^{\ast})^{2}\).
In the sequel, we shall omit the pullback and simply write
\begin{equation}
\label{eq:Omega-check}
\check{\Omega} = \mathrm{i}\cdot
\frac{\mathrm{d}t_{1}}{t_{1}}\wedge\frac{\mathrm{d}t_{2}}{t_{2}}
\end{equation}
if no confusion occurs.
Here \((t_{1},t_{2})\) stands for the 
coordinates on \((\mathbb{C}^{\ast})^{2}\subset \check{Y}_{\mathrm{tor}}\).

Denote by \(q\) the coordinate on \(B\).
Look at the diagram
\begin{equation}
\label{dia:cover-ramifications}
\begin{tikzcd}
&(t_{1},t_{2})\ar[d,maps to] &[-2em]
(\mathbb{C}^{\ast})^{2}\ar[r,"W"]\ar[d,"\mathrm{pr}_{2}"] &\mathbb{C}.\\
&t_{2} &[-2em]\mathbb{C}^{\ast}
\end{tikzcd}
\end{equation}
For general \(q\in\mathbb{C}\), the pre-image 
\(W^{-1}(q)\) is an elliptic curve
with four points removed.
\begin{lemma}
The ramification points \(t_{2}\) of the 
double cover \(W^{-1}(q)\to\mathbb{C}^{\ast}\)
induced from the vertical arrow in \eqref{dia:cover-ramifications}
satisfy \((t_{2}^{2}+1-qt_{2})^{2}-4t_{2}^{2}=0\).
\end{lemma}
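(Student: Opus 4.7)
The plan is to directly analyze the double cover. Fix $q \in \mathbb{C}$ and $t_2 \in \mathbb{C}^\ast$, and solve the equation $W(t_1,t_2)=q$ for $t_1$. Concretely, $t_1 + t_2 + \tfrac{1}{t_1} + \tfrac{1}{t_2} = q$ is equivalent to
\begin{equation*}
t_1 + \frac{1}{t_1} = q - t_2 - \frac{1}{t_2},
\end{equation*}
which, after clearing $t_1$ in the denominator, becomes the quadratic
\begin{equation*}
t_1^{2} - \Bigl(q - t_2 - \tfrac{1}{t_2}\Bigr)\, t_1 + 1 = 0
\end{equation*}
in the variable $t_1$. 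Thus the vertical arrow $\mathrm{pr}_2 \colon W^{-1}(q) \to \mathbb{C}^\ast$ exhibits $W^{-1}(q)$ as a double cover of $\mathbb{C}^\ast$ with fiber given by the two roots of this quadratic.

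Next I would apply the standard criterion: the cover is ramified at $t_2$ precisely where the two roots coincide, i.e. where the discriminant vanishes,
\begin{equation*}
\Bigl(q - t_2 - \tfrac{1}{t_2}\Bigr)^{2} - 4 = 0.
\end{equation*}
Multiplying through by $t_2^{2}$ (permissible since $t_2 \in \mathbb{C}^\ast$) rewrites this as
\begin{equation*}
\bigl(q t_2 - t_2^{2} - 1\bigr)^{2} - 4 t_2^{2} = 0,
\end{equation*}
which is exactly $(t_2^{2} + 1 - q t_2)^{2} - 4 t_2^{2} = 0$ after noting that the square is insensitive to the overall sign of $q t_2 - t_2^2 - 1$.

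There is essentially no obstacle here; the lemma is a direct discriminant computation, and the only point worth stating carefully is that the projection $\mathrm{pr}_2$ restricted to $W^{-1}(q)$ really is a degree-two cover (which is visible from the quadratic above), so that the scheme-theoretic ramification locus indeed coincides with the discriminant locus. No further input from the geometry of the elliptic fibration is needed for this statement.
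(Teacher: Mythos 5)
Your proof is correct and follows essentially the same route as the paper: solve $W(t_1,t_2)=q$ as a quadratic in $t_1$ and read off the ramification locus from the discriminant. The only cosmetic difference is that the paper clears both $t_1$ and $t_2$ denominators at once to get $t_2 t_1^2+(t_2^2+1-qt_2)t_1+t_2=0$, whereas you first clear $t_1$ and only multiply by $t_2^2$ after taking the discriminant; the computations are identical.
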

\begin{proof}
Compute
\begin{align*}
t_{1}+t_{2}+\frac{1}{t_{1}}+\frac{1}{t_{2}} - q = 0~&\Leftrightarrow~
t_{1}^{2}t_{2}+t_{1}t_{2}^{2}+t_{2}+t_{1}-qt_{1}t_{2}=0\\
&\Leftrightarrow~t_{2}t_{1}^{2}+(t_{2}^{2}+1-qt_{2})t_{1}+t_{2}=0.
\end{align*}
We see that \(t_{2}\) is a ramification point if and only if
\((t_{2}^{2}+1-qt_{2})^{2}-4t_{2}^{2}=0\).
\end{proof}

Making use of \eqref{eq:superpotential-p1xp1},
we can write \eqref{eq:Omega-check} as
\begin{equation}
\label{eq:omega-check-1}
\mathrm{i}\cdot\frac{\mathrm{d}q\wedge
\mathrm{d}t_{2}}{2t_{1}t_{2}+t_{2}^{2}+1-qt_{2}}
\end{equation}
whenever \(t_{1}^{2}\ne 1\).
Moreover, from \(W(t_{1},t_{2})-q=0\), we can solve
\begin{equation}
\label{eq:double-cover-t1}
t_{1}= f_{\pm}(t_{2}):= \frac{-(t_{2}^{2}+1-qt_{2})
\pm\sqrt{(t_{2}^{2}+1-qt_{2})^{2}-4t_{2}^{2}}}{2t_{2}}.
\end{equation}
Here we have chosen the branch cut to be 
the non-positive real axis to define the square root.
For a nonzero complex number \(z\notin\mathbb{R}_{-}\), we define
\begin{equation*}
\sqrt{z}:=\exp\left(\frac{1}{2}\log z\right),
\end{equation*}
where 
\begin{equation*}
\log z = \log|z|+\mathrm{i}\theta,~\theta\in (-\pi,\pi).
\end{equation*}
Substituting \(t_{1}\), we may rewrite \eqref{eq:omega-check-1} and get
\begin{equation}
\label{eq:check-omega-compute}
\check{\Omega}=
\mathrm{i}\cdot\frac{\mathrm{d}q\wedge\mathrm{d}t_{2}}
{\pm\sqrt{(t_{2}^{2}+1-qt_{2})^{2}-4t_{2}^{2}}}
\end{equation}
%To compute affine structures, 
%we have to choose branch cuts carefully. 
%(cf.~\textsc{Figure \ref{figure:q-2-fiber}}).
\begin{figure}[ht]
\includegraphics[scale=1]{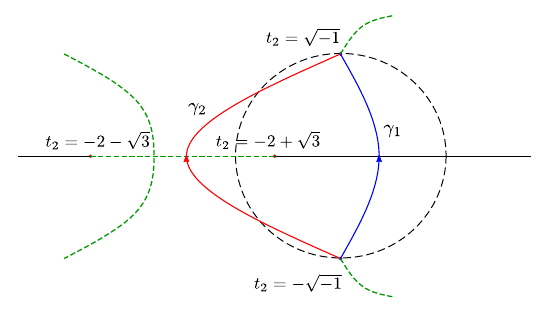}
\caption{The \(t_{2}\)-plane at \(q=-2\).
There are four ramification points: \(t_{2}=\pm~\mathrm{i}\) and 
\(t_{2}=-2\pm\sqrt{3}\). The green dashed line segments 
indicate the branch cuts we chose,
i.e., the loci where \((t_{2}^2+1-qt_2)^{2}-4t_{2}^{2}\le 0\).}
\label{figure:q-2-fibre}
\end{figure}

Each line segment of $\{q\in \mathbb{R}\}$ between $O,A_{\pm}$ 
is an affine line. Notice that due to the monodromy 
the corresponding $1$-cycles 
in the fibres for each of the affine line segments
$\overline{\infty A_{-}},\overline{A_{-}O},
\overline{OA_{+}},\overline{A_{+}\infty}$ 
are not the same after parallel transport. 

\begin{figure}[ht]
\includegraphics[scale=0.9]{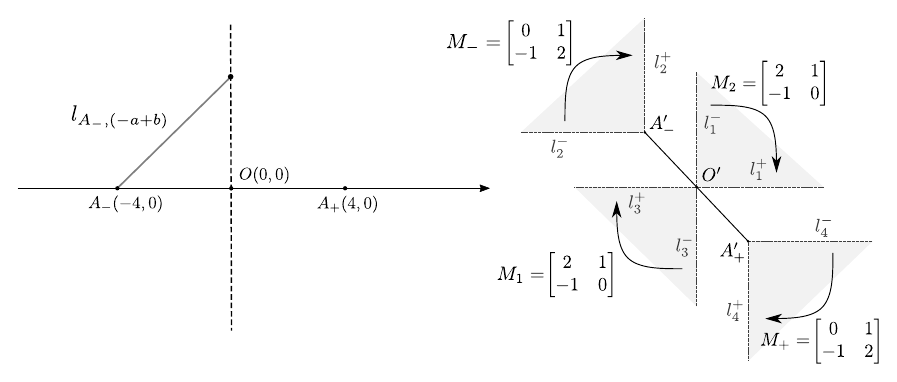}
\caption{Base of the SYZ fibration in coordinate \(q\) and $B'_{CPS}$.}
\label{fig:comparison}
\end{figure}

For the rest of this section, we will take $O$ as the reference point 
of the affine structure (see Remark \ref{singular ref}). We will restrict 
ourselves to the region $\operatorname{Im}q\geq 0$.  Let $a(q),b(q)$ be the vanishing thimbles 
from $O,A_-$ up to parallel transport via a path contained in the region $\mbox{Im}(q)>0$ into a fibre over \(q\)
with the orientation such that 
\begin{enumerate}
\item $\int_{a(q)}\operatorname{Re}\check{\Omega}<0$, if $q\in \overline{A_-O}$ and 
\item $\int_{b(q)}\operatorname{Re}\check{\Omega}>0$, if $\mbox{Im}(q)>0$ near $A_-$.  
\end{enumerate}
Let us describe the cycles \(a(q)\) and \(b(q)\) explicitly.
Let \(\gamma_{i}\colon [0,1]\to \mathbb{C}^{\ast}\)
be a parametrization of the oriented 
smooth curve drawn in \textsc{Figure}~\ref{figure:q-2-fibre}.
The cycle \(\partial a\) can be parameterized by
\begin{equation}
\label{eq:parameterize-a}
\gamma_{\partial a}(s)=
\begin{cases}
(\gamma_{1}(2s),f_{+}(\gamma_{1}(2s)),~&0\le s\le 1/2,\\
(\gamma_{1}(2-2s),f_{-}(\gamma_{1}(2-2s)),~&1/2\le s\le 1,
\end{cases}
\end{equation}
where \(f_{\pm}(t)\) is defined in \eqref{eq:double-cover-t1}. We
equip \(\partial a\) with 
the orientation induced from this parametrization to achieve the item (1) above.

\begin{lemma}
\label{lem:orientation-a}
Under the parametrization \eqref{eq:parameterize-a}, 
the item (1) above holds; namely
\begin{equation*}
\int_{a(q)}\operatorname{Re}\check{\Omega}<0,~\mbox{for}~q\in \overline{A_-O}.
\end{equation*}
\end{lemma}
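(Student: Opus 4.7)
The plan is to compute $\int_{a(q)} \check{\Omega}$ as a fibred integral over the thimble and show it is real and strictly negative, so that its real part equals the integral itself.  Using the Gelfand--Leray decomposition $\check{\Omega} = dq \wedge \nu$ and \eqref{eq:double-cover-t1}, together with the identity $f_+(t_2) f_-(t_2) = 1$ (which is the product of roots of the quadratic $t_2 t_1^2 + (t_2^2 + 1 - qt_2) t_1 + t_2 = 0$), one obtains
\begin{equation*}
\nu\big|_{\{t_1 = f_\pm(t_2)\}} = \pm\, \frac{i\, dt_2}{\sqrt{P(t_2, q')}},
\qquad P(t_2, q') := (t_2^2 + 1 - q't_2)^2 - 4t_2^2.
\end{equation*}

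For $q' \in [-4, 0]$, the two roots $\alpha(q') \leq \beta(q')$ of the ramification equation $t_2^2 + 1 - (q'-2)t_2 = 0$ attached to the critical point $(1,-1) \in W^{-1}(0)$ are real and collapse to $-1$ as $q' \to 0$.  On $[\alpha(q'),\beta(q')]$ one checks that $P \leq 0$, that $|f_\pm(t_2)| = 1$, and that $f_-(t_2) = \overline{f_+(t_2)}$.  Hence the cycle $\partial a(q')$ parameterized by \eqref{eq:parameterize-a} consists of two arcs over the branch cut $[\alpha(q'), \beta(q')]$ joined at $t_1 = 1$, and the thimble $a(q) = \bigcup_{q' \in [q, 0]} \partial a(q')$ is a disk with base coordinate $q' = sq$, $s \in [0, 1]$.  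Using the principal branch of $\sqrt{\,\cdot\,}$ with boundary value $+i\sqrt{-P}$ on the $f_+$ sheet over $[\alpha,\beta]$ (and $-i\sqrt{-P}$ on $f_-$), the fibrewise integral
\begin{equation*}
\int_{\partial a(q')} \nu_{q'} = 2 \int_{\alpha(q')}^{\beta(q')} \frac{dt_2}{\sqrt{-P(t_2, q')}}
\end{equation*}
is a positive real number.  Combining with $dq = q\, ds$ along the base path yields $\int_{a(q)} \check{\Omega} = 2q \int_0^1 ds \int_{\alpha(sq)}^{\beta(sq)} dt_2/\sqrt{-P(t_2, sq)}$, which is real and strictly negative for $q \in [-4, 0)$.

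The main obstacle is orientation bookkeeping: one must confirm that the orientation of the thimble induced by \eqref{eq:parameterize-a} corresponds to the parameter $s$ running from the critical point ($s = 0$) outward to the boundary ($s = 1$), rather than the reverse, since an overall sign flip would invert the conclusion.  This is most cleanly pinned down by passing to a local Morse normal form $W \sim u^2 - v^2$ around the node $(1,-1) \in W^{-1}(0)$, in which the standard orientation of the Lefschetz thimble is canonical and can be matched against $\gamma_{\partial a}$ explicitly.
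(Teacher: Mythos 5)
There is a genuine gap, rooted in a misidentification of the curve $\gamma_1$ appearing in \eqref{eq:parameterize-a}. The fibre over $O$ (i.e.\ $q=0$) is an $I_2$ fibre with two nodes, at $(t_1,t_2)=(-1,1)$ and $(t_1,t_2)=(1,-1)$, and the ramification equation $(t_2^2+1-qt_2)^2-4t_2^2=0$ factors into $t_2^2-(q+2)t_2+1=0$ (the blue ramification points in the figure, on the unit circle, collapsing to $t_2=1$) and $t_2^2-(q-2)t_2+1=0$ (the red ones on $\mathbb{R}_-$, collapsing to $t_2=-1$). You have taken $\gamma_1$ to be a path over the real cut near $t_2=-1$, attached to the node $(1,-1)$. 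However, the paper's proof deforms $\gamma_1$ to $\exp(\mathrm{i}\theta)$, $\theta\in[-\theta_0,\theta_0]$, and the proof of Lemma \ref{lemma:affine-line-OA} explicitly states $\partial a$ projects to a path joining the ramification points $(2+q\pm\sqrt{q^2+4q})/2$ through the positive real axis — that is, $\gamma_1$ is the unit-circle arc joining the \emph{blue} points near $t_2=1$, attached to the other node $(-1,1)$. So your computation is carried out on the vanishing thimble at the wrong node of the $I_2$ fibre.

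This matters because the two thimbles do not carry the same integral with the same sign: the involution $\iota(t_1,t_2)=(t_2^{-1},t_1^{-1})$ preserves $W$, swaps the two nodes, and satisfies $\iota^*\check{\Omega}=-\check{\Omega}$, so the two thimble integrals differ by a sign (modulo orientation conventions). Consequently the fact that your computation happens to produce a negative answer does not by itself transfer to the cycle actually defined by \eqref{eq:parameterize-a}. You have in fact flagged the residual issue yourself — the final orientation bookkeeping via a Morse normal form is left as an ``obstacle'' — but since the entire content of the lemma is to pin down this sign for the specific parameterization \eqref{eq:parameterize-a}, deferring the orientation check leaves the proof incomplete. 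By contrast, the paper's argument avoids this by deforming $\gamma_1$ to $\exp(\mathrm{i}\theta)$ with a fixed (counterclockwise) parameterization, whereupon the fibre integral $2\int_{-\theta_0}^{\theta_0}\frac{-\mathrm{d}\theta}{\sqrt{(2\cos\theta-q)^2-4}}$ has a manifestly definite sign and the orientation is automatically encoded in the limits of integration.
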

\begin{proof}
Note that
\begin{equation*}
\int_{a(q)}\check{\Omega} = -\int_{a(q)} \iota_{\partial/\partial q}
\check{\Omega}\wedge\mathrm{d}q=
-\int_{0}^{q}\int_{\partial a(q)}(\iota_{\partial/\partial q}
\check{\Omega})\mathrm{d}q.
\end{equation*}
Since \(q<0\), the result will follow if we can show that
\begin{equation*}
\operatorname{Re}\left(\int_{\partial a(q)}\iota_{\partial/\partial q}
\check{\Omega}\right)<0.
\end{equation*}
Deforming the curve, we may 
assume \(\gamma_{1}(\theta)=\exp(\mathrm{i}\theta)\), 
\(\theta\in [-\theta_{0},\theta_{0}]\subseteq [-\pi,\pi]\),
with the counterclockwise orientation. By \eqref{eq:check-omega-compute}, 
\begin{equation*}
\int_{\partial a(q)}\iota_{\partial/\partial q}
\check{\Omega}=2\int_{-\theta_{0}}^{\theta_{0}} 
\frac{-\exp(\mathrm{i}\theta)\mathrm{d}\theta}
{\sqrt{\exp(2\mathrm{i}\theta)((2\cos\theta-q)^{2}-4)}}.
\end{equation*}
Note that \((2\cos\theta-q)^{2}-4>0\) for all 
\(\theta\in (-\theta_{0},\theta_{0})\).
We deduce that
\begin{equation*}
\sqrt{\exp(2\mathrm{i}\theta)((2\cos\theta-q)^{2}-4)}=\exp(\mathrm{i}\theta)
\sqrt{(2\cos\theta-q)^{2}-4}
\end{equation*}
and therefore 
\begin{equation*}
\int_{-\theta_{0}}^{\theta_{0}} 
\frac{-\exp(\mathrm{i}\theta)\mathrm{d}\theta}{\sqrt{\exp(2\mathrm{i}\theta)((2\cos\theta-q)^{2}-4)}}
=\int_{-\theta_{0}}^{\theta_{0}} 
\frac{-\mathrm{d}\theta}{\sqrt{(2\cos\theta-q)^{2}-4}}<0.
\end{equation*}
\end{proof}

Likewise, we can equip \(\partial b\) with an 
orientation through \(\gamma_{2}\)
to achieve item (2).
Starting from \(t_{2}=-\mathrm{i}\), we parameterize \(\partial b\) via
\begin{equation*}
\gamma_{\partial b}(s)=(\gamma_{2}(s),f_{+}(\gamma_{2}(s))),~0\le s\ll 1.
\end{equation*}
Let \(\{\gamma_{2}(s_{i})~|~0<s_{1}<\cdots<s_{k}<1\}\)
be the intersection of \(\gamma_{2}\) and the branch cut.
When \(\gamma_{2}\) meets the branch cut, 
the curve \(\gamma_{\partial b}\)
enters a different sheet and we shall exchange \(f_{\pm}(t)\). In other words,
\(\partial b\) can be parameterized by
\begin{equation*}
\gamma_{\partial b}(s)=
\begin{cases}
(\gamma_{2}(2s),f_{+}(\gamma_{2}(2s)),~&0\le s\le s_{1}/2\\
(\gamma_{2}(2s),f_{-}(\gamma_{2}(2s)),~&s_{1}/2 \le s\le s_{2}/2\\
\hspace{2cm}\vdots&\hspace{1cm}\vdots\\
(\gamma_{2}(2s),f_{\pm}(\gamma_{2}(2s)),~&s_{k}/2 \le s\le 1/2,\\
(\gamma_{2}(2-2s),f_{\mp}(\gamma_{2}(2-2s)),~&1/2 \le s\le 1-s_{k}/2,\\
\hspace{2cm}\vdots&\hspace{1cm}\vdots\\
(\gamma_{2}(2-2s),f_{+}(\gamma_{2}(2-2s)),~&1-s_{2}/2\le s\le 1-s_{1}/2,\\
(\gamma_{2}(2-2s),f_{-}(\gamma_{2}(2-2s)),~&1-s_{1}/2 \le s\le 1.\\
\end{cases}
\end{equation*}
and we equip \(\partial b\) with the induced orientation
of this parametrization.
Here the sign depends on the parity of \(k\).
For instance, if \(k=1\), then we shall pick \(f_{-}\)
for \(s_{1}/2\le s\le 1/2\) and \(f_{+}\)
for \(1/2\le s\le 1-s_{1}/2\).
Similar to the proof of Lemma~\ref{lem:orientation-a}, 
we can prove the follow lemma, which shows that the orientation
fulfills the requirement. 
\begin{lemma}
For \(q\in\overline{A_{-}O}\), we have
\begin{equation*}
\int_{b(q)}\operatorname{Re}\check{\Omega} = 0~\mbox{and}~
\int_{b(q)}\operatorname{Im}\check{\Omega} < 0.
\end{equation*}
\end{lemma}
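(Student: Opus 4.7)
The plan is to follow the strategy of the proof of Lemma \ref{lem:orientation-a}: write
\begin{equation*}
\int_{b(q)}\check{\Omega} \;=\; -\int_{-4}^{q}\!\int_{\partial b(q')} \iota_{\partial/\partial q}\check{\Omega}\,\mathrm{d}q',
\end{equation*}
using that $b(q)$ is the Lefschetz thimble fibered over the real interval $[-4,q]$ with vanishing cycle collapsing at $A_{-}=-4$, and reduce the claim to analyzing the inner fibrewise integrals $\int_{\partial b(q')}\iota_{\partial/\partial q}\check{\Omega}$ for $q'\in(-4,0)$.

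For the vanishing of the real part I would invoke the complex-conjugation symmetry $\sigma\colon(t_{1},t_{2})\mapsto(\bar t_{1},\bar t_{2})$, which satisfies $\sigma^{\ast}\check{\Omega}=-\overline{\check{\Omega}}$. The critical point $(-1,-1)$ is fixed by $\sigma$, and in a Morse chart where $W+4=-(u_{1}^{2}+u_{2}^{2})+O(u^{3})$ the vanishing cycle for real $q'\in(-4,0)$ is the circle $\{(\mathrm{i}y_{1},\mathrm{i}y_{2}):y_{1}^{2}+y_{2}^{2}=q'+4\}$ lying in the purely imaginary locus. On this circle $\sigma$ acts by $(y_{1},y_{2})\mapsto(-y_{1},-y_{2})$, which is a rotation by $\pi$ and therefore orientation-preserving. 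Consequently $\sigma_{\ast}b(q)=b(q)$ as oriented $2$-chains for real $q\in[-4,0]$, and
\begin{equation*}
\int_{b(q)}\check{\Omega}=\int_{b(q)}\sigma^{\ast}\check{\Omega}=-\overline{\int_{b(q)}\check{\Omega}},
\end{equation*}
forcing $\operatorname{Re}\int_{b(q)}\check{\Omega}=0$.

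For the strict negativity of the imaginary part, the plan is to compute the inner integral along the unit-circle arc $\gamma_{2}(\theta)=e^{\mathrm{i}\theta}$, $\theta\in[\pi-\psi_{0},\pi+\psi_{0}]$ with $\cos\psi_{0}=-(q'+2)/2$, which joins the two conjugate ramification points $\tfrac{1}{2}\bigl((q'+2)\pm\mathrm{i}\sqrt{4-(q'+2)^{2}}\bigr)$ through $t_{2}=-1$. Substituting into \eqref{eq:check-omega-compute} via $t_{2}^{2}+1-q't_{2}=e^{\mathrm{i}\theta}(2\cos\theta-q')$ with $(2\cos\theta-q')^{2}-4<0$ on the interior of $\gamma_{2}$, and carefully combining the two-sheet contributions with a sheet switch at the branch-cut crossing $\theta=\pi$, yields
\begin{equation*}
\int_{\partial b(q')}\iota_{\partial/\partial q}\check{\Omega}\;=\;C(q')\,\mathrm{i}\int_{0}^{\psi_{0}(q')}\frac{\mathrm{d}\psi}{\sqrt{4-(2\cos\psi+q')^{2}}},
\end{equation*}
where $C(q')>0$ is a constant fixed by item (2). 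Since the right-hand side is purely imaginary with strictly positive imaginary part, integrating over $q'\in[-4,q]$ and absorbing the outer minus sign gives $\operatorname{Im}\int_{b(q)}\check{\Omega}<0$. The sign $C(q')>0$ can be anchored by the local Morse-chart calculation $\check{\Omega}\approx\mathrm{i}\,\mathrm{d}u_{1}\wedge\mathrm{d}u_{2}$, parameterizing the thimble by $(q',\phi)\in[-4,q]\times[0,2\pi]$ via $u_{j}=\mathrm{i}\sqrt{q'+4}(\cos\phi,\sin\phi)$, which gives $\int_{b(q)}\check{\Omega}\approx-\mathrm{i}\pi(q+4)$ near $A_{-}$.

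The main obstacle is the careful bookkeeping of the sheet switch in the parameterization of $\partial b(q')$ when $\gamma_{2}$ crosses the branch cut of the square root at $\theta=\pi$: one must verify that this switch combines correctly with the forward/backward traversal to yield a closed cycle, that this cycle is $\sigma$-invariant with the orientation used in the real-part argument, and that the resulting sign $C(q')$ is consistent with the orientation convention of item (2) throughout the interval $(-4,0)$. Once these combinatorial data are pinned down, both assertions of the lemma follow.
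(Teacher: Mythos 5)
The paper itself gives no argument beyond ``Similar to the proof of Lemma \ref{lem:orientation-a}''; your proposal fills in what that pointer presumably refers to, and the structure of your argument is sound. Your conjugation--symmetry argument for $\operatorname{Re}\int_{b(q)}\check{\Omega}=0$ is a genuinely nicer variant: the paper's proofs of Lemmas \ref{lemma:affine-line-OA}--\ref{affine cut 2} use a contour-conjugation trick (splitting $\gamma$ into conjugate halves), whereas you lift the symmetry to the ambient involution $\sigma(t_1,t_2)=(\bar t_1,\bar t_2)$, which satisfies $\sigma^{\ast}\check{\Omega}=-\overline{\check{\Omega}}$ and preserves $b(q)$ with orientation (a half-turn on the vanishing circle is orientation-preserving, and $\sigma$ fixes the real $q$-segment). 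This avoids having to track the branch-cut crossing at $\theta=\pi$ at all in the real-part argument, which is cleaner than what the paper's Lemma-\ref{lem:orientation-a} template would give directly, since for $\gamma_2$ the quantity $(2\cos\theta-q)^2-4$ is \emph{negative} and the analogue of the paper's simplification $\sqrt{e^{2\mathrm{i}\theta}(\ldots)}=e^{\mathrm{i}\theta}\sqrt{(\ldots)}$ breaks on the cut.

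For the imaginary part, your reduction
$\int_{\partial b(q')}\iota_{\partial/\partial q}\check{\Omega}=C(q')\,\mathrm{i}\int_0^{\psi_0}\frac{\mathrm{d}\psi}{\sqrt{4-(2\cos\psi+q')^2}}$
is the correct shape. Two small points would tighten it. First, when the unit-circle arc $\gamma_2$ crosses the principal branch cut at $\theta=\pi$, both the sheet label $f_\pm$ and the principal-branch value of $\sqrt{e^{2\mathrm{i}\theta}((2\cos\theta-q')^2-4)}=\mp\mathrm{i}\,e^{\mathrm{i}\theta}\sqrt{4-(2\cos\theta-q')^2}$ flip sign simultaneously, so the integrand remains smooth; carrying this through the paper's eight-piece parameterization of $\gamma_{\partial b}$ yields $\pm 2\mathrm{i}\int_{\pi-\psi_0}^{\pi+\psi_0}\mathrm{d}\theta/\sqrt{4-(2\cos\theta-q')^2}$ exactly, so $|C|$ is $2$, not an unspecified function. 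Second, to conclude $C(q')$ has one sign on all of $(-4,0)$ and not only near $A_-$, note $\int_{\partial b(q')}\iota_{\partial/\partial q}\check{\Omega}$ is continuous in $q'$ and the real integral $\int_0^{\psi_0}\mathrm{d}\psi/\sqrt{4-(2\cos\psi+q')^2}$ is strictly positive, so the sign of $C$ cannot change without the left side vanishing. Your Morse-chart anchor, which gives $\int_{b(q)}\check{\Omega}\approx -\mathrm{i}\pi(q+4)$ (with $\operatorname{Re}\int_{b(q)}\check{\Omega}\approx\pi r\sin\alpha>0$ for $q+4=re^{\mathrm{i}\alpha}$, $\alpha\in(0,\pi)$, confirming item (2)) then fixes the sign globally. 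With those two clarifications, the proposal is a complete proof by the paper's intended method.
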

%\begin{proof}
%Omitted. \textcolor{red}{Is this obvious, especially the second part?}
%\end{proof}

\begin{lemma}
	If the reference point is changed to a point near $O$, then $\langle \partial a,\partial b\rangle=-2$, where $\langle, \rangle$ is the natural pairing of the fibre torus.
\end{lemma}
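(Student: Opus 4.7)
The plan is to compute the intersection number directly by moving the reference point to a convenient location $q_0$ close to $O=0$ and using the explicit double-cover description $W^{-1}(q_0)\to\mathbb{C}^{\ast}_{t_2}$ developed in this section. Near $O$, the four ramification points separate into two tight clusters: a pair near $t_2=-1$ coming from the factor $t_2^2-(q+2)t_2+1$, and a pair near $t_2=+1$ coming from $t_2^2-(q-2)t_2+1$. This clustering makes the local models of the two nodes of the $I_2$ fibre transparent.

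First, I would produce an explicit representative of $\partial a$. Near the $I_2$ node $(t_1,t_2)=(1,-1)$, a direct expansion of $W$ in coordinates $u=t_1-1, v=t_2+1$ yields the Morse normal form $u^2-v^2+O(u^3,v^3)=q$, from which the vanishing cycle at $q_0=-\epsilon$ is parameterized explicitly as $u=-\mathrm{i}\sqrt{\epsilon}\sin\theta,\, v=\sqrt{\epsilon}\cos\theta$. In particular its projection to the $t_2$-plane is the short real segment $[-1-\sqrt{\epsilon},-1+\sqrt{\epsilon}]$ traversed once on sheet $f_+$ and once on sheet $f_-$, matching the parameterization $\gamma_{\partial a}$ of the text.

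Second, I would identify $\partial b$ after parallel transport from $A_-$ to $q_0$ along an upper half-plane arc. At $q=A_-=-4$ the vanishing cycle projects to the single point $t_2=-1$ (where the eq$+$ pair has collapsed). As $q$ traces the arc from $A_-$ to $q_0$, the eq$+$ ramification pair moves in the $t_2$-plane from $-1$ through $\{\pm\mathrm{i}\}$ (at $q=-2$) to $\{1\pm\mathrm{i}\sqrt{\epsilon}\}$ (at $q_0$), and the meanwhile the eq$-$ cut shrinks from $[-2-\sqrt{3},-2+\sqrt{3}]$ to $[-1-\sqrt{\epsilon},-1+\sqrt{\epsilon}]$. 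The essential topological observation is that at $q=-2$ any path $\gamma_2$ from $-\mathrm{i}$ to $+\mathrm{i}$ representing the vanishing cycle crosses the eq$-$ branch cut an odd number of times (since $-\mathrm{i}$ and $+\mathrm{i}$ lie on opposite sides of that cut once one passes $t_2=0$ correctly). By continuity of the parallel transport, the deformed $\gamma_2$ at $q_0$ must still cross the (shrunken) eq$-$ cut an odd number of times; after a standard isotopy within the fibre we may arrange exactly one transverse crossing.

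Third, I would count the intersection on the double cover. Each transverse crossing of $\gamma_1$ and $\gamma_2$ in the $t_2$-plane lifts to two points in the fibre $W^{-1}(q_0)$, one on each sheet, and because $\gamma_{\partial a}$ traverses the $\gamma_1$-segment on both sheets (once in each direction), both lifts contribute with the same local sign. Hence $|\langle\partial a,\partial b\rangle|=2$. The sign is pinned down by the orientation conventions (1) and (2) imposed before the lemma, together with the explicit formula $\check{\Omega}=\pm\mathrm{i}\,\mathrm{d}q\wedge\mathrm{d}t_2/\sqrt{(t_2^2+1-qt_2)^2-4t_2^2}$: tracking these conventions through the parameterization $\gamma_{\partial b}$ (using the sheet-flips at each crossing of a branch cut) gives $\langle\partial a,\partial b\rangle=-2$.

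The main obstacle is the sign bookkeeping in this last step. The topological count only determines the intersection up to sign; it is the careful coordination of the sheet choices $f_\pm$ along $\gamma_1,\gamma_2$ with the orientation constraints imposed by $\int_{a(q)}\mathrm{Re}\,\check{\Omega}<0$ and $\int_{b(q)}\mathrm{Re}\,\check{\Omega}>0$ (and its companion $\int_{b(q)}\mathrm{Im}\,\check{\Omega}<0$) that forces the sign to be $-2$ rather than $+2$. As an independent consistency check, this agrees with the global constraint that the product of monodromies $T_{\partial b}\cdot T_{v_+}\cdot T_{\partial a}^{\,2}\cdot T_{v_\infty}^{\,8}=\mathrm{id}$ in $\mathrm{SL}(2,\mathbb{Z})$ imposed by the extremal rational elliptic surface of type $\mathrm{I}_8\mathrm{I}_2\mathrm{I}_1^2$.
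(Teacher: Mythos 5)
Your strategy differs from the paper's and contains a genuine gap precisely at the point you yourself flag as the main difficulty: the sign. The paper does not track sheet choices at all. After declaring the magnitude $\pm 2$ to be clear, it pins down the sign via a slick indirect argument: let $\ell_{\partial a}$ and $\ell_{\partial b}$ be the affine rays emanating from $q_0$ along which the corresponding thimble periods are real and increasing, with tangent vectors $v_{\partial a}$, $v_{\partial b}$. One checks $\langle v_{\partial a}, v_{\partial b}\rangle > 0$ in the orientation of the base, and then uses the general fact that the intersection pairing of vanishing cycles in the fibre is the \emph{negative} of the orientation pairing of the corresponding period directions in the base. This avoids the sheet bookkeeping entirely. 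Your proposal, by contrast, asserts that ``tracking these conventions through the parameterization $\gamma_{\partial b}$ (using the sheet-flips at each crossing of a branch cut) gives $\langle\partial a,\partial b\rangle=-2$'' without actually carrying out the tracking. Since you acknowledge the magnitude $\pm 2$ is the easy part, leaving the sign as an unexecuted plan means the central claim of the lemma is not established.

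There are also two smaller problems. First, your proposed consistency check via the monodromy relation in $\mathrm{SL}(2,\mathbb{Z})$ cannot detect the sign: Dehn twists $T_\gamma$ depend only on the unoriented isotopy class of $\gamma$, so $T_{\partial a}=T_{-\partial a}$, and the braid relation is satisfied regardless of whether the intersection number is $+2$ or $-2$. Second, the justification you offer for the odd-crossing count is not sound as written: a path from $-\mathrm{i}$ to $+\mathrm{i}$ in $\mathbb{C}^{\ast}$ can avoid the cut between $-2-\sqrt 3$ and $-2+\sqrt 3$ entirely by passing through the right half-plane, so ``any path\ldots{}crosses\ldots{}an odd number of times'' is false without a further constraint (the real reason $\gamma_{1}$ and $\gamma_{2}$ must link is the Picard--Lefschetz monodromy of the $I_8 I_2 I_1^2$ configuration, which forces the two vanishing cycles to have nonzero intersection; ``once one passes $t_2=0$ correctly'' gestures at this but does not supply it). Minor further issues: the labeling of the two quadratics is swapped --- at $q$ near $0$ the factor $t_2^2-(q+2)t_2+1$ has its roots clustering at $t_2=+1$, not $-1$, while $t_2^2-(q-2)t_2+1$ clusters at $-1$; and your Morse model is taken at the node $(t_1,t_2)=(1,-1)$ whereas the paper's explicit $\gamma_1=e^{\mathrm{i}\theta}$ shrinks to the other node $(-1,1)$, which is harmless homologically but worth keeping straight when aligning with the paper's parameterizations. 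Your more concrete double-cover count of the magnitude (via the involution identifying the two lifted intersection points with the same local sign) is a reasonable way to fill in the paper's ``it is clear,'' but it does not substitute for the missing sign argument.
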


\begin{proof}
Since the intersection number is topological, we
can compute \(\langle \partial a,\partial b\rangle\)
at \(q_{0}\) near \(A_{-}\). 
It is clear that 
\(\langle \partial a,\partial b\rangle =\pm 2\).
To pin down the sign, we notice that if 
\(\ell_{\partial a}\) (resp.~\(\ell_{\partial b}\))
denotes the curve starting from \(q_{0}\)
with the direction such that

\begin{equation*}
\int_{\partial a} \check{\Omega}\in\mathbb{R} 
~~\left(\mbox{resp.~}
\int_{\partial b} \check{\Omega}\in\mathbb{R} \right)
\end{equation*}
and increases, then we have
\begin{equation}
\left\langle v_{\partial a},v_{\partial b}\right\rangle > 0
\end{equation}
with respect to the orientation on the base \(\mathbb{C}\),
where \(v_{\partial a}\) (resp.~\(v_{\partial b}\))
is the tangent vector of \(\ell_{\partial a}\) 
(resp.~\(\ell_{\partial b}\)) at \(q_{0}\) pointing in 
the direction in which the symplectic area is increasing.
Since \(\langle \partial a,\partial b\rangle\)
and \(\left\langle v_{\partial a},v_{\partial b}\right\rangle\)
differ by a sign, this completes the proof.
\end{proof}

Now define another set of affine coordinates by $x:=x_{\partial (a-b)}$ and $y:=x_{\partial(a+b)}$. 

\begin{figure}[ht]
\includegraphics[scale=3]{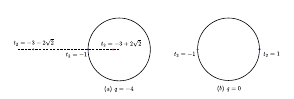}
\caption{When \(q\) moves from \(-4\) to \(0\) in \(\mathbb{R}\), two of the ramifications (red marks
in figure (a)) collapse to \(t_{2}=-1\) in figure (b) along the real axis \(t_{2}\in\mathbb{R}\).
Note that \(t_{2}=-1\) in figure (a) is a double ramification.
One can also check that the ramifications marked in blue travel along 
the semicircles \(t_{2}=(2+q\pm\sqrt{q^{2}+4q})/2\) 
when \(q\) moves from \(-4\) to \(0\).}
\end{figure}

\begin{lemma}
\label{lemma:affine-line-OA} 
The affine line $\operatorname{Im}q=0$ between $O$ and $A_{-}$ satisfies $x+y=0$. 
The same is true for the affine line $\operatorname{Im}q=0$ between $O$ and $A_+$. 
\end{lemma}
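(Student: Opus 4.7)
The plan is to reduce the claim $x+y=0$ along both segments $\overline{OA_-}$ and $\overline{OA_+}$ to a reality statement for a single period integral of $\check{\Omega}$ over the Lefschetz thimble of $\partial a$ at $O$. By linearity of $x_\gamma$ in $\gamma$, one has $x_{\partial(a-b)}+x_{\partial(a+b)} = 2x_{\partial a}$, so it suffices to show that $x_{\partial a}(q)=0$ for every $q\in\overline{OA_-}\cup\overline{OA_+}$.

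Next I use the thimble interpretation. Since $O$ is the reference point and $\partial a$ is a vanishing cycle at $O$, the $2$-chain $C_{\partial a}$ fibered over $\overline{Oq}$ with fiber $\partial a$ is (up to sign) the Lefschetz thimble $a(q)$. Using $\operatorname{Im}\Omega = -\operatorname{Im}\check{\Omega}$ from \eqref{eq:hyperkah-rotation}, one obtains
\[
x_{\partial a}(q) = \int_{C_{\partial a}} \operatorname{Im}\Omega = -\operatorname{Im}\int_{a(q)} \check{\Omega}.
\]
Thus the lemma reduces to showing that $\int_{a(q)}\check{\Omega}\in\mathbb{R}$ for real $q\in[-4,4]$.

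As in the proof of Lemma~\ref{lem:orientation-a}, I use the slicing
\[
\int_{a(q)}\check{\Omega} \;=\; -\int_{0}^{q}\!\left(\int_{\partial a(q')}\iota_{\partial/\partial q}\check{\Omega}\right)\mathrm{d}q',
\]
and show that the inner integral is real-valued for all $q'\in(-4,0)\cup(0,4)$. For $q'\in(-4,0)$, this was already carried out in Lemma~\ref{lem:orientation-a}: parameterizing $\partial a(q')$ via $\gamma_1(\theta)=e^{\mathrm{i}\theta}$ yields $2\int_{-\theta_0}^{\theta_0}(-\mathrm{d}\theta)/\sqrt{(2\cos\theta-q')^2-4}\in\mathbb{R}$. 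For $q'\in(0,4)$, I first track the vanishing cycle $\partial a$ by parallel transport across the $I_2$ critical value $O$: using the factorization
\[
(t_2^2+1-q't_2)^2 - 4t_2^2 \;=\; \bigl(t_2^2-(q'+2)t_2+1\bigr)\bigl(t_2^2-(q'-2)t_2+1\bigr),
\]
the pair of branch points collapsing to $t_2=1$ as $q'\to 0^+$ are the real-positive reciprocal roots $t_\pm=((q'+2)\pm\sqrt{(q'+2)^2-4})/2$ of the first factor, so $\partial a(q')$ becomes a loop along the real $t_2$-axis around this pair. On the segment $(t_-,t_+)\subset\mathbb{R}$ the first factor is negative while the second is positive (its discriminant $q'(q'-4)$ is negative on $(0,4)$), so the radicand is negative and $\sqrt{\cdot}$ is purely imaginary; consequently $\mathrm{i}\,\mathrm{d}t_2/\sqrt{\cdot}$ is real and the inner integral is real.

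Combining the two cases, the inner integral is a real-valued function of the real variable $q'$, and the outer integral over a real interval is then real. Hence $\operatorname{Im}\int_{a(q)}\check{\Omega}=0$ and $x+y=2x_{\partial a}=0$ on both segments. The main obstacle in this plan is the cycle-tracking step across $q=0$: the $I_2$ fiber has two vanishing cycles, and one must justify that the parallel transport of $\partial a$ along a path in $\{\operatorname{Im}q>0\}$ skirting $O$ produces the specific real-axis loop around $t_\pm$ described above, rather than the other vanishing cycle of the $I_2$ fiber. Integrability of the inner integral at the endpoints $q'=0,\pm 4$ follows from the standard local model of a node together with the logarithmic estimate already employed in Lemma~\ref{local calculation}.
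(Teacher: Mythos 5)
Your proof is correct and reaches the same key reduction as the paper — namely that $x+y = 2x_{\partial a}$, so it suffices to show $\int_{a(q)}\check{\Omega}\in\mathbb{R}$ — but it then proceeds by direct explicit computation where the paper opts for a symmetry argument. For $\overline{OA_-}$ you simply invoke the unit-circle parameterization from Lemma~\ref{lem:orientation-a}, which is exactly the ``direct'' route the paper acknowledges works, whereas the paper deliberately replaces it with a complex-conjugation argument (splitting $\gamma$ into $\gamma_{\pm}$ and using $\bar{\gamma}_{+}=-\gamma_{-}$) because that scheme is reused verbatim in Lemmas~\ref{branch cut from O} and \ref{affine cut 2}. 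For $\overline{OA_+}$, where the paper merely says the proof is ``similar,'' you supply a genuinely new and clean argument: the factorization $(t_2^2+1-qt_2)^2-4t_2^2=(t_2^2-(q+2)t_2+1)(t_2^2-(q-2)t_2+1)$ shows the radicand is negative on the real segment $[t_-,t_+]$ (first factor negative between its real roots, second factor positive because its discriminant $q(q-4)<0$), so $\mathrm{i}\,\mathrm{d}t_2/\sqrt{\cdot}$ is real there. This buys you a completely explicit handle on the $\overline{OA_+}$ segment at the cost of not generalizing as directly to the other lemmas in \S\ref{sec:complex-affine-structure-of-slf}.

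The one point you flag as ``the main obstacle'' — pinning down which cycle the parallel transport of $\partial a$ across the $I_2$ fibre at $O$ lands on — is in fact a non-issue, and you should close it rather than leave it as a caveat. The fibrewise $1$-form $\iota_{\partial/\partial q}\check{\Omega}$ is holomorphic, so the inner integral depends only on the homology class of $\partial a$ in the nearby fibre; and for an $I_2$ fibre the two vanishing cycles (one collapsing at $t_2=1$, the other at $t_2=-1$) are homologous in the smooth fibre up to sign, since the monodromy $T^2_\alpha$ around $O$ is the composition of the two Dehn twists, which forces both vanishing cycles to be $\pm\alpha$. A sign ambiguity does not affect the conclusion $\operatorname{Im}\int_{a(q)}\check{\Omega}=0$. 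So the representative you describe — the double cover of the real segment $[t_-,t_+]$ — is the correct class, and your argument is complete. (Minor terminology: you call this a ``loop along the real axis around this pair''; more precisely, the image of $\partial a$ under $\mathrm{pr}_2$ is the segment $[t_-,t_+]$ traversed once, and $\partial a$ itself is the double cover of it in the elliptic fibre.)
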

\begin{proof}
We will prove the former statement. The proof of the latter statement is similar. 
It suffices to prove that \(x_{\partial a}=0\); in other words, we have to show that
\begin{equation}
\label{lem:eq:to-prove}
\int_{a(q)} \check{\Omega}\in \mathbb{R}
\end{equation}
where \(a(q)\) is the vanishing thimble from \(O\) to \(q\in \overline{OA_{-}}\).
From the proof of Lemma \ref{lem:orientation-a}, one can
directly see that \eqref{lem:eq:to-prove} holds. However, we shall
give a more conceptual proof which will be useful later.

We can choose \(a\) such that the image of \(\partial a\) (over \(q\in\overline{OA_{-}}\))
under the projection 
\(\mathrm{pr}_{2}\) in diagram \eqref{dia:cover-ramifications} is given by 
a path \(\gamma\) connecting two ramification points \((2+q\pm\sqrt{q^{2}+4q})/2\)
and passing through the positive real axis \(\{t_{2}\in\mathbb{R}_{+}\}\).
We may also assume that \(\gamma\) is invariant under complex conjugation as well
(cf.~\textsc{Figure}~\ref{fig:vanishing-cycle-a}).
\begin{figure}[ht]
\includegraphics[scale=1]{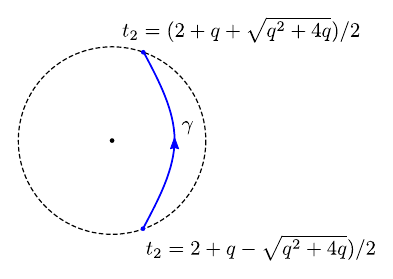}
\caption{The image of \(\partial a(q)\) under the projection \(\mathrm{pr}_{2}\).}
\label{fig:vanishing-cycle-a}
\end{figure}
Now we have
\begin{equation}
\label{eq:lemma-want-to-show}
\int_{\partial a} \iota_{\partial/\partial q}\check{\Omega}=
2\int_{\gamma}
\mathrm{i}\cdot\frac{\mathrm{d}t_{2}}{\sqrt{(t_{2}^{2}+1-qt_{2})^{2}-4t_{2}^{2}}}.
\end{equation}
%We remark that the sign \(\pm\) here is fixed by the item (1).

Let \(\gamma_{+}:=\gamma\cap\{\operatorname{Im}t_{2}\ge 0\}\) and
\(\gamma_{-}:=\gamma\cap\{\operatorname{Im}t_{2}\le 0\}\).
Notice that \(\bar{\gamma}_{+}=-\gamma_{-}\) (orientation reversing). We can write
\begin{align*}
\int_{\gamma}&\frac{\mathrm{d}t_{2}}{\sqrt{(t_{2}^{2}+1-qt_{2})^{2}-4t_{2}^{2}}}\\
&=
\int_{\gamma_{+}}\frac{\mathrm{d}t_{2}}{\sqrt{(t_{2}^{2}+1-qt_{2})^{2}-4t_{2}^{2}}}+
\int_{\gamma_{-}}\frac{\mathrm{d}t_{2}}{\sqrt{(t_{2}^{2}+1-qt_{2})^{2}-4t_{2}^{2}}}\\
&=\int_{\gamma_{+}}\frac{\mathrm{d}t_{2}}{\sqrt{(t_{2}^{2}+1-qt_{2})^{2}-4t_{2}^{2}}}-
\int_{\gamma_{+}}\frac{\mathrm{d}\bar{t}_{2}}{\sqrt{(\bar{t}_{2}^{2}+1-q\bar{t}_{2})^{2}-4\bar{t}_{2}^{2}}}
\in\mathrm{i}\cdot\mathbb{R}.
\end{align*}
It follows from \eqref{eq:lemma-want-to-show} that
\begin{equation*}
\int_{\partial a} \iota_{\partial/\partial q}\check{\Omega}\in \mathbb{R}
\end{equation*}
which implies 
\begin{equation*}
\int_{a(q)} \check{\Omega}=
-\int_{0}^{q}\int_{\partial a} (\iota_{\partial/\partial q}\check{\Omega})\mathrm{d}q\in \mathbb{R}
\end{equation*}
as desired.
\end{proof}

Let us examine the branch cut 
when \(q=\mathrm{i}\xi\in \mathrm{i}\cdot\mathbb{R}_{+}\). 
We observe that 
the set \(\{t_{2}\in\mathbb{C}^{\ast}~|~(t_{2}^{2}+1-\mathrm{i} \xi t_{2})^{2} - 4t_{2}^{2} \le 0\}\)
is invariant under \(t_{2}\mapsto -\bar{t}_{2}\). Moreover,
\(\{t_{2}\in\mathbb{C}^{\ast}~|~(t_{2}^{2}+1-\mathrm{i} \xi t_{2})^{2} - 4t_{2}^{2} \le 0\}\cap 
\mathrm{i}\cdot\mathbb{R}_{+}=\emptyset\). Indeed, if \(t_{2}=\mathrm{i}x\) with \(x\in\mathbb{R}_{+}\),
\begin{align*}
(t_{2}^{2}+1-\mathrm{i} \xi t_{2})^{2} - 4t_{2}^{2}=
(-x^{2}+1+\xi x)^{2} + 4x^{2}>0.
\end{align*}

The following lemma shows that $\mathrm{i}\cdot\mathbb{R}_+$ is an affine ray. 
\begin{lemma} 
\label{branch cut from O}
Recall that the reference point is the origin \(O\).
We have 
\begin{equation*}
x_{\partial(-a+b)}(q)=0~\mbox{for}~q\in \mathrm{i}\cdot\mathbb{R}_+.
\end{equation*} 	
\end{lemma}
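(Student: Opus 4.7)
The plan is to adapt the symmetry argument used in the conceptual proof of Lemma \ref{lemma:affine-line-OA}, this time employing the involution $\sigma\colon t_{2}\mapsto -\bar{t}_{2}$ of the $t_{2}$-plane (reflection across the imaginary axis). For $q=\mathrm{i}\xi\in\mathrm{i}\cdot\mathbb{R}_{+}$, the polynomial $P(t_{2},q):=(t_{2}^{2}+1-qt_{2})^{2}-4t_{2}^{2}$ satisfies $P(-\bar{t}_{2},q)=\overline{P(t_{2},q)}$, as already noted in the paragraph preceding the statement. Consequently $\sigma$ preserves the branch locus in the $t_{2}$-plane and swaps the two ``red'' ramifications (those collapsing to $t_{2}=-1$ as $q\to 0$) with the two ``blue'' ramifications (collapsing to $t_{2}=+1$).

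First I would parameterize the base path by $q(s)=\mathrm{i}s$ with $s\in[0,\xi]$, so that $dq=\mathrm{i}\,ds$. The tube integral decomposes as
\begin{equation*}
\int_{C_{\partial(-a+b)}}\check{\Omega} \;=\; -\int_{0}^{\xi} I(s)\,\mathrm{i}\,ds,\qquad I(s):=\int_{\partial(-a+b)(q(s))}\iota_{\partial/\partial q}\check{\Omega}.
\end{equation*}
Since $x_{\partial(-a+b)}(q_{0})=\int_{C_{\partial(-a+b)}}\operatorname{Im}\check{\Omega}$, it suffices to show $I(s)\in\mathrm{i}\cdot\mathbb{R}$ for every $s\in(0,\xi]$, which forces the right-hand side to be real and hence its imaginary part to vanish.

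Next I would represent $\partial(-a+b)$ in the fiber $W^{-1}(q)$ by the lift of an arc $\gamma$ in the $t_{2}$-plane that joins a red ramification in $\{\operatorname{Re}t_{2}<0\}$ to its $\sigma$-image, a blue ramification in $\{\operatorname{Re}t_{2}>0\}$, chosen symmetric about the imaginary axis. Such $\gamma$ is set-wise $\sigma$-invariant but orientation-reversed: $\sigma(\gamma)=-\gamma$. Using $\sigma^{*}dt_{2}=-d\bar{t}_{2}$ together with $\sqrt{\bar{z}}=\overline{\sqrt{z}}$ for our branch of the square root (cut along the non-positive real axis), the one-form $\omega:=\mathrm{i}\,dt_{2}/\sqrt{P(t_{2},q)}$ entering $\iota_{\partial/\partial q}\check{\Omega}$ satisfies $\sigma^{*}\omega=\overline{\omega}$, so
\begin{equation*}
\overline{I(s)} \;=\; \int_{\gamma}\overline{\omega} \;=\; \int_{\gamma}\sigma^{*}\omega \;=\; \int_{\sigma(\gamma)}\omega \;=\; -\int_{\gamma}\omega \;=\; -I(s),
\end{equation*}
giving $I(s)\in\mathrm{i}\cdot\mathbb{R}$ as required.

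The hard part is the geometric identification in the preceding paragraph. One must verify that a $\sigma$-symmetric arc joining a red ramification to a blue ramification indeed represents the class $\partial(-a+b)$ (not $\partial(a+b)$, $\partial(a-b)$, or some other linear combination of $\partial a$ and $\partial b$), and one must track whether the lift to $W^{-1}(q)$ exchanges sheets whenever $\gamma$ meets the branch cut, in the same spirit as the explicit sheet-by-sheet parameterization of $\partial b$ carried out earlier in this section via $\gamma_{\partial b}$. Once the correct representative is pinned down, the symmetry calculation above delivers the claim immediately.
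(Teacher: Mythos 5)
Your symmetry computation at the heart of the argument is correct: for $q\in\mathrm{i}\cdot\mathbb{R}_{+}$ the reflection $\sigma(t_{2})=-\bar{t}_{2}$ satisfies $P(-\bar{t}_{2},q)=\overline{P(t_{2},q)}$, and consequently (with the chosen branch cut avoiding $\mathrm{i}\cdot\mathbb{R}_{+}$) the one-form $\omega=\mathrm{i}\,\mathrm{d}t_{2}/\sqrt{P}$ pulls back to its complex conjugate, so that integrating over any $\sigma$-anti-invariant cycle yields a purely imaginary number. This is exactly the mechanism the paper exploits.

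However, you hang the argument on a geometric identification that you yourself flag as ``the hard part'' and never establish, and this is where your proposal genuinely diverges from the paper --- to its disadvantage. You propose to represent $\partial(-a+b)$ by the \emph{lift of a single arc} in the $t_{2}$-plane joining a ``red'' ramification to its $\sigma$-image (a ``blue'' ramification). But the natural representative of $\partial(-a+b)$ is not a single slit: it is the formal combination of the two slits $\gamma_{1}$ and $\gamma_{2}$ over which $\partial a$ and $\partial b$ respectively are supported, and the paper works directly with the closed curve $\gamma_{2}-\gamma_{1}$. Whether the cycle class over your proposed red-to-blue slit agrees with $-\partial a+\partial b$ (and not, say, $\partial a+\partial b$ or a multiple thereof) depends on the homology relations on the double cover and on the monodromy transporting $\partial b$ from near $A_{-}$ to the imaginary axis; nothing in your sketch pins this down, and it is not a formality one can wave away. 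The paper's proof sidesteps the issue entirely: it takes the natural closed representative $\gamma_{2}-\gamma_{1}$, deforms it to be symmetric about the imaginary axis, and then splits it into the two halves $\gamma_{\pm}$ determined by the sign of $\operatorname{Re}t_{2}$. The symmetry relation $\gamma_{-}=-\bar{\gamma}_{+}$ (with reversed orientation) then gives the same cancellation you are after, without ever needing to decide which pairs of branch points an equivalent single slit would join. To close the gap in your version, you would need either to prove the single-arc representation directly (tracking sheet exchanges and monodromy as you note), or, more economically, replace your $\gamma$ with the paper's $\gamma_{+}$, the real-part-nonnegative half of $\gamma_{2}-\gamma_{1}$, for which $\sigma(\gamma_{+})=\gamma_{-}$ holds by construction and the remaining computation is identical to yours.
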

\begin{proof}
Denote by \(\gamma_{1}\) and \(\gamma_{2}\)
the image of \(\partial a\) and \(\partial b\) 
under the projection \(\mathrm{pr}_{2}\) in \eqref{dia:cover-ramifications}.
For \(q=\mathrm{i}\cdot \xi
\in\mathrm{i}\cdot\mathbb{R}_{+}\), we may assume the \(\gamma_{i}\) are smooth curves 
connecting two out of four ramifications that collapse to 
one point when \(q\to 0\) 
(cf.~\textsc{Figure}~\ref{fig:ramifications-imaginary}). 
By our choice of orientations, we see
that the image of \(\gamma_{2}-\gamma_{1}\) is a closed curve on \(\mathbb{C}^{\ast}\).
As in the proof of Lemma \ref{lemma:affine-line-OA}, 
we shall compute an integral of a holomorphic function
over \(\gamma_{2}-\gamma_{1}\). We can deform \(\gamma_{2}\) a bit and assume that 
\(\gamma_{2}-\gamma_{1}\) is symmetric with respect to the imaginary axis.

Write \(\gamma_{2}-\gamma_{1}=\gamma_{+}\cup\gamma_{-}\) where
\begin{align*}
\gamma_{+}:=\{p\in \gamma_{2}-\gamma_{1}~|~\operatorname{Re}p\ge 0\}~\mbox{and}
~\gamma_{-}:=\{p\in \gamma_{2}-\gamma_{1}~|~\operatorname{Re}p\le 0\}.
\end{align*}
We see that \(\gamma_{-}\) and \(-\bar{\gamma}_{+}\) are set-theoretically symmetric 
with respect to the imaginary axis, but the orientation is reversed.

The integrand is given by 
\begin{equation*}
\iota_{\mathrm{i}\partial/\partial \xi}\check{\Omega}=\pm
\frac{\mathrm{d}t_{2}}{\sqrt{(t_{2}^{2}+1-qt_{2})^{2}-4t_{2}^{2}}}=:F(t_{2})
\end{equation*}
with \(q\in\mathrm{i}\cdot
\mathbb{R}_{+}\). 
Since the branch cut does not intersect \(\mathrm{i}\cdot\mathbb{R}_{+}\), 
the integrand \(F(t_{2})\)
satisfies \(F(-\bar{t}_{2})=-F(\bar{t}_{2})\).
Now for such a function \(F(t_{2})\),
\begin{align*}
\int_{\gamma_{+}} F(t_{2})+\int_{\gamma_{-}} F(t_{2})
=\int_{\gamma_{+}} F(t_{2})+\int_{\gamma_{+}} 
F(\bar{t}_{2})\in\mathbb{R}
\end{align*}
which implies the result.
\begin{figure}[ht]
\includegraphics[scale=1]{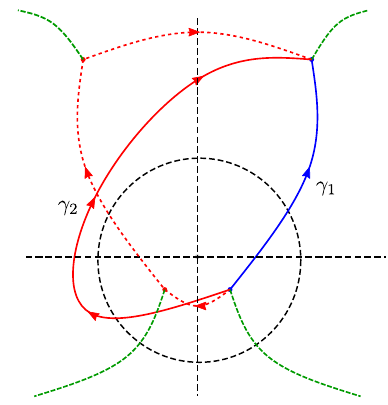}
\caption{}
\label{fig:ramifications-imaginary}
\end{figure}
\end{proof}

Next we will choose the branch cut of $A_{\pm}$ to be the affine 
line segments $\overline{\infty A_{\pm}}$ contained in $\{\mbox{Im}(q)=0\}$ as in 
\textsc{Figure}~\ref{fig:comparison}. 
If we move $q$ across $\overline{\infty A_{-}}$ from $\mbox{Im}(q)>0$ to $\mbox{Im}(q)<0$, 
then the counter-clockwise monodromy around $A_{\pm}$ is given by 
  \begin{align*}
       \partial a &\mapsto \partial a+ 2\partial b \\ 
       \partial b &\mapsto \partial b.
  \end{align*}
Therefore, the corresponding clockwise affine transformation is 
\(M_{\pm}\) (cf.~\textsc{Figure}~\ref{fig:comparison}).

\begin{lemma} \label{affine cut 2}
$x_{\partial(-a+b)}(q)-x_{\partial (-a+b)}(A_-)=0$ for $q<-4$. 	
\end{lemma}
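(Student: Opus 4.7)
The plan is to establish that $\int_{\partial(-a+b)(q)}\iota_{\partial/\partial q}\check{\Omega}\in\mathbb{R}$ for all $q\in(-\infty,-4)$; integrating along the ray from $A_-$ to $q$ then gives the lemma, since this integral equals $\partial x_{\partial(-a+b)}/\partial(\operatorname{Re} q)$.

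The main tools will be Schwarz reflection and the Picard--Lefschetz monodromy at $A_-$. By (the proof of) Lemma \ref{lemma:affine-line-OA}, the period $I_a(q):=\int_{\partial a(q)}\iota_{\partial/\partial q}\check{\Omega}$ is real on the open segment $(-4,0)$, and it depends holomorphically on $q$ off the critical values $\{O,A_\pm\}$. Schwarz reflection across $(-4,0)$ then yields $I_a^+(q)=\overline{I_a^-(q)}$ for $q\in(-\infty,-4)$ real, where $I_a^\pm$ denote the analytic continuations via the upper and lower half planes (one checks that the lower-path continuation agrees with the Schwarz-extended value by a routine homotopy argument in $\mathbb{C}\setminus\{O,A_\pm\}$).

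Next I would invoke Picard--Lefschetz at $A_-$: with vanishing cycle $\partial b$ and intersection number $\langle\partial a,\partial b\rangle=-2$, the counterclockwise monodromy acts by $\partial a\mapsto\partial a+2\partial b$. Since the loop obtained by going upper from $O$ to $q$ and returning via the lower path encloses $A_-$ exactly once counterclockwise, one deduces $\partial a_+-\partial a_-=2\partial b$, and hence $I_a^+(q)-I_a^-(q)=2I_b(q)$, where $I_b(q):=\int_{\partial b(q)}\iota_{\partial/\partial q}\check{\Omega}$. Combining with Schwarz, $2\mathrm{i}\operatorname{Im} I_a^+(q)=2I_b(q)$, i.e.\ $\operatorname{Im} I_a^+(q)=-\mathrm{i}\,I_b(q)$. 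A direct computation analogous to Lemma \ref{lemma:affine-line-OA} shows that for $q<-4$ real the cycle $\partial b(q)$ projects to a loop around the inner interval $[r_2,r_3]$ (where $P(t_2)=(t_2^2+1-qt_2)^2-4t_2^2>0$), giving
\[ I_b(q)=2\mathrm{i}\int_{r_2}^{r_3}\frac{\mathrm{d}t_2}{\sqrt{P(t_2)}}\in\mathrm{i}\cdot\mathbb{R}. \]
Writing $I_b(q)=\mathrm{i}\,\tilde{J}(q)$ with $\tilde{J}\in\mathbb{R}$, one obtains $\operatorname{Im} I_a^+(q)=\tilde{J}(q)=\operatorname{Im} I_b(q)$, which gives $\operatorname{Im}\bigl(-I_a^+(q)+I_b(q)\bigr)=0$ as desired.

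The main obstacle is correctly pinning down the sign conventions at each step: one must confirm that the upper-then-reverse-lower loop encircles $A_-$ counterclockwise (so Picard--Lefschetz contributes $+2\partial b$ rather than $-2\partial b$), that the orientation of $\partial b(q)$ obtained by parallel transport from $A_-$ in the upper half plane is compatible with the sign convention fixed in item (2) of the setup, and that the Schwarz extension across $(-4,0)$ indeed coincides with the lower-half-plane continuation. Given the explicit hyperelliptic description of the fibration and the conventions already established in the section, each of these verifications amounts to careful bookkeeping rather than a substantive new argument.
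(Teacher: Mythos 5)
Your proposal is correct in substance, but it takes a genuinely different route from the paper's proof. The paper argues exactly as in Lemma~\ref{branch cut from O}: it works with the image $\gamma_2-\gamma_1$ of $\partial(-a+b)$ in the $t_2$-plane, deforms it to be invariant under complex conjugation (which is possible since $q\in\mathbb{R}$ and the branch locus is real), and then uses the conjugation-equivariance $F(\bar t_2)=-\bar F(t_2)$ of the hyperelliptic integrand to conclude that the period is real. Your argument instead lifts the conjugation symmetry one level up, to the period function itself: Schwarz-reflect $I_a(q)$ across $(-4,0)$ (where it is real by Lemma~\ref{lemma:affine-line-OA}), compare $I_a^\pm$ for $q<-4$ via the Picard--Lefschetz formula at $A_-$ to get $I_a^+-I_a^-=2I_b$, and close with the direct verification that $I_b(q)\in\mathrm{i}\mathbb{R}$ (because $\partial b(q)$ projects to the segment $[r_2,r_3]$ where $P>0$, so $\sqrt{P}$ is real and only the overall $\mathrm{i}$ in $\check\Omega$ survives). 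The paper's argument is more self-contained and uniform with the surrounding lemmas; yours trades a contour deformation for monodromy bookkeeping and explains the constraint more globally, making visible how the three lemmas (\ref{lemma:affine-line-OA}, \ref{branch cut from O}, \ref{affine cut 2}) fit into a single Picard--Lefschetz picture.

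On the flagged sign issues: they do come out the right way, so the proof goes through. The loop ``upper from $O$ to $q$, return via lower'' (based at $O$) does encircle $A_-$ counterclockwise, and with the paper's convention $M(\partial a)=\partial a+2\partial b$ for counterclockwise monodromy one gets $\partial a_+-\partial a_-=2\partial b$, hence $2\mathrm{i}\operatorname{Im}I_a^+=2I_b$. Also note that the branch cut for $\sqrt{\cdot}$ placed on $\mathbb{R}_-$ makes $\sqrt{P}>0$ on $(r_2,r_3)\subset\mathbb{R}_-$ (since $P>0$ there), confirming $I_b\in\mathrm{i}\mathbb{R}$. One small wrinkle you should make explicit if you write this up: the Schwarz extension of $I_a^+$ from the upper half-plane across $(-4,0)$ agrees with $I_a^-$ because both are holomorphic on $\{\operatorname{Im}q<0\}$ and restrict to the same germ on $(-4,0)$; since the critical values $0,\pm4$ all lie on $\mathbb{R}$, the lower half-plane is simply connected and there is no ambiguity.
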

\begin{proof}
The argument used in Lemma \ref{branch cut from O} applies to this case.
Denote again by \(\gamma_{1}\) and \(\gamma_{2}\)
the image of \(\partial a\) and \(\partial b\) on the \(t_{2}\)-plane
under the projection \(\mathrm{pr}_{2}\) in \eqref{dia:cover-ramifications}.
For \(q<-4\), we may assume the \(\gamma_{i}\) are smooth curves 
connecting two out of four ramifications that collapse to 
one point when \(q\to -4\) 
(cf.~\textsc{Figure}~\ref{fig:ramifications-negative-real}). 
We can deform \(\gamma_{1}\) a bit and assume that 
\(\gamma_{2}-\gamma_{1}\) is symmetric with respect to the real axis.

Write \(\gamma_{2}-\gamma_{1}=\gamma_{+}\cup\gamma_{-}\) where
\begin{align*}
\gamma_{+}&:=\{p\in \gamma_{2}-\gamma_{1}~|~\operatorname{Im}p\ge 0\},~\mbox{and}\\
\gamma_{-}&:=\{p\in \gamma_{2}-\gamma_{1}~|~\operatorname{Im}p\le 0\}.
\end{align*}
We see that in the present case 
\(\gamma_{-}\) and \(\bar{\gamma}_{+}\) are set-theoretically symmetric 
with respect to the real axis, but the orientation is reversed.

Since \(\gamma_{2}-\gamma_{1}\) does not intersect the branch cut on the real axis,
it follows that the integrand \(F(t_{2})\) obeys the rule \(F(\bar{t}_{2})=-\bar{F}(t_{2})\),
and we have
\begin{align*}
\int_{\gamma_{+}} F(t_{2})+\int_{\gamma_{-}} F(t_{2})
=\int_{\gamma_{+}} F(t_{2})+\int_{\gamma_{+}} 
\bar{F}(t_{2})\in\mathbb{R}.
\end{align*}

\begin{figure}[ht]
\includegraphics[scale=1]{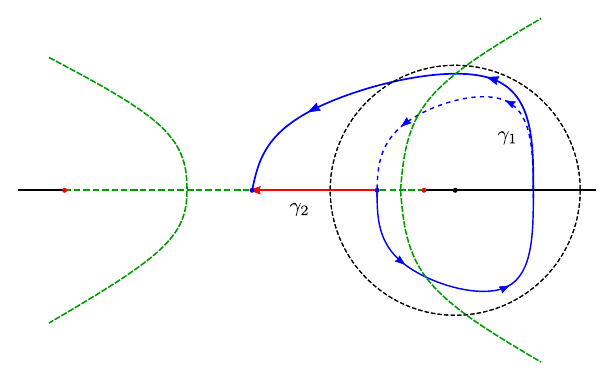}
\caption{\(\gamma_{1}\) (the blue solid curve) is the image of \(\partial a\), and 
\(\gamma_{2}\) (the red solid curve) is the image of \(\partial b\). 
We can deform the upper part of \(\gamma_{1}\) into the union of 
the blue dashed curve and \(\gamma_{2}\).}
\label{fig:ramifications-negative-real}
\end{figure}
\end{proof}
The proof of the following lemma is 
deferred to Appendix \ref{app-proof-lemma}.
\begin{lemma}\label{initial ray b}
  The affine line $l_{A_-,\partial b}$ intersects $l_{O,\partial (-a+b)}$.
\end{lemma}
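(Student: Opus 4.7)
By Lemma \ref{branch cut from O}, $l_{O,\partial(-a+b)}$ is the positive imaginary axis $\mathrm{i}\cdot\mathbb{R}_+$ in the $q$-plane, along which $x_{\partial a}(q)=x_{\partial b}(q)$. The line $l_{A_-,\partial b}$ has tangent cycle $\partial b$; under the identification $T_u B\cong H^1(L_u,\mathbb{R})$ coming from Lagrangian deformation theory together with Poincar\'e duality on the torus fibre, the tangent vector corresponding to $\partial b$ has components $(\partial a\cdot\partial b,\partial b\cdot\partial b)=(-2,0)$ in the coordinates $(x_{\partial a},x_{\partial b})$. Hence $l_{A_-,\partial b}$ is the horizontal line $\{x_{\partial b}=x_{\partial b}(A_-)\}$ in these affine coordinates, and in the affine plane the two affine lines meet uniquely at the point $(x_{\partial b}(A_-),x_{\partial b}(A_-))$. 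The problem thus reduces to producing $\xi_0>0$ with $x_{\partial b}(\mathrm{i}\xi_0)=x_{\partial b}(A_-)$.

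To produce such a $\xi_0$, my plan is to apply the intermediate value theorem to the continuous function $\xi\mapsto x_{\partial b}(\mathrm{i}\xi)$ on $(0,\infty)$. At $\xi\to 0^+$ the value tends to $0$ by the reference normalization at $O$. The value $x_{\partial b}(A_-)$ is nonzero with a definite sign, obtained by applying the integration identities in the proof of Lemma \ref{lem:orientation-a} (together with the orientation convention stated just above that lemma) to the thimble $b$ on $\overline{A_-O}$. For the upper endpoint, the fibre at $q=\infty$ is an $I_8$ singular fibre with maximally unipotent monodromy, so some period of $\check{\Omega}$ acquires logarithmic growth as $\xi\to\infty$; one then checks via the $t_2\mapsto -\overline{t_2}$ symmetry of the integrand (as used in Lemma \ref{branch cut from O}) that this growth manifests specifically as $x_{\partial b}(\mathrm{i}\xi)\to\pm\infty$ rather than being absorbed into the monodromy-invariant direction.

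The intermediate value theorem then yields the required $\xi_0$, and $\mathrm{i}\xi_0$ lies on both $l_{A_-,\partial b}$ and $l_{O,\partial(-a+b)}$, giving the intersection point.

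The main obstacle I anticipate is rigorously verifying the divergent asymptotic $x_{\partial b}(\mathrm{i}\xi)\to\pm\infty$ as $\xi\to\infty$. A cleaner alternative that bypasses the large-$\xi$ analysis is to prove monotonicity of $x_{\partial b}(\mathrm{i}\xi)$ via a sign computation on
\begin{equation*}
\frac{\mathrm{d}}{\mathrm{d}\xi}x_{\partial b}(\mathrm{i}\xi)=\operatorname{Re}\int_{\partial b(\mathrm{i}\xi)}\frac{\mathrm{i}\,\mathrm{d}t_2}{P(\mathrm{i}\xi,t_2)},
\end{equation*}
using an integrand-symmetry argument parallel to those of Lemmas \ref{branch cut from O} and \ref{affine cut 2}, and then to exhibit a single explicit $\xi_1$ where $x_{\partial b}(\mathrm{i}\xi_1)$ can be bounded past $x_{\partial b}(A_-)$.
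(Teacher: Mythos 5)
Your reduction is the same one the paper performs, and it is correct: by Lemma \ref{branch cut from O} the ray $l_{O,\partial(-a+b)}$ is $\mathrm{i}\,\mathbb{R}_{+}$, on which $x_{\partial a}=x_{\partial b}$; the line $l_{A_-,\partial b}$ is the level set $\{x_{\partial b}=x_{\partial b}(A_-)\}$; the target value $x_{\partial b}(A_-)$ is strictly positive (from the orientation convention for $b$ on $\overline{A_-O}$ and the vanishing of $x_{\partial b}$ at the reference point $O$); so the lemma reduces, exactly as you say, to producing $\xi_{0}>0$ with $x_{\partial a}(\mathrm{i}\xi_{0})=x_{\partial b}(A_-)$ and applying the intermediate value theorem.

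The genuine gap --- which you flag yourself --- is the divergence $x_{\partial a}(\mathrm{i}\xi)\to +\infty$, \emph{with that particular sign}. Your first route (the $I_{8}$ fibre at $\infty$ forces a $\log$-growing period) is not a proof: it does not show that $\partial a$ has a component transverse to the $I_{8}$ vanishing cycle (if $\partial a$ were that invariant cycle, its periods would stay bounded), and the $t_{2}\mapsto-\bar t_{2}$ symmetry only yields $x_{\partial(-a+b)}\equiv 0$ on the axis, which says nothing about the sign of $x_{\partial a}(\mathrm{i}\xi)$ --- precisely the information the intermediate value theorem needs. Your second route (monotonicity plus one explicit evaluation past the target) is plausible in outline but neither ingredient is actually established. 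The paper's proof of this lemma (Appendix \ref{app-proof-lemma}) is essentially nothing but that missing estimate: after deforming the image of $\partial a$ in the $t_{2}$-plane into two circular arcs through the ramification points joined by a segment of $\mathbb{R}_{+}$, it proves
\begin{equation*}
\frac{\mathrm{d}}{\mathrm{d}\xi}\,x_{\partial a}(\mathrm{i}\xi)
=\operatorname{Im}\int_{-\partial a(\mathrm{i}\xi)}\iota_{\mathrm{i}\partial/\partial q}\check\Omega
\ \geq\ \frac{\kappa}{\xi}\qquad(\xi\gg 0)
\end{equation*}
for an explicit $\kappa>0$; integrating this delivers the sign and the logarithmic divergence at once. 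That estimate \emph{is} the proof, and it is not present in your proposal.
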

Now we will choose $\overline{\infty A_{\pm}}$, which is an 
affine ray by Lemma \ref{affine cut 2}, to be the branch cuts from $A_{\pm}$. 
 The monodromy $M_{\pm}$ coincides with the gluing transformation of cuts around $A'_{\pm}$ in $B'_{CPS}$. Thus, there exists an affine isomorphism $\Psi$ between a punctured neighborhood of $A_{\pm}$ in $B$ and a punctured neighborhood of $A'_{\pm} \in B'_{CPS}$ by Remark \ref{germ}. 
 With the scaling of the affine coordinates chosen appropriately, the affine isomorphism $\Psi$ can be chosen such that it can be extended to a neighborhood of $\overline{OA_-}$ in $B$. Moreover, by Lemma \ref{lemma:affine-line-OA}, the extended isomorphism will identify a neighborhood of $\overline{OA_-}$ in $B$ with a that of $\overline{O'A'_-}$ in $B'_{CPS}$ 
 with $\Psi(\overline{OA_-})=\overline{O'A'_-}$.
 Let $M_1,M_2$ be as in \textsc{Figure} \ref{fig:comparison}. Then $M=M_1M_2$ is the monodromy around $O$. We will choose two branch cuts from $O$ to be $\mathrm{i}\mathbb{R}_{\pm}$ with the gluing transformation $M_1,M_2$. 
 From Lemma \ref{branch cut from O} and Remark \ref{germ}, we can again extend the affine isomorphism $\Psi$ to an affine isomorphism 
 identifying a neighborhood of $O$ with a neighborhood of $O'$ such that $\Psi(\mathrm{i}\mathbb{R}_+)$ is the branch cut from $O'$ in \textsc{Figure} \ref{fig:comparison}. 
 
 Next let us denote the intersection of the affine lines $l_{A_-,\partial b}$ and $l_{O,\partial(-a+b)}$ by $C$. Then $C'=\Psi(C)=(0,1)$. Moreover, $\Psi$ extends to an affine isomorphism from the triangle $OA_-C\subseteq B$ to $O'A_-C'\subseteq B_{CPS}$ by Lemma \ref{lemma:affine-line-OA}, Lemma \ref{branch cut from O}, and Lemma \ref{initial ray b}. Again Lemma \ref{branch cut from O}, Lemma \ref{affine cut 2}, and Lemma \ref{initial ray b} imply that such an affine isomorphism can be extended from the unbounded region in $B$ surrounded by $l_{A_-,\partial (-a+b)}, l_{O,-\partial a}, l_{O,\partial (-a+b)}$ to the corresponding  unbounded region in $B_{CPS}$ as shown in \textsc{Figure} \ref{fig:comparison}. By the symmetry $q\rightarrow \bar{q}$ and $q\rightarrow -\bar{q}$, the affine isomorphism extends to $\Psi\colon B\cong B'_{CPS}$.

 \begin{remark}
 	 Although the integral affine structures with singularities on $B$ and $B_{CPS}$ are different (even up to moving worms), the authors expect that the corresponding tropical counting of the $\mathbb{A}^1$-curves and the product structures of the algebra generated by theta functions are the same. The authors will leave it for future work. 
 \end{remark}

 \begin{remark}
 	It is worth noticing that the Mordell--Weil group of $\check{Y}_e$ is $\mathbb{Z}_4$ \cite{MP} (see also \cite{SS}*{p.102}) and thus gives a $\mathbb{Z}_4$-action on $\check{Y}_e$ which descends to the identity on the base of the elliptic fibration. 
 \end{remark}

\section{Limiting Complex Affine Structure for del Pezzo Surface of Degree \texorpdfstring{$3$}{3} and \texorpdfstring{$4$}{4}} \label{sec: dP3dP4}
 In this section, we will describe the limiting complex affine structure for del Pezzo surface of degree $3$ and $4$. 
 
 We first deal the the case of del Pezzo surface of degree $3$ and recall the geometric setup: let $Y=\mathbb{P}^2$, $D$ is a smooth cubic surface and $X$ be the complement $X=Y\setminus D$. Then $X$ admits a special Lagrangian fibration with respect to $\omega_{TY},\Omega$ from Theorem \ref{CJL} and a suitable hyperK\"ahler rotation $\check{X}$ with hyperK\"ahler triple $(\check{\omega},\check{\Omega})$ can be compactified to a rational elliptic surface $\check{Y}_9$ with an $I_9$-fibre from Theorem \ref{theorem:CJL}. It is known that there exists a unique rational elliptic surface with an $I_9$-fibre with singular configuration $I_9I_1^3$. The authors then use the geometry of $\check{Y}_9$ to computed the complex affine structure of the special Lagrangian fibration on $X$. It is given by the affine subspace of $\mathbb{R}^2$ as shown in Figure \ref{fig: P2} and then glue the corresponding edges with the prescribed linear transformations.
   \begin{figure}
 	\centering
 	\includegraphics[scale=0.8]{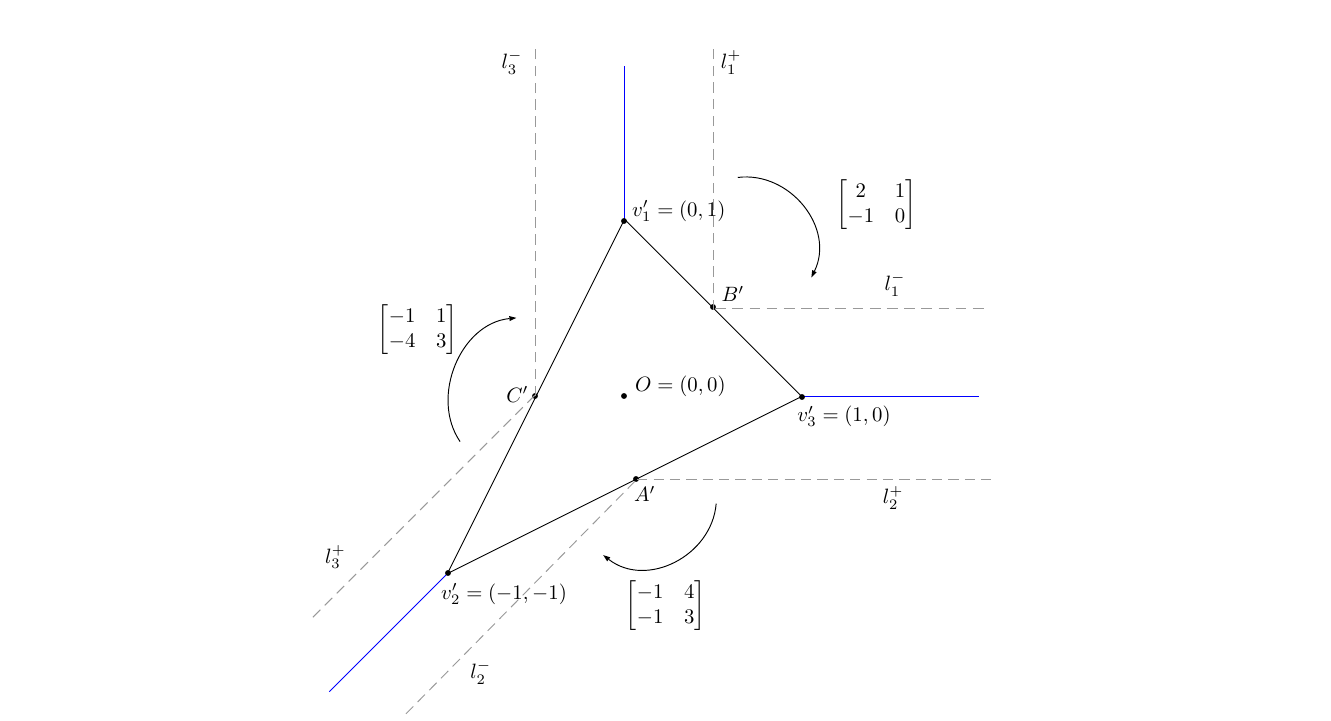}
 	\caption{The limiting complex affine structure of special Lagrangian fibration in $\mathbf{P}^2$}
 	\label{fig: P2}
 \end{figure}
 It is proved that there exists an $\mathbb{Z}_3\oplus \mathbb{Z}_3$-action on $\check{X}$ preserving $(\check{\omega},\check{\Omega})$ in \cite{2020-Collins-Jacob-Lin-the-syz-mirror-symmetry-conjecture-for-del-pezzo-surfaces-and-rational-elliptic-surfaces}*{Corollary 5.11} and the proof therein. Denote $t_1,t_2$ be the local complex coordinates of $\check{Y}_9$, where $t_1$ is the fibre coordinate and $t_2$ is the base coordinate on $\mathbb{P}^1$.  
  The first copy of $\mathbb{Z}_3$ is the Mordell-Weil group of $\check{Y}_9$ and acts as translation by holomorphic sections. In other words, it is of the form 
  \begin{align*}
  	  t_1\mapsto t_1+\sigma(t_2), t_2\mapsto t_2,
  \end{align*} where $\sigma$ is the holomorphic section.   
   The second copy of $\mathbb{Z}_3$ preserves a holomorphic section of $\check{Y}_9$ and sends fibres to fibres. Thus, the second $\mathbb{Z}_3$-action descends to the base $\mathbb{P}^1$ of $\check{Y}_9$ and acts simply as the rotation 
   \begin{align*}
   	  t_1\mapsto t_1, t_2\mapsto e^{2\pi i/3} t_2.
   \end{align*} In particular, the fibres over $t_2=0,\infty$ are fixed. Let $r=(1,1)\in \mathbb{Z}_3\oplus \mathbb{Z}_3$ and denote $\langle r\rangle\cong \mathbb{Z}_3$ be the subgroup generated by $r$. Since both $\mathbb{Z}_3$-action is fibration preserving, $\check{Y}_9/\langle r\rangle$ still admits an elliptic fibration structure. The $I_9$-fibre of $\check{Y}_9$ reduces to an $I_3$-fibre after the quotient. The three $I_1$-fibres are identified to a single $I_1$-fibre. The fibre over $t_2=0$ is set-wisely fixed by the action of $\langle r\rangle$ and after the quotient becomes an orbifold $\mathbb{P}^1$ with three orbifold singularities locally modeled by $\mathbb{C}^2/\mathbb{Z}_3$. Thus, the minimal resolution $\check{Y}$ of $\check{Y}_9/\langle r\rangle$ is a crepant resolution. From the formula of anti-canonical divisor for ramified cover, for instance \cite{BPV}*{Lemma I.17.1}, we have $\check{Y}$ is a again a rational elliptic surface and its singular configuration is $IV^*I_3I_1$. We then have the following observations:
\begin{itemize}
	\item consider the del Pezzo surface of degree $3$ as a monotone symplectic manifold, then its mirror Landau-Ginzburg potential can be compactified to $\check{Y}$ from Theorem \ref{extremal-LG}. 
	\item The $\langle r\rangle$-action preserves $(\check{\omega},\check{\Omega})$ on $\check{X}$ implies that the $\langle r\rangle$-action preserves $(\omega,\Omega)$ on $X$ and descends to a $\mathbb{Z}_3$-action on the complex affine structure of $X$. 
	Therefore, the underlying space of $\check{X}\setminus \{t_2=0\}$, viewed as a subspace of $X$,  admits a special Lagrangian fibration with respect to $(\omega,\Omega)$. Moreover, the complex affine structure of this special Lagrangian fibration is the $\mathbb{Z}_3$-quotient of the complex affine structure of the special Lagrangian fibration in $X$. 
	\item Recall that the complex affine structure of the the limiting special Lagrangian fibration of del Pezzo surfaces of degree $3$ is determined by $\check{Y}$ by Theorem \ref{thm:period-point}. Since $\check{Y}\rightarrow \check{Y}_9/\langle r\rangle$ is a crepant resolution, the complex affine structure from the limiting special Lagrangian fibration in del Pezzo surface of degree $3$ is exactly the same as that of the special Lagrangian fibration in $\check{X}\setminus \{t_2=0\}$. 
\end{itemize} 
Now the $\mathbb{Z}_3$-action on the complex affine structure of the special Lagrangian fibration in $\mathbb{P}^2$ can be described as the linear transformation $\begin{pmatrix} 0 & 1 \\ -1 & -1 \end{pmatrix}$ restricting to the affine subspace in Figure \ref{fig: P2}. It is straight-forward to check that the transformation preserves the gluings listed Figure \ref{fig: P2}. 
Altogether, we reach the following conclusion: 
\begin{theorem}
	The limiting complex affine structure for del Pezzo surface of degree $3$ is a $\mathbb{Z}_3$ quotient of the complex affine structure of the special Lagrangian in $\mathbb{P}^2$. The $\mathbb{Z}_3$-action is generated by $\begin{pmatrix}0 & -1\\ 1 & -1 \end{pmatrix}$. Explicitly, it is given by the shaded affine subset of $\mathbb{R}^2$ in Figure \ref{fig: dP3} below with $\overline{OB'}$ identified with $\overline{OC'}$ and two vertical boundaries identified.
	 \begin{figure}[h]
		\centering
		\includegraphics[scale=0.3]{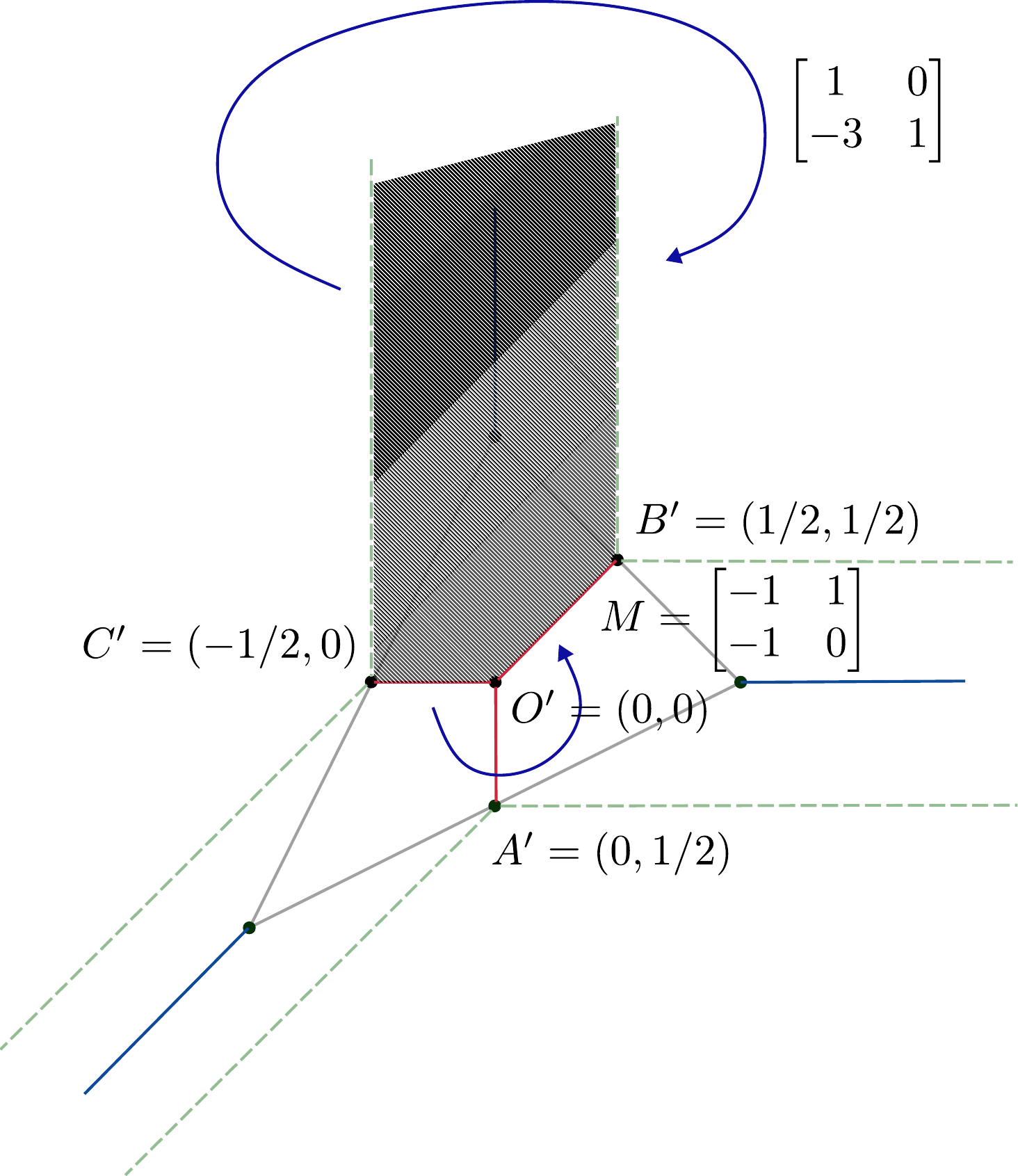}
		\caption{The limiting affine structure for del Pezzo surfaces of degree $3$.}
		\label{fig: dP3}
	\end{figure}
\end{theorem}	

Next we will use a similar argument to derive the limiting complex affine structure for the special Lagrangian fibration of del Pezzo surface of degree $4$. We will start from $Y=\mathbb{P}^1\times \mathbb{P}^1$, $D_t$ smooth anti-canonical divisors of $Y$ converging to an irreducible nodal anti-canonical divisor and $X_t=Y\setminus D_t$. Following the similar argument above, a suitable hyperK\"ahler rotation $\check{X}_t$ can be compactified to a rational elliptic surface $\check{Y}_{t}$ and $\check{Y}_t$ converging to a rational elliptic surface $\check{Y}_{8'}$ with singular configuration $I_8I_2I_1^2$. There exists a $\mathbb{Z}_2\oplus \mathbb{Z}_2$-action on $\check{Y}_{8'}$. The first copy of the $\mathbb{Z}_2$ is the fibrewise negation and the second copy descends to the base of the fibration. Again let $r=(1,1)\subseteq \mathbb{Z}_2\oplus \mathbb{Z}_2$ and consider the elliptic fibration $\check{Y}_{8'}/\langle r\rangle$. The $I_8$-fibre of $\check{Y}_{8'}$ reduces to an $I_4$-fibre, the two $I_1$-fibres are identified under the quotient and the two components of the $I_2$-fibre are set-wisely fixed, each with two fixed points. The minimal resolution $\check{Y}$ of $\check{Y}_{8'}/\langle r\rangle$ is the unique rational elliptic surface with singular configuration $I_1^*I_4I_1$, which is the compactification of the mirror Landau-Ginzburg potential of del Pezzo surface of degree $4$, viewed as a monotone symplectic manifold. Slightly different from the previous case, $r^*\check{\Omega}=-\check{\Omega}$. From the definition of the complex affine structure and the hyperK\"ahler rotation relation \eqref{eq:hyperkah-rotation}, this still implies that $r$ sends affine line to affine line with respect to the limiting complex affine structure of the special Lagrangian fibration in $\mathbb{P}^1\times \mathbb{P}^1$. This leads the following theorem. 
\begin{theorem}
		The limiting complex affine structure for del Pezzo surface of degree $4$ is a $\mathbb{Z}_2$ quotient of the limiting complex affine structure of the special Lagrangian in $\mathbb{P}^1\times \mathbb{P}^1$. Explicitly, it is given by the shaded affine subset of $\mathbb{R}^2$ in Figure \ref{fig: dP4} below with $l_1^+$ identified with $l_3^-$ and $l_4^+$ identified with $l_4^-$.
	\begin{figure}[h]
		\centering
		\includegraphics[scale=1]{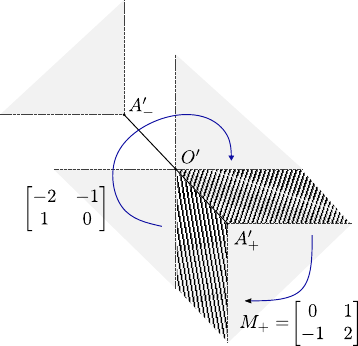}
		\caption{The limiting affine structure for del Pezzo surfaces of degree $4$.}
		\label{fig: dP4}
	\end{figure}
\end{theorem}

%
%One can define the scattering diagram $\mathfrak{D}_{lim}$ on $B$ following \cite{L?}. Similar to the case of $\mathbb{P}^2$ (see Section 7 \cite{L?}), if we identify $B$ with a subset of $\mathbb{R}^2$ as in Figure \ref{?}, the Euclidean distance to $q=0$ is strictly increasing along any ray $\mathfrak{d}\in \mathfrak{D}_{lim}$. In other words, every ray $\mathfrak{d}\in \mathfrak{D}_{lim}$ converges to infinity. 
%There are two kinds of rays $\mathfrak{d}$ in $\mathfrak{D}_{lim}$:
%Either $\mathfrak{d}$ is an affine ray converge to infinity in the monodromy invariant direction. In this case, it corresponds to a tropical $\mathbb{A}^1$-curve. Otherwise, $\mathfrak{d}$ spiral into infinity. In particular, any two rays in $\mathfrak{D}_{lim}$ intersects infinitely many times. 
%
%By definition, the initial rays of $\mathfrak{D}_{lim},\mathfrak{D}_{CPS}$ are in the one-to-one correspondence and the wall functions coincide up to parallel transport. 
%From the calculation in p. 33 \cite{GHKK}, the rays starting from $A_{\pm}$ are one-to-one corresponds to those scatter from the intersections of inital rays in $\mathfrak{D}_{CPS}$. Moreover, the corresponding wall functions coincide up to parallel transport as well. Then by induction on the number  
%
%Notice that the gluing transformations all preserve the pairing of charges and thus the wall functions are the same up to parallel transport by the Kontsevich-Soibelman wall-crossing lemma (Theorem 6 \cite{KS}). 

\section{Mirror construction for del Pezzo surfaces using immersed Lagrangians} \label{sec:Floer}

\subsection{Smoothing of $A_n$ singularities} \label{sec:An}

Smoothings and resolutions of $A_n$ singularities provide excellent examples of local Calabi-Yau manifolds.  The symplectic geometry of $A_n$ smoothings has been well studied by the early work of \cites{Thomas, ST}.  SYZ mirror symmetry for $A_n$ singularities was studied in \cites{LLW, Chan-An}.  Rigorously speaking, the previous construction concerns only about Floer theory of smooth SYZ fibers, and points that are mirror to the singular SYZ fibers are missing.
In this section, we use the method of \cites{CHL-glue, HKL} to glue in the deformation spaces of immersed nodal spheres in a smoothing of an $A_n$-singularity.  This fills in the corresponding punctures and completes the SYZ mirrors.

Consider a smoothing of $A_n$ surface singularities
$$ S = S^{(n)}=\left\{(X,Y,Z) \in \C^3: XY = \prod_{i=1}^{n+1} (Z- \epsilon_i) \right\} $$
where $\epsilon_1 < \epsilon_2 < \ldots < \epsilon_{n+1}$ are taken to be real numbers for simplicity.  It has a K\"ahler form restricted from $\C^3$.

We construct Lagrangians in $S$ using symplectic reduction.  The $\bS^1$-action $(X,Y,Z)\mapsto (e^{i\theta}X, e^{-i\theta}Y,Z)$ is Hamiltonian and has the moment map $\mu = |X|^2 - |Y|^2: \C^3 \to \R$.  The coordinate $Z$ is $\bS^1$-invariant and descends to the reduced space $S \sslash_a \bS^1$.  This gives an identification $S \sslash_a \bS^1 \cong \C$.  We will take the level to be $a=0$.  By virtue of the dimension, any curve in $\C$ corresponds to a Lagrangian in $S$.

Consider a simple loop $C$ in $\C$ which winds around all the points $\epsilon_1,\ldots,\epsilon_{n+1} \in \R$, and is invariant under complex conjugation.  It corresponds to a Lagrangian torus $$L_0 := Z^{-1}(C) \cap \mu^{-1}\{0\} \subset S.$$
For simplicity, let's apply a diffeomorphism $\rho$ on the base $\C$ that commutes with complex conjugation (and in particular preserves the real line) and is equal to identity outside a compact subset, such that $\rho(C)$ is a circle with center lying in the real line.

For each $i=1,\ldots,n+1$, we take a circle $C'_i$ that satisfies the following requirements.
\begin{enumerate}
	\item $C'_i$ passes through the point $\epsilon_i' = \rho(\epsilon_i)$.
	\item The center of $C'_i$ lies in the real line.
	\item $C'_i$ and $\rho(C)$ intersect at two points.
	\item The two strips bounded by $\rho(C)$ and $C'_i$ have the same symplectic area with respect to $(\rho^{-1})^*\omega_{\mathrm{red}}$ (where $\omega_{\mathrm{red}}$ denotes the reduced symplectic form).
\end{enumerate}
See Figure \ref{fig:An-Lag}.

\begin{figure}[h]
	\begin{center}
		\includegraphics[scale=0.5]{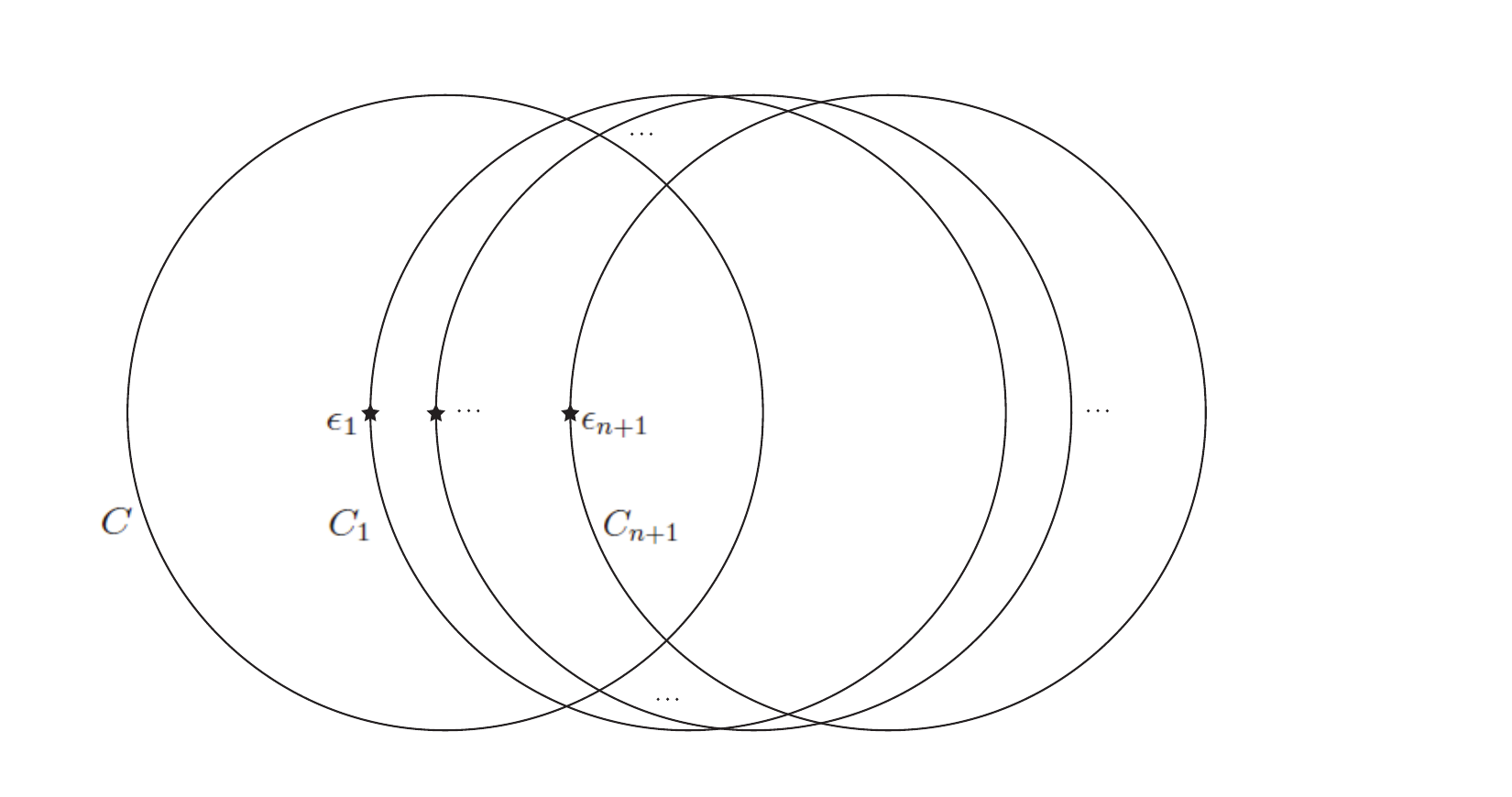}
		\caption{The images of the Lagrangians in the reduced space.}\label{fig:An-Lag}
	\end{center}
\end{figure}

\begin{lemma}
	For each $i=1,\ldots,n+1$, there exists a unique circle $C'_i$ that satisfies the above requirements.
\end{lemma}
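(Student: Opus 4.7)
The plan is to encode condition (4) as a single scalar equation in the real parameter $c$ labelling the centre of $C'_i$, and then to apply the intermediate value theorem together with strict monotonicity in $c$. Conditions (1)-(2) leave only the one-parameter family of circles $\Gamma(c)$ of centre $c\in\mathbb{R}$ and radius $|c-\epsilon_i|$. Set $\omega':=(\rho^{-1})^{\ast}\omega_{\mathrm{red}}$, let $\bar\Gamma(c)$ be the closed disk bounded by $\Gamma(c)$, and let $D_0$ be the closed disk bounded by $\rho(C)$.

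The first step is a direct inclusion-exclusion: the two strips in (4) are the lunes $D_+(c):=\bar\Gamma(c)\setminus D_0$ and $D_-(c):=D_0\setminus\bar\Gamma(c)$, and decomposing $\bar\Gamma(c)$ and $D_0$ along the common overlap $\bar\Gamma(c)\cap D_0$ yields
\[
\int_{D_+(c)}\omega' \;-\; \int_{D_-(c)}\omega' \;=\; A(c)-A_0,
\]
with $A(c):=\int_{\bar\Gamma(c)}\omega'$ and $A_0:=\int_{D_0}\omega'$. So condition (4) is equivalent to the scalar equation $A(c)=A_0$.

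Restrict the parameter to $c>\epsilon_i$, consistent with the placement of $C'_i$ in Figure~\ref{fig:An-Lag}. The triangle inequality gives $\bar\Gamma(c_1)\subset\bar\Gamma(c_2)$ whenever $\epsilon_i<c_1<c_2$, so $A$ is continuous and strictly increasing on $(\epsilon_i,\infty)$. As $c\to\epsilon_i^{+}$ the disk $\bar\Gamma(c)$ collapses to the point $\{\epsilon_i\}$, hence $A(c)\to 0$; as $c\to\infty$ the disks $\bar\Gamma(c)$ exhaust the half-plane $\{\operatorname{Re}z>\epsilon_i\}$. That half-plane carries infinite $\omega'$-mass because the reduced K\"ahler form on $\mathbb{C}_Z$ inherits infinite volume from the incomplete K\"ahler metric on $S\subset\mathbb{C}^3$, and this property is preserved under the compactly supported diffeomorphism $\rho^{-1}$. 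Hence $A(c)\to\infty$, and the intermediate value theorem produces a unique $c^{\ast}>\epsilon_i$ with $A(c^{\ast})=A_0$. Condition (3) is automatic at $c^{\ast}$: since $\Gamma(c^{\ast})$ and $\rho(C)$ bound disks of equal $\omega'$-area but are distinct curves, neither disk can contain the other, and hence two distinct circles meet in exactly two transverse points (tangency would force one disk inside the other, contradicting equal areas).

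The principal subtlety is uniqueness rather than existence: running the same monotonicity argument on the interval $c<\epsilon_i$ produces a second circle $\Gamma(c^{\ast\ast})$ that a priori also satisfies (1)-(4). The uniqueness assertion in the lemma must therefore be read relative to the side of $\epsilon_i$ fixed by Figure~\ref{fig:An-Lag}; once that side is selected, the argument above gives genuine existence and uniqueness. Matching the correct side with the combinatorial data of the $A_n$ smoothing, and in particular with the subsequent Floer-theoretic role of the immersed Lagrangian built from $C'_i$, is the step I expect to require the most care in a detailed writeup.
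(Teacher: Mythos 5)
Your argument is correct and takes a genuinely different route from the paper's. The paper parameterizes by the Euclidean radius $R$, first determines a threshold $a>0$ so that Condition~(3) holds precisely for $R>a$, then observes that one lune area strictly decreases while the other strictly increases in $R$, with the increasing one running from $0$ at $R=a$ to $+\infty$, and concludes by monotonicity. Your inclusion--exclusion reduction of Condition~(4) to the single scalar equation $A(c)=A_0$ in the enclosed disc area is a clean shortcut: it collapses the two-quantity bookkeeping into one monotone function and makes the explicit threshold $a$ unnecessary, since Condition~(3) is then derived from the solution rather than imposed up front. Your flagging of the side ambiguity $c\gtrless\epsilon_i$ is also worth retaining: the paper's statement that ``the only remaining freedom is the choice of the radius'' tacitly presupposes a side of $\epsilon_i$, and you make that presupposition explicit.

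One step is loose as written. The parenthetical justification of Condition~(3), that ``tangency would force one disk inside the other, contradicting equal areas,'' covers internal tangency but not external tangency or full disjointness: two distinct circles bounding discs of equal $\omega'$-area, neither containing the other, could a priori still be disjoint or externally tangent, and Condition~(3) would then fail. What rules this out, and what you never invoke, is that $\epsilon_i$ lies in the \emph{interior} of $D_0$ (because $\rho(C)$ winds around all of $\epsilon_1,\ldots,\epsilon_{n+1}$) while lying on the circle $\Gamma(c^{\ast})$; since $\Gamma(c^{\ast})$ must also contain a point outside $D_0$ (else $\bar\Gamma(c^{\ast})\subsetneq D_0$, contradicting $A(c^{\ast})=A_0$), connectedness forces $\Gamma(c^{\ast})$ to cross $\rho(C)$, and two distinct circles that cross do so transversally at exactly two points. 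One such sentence closes the gap. The justification for $A(c)\to\infty$ should also be stated the way the paper does: $\rho=\mathrm{Id}$ outside a compact set, so $\omega'=\omega_{\mathrm{red}}$ there, and the reduced form dominates the Euclidean area form on $\C$, which already has infinite mass on any half-plane; the phrase ``incomplete K\"ahler metric on $S$'' is not apt, as the restricted metric on the closed hypersurface $S\subset\C^3$ is complete.
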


\begin{proof}
	Given Condition (1) and (2), the only remaining freedom is the choice of the radius $R$ (in usual Euclidean sense) of $C'_i$.  There exists $a>0$ such that Condition (3) is satisfied if and only if $R>a$.  For $R<R'$, the left strip bounded by $(C'_{R'}, C)$ is a subset of that bounded by $(C'_{R}, C)$.  Similarly the right strip bounded by $(C'_{R}, C)$is a subset of that bounded by $(C'_{R'}, C)$.  This implies the symplectic area of the left (or right) strip strictly decreases (or increases resp.) as $R$ increases.  At the limit $R=a$ (at which $C'_i$ and $C$ touch at a point), the symplectic area of the right strip is equal to $0$; At the limit $R=+\infty$ (at which $C'_i$ is a straight line), the right strip contains an unbounded right half plane, and so has $+\infty$ symplectic area since $(\rho^{-1})^*\omega_{\mathrm{red}} = \omega_{\mathrm{red}}$ (as $\rho=\mathrm{Id}$ outside a compact subset).  As a consequence, there exists a unique intermediate value $R$ of the radius such that Condition (4) is satisfied.
\end{proof}

In the above choice, $C$ and $C_i :=\rho^{-1}(C'_i)$ intersect at two points and bound two strips with the same area under $\omega_{\mathrm{red}}$.  

\begin{corollary}
	In this setting, each pair of $C_i$ and $C_j$ for $i\not=j$ also intersects at two points and bounds two strips, where the two strips have areas equal to each other under $\omega_{\textrm{red}}$.
\end{corollary}
\begin{proof}
	Without loss of generality, let $\epsilon_i' < \epsilon_j'$ in the real line.  We want to show that $C_j'$ does not entirely lie in the disc bounded by $C_i'$, and hence $C_i',C_j'$ intersect at two points and bound two strips.  We consider the area form $(\rho^{-1})^*\omega_{\textrm{red}}$ below.
	
	Suppose this is not true.  Let $A$ be the area of each of the two strips bounded by $C',C_i'$.  In this situation, the area of the left strip bounded by $C',C_j'$ is greater than $A$, and the area of the right strip bounded by $C',C_j'$ is less than $A$.  See the left figure below. This contradicts that the strips bounded by $C',C_j'$ have equal area $\tilde{A}$.
	
	Thus, we are in the situation of the right two figures below.  Then
	$$ \tilde{A}-A = B-D = B'$$
	and hence the strips bounded by $C_i'$ and $C_j'$ have equal area $B = B'+D$ under the area form $(\rho^{-1})^*\omega_{\textrm{red}}$.  Applying $\rho^{-1}$, conclusion follows for $C_i=\rho^{-1}(C_i')$ and $C_j=\rho^{-1}(C_j')$.
\end{proof}

\begin{figure}[h]
	\begin{center}
		\includegraphics[scale=0.4]{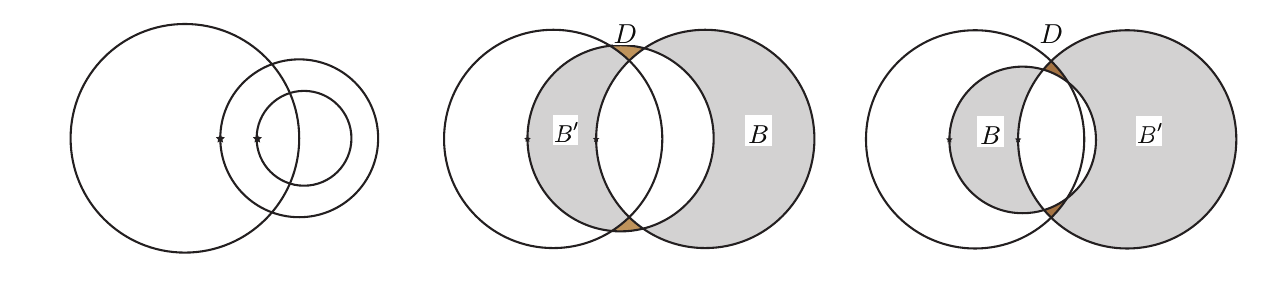}
		\caption{Strips involved in Fukaya isomorphism}\label{fig:areas-An}
	\end{center}
\end{figure}

We fix a point $p\in \R\subset \C$ in the reduced base, which lies in the common intersection of the discs bounded by the circles $C$ and $C_j$ for $j=1,\ldots,n+1$. This corresponds to an anti-canonical divisor $Z^{-1}\{p\} \subset S$.  We denote the complement by
$$ S^\circ := S - Z^{-1}\{p\}.$$
All the Lagrangians we have constructed lie in $S^\circ$.

It easily follows from the symplectic reduction that:

\begin{lemma} \label{lem:grading}
	For each $i=1,\ldots,n+1$, 
	$$L_i  := Z^{-1}(C_i) \cap \mu^{-1}\{0\}\subset S^\circ $$ 
	is a Lagrangian immersed sphere with a single nodal point at $X=Y=0, Z=\epsilon_i$.
\end{lemma}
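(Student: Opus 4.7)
The plan is to establish three things: (i) $L_i$ is Lagrangian wherever smooth; (ii) topologically $L_i$ is the image of a smooth immersion $\bS^{2}\to S$ with a single double point; (iii) the double point at $(0,0,\epsilon_i)$ is an ordinary node.

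For (i), I would invoke the standard symplectic reduction principle: every smooth curve $\gamma$ in the reduced space $S\sslash_{0}\bS^{1}\cong \C$ is Lagrangian (by dimension), and its preimage under the quotient $\pi\colon\mu^{-1}(0)\to \C$ is Lagrangian in $S$ at points where the $\bS^{1}$-action is free. Since the $\bS^{1}$-action on $S$ is free off the fixed locus $\{X=Y=0\}$, it is free on $L_i$ except possibly at $(0,0,\epsilon_i)$, and taking $\gamma = C_i$ yields the Lagrangian property there.

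For (ii), I would analyze the projection $Z\colon L_i\to C_i$. For $z\in C_i\setminus\{\epsilon_i\}$ the value $\prod_j(z-\epsilon_j)$ is nonzero (as the construction ensures $\epsilon_i$ is the only point of $\{\epsilon_j\}$ on $C_i$), and the constraints $XY = \prod_j(z-\epsilon_j)$ together with $|X|=|Y|$ cut out a single free $\bS^{1}$-orbit. At $z=\epsilon_i$, the vanishing of $XY$ combined with $|X|=|Y|$ forces $X=Y=0$, so the fiber collapses to the single point $(0,0,\epsilon_i)$. Parameterizing $C_i$ by $t\in \R/\Z$ with $C_i(0)=\epsilon_i$, the preimage over the open arc $t\in(0,1)$ is a cylinder whose two ends both converge to $(0,0,\epsilon_i)$; capping each end gives an abstract $\bS^{2}$ whose image in $S$ identifies the two cap points.

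I expect the main obstacle to be (iii), namely verifying that this double point is a transverse ordinary node rather than a worse singularity. The plan is to set $Z=\epsilon_i+\zeta$ and note that $S$ is smooth at $(0,0,\epsilon_i)$ with local equation $XY=A\zeta+O(\zeta^{2})$, where $A=\prod_{j\ne i}(\epsilon_i-\epsilon_j)\ne 0$. Since $C_i$ is tangent to a real line through $\epsilon_i$, after absorbing a phase the condition $\zeta\in C_i$ becomes $\operatorname{Im}(XY)=0$ to leading order. Combined with $|X|=|Y|$, the substitution $u=X,\,v=\bar Y$ (so $|u|=|v|$ and $u\bar v\in i\R$) splits the solution set into the two smooth $2$-planes parameterized by $(\rho,\phi)\mapsto (\rho e^{i\phi},\rho e^{i(\phi\mp\pi/2)})$, which meet transversally only at the origin. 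These two planes are exactly the images of the two caps from step (ii), which closes the argument and shows that $L_i$ is an immersed Lagrangian sphere with a single ordinary node at $(0,0,\epsilon_i)$.
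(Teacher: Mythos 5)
Your proposal is correct and fills in the details that the paper leaves implicit: the paper states the lemma after the single remark ``it easily follows from the symplectic reduction'' and gives no further argument, so your three-step verification (Lagrangian away from the node via reduction; topology of a cylinder capped at both ends over a circle through one $\epsilon_i$; local transverse-node analysis) is exactly the content the authors intended. The approach is the same; you simply make it explicit, and step (iii) in particular is a worthwhile addition.

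One small internal inconsistency in step (iii): since $C_i$ has its center on the real axis and passes through $\epsilon_i\in\R$, its tangent at $\epsilon_i$ is the \emph{imaginary} direction, and since $A=\prod_{j\ne i}(\epsilon_i-\epsilon_j)$ is real, the leading-order constraint on $\zeta=XY/A$ is $\operatorname{Re}(\zeta)=0$, hence $\operatorname{Re}(XY)=0$, i.e.\ $XY\in i\R$ — not $\operatorname{Im}(XY)=0$ as you wrote. Your subsequent substitution $u=X$, $v=\bar Y$ with $u\bar v=XY\in i\R$ and the resulting two transverse planes $(\rho e^{i\phi},\rho e^{i(\phi\mp\pi/2)})$ are consistent with the corrected condition, so the conclusion stands; just fix that one line for coherence.
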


The Lagrangian immersions $L_i$ are graded by the holomorphic volume form $dX \wedge dY / (Z - p)$ on $S^\circ$.  Thus, the Maslov index formula of \cite[Lemma 3.1]{2007-Auroux-morror-symmetry-and-t-duality-in-the-complement-of-an-anticanonical-divisor} can be applied.

We decorate the Lagrangian torus $L_0$ with flat $\Lambda_0^\times$-connections.  
To write them down explicitly, we proceed as follows.  First, the fibration $Z: S \to \C$ is trivialized after restricting to the open subset $\{Y\not=0\} \subset S$.  Then we take a basis $\{e_1,e_2\}$ of $\pi_1(L_0) \cong \Z^2$, where $e_1$ is along the Hamiltonian $\bS^1$-action, and $e_2$ is clockwise along the base circle $C$.  Then the flat connections are parametrized by $z,w \in \Lambda_0^\times$, where $z,w$ are the holonomies along $e_1,e_2 \in \pi_1(L_0)$ respectively.  $z$ is the monodromy-invariant direction.  We denote these flat connections by $\nabla^{L_0, (z,w)}$.

For the immersed spheres $L_i$, $i=1,\ldots,n+1$, the self-nodal point gives two immersed generators denoted by $U_i$ and $V_i$, which correspond to the two branch jumps $q_1\mapsto q_2$ and $q_2 \mapsto q_1$, where $\{q_1,q_2\}\subset \hat{L}_i$ is the preimage of the nodal point in the normalization $\hat{L}_i \cong \bS^2$ of $L_i$.  Using the grading in Lemma \ref{lem:grading}, these generators have degree $1$.  We shall consider the deformations $b_i=u_i U_i + v_i V_i \in \CF^1(L_i,L_i)$.

Below, we shall follow the construction in \cite{HKL}.  Note that the immersed Lagrangians $\bL_i$ are invariant under complex conjugation, and so the argument for weakly unobstructedness still applies.  Readers are referred to there for detail.  

\begin{lemma}[Lemma 3.3 of \cite{HKL}]
	Consider $L_i \subset S^\circ$ and let $b_i=u_i U_i + v_i V_i \in \CF^1(L_i,L_i)$ with $u_i,v_i \in \Lambda_0$ and $\val(u_iv_i)>0$.  We have
	$m_0^{L_i,b_i}=0$.
\end{lemma}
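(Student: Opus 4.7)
My approach is to classify the holomorphic discs that can contribute to $m_{0}^{L_{i},b_{i}}$ and then invoke the anti-holomorphic involution $\iota\colon(X,Y,Z)\mapsto(\bar X,\bar Y,\bar Z)$ to pair them off. The holomorphic volume form $dX\wedge dY/(Z-p)$ on $S^{\circ}$ has a simple pole along the divisor $D=Z^{-1}\{p\}$, so Auroux's Maslov-index formula applies and gives $\mu(\beta)=2\,\beta\cdot D$ for every disc class $\beta\in\pi_{2}(S^{\circ},L_{i})$.

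First, I would enumerate the disc classes that can contribute. A disc in class $\beta$ with $k$ immersed corners from $\{U_{i},V_{i}\}$ outputs a cochain in $\CF^{\mu(\beta)-k}(L_{i},L_{i})$, which must have degree $2$ to contribute to $m_{0}$. Since $\val(u_{i}v_{i})>0$, adding a matched pair of $U_{i}$- and $V_{i}$-insertions multiplies by an element of $\bT^{\Lambda_{+}}\subset\Lambda_{+}$, so expanding $m_{0}^{L_{i},b_{i}}$ as a power series in $u_{i},v_{i}$ reduces the problem to showing that each individual coefficient vanishes.

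Second, I would analyse the leading contribution, coming from honest Maslov-$2$ discs with no insertions. Via the symplectic reduction $Z\colon S^{\circ}\to\C\setminus\{p\}$, these correspond bijectively to the two lunes in the base bounded by $C_{i}$ and a small loop around $p$, together with their Hamiltonian $\bS^{1}$-lifts. By the construction of $C_{i}$, the two lunes have the same symplectic area, so the corresponding two Maslov-$2$ disc classes have the same $\bT$-exponent. A parallel description applies to the higher-order terms: discs with $2k$ additional corners at the node are obtained by attaching $k$ nodal teardrops to one of the two basic lunes, and again come in two families of equal area, swapped by reflection in $C_{i}$.

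Third, and this is the main step, I would invoke the $\iota$-action. The circle $C_{i}$ and the moment level $\mu^{-1}\{0\}$ are both preserved by complex conjugation, so $\iota(L_{i})=L_{i}$; the involution swaps the two branches of $L_{i}$ at the node and hence interchanges $U_{i}\leftrightarrow V_{i}$, and it sends a $J$-holomorphic disc $u$ to the anti-holomorphic disc $\iota\circ u$. Precomposing with complex conjugation on the source gives an involution on the moduli spaces of the rigid discs identified in the previous step, pairing the two families. With respect to an $\iota$-equivariant spin structure on the normalisation $\hat L_{i}\cong\bS^{2}$, the paired discs contribute with opposite signs, so that all contributions to $m_{0}^{L_{i},b_{i}}$ cancel in pairs and the sum vanishes.

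The hard part is precisely this sign verification: one must check that $\iota$ acts as an \emph{odd} involution on the rigid moduli relevant to $m_{0}^{L_{i},b_{i}}$, so that the terms truly cancel instead of doubling. This is exactly the argument used in the proof of Lemma 3.3 of \cite{HKL} for the analogous immersed sphere in a K3 surface; the local geometry of $L_{i}$ near its node in $S^{\circ}$ agrees with the local model there, which is why the authors can simply cite the earlier calculation here.
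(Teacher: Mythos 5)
The paper gives no proof of this lemma: it cites \cite{HKL} directly and only flags, one sentence earlier, that the immersed Lagrangians are invariant under complex conjugation ``so the argument for weakly unobstructedness still applies.'' Your reconstruction via the anti-holomorphic involution $(X,Y,Z)\mapsto(\bar X,\bar Y,\bar Z)$, together with the observation that discs contributing to $m_{0}^{L_i,b_i}$ must carry equal numbers of $U_i$- and $V_i$-corners (so their coefficients are powers of $u_iv_i$, which the involution fixes) and the use of Condition~(4) to match energies, is the same approach the paper is pointing to, and you correctly isolate the sign check as the substantive step. That said, your intermediate geometric picture is shaky: the Maslov-two discs with no corners project under $Z$ to the full disc bounded by $C_i$ and wrapping once around $p$, not to ``two lunes bounded by $C_i$ and a small loop around $p$,'' and the claim that higher-order contributions arise by attaching ``nodal teardrops'' to two basic lunes is not something the construction supports as stated. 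These slips do not derail the overall cancellation mechanism, but they are the part of the argument you would actually need to make precise, since the sign statement must be applied disc class by disc class. Since the paper itself defers entirely to \cite{HKL}, there is no in-paper proof to compare against in more detail; your plan is a plausible reconstruction of that reference's argument, modulo the geometric haziness just noted.
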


For $i=1,\ldots,n$, the immersed spheres $L_i$ and $L_{i+1}$ cleanly intersect at two circles (projecting to the two intersection points between $C_i$ and $C_{i+1}$ in the base).  Fix a perfect Morse function on each of these circles.  The critical points in one of the circles give Floer generators of degree 0 and 1 in $\CF(L_i,L_{i+1})$ (or degree 2 and 1 in $\CF(L_{i+1},L_i)$); the critical points in the other circle give Floer generators of degree 1 and 2 in $\CF(L_i,L_{i+1})$ (or degree 1 and 0 in $\CF(L_{i+1},L_i)$).  Denote by $\alpha_i, \beta_i$ the degree zero generators in $\CF(L_i,L_{i+1})$ and $\CF(L_{i+1},L_i)$ respectively.  We find transition between $b_i$ and $b_{i+1}$ such that $(\alpha_i,\beta_i)$ forms an isomorphism pair:
\begin{equation} \label{eq:isom}
	m_1^{b_i,b_{i+1}}(\alpha_i)=0, m_1^{b_{i+1},b_{i}}(\beta_i)=0; m_2^{b_i,b_{i+1},b_i}(\alpha_i, \beta_i) = 1_{L_i}, m_2^{b_{i+1},b_{i},b_{i+1}}(\alpha_i, \beta_i) = 1_{L_{i+1}}
\end{equation}
where $1_{L}$ denotes the unit of $L$.
Similarly, let $\alpha_0$ and $\beta_0$ denote the degree zero generators in $\CF(L_0,L_1)$ and $\CF(L_1,L_0)$.

\begin{theorem} \label{thm:iso-An}
	For $i=1,\ldots,n$, $\alpha_i$ is an isomorphism between $(L_i,b_i)$ and $(L_{i+1},b_{i+1})$ if $v_i = u_{i+1}^{-1}$ and $u_i = u_{i+1}^2 v_{i+1}$.
	Moreover, $\alpha_0$ is an isomorphism between $(L_0,b_0)$ and $(L_1,b_1)$ if $w=u_1$ and $z=u_1v_1-1$.
\end{theorem}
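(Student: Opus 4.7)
The plan is to verify the isomorphism equations \eqref{eq:isom} by computing each $A_\infty$ operation as a weighted count of holomorphic polygons, and to use the Hamiltonian $\bS^1$-action to reduce the counts to a two-dimensional problem in the base $\C$. Because the Lagrangians are of the form $L_i = Z^{-1}(C_i) \cap \mu^{-1}\{0\}$, any $J$-holomorphic polygon in $S^\circ$ whose boundary lies in the union of the $L_i$'s projects under $Z$ to a holomorphic polygon in the base with boundary on the circles $C, C_1,\ldots,C_{n+1}$, and conversely a base polygon lifts uniquely after fixing the fiber direction at one boundary point. This converts every required calculation into a finite combinatorial problem among the configuration of circles in the reduced space.

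First I would treat $m_1^{b_i,b_{i+1}}(\alpha_i)$ for $i\geq 1$. The base circles $C_i$ and $C_{i+1}$ bound exactly two bigons, equal in area by the defining property of the $C_i$. Each bigon has exactly one boundary arc passing through one of the distinguished points $\epsilon_j$ where the corresponding $L_j$ has a nodal self-intersection; the lift of this arc forces a corner at the node, weighted by $u_j$ or $v_j$ according to the branch jump dictated by the orientation of the boundary. Because the symplectic $\bT$-weights cancel, the two contributions combine into a relation among $u_i, v_i, u_{i+1}, v_{i+1}$ whose vanishing is exactly one of the stated equations. The computation of $m_1^{b_{i+1},b_i}(\beta_i)$ is symmetric with opposite orientation, producing the second relation; together they are equivalent to $v_i = u_{i+1}^{-1}$ and $u_i = u_{i+1}^2 v_{i+1}$. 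The triangle equations $m_2^{b_i,b_{i+1},b_i}(\alpha_i,\beta_i) = 1_{L_i}$ and $m_2^{b_{i+1},b_i,b_{i+1}}(\beta_i,\alpha_i) = 1_{L_{i+1}}$ count triangles bounded by $L_i$ and $L_{i+1}$; one verifies that these are automatically satisfied once the $m_1$ relations hold, since only the constant triangle remains after the bigon cancellation.

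For the pair $(L_0,L_1)$, the same strategy applies, except that one boundary lies on the torus $C$. The two base bigons between $C$ and $C_1$ still have equal area. The arcs on $L_0$ now contribute via the flat connection $\nabla^{L_0,(z,w)}$: a wrap along $e_1$ contributes $z$ and a wrap along $e_2$ contributes $w$, while the arc on $C_1$ picks up $u_1$ or $v_1$ according to whether it passes through $\epsilon_1$. Setting $m_1^{b_0,b_1}(\alpha_0)=0$ yields the condition $w = u_1$. The remaining relation $z = u_1 v_1 - 1$ emerges from the triangle equation $m_2^{b_0,b_1,b_0}(\alpha_0,\beta_0) = 1_{L_0}$: the nonconstant triangle bounded by $C$ and $C_1$ with two corners at $\epsilon_1$ contributes $u_1 v_1$, the constant triangle contributes $1$, and the unit on $L_0$ is realized by the holonomy $z$ along $e_1$, producing $z+1 = u_1 v_1$.

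The main obstacle is controlling which polygons actually enter the count -- a priori, configurations with many insertions of $b_j$ at the nodes, multicovers in the $\bS^1$-fiber direction, or discs meeting the anti-canonical divisor $Z^{-1}\{p\}$ could modify the answer. Each is ruled out by energy and Maslov-index considerations: the anti-canonical divisor is placed so that the relevant base polygons avoid $p$, and Auroux's Maslov index formula from \cite{2007-Auroux-morror-symmetry-and-t-duality-in-the-complement-of-an-anticanonical-divisor}, together with the positivity $\val(u_j v_j)>0$, leaves only the basic bigons and triangles described above as rigid contributions. Signs and gradings follow from the clean-intersection conventions already used in \cite{HKL}.
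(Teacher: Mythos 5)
Your proposal diverges from the paper's proof in two ways, and both turn out to be substantive.

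First, for the $(L_0,L_1)$ pair, the paper quotes the computation from \cite{HKL} (Theorem 3.7):
\[
m_1^{b_0,b_1}(\alpha_0) = (1-wu_1^{-1})\,X + h(u_1v_1)\,(z + 1 - u_1v_1)\,Y,
\]
where $X$ and $Y$ are the \emph{two} degree-one generators coming from the two intersection circles. Because $m_1(\alpha_0)$ lands in a rank-two space, setting it to zero already forces \emph{both} relations $w=u_1$ and $z+1=u_1v_1$. You claim instead that $m_1(\alpha_0)=0$ yields only $w=u_1$ and that $z = u_1v_1 -1$ comes from the triangle equation $m_2(\alpha_0,\beta_0)=1_{L_0}$. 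That is not how the degree count works here: the cocycle condition alone pins down both variables, and the $m_2$ equations are an independent check of invertibility. The same mismatch affects your description of the $(L_i,L_{i+1})$ case, where you attribute the two gluing equations to $m_1(\alpha_i)=0$ and $m_1(\beta_i)=0$ separately. Again, both should already be forced by the two components of $m_1(\alpha_i)$.

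Second, and more seriously, you propose to count bigons and triangles \emph{directly} between the two immersed spheres $L_i$ and $L_{i+1}$. The paper deliberately avoids this: it introduces the auxiliary Lagrangian \emph{torus} $L_{i,i+1}$ (a circle of the same radius centered between $\epsilon_i$ and $\epsilon_{i+1}$), and reduces the sphere--sphere isomorphism to two applications of the already-established torus--sphere isomorphism plus the single holomorphic triangle realizing $m_2(\alpha_i,\alpha^{L_{i+1},L_{i,i+1}})=\alpha^{L_i,L_{i,i+1}}$. The reason this detour is needed is exactly the obstacle you flag at the end: for a direct count between two immersed spheres, one must control polygons with arbitrarily many boundary insertions of $b_i$ and $b_{i+1}$ at \emph{both} nodes, and your ``energy and Maslov index considerations'' do not by themselves kill those contributions (the boundary insertions have degree one and the Novikov filtration alone does not truncate the sum). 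The paper's route sidesteps this by using that isomorphisms compose, together with the uniqueness-of-lift argument for a single triangle in the trivialized conic fibration. Without that reduction, your outline has a genuine gap: the assertion that ``only the constant triangle remains'' and that the $m_2$ equations hold automatically once $m_1=0$ is unjustified for sphere--sphere pairs.

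So the result you are proving is correct, but the argument as sketched does not close; you would need either to carry out the full polygon count with $b$-insertions between two immersed spheres (substantially harder than what you describe), or to adopt the paper's auxiliary-torus reduction.
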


\begin{proof}
	The assertion that $\alpha_0$ is an isomorphism under the given transition map was proved in \cite[Theorem 3.7]{HKL}.  The key ingredient is that
	$$ m_1^{b_0,b_1}(\alpha_0) = (1-wu_1^{-1}) X + h(u_1v_1)(z + 1 - u_1v_1) Y $$
	for a certain series $h$, where $X$ and $Y$ denote the degree one Floer generators over the base intersection points of $\beta_0$ and $\alpha_0$ respectively.  Then $m_1^{b_0,b_1}(\alpha_0)=0$ gives the gluing formula.
	
	Take a circle of the same radius as $C_i$ and $C_{i+1}$, whose center lies in the real line, and that passes through a point between $\epsilon_i$ and $\epsilon_{i+1}$, see Figure \ref{fig:isom}.  This corresponds to a Lagrangian torus $L_{i,i+1}$.  As in the previous paragraph, we have isomorphisms $\alpha^{L_i,L_{i,i+1}}$ and $\alpha^{L_{i+1},L_{i,i+1}}$, under the gluing maps
	$$ v_i = w^{L_{i,i+1}} = u_{i+1}^{-1}, \, u_iv_i = 1+z^{L_{i,i+1}} = u_{i+1}v_{i+1}.  $$
	Moreover, we have $m_2(\alpha_i,\alpha^{L_{i+1},L_{i,i+1}}) = \alpha^{L_i,L_{i,i+1}}$: consider the triangle bounded by $L_i, L_{i+1}, L_{i,i+1}$ shaded in Figure \ref{fig:isom}.  The conic fibration trivializes over a neighborhood of this triangle.  In particular, it lifts uniquely to a holomorphic triangle in $S^\circ$ with corners passing through the maximum points corresponding to $\alpha_i$ and $\alpha^{L_{i+1},L_{i,i+1}}$.  This is the only holomorphic polygon with input corners being $\alpha_i$ and $\alpha^{L_{i+1},L_{i,i+1}}$.
	
	Thus, $\alpha^{L_i,L_{i,i+1}}$ is an isomorphism under the same gluing equation.  This gives the claimed transition map.
\end{proof}

\begin{figure}[h]
	\begin{center}
		\includegraphics[scale=0.4]{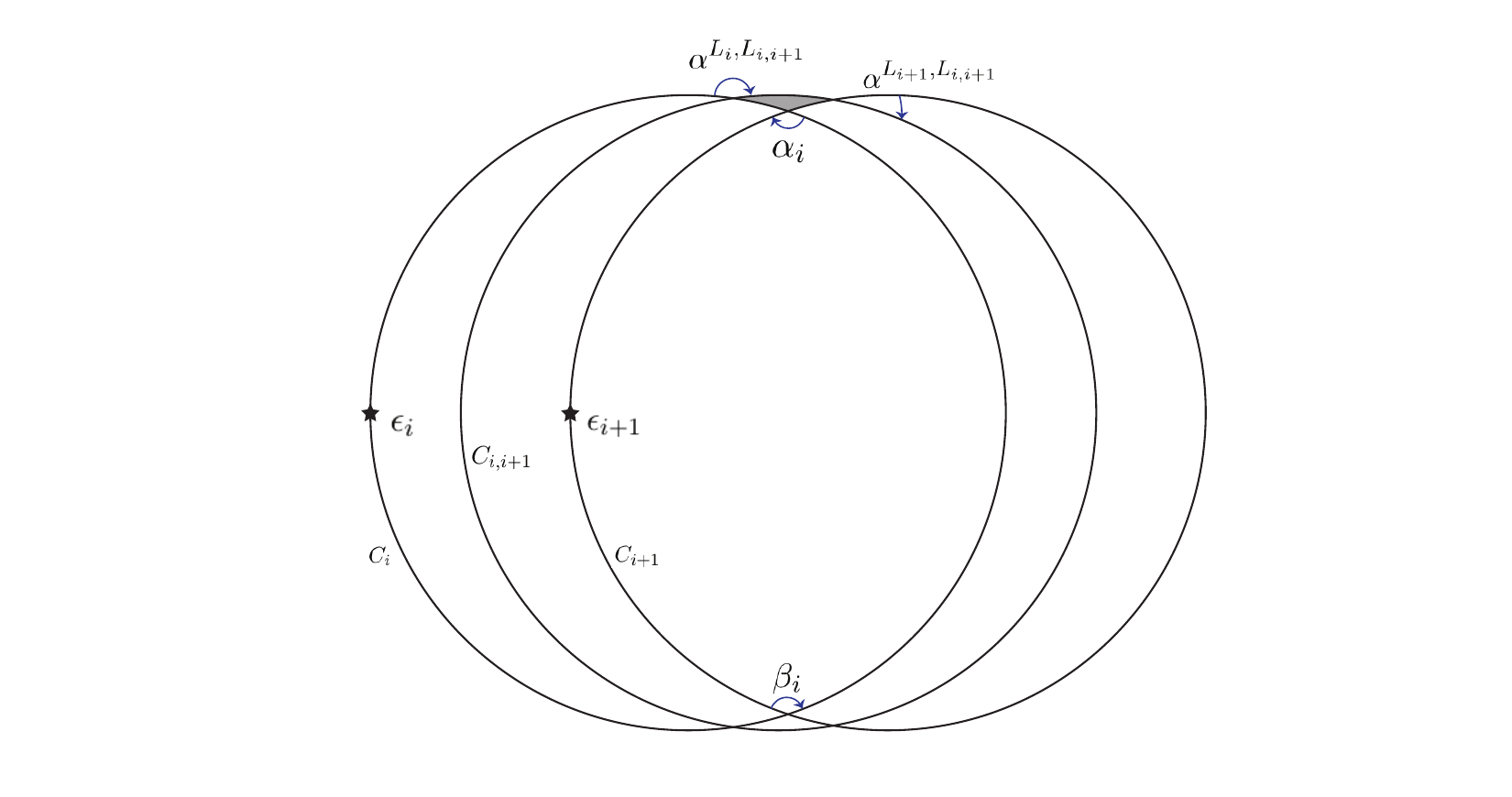}
		\caption{The isomorphisms between $L_i$, $L_{i+1}$, and the torus $L_{i,i+1}$.}\label{fig:isom}
	\end{center}
\end{figure}

%\begin{definition}
%Given a collection of (graded) Lagrangian immersions $\{L_i: i \in I\}$ together with a collection of cochains $\alpha_{ij}(b_i,b_j) \in \CF^0(L_i,L_j)$ for $i,j \in I$, $b_i \in CF^1(L_i)$ and $b_j \in CF^1(L_j)$, the associated Floer theoretical mirror space is defined as the space glued from the spaces of degree-one weak bounding chains of $L_i$, where the gluing maps between $i$ and $j$-th charts are taken such that $(\alpha_{ij}(b_i,b_j),\alpha_{ji}(b_i,b_j))$ forms an isomorphism between $(L_i,b_i)$ and $(L_j,b_j)$.
%\end{definition}

Note that the above gluing equations are the transition maps for a toric resolution of an $A_n$ singularity.  

\subsection{Blowing-up over the Novikov field}
To have a better geometric understanding of the above moduli of Lagrangians, we define the following analog of blowing-up over the Novikov ring.

\begin{definition} \label{def:blowup}
	$$\mathbb{P}^1_{\Lambda} := \left(\Lambda^2 - \{0\}\right)/ \Lambda^\times.$$
	The blowing up of $\Lambda^2$ at a point $(u_0,v_0) \in \Lambda^2$ is defined as 
	$$ \{((u,v),a:b) \in \Lambda^2 \times \mathbb{P}^1_{\Lambda}: b(u-u_0) = a(v-v_0)\}. $$
	The blowing-down map $\pi$ to $\Lambda^2$ is given by forgetting the component $\mathbb{P}^1_{\Lambda}$.  
	Given a subset $S \subset \Lambda^2$ that contains $(u_0,v_0) \in U + \Lambda_+^2 \subset S$ for some open subset $U \subset \C^2$,  the blowing up of $S$ at $(u_0,v_0)$ is defined as $\pi^{-1}(S)$.
	
	We also have the toric resolution of the orbifold singularity at the origin
	\begin{equation} \label{eq:A_{n-1}}
		A_{n-1} := \{(u,v,\tilde{z}) \in \Lambda^3: uv = \tilde{z}^n\}
	\end{equation}
	over $\Lambda$, which is given as a toric surface glued from charts $\Lambda_{v_i,u_i}^2$ by the transitions $v_i = u_{i+1}^{-1}$, $u_i = v_{i+1}u_{i+1}^{2}$ for $i=1,\ldots,n$ and $n>1$.  (Note that when $n=1$, it is simply $\Lambda^2$.) The blowing down map $\pi$ is given by $u_1 = u, v_n = v, v_iu_i = \tilde{z}$ for all $i$.   We also have the map $f:A_{n-1}\to \Lambda^2$ forgetting the $\tilde{z}$-coordinate.  
	For a subset $S \subset \Lambda^2$ with $U+\Lambda_+^2 \subset S$ for some open $U\subset \C^2$ containing $0$, the blowing up of $f^{-1}(S) \subset A_{n-1}$ at the origin is defined as $\pi^{-1}(f^{-1}(S))$.
\end{definition}

\begin{remark}
	We have used the notation $\tilde{z}$ for a coordinate in the above definition, to distinguish from the previous holonomy variable $z$ for the torus.  They are related by $\tilde{z}=1+z$.
\end{remark}

Then we have:
\begin{corollary} \label{cor:MisA_n}
	The space $\check{X}_\Lambda$ glued by the unobstructed deformation spaces of the immersed Lagrangians $L_0,\ldots,L_{n+1} \subset S^\circ$, where the transition is taken as the solutions to the isomorphism equations for $(\alpha_i,\beta_i) \in \CF^0(L_i,L_{i+1})\times \CF^0(L_{i+1},L_{i})$, is equal to the resolution of $A_n \cap (\Lambda_0^2 \times (1+\Lambda_0^\times))$ at $0$ when $n>0$, where $A_n$ is given by Definition \ref{def:blowup}.  When $n=0$, it is equal to $A_0 \cap (\Lambda_0^2 \times (1+\Lambda_0^\times))$.
\end{corollary}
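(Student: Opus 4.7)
The plan is to verify the corollary by comparing the two spaces chart-by-chart, using directly the explicit isomorphism transition maps from Theorem \ref{thm:iso-An}, which already supply all the gluing data.

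First I would handle the $n+1$ immersed-sphere charts $U_i := \{(u_i, v_i) \in \Lambda_0^2 : \val(u_iv_i)>0\}$ for $i=1,\ldots,n+1$.  The transitions $v_i = u_{i+1}^{-1}$ and $u_i = u_{i+1}^2 v_{i+1}$ coming from Theorem \ref{thm:iso-An} are precisely the toric transitions that define the minimal resolution of the surface singularity $\{uv = \tilde{z}^{n+1}\}$, in the sense of Definition \ref{def:blowup}.  A direct computation gives $u_iv_i = u_{i+1}v_{i+1}$ across each transition, so $\tilde{z} := u_iv_i$ is a well-defined global function on $U := U_1\cup\cdots\cup U_{n+1}$; iterating the equivalent relation $v_{i+1} = u_iv_i^2$ produces global functions $u := u_1$ and $v := v_{n+1} = u_1^n v_1^{n+1}$ satisfying $uv = (u_1v_1)^{n+1} = \tilde{z}^{n+1}$.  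The unobstructedness condition $\val(u_iv_i)>0$ cuts out the preimage of $\{\tilde{z}\in\Lambda_+\}$ in this resolution, with the further constraint $u_i,v_i\in\Lambda_0$.

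Next I would bring in the torus chart $U_0 := \{(w, \tilde{z}) : w \in \Lambda_0^\times, \tilde{z} \in 1+\Lambda_0^\times\}$ (using the substitution $\tilde{z}=1+z$), together with the gluing $w = u_1$, $\tilde{z} = u_1 v_1$ to $U_1$ provided by Theorem \ref{thm:iso-An}.  This identifies $U_0$ with the affine chart of $A_n$ on which $u=w$ is a unit, intersected with $\{\tilde{z}\in 1+\Lambda_0^\times\}$, where $v = \tilde{z}^{n+1}/u$ is then determined by the hypersurface equation; because $u$ is a unit throughout $U_0$, no blowing up occurs here.  The overlap with $U_1$ is exactly $\{u_1\in\Lambda_0^\times,\ v_1 \in \Lambda_+\}$, which is consistent with the transition. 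It then remains to verify that $U_0\cup U$ covers precisely $A_n\cap (\Lambda_0^2\times(1+\Lambda_0^\times))$ resolved at the origin: a point with $\val(\tilde{z})=0$ (i.e.\ constant term of $\tilde{z}$ not equal to $0$ or $1$) satisfies $uv=\tilde{z}^{n+1}\in\Lambda_0^\times$, forcing $u,v\in\Lambda_0^\times$, and thus lies in $U_0$; a point with $\tilde{z}\in\Lambda_+$ has $\val(u)+\val(v) = (n+1)\val(\tilde{z})$, and the pair $(\val(u),\val(v))$ lies in a unique $2$-dimensional cone of the fan resolving $A_n$, which singles out the unique chart $U_i$ containing it.

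The main work lies in this last coverage verification, since one must carefully convert the valuation inequalities over $\Lambda_0$ into the piecewise linear structure of the $A_n$ fan; once this is done, the identification is forced.  For the degenerate case $n=0$, the argument simplifies considerably: $A_0=\{uv=\tilde{z}\}$ is already smooth, so there is no exceptional locus and no blow-up, and the same coverage reasoning with only the two charts $U_0$ and $U_1$ yields $U_0\cup U_1 = A_0\cap(\Lambda_0^2\times(1+\Lambda_0^\times))$, matching the stated conclusion.  In essence, this corollary is a packaging of Theorem \ref{thm:iso-An} with the classical toric description of $A_n$ resolutions, with the Novikov-valuation bookkeeping being the only substantive new ingredient.
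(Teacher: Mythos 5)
Your overall strategy matches the paper's proof of Corollary~\ref{cor:MisA_n}: both arguments verify the identification chart by chart using the transition maps supplied by Theorem~\ref{thm:iso-An}, and in both the decisive step is showing that the union of the Floer charts exhausts the preimage $\pi^{-1}(\Lambda_0^2\times(1+\Lambda_0^\times))$. Your computation that the sphere-chart transitions satisfy $u_iv_i=u_{i+1}v_{i+1}$, giving a global $\tilde z$, and that $uv=(u_1v_1)^{n+1}=\tilde z^{n+1}$, is correct, and the treatment of the torus chart $U_0$ with $w=u_1$, $\tilde z = u_1v_1$ is exactly what the paper does.

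However, the ``coverage'' step as you sketch it has a genuine gap. You appeal to the claim that, when $\tilde z\in\Lambda_+$, ``the pair $(\val(u),\val(v))$ lies in a unique $2$-dimensional cone of the fan resolving $A_n$, which singles out the unique chart $U_i$ containing it.'' This determines the chart only for points over the big torus, where $\pi$ is an isomorphism. On the exceptional locus the valuation pair collapses: if $P$ lies over the origin $(u,v,\tilde z)=(0,0,0)$ (as happens for any point of the interior exceptional curves $E_j$, $1\le j\le n$), then $(\val(u),\val(v))=(\infty,\infty)$ regardless of which $E_j$ or which point of $E_j$ we are on, so no ``unique cone'' is selected and the valuation of the ambient coordinates $u,v$ does not determine the chart. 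Similarly, over the strict transforms of the axes one of $\val(u),\val(v)$ is infinite. The valuation data that actually pins down the chart is that of the \emph{chart coordinates} $u_j,v_j$ of $P$ itself, not of the pushforward $(u,v)$, and one must show this pair lands in $\Lambda_0^2$ with $\val(u_jv_j)>0$ for some $j$. The paper handles this precisely by an inductive chase through the charts: it starts from $u_1=u\in\Lambda_0$, tests whether $v_1\in\Lambda_0$, and if not passes to $u_2=v_1^{-1}\in\Lambda_0$, terminating because $v_{n+1}=v\in\Lambda_0$ by hypothesis. That chain argument, rather than a fan-cone assignment on $(\val(u),\val(v))$, is what makes the converse inclusion go through, including for exceptional-locus points. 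You flag this as ``where the main work lies,'' which is accurate, but the specific mechanism you propose for it would need to be replaced by (or reduced to) such a chain argument to close the gap.

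One smaller omission: the paper first verifies that $A_n\cap(\Lambda_0^2\times(1+\Lambda_0^\times))$ equals $f^{-1}(S)$ with $S=\{(u,v)\in\Lambda_0^2: uv\in 1+\Lambda_0^\times\}$, which requires the preliminary observation $(1+\Lambda_0^\times)^{1/(n+1)}=1+\Lambda_0^\times$. Your sketch bypasses this by working directly with $(u,v,\tilde z)$, which is fine, but worth noting since the paper's Definition~\ref{def:blowup} of the blow-up of a subset of $\Lambda^2$ is phrased through $f^{-1}(S)$.
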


\begin{proof}
	Note that $(1+\Lambda_0^\times)^{\frac{1}{n}} = 1+\Lambda_0^\times$ and $1+\Lambda_0^\times = \bigcup_{\lambda\in \C-\{1\}} (\lambda + \Lambda_+)$.  Consider an element in $1+\Lambda_0^\times$.  Its $n$-th power lies in $\lambda^n + \Lambda_+$ where $\lambda^n\not=1$.  Thus $(1+\Lambda_0^\times)^{\frac{1}{n}} \supset 1+\Lambda_0^\times$.  Conversely, consider the $n$-th root of an element in $\lambda + \Lambda_+$ for $\lambda\not=1$.
	If $\lambda=0$, its $n$-th root still belongs to $\Lambda_+ \subset 1+\Lambda_0^\times$; if $\lambda\not=0$, its $n$-th root is of the form $\lambda^{1/n} + \Lambda_+$ where $\lambda^{1/n} \not=1$ (since $\lambda\not=1$).   Thus $(1+\Lambda_0^\times)^{\frac{1}{n}} \subset 1+\Lambda_0^\times$.
	
	Then $A_n \cap (\Lambda_0^2 \times (1+\Lambda_0^\times)) = f^{-1}(S)$, where $S=\{(u,v)\in \Lambda_0^2: uv \in 1+\Lambda_0^\times\} \subset \Lambda^2$.  It makes sense to talk about the blowing up of $ f^{-1}(S)$ by using the above definition.
	
	By Theorem \ref{thm:iso-An}, the gluing equations for the blowup in Definition \ref{def:blowup} are satisfied.  What remains to show is that the preimage $\pi^{-1}(\Lambda_0^2 \times (1+\Lambda_0^\times))$ (where $\pi$ is the blowing down map) is equal to the union of the charts $(\Lambda_0^\times)^2$ of $L_0$ and $\{(u_i,v_i)\in \Lambda_0^2: \val(u_iv_i) > 0 \}$ of $L_i$ for $i=1,\ldots,n+1$.  
	
	First, we show that the union of charts is a subset of $\pi^{-1}(\Lambda_0^2 \times (1+\Lambda_0^\times))$.
	It is easy to see that for $(u,v,\tilde{z}) \in \pi((\Lambda_0^\times)^2_{u_1,\tilde{z}-1})$ (where $(\Lambda_0^\times)^2_{u_1,\tilde{z}-1}$ is the chart for $L_0$), $u=u_1 \in \Lambda_0^\times \subset \Lambda_0$, $\tilde{z} \in 1 + \Lambda_0^\times$, and $v = u^{-1}\tilde{z}^n \in \Lambda_0$.  Also for $(u,v,\tilde{z}) \in \pi(\{(u_i,v_i)\in \Lambda_0^2: \val(u_iv_i) > 0 \})$, $\tilde{z} = u_iv_i \in \Lambda_+ \subset 1 + \Lambda_0^\times$; moreover, since both $u=u_1$ and $v=v_{n+1}$ are of the form $u_i^rv_i^s$ for $r,s\in \Z_{\geq 0}$, we have $u,v \in \Lambda_0$.  Thus we see that the union of charts is a subset of $\pi^{-1}(\Lambda_0^2 \times (1+\Lambda_0^\times))$.
	
	To show the converse, first we prove the statement for $n=0$.  Namely, the union of the chart $(\Lambda_0^\times)^2_{u_1,\tilde{z}-1}$ of $L_0$ with the chart $\{(u_1,v_1)\in \Lambda_0^2: \val(u_1v_1) > 0 \}$ of  $L_1$ is equal to $\{(u_1,v_1,\tilde{z}) \in \Lambda_0^2 \times (1+\Lambda_0^\times): u_1v_1 = \tilde{z} \}$.  The subset relation is known from the previous paragraph.  Conversely, $\val(u_1v_1)$ is either positive or zero.  If it is positive, the point belongs to the chart of $L_1$.  If it is zero, then $u_1 \in \Lambda_0^\times$, and hence the point belongs to the chart of $L_0$.
	
	Note that the above statement is symmetric with respect to $u_1$ and $v_1$.  This means the union of $(\Lambda_0^\times)^2_{v_1,\tilde{z}-1}$ with $\{(u_1,v_1)\in \Lambda_0^2: \val(u_1v_1) > 0 \}$ is equal to $\{(u_1,v_1,\tilde{z}) \in \Lambda_0^2 \times (1+\Lambda_0^\times): u_1v_1 = \tilde{z} \}$.  Note that by $u_2 = v_1^{-1}$, we have $(\Lambda_0^\times)^2_{v_1,\tilde{z}-1} = (\Lambda_0^\times)^2_{u_2,\tilde{z}-1}$.  
	Proceeding in the same way, the union of $(\Lambda_0^\times)^2_{u_2,\tilde{z}-1}$ and $\{(u_2,v_2)\in \Lambda_0^2: \val(u_2v_2) > 0 \}$ gives $\{(u_2,v_2,\tilde{z}) \in \Lambda_0^2 \times (1+\Lambda_0^\times): u_2v_2 = \tilde{z}\}$.  Inductively, we conclude that the union of the charts of $L_i$ for $i=0,\ldots,n+1$ is equal to the union of $\{(u_j,v_j,\tilde{z}) \in \Lambda_0^2 \times (1+\Lambda_0^\times): u_jv_j = \tilde{z}\}$ for $j=1,\ldots,n+1$.
	
	Now we are ready to show the converse for general $n$.   Given $(u,v,\tilde{z}) \in \pi^{-1}(\Lambda_0^2 \times (1+\Lambda_0^\times))$, we need to show that it belongs to the union of $\{(u_j,v_j,\tilde{z}) \in \Lambda_0^2 \times (1+\Lambda_0^\times): u_jv_j = \tilde{z}\}$ for $j=1,\ldots,n+1$.  We already know that $u_1 = u \in \Lambda_0$.  If $v_1 \in \Lambda_0$, we are done.  Otherwise, $u_2 = v_1^{-1} \in \Lambda_0$.  If $v_2 \in \Lambda_0$, then we are done.  Inductively, either we get that the point belongs to $\{(u_j,v_j,\tilde{z}) \in \Lambda_0^2 \times (1+\Lambda_0^\times): u_jv_j = \tilde{z}\}$ for some $j=1,\ldots,n$, or we get $u_{n+1} \in \Lambda_0$.  Since $v_{n+1} = v \in \Lambda_0$, the point belongs to $\{(u_{n+1},v_{n+1},\tilde{z}) \in \Lambda_0^2 \times (1+\Lambda_0^\times): u_{n+1}v_{n+1} = \tilde{z}\}$ in this case.
\end{proof}

In the above, the local charts are $\{(u,v,\tilde{z}) \in \Lambda_0^2 \times (1+\Lambda_0^\times): uv = \tilde{z}\}$.  They turn out to have a very nice relation with an open subset over $\C$:

\begin{proposition} \label{prop:C^2}
	$\{(u,v,\tilde{z}) \in \Lambda_0^2 \times (1+\Lambda_0^\times): uv = \tilde{z}\}=(\C^2 - \{uv=1\}) + \Lambda_+^2$.
\end{proposition}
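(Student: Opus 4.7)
The plan is to unpack both sides as explicit subsets of $\Lambda_0^2$ cut out by a single condition on the constant terms of $u$ and $v$, using the canonical splitting $\Lambda_0 = \C \oplus \Lambda_+$ as $\C$-vector spaces (where the $\C$-factor is read off as the coefficient of $\bT^0$). Because $\tilde z$ is uniquely determined by $u, v$ via $\tilde z=uv$, I first replace the LHS by its image under the projection $(u,v,\tilde z)\mapsto (u,v)$, which identifies it with $\{(u,v)\in\Lambda_0^2 : uv\in 1+\Lambda_0^\times\}$. The RHS already sits inside $\Lambda_0^2$ (since $\C + \Lambda_+\subset \Lambda_0$), so both sides can now be compared in the same ambient space.

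Next, for any $u\in\Lambda_0$ write $u=u_0+u_+$ with $u_0\in\C$, $u_+\in\Lambda_+$, and similarly $v=v_0+v_+$. Expanding gives
\[
uv \;=\; u_0v_0 + \bigl(u_0v_+ + v_0u_+ + u_+v_+\bigr),
\]
where the parenthesized term lies in $\Lambda_+$. Hence $uv-1$ has constant term $u_0v_0-1$, and by the very definition of $\Lambda_0^\times$, the element $uv$ belongs to $1+\Lambda_0^\times$ if and only if $u_0v_0\neq 1$. This shows the LHS is the set
\[
\bigl\{(u,v)\in\Lambda_0^2 : u_0v_0\neq 1\bigr\}.
\]

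For the RHS, a point $(u,v)\in (\C^2 -\{uv=1\}) + \Lambda_+^2$ means there exist $(a,b)\in\C^2$ with $ab\neq 1$ and $(x,y)\in\Lambda_+^2$ such that $u=a+x$, $v=b+y$. Since the decomposition $\Lambda_0 = \C\oplus\Lambda_+$ is unique, the equality $u=a+x$ forces $a=u_0$ and $x=u_+$, and likewise $b=v_0$, $y=v_+$. The remaining condition $ab\neq 1$ is precisely $u_0v_0\neq 1$. Therefore the RHS is also $\{(u,v)\in\Lambda_0^2 : u_0v_0\neq 1\}$, matching the LHS.

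The proof is essentially a direct bookkeeping exercise; there is no real obstacle. The only point that requires a moment of care is the initial identification between the two ambient spaces (making the coordinate $\tilde z=uv$ implicit), after which everything follows from the splitting $\Lambda_0 = \C\oplus\Lambda_+$ and the characterization of $\Lambda_0^\times$ as the preimage of $\C^\times$ under reduction mod $\Lambda_+$.
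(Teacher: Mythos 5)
Your proof is correct, and it takes a genuinely different (and cleaner) route than the paper's. The paper verifies the equality by stratifying: it writes the left-hand side as $\{\val(uv)>0\} \sqcup \{\val u = \val v = 0,\ uv \in (\C - \{0,1\}) + \Lambda_+\}$ and the right-hand side as $\left(\{uv=0\} + \Lambda_+^2\right) \sqcup \left(\{uv \neq 0, 1\} + \Lambda_+^2\right)$, then subdivides the first stratum of each into three further sub-cases and matches them one at a time. You instead observe that the constant-term map $\Lambda_0 \to \C$, $u \mapsto u_0$, is a ring homomorphism realizing the canonical splitting $\Lambda_0 = \C \oplus \Lambda_+$, and that under this splitting both sides are cut out by the single condition $u_0 v_0 \neq 1$: on the left because $1+\Lambda_0^\times = \{z \in \Lambda_0 : z_0 \neq 1\}$ and $(uv)_0 = u_0 v_0$, and on the right because uniqueness of the decomposition $u = a + x$ with $a \in \C$, $x \in \Lambda_+$ forces $a = u_0$. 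Your argument reduces the entire statement to the observation that $1+\Lambda_0^\times$ is the preimage of $\C \setminus \{1\}$ under reduction modulo $\Lambda_+$, whereas the paper's case-by-case stratification essentially rediscovers this reduction piecemeal without naming it. Both are valid; yours is more economical and makes the underlying reason transparent.
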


\begin{proof}
	The proof is by stratifying both sides into two pieces:  the left hand side is equal to
	$$ \{\val uv > 0\} \sqcup \{\val u = \val v = 0, uv \in (\C-\{0,1\})+\Lambda_+ \}$$
	and the right hand side is equal to
	$$ \left(\{uv=0\} + \Lambda_+^2\right) \sqcup \left(\{uv\not=0,1\} + \Lambda_+^2\right).$$
	To verify that $\{\val uv > 0\}=\left(\{uv=0\} + \Lambda_+^2\right)$, we can further stratify to three pieces 
	$$
	\{\val u \textrm{ and } \val v > 0 \} \sqcup \{\val u > 0 \textrm{ and } \val v = 0 \} \sqcup \{\val v > 0 \textrm{ and } \val u = 0 \}
	$$ 
	and 
	$$
	\left(\{u=v=0\} + \Lambda_+^2\right) \sqcup 	\left(\{u=0 \textrm{ and }v\in \C^\times \} + \Lambda_+^2\right) \sqcup 	\left(\{v=0 \textrm{ and }u\in \C^\times \} + \Lambda_+^2\right)
	$$ respectively.  Then it is easy to see that they are equal to each other.  
	
	To verify that $\{\val u = \val v = 0, uv \in (\C-\{0,1\})+\Lambda_+ \}=\left(\{uv\not=0,1\} + \Lambda_+^2\right)$, one can check that both sides consist of elements $(u,v)$ of the form $u=u_0+u_+, v=v_0+v_+$, where $u_0,v_0 \in \C^\times$ with $u_0v_0\not=1$, and $u_+,v_+\in \Lambda_+$.
\end{proof}

\begin{remark}
	The proof shows that the glued mirror from $(L_i,b_i)$ for $i=0,\ldots,n+1$ in the smoothing $S^{(n)}$ is equal to the union of these charts $\left((\C^2 - \{uv=1\}) + \Lambda_+^2\right) \subset \Lambda^2$.  Taking the intersection of each chart with $\C^2 \subset \Lambda^2$, we get the complex surface $\{(u_j,v_j,\tilde{z}) \in \C^2 \times (\{1\}+\C^\times): u_jv_j = \tilde{z}\}$, which is equal to the usual $\C$-valued $A_n$-resolution minus the anti-canonical divisor with local description $u_jv_j=1$.
	
	The $\C$-valued mirror is glued from the Clifford torus $L_0$ with flat $\C^\times$-connections (for $\tilde{z} \in 1+\C^\times$ and $u_1 \in \C^\times$), and the immersed sphere $L_i$ (for $i=1,\ldots,n$) with boundary deformations $u_i U_i + v_i V_i$ with $u_iv_i=0$.
\end{remark}

As a result, resolutions of an $A_n$-singularity are mirror to smoothings of the $A_n$-singularity.  By the construction in \cite{CHL-glue}, there exists an $A_\infty$ functor from the Fukaya category of $S^\circ$ to the category of twisted complexes over $\check{X}_\Lambda$.  In this sense, the local $A_n$-singularity is self-mirror.

The relationship between $\Lambda_0$ and $\C$ can be formulated more systematically by defining the following.

\begin{definition} \label{def:NovExt}
	Given a complex manifold $M$, its extension over the Novikov ring, denoted by $M + \Lambda_+^n$, is defined as the union of charts $U+\Lambda_+^n$ glued by the same transition functions of $M$, where $U$ are charts of $M$.  For an analytic subset $Z \subset M$, its extension over the Novikov ring, denoted by $Z+\Lambda_+^n$, is the union of $Z_p+\Lambda_+^n$, where $Z_p \subset U_p$ is the neighborhood of $p$ in $Z$ lying in a local chart $U_p \subset \C^n$ of $M$ and is given as the zero locus of a (finite) set of complex analytic functions.
\end{definition}

The above is well-defined since for an analytic function $f$, $f(z) \in f(z_0) + \Lambda_+$ where $z\in z_0 + \Lambda_+$ and $z_0\in \C$.

Using the above definition, Proposition \ref{prop:C^2}, and Corollary \ref{cor:MisA_n}, we get the following.

\begin{corollary}
	The resolution of $A_n \cap (\Lambda_0^2 \times (1+\Lambda_0^\times))$ is equal to $\hat{A}_n + \Lambda_+^2$, where $\hat{A}_n$ is the resolution of $\{(u,v,\tilde{z})\in \C^2 \times (\C-\{1\}): uv=\tilde{z}^n\}$ over $\C$.
\end{corollary}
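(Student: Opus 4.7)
The plan is to assemble the result directly from the three pieces developed just above: Corollary \ref{cor:MisA_n}, Proposition \ref{prop:C^2}, and Definition \ref{def:NovExt}.  The idea is that each affine chart of the Novikov-valued resolution coincides, chart-by-chart, with the Novikov extension of the corresponding chart of the classical complex resolution $\hat{A}_n$, and that the transition maps between the charts are in fact defined over $\C$, so they respect the extension.

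First, I would recall from Corollary \ref{cor:MisA_n} that the resolution of $A_n \cap (\Lambda_0^2 \times (1+\Lambda_0^\times))$ is covered by the $n+1$ affine charts
\[
\{(u_j,v_j,\tilde z)\in \Lambda_0^2\times(1+\Lambda_0^\times): u_jv_j=\tilde z\}, \qquad j=1,\dots,n+1,
\]
glued by $v_i = u_{i+1}^{-1}$ and $u_i = u_{i+1}^2 v_{i+1}$ (equivalently, $\tilde z=u_iv_i$ is preserved).  By Proposition \ref{prop:C^2}, each such chart equals $(\C^2-\{uv=1\})+\Lambda_+^2$, i.e.\ the Novikov extension (in the sense of Definition \ref{def:NovExt}) of the affine chart $\{(u_j,v_j,\tilde z)\in \C^2\times(\C-\{1\}): u_jv_j=\tilde z\}$ of $\hat{A}_n$.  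Here one uses that the complex variety $\{uv=\tilde z\}\subset \C^3$ is nothing but the $j$-th affine chart of the usual toric resolution of the $A_n$-surface, and the condition $\tilde z\neq 1$ cuts out the complement of the smooth anticanonical divisor.

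Next I would check that the two gluing prescriptions agree.  The transition maps $v_i = u_{i+1}^{-1}$ and $u_i = u_{i+1}^2 v_{i+1}$ are defined by rational functions over $\C$; hence they coincide with the transition maps in the classical toric resolution $\hat{A}_n$ and, in particular, are of the form required by Definition \ref{def:NovExt}.  Concretely, on overlaps one has $u_i,v_i\in \Lambda_0^\times$ (so they decompose canonically as $(u_i)_0+\Lambda_+$ with $(u_i)_0\in\C^\times$), and the transition formulas restrict to the classical transitions on the leading $\C^\times$-part while extending formally in the $\Lambda_+$-part.  So the gluing of the charts $(\C^2-\{uv=1\})+\Lambda_+^2$ by these complex transitions is, by definition, $\hat{A}_n + \Lambda_+^2$.

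The only substantive point that needs care, and that I would view as the main technical obstacle, is the last step: verifying that the set-theoretic identification of each Novikov chart with the Novikov extension of a complex chart (Proposition \ref{prop:C^2}) is genuinely compatible with the transitions, so that the global gluings match.  This boils down to the fact that each transition map is given by a convergent (indeed polynomial in $u_i^{\pm1}, v_i^{\pm1}$) formula whose value on an element $x_0+x_+\in \Lambda_0$ (with $x_0\in\C$, $x_+\in\Lambda_+$) lies in $f(x_0)+\Lambda_+$; this is exactly the compatibility condition built into Definition \ref{def:NovExt}.  Once this is verified for each of the finitely many overlaps $j \leftrightarrow j+1$, the identification globalizes, and combining with the chart-wise identity of Proposition \ref{prop:C^2} gives the claimed equality
\[
\text{Res}\bigl(A_n\cap(\Lambda_0^2\times(1+\Lambda_0^\times))\bigr) \;=\; \hat{A}_n + \Lambda_+^2.
\]
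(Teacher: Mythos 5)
Your proof is correct and reconstructs exactly the paper's intended argument: the paper gives no separate proof of this corollary, stating only that it follows from Definition \ref{def:NovExt}, Proposition \ref{prop:C^2}, and Corollary \ref{cor:MisA_n}, which is precisely the chart-by-chart identification plus compatibility of the complex-rational transition maps that you spell out. No gap; your write-up just makes the paper's one-line deduction explicit.
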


Similarly,

\begin{proposition}
	For an open subset $U \subset \C^2$ with $(u_0,v_0) \in U$, the blowing up of $U + \Lambda_+^2$ (over $\Lambda$) at $(u_0,v_0)$ is equal to $\hat{U} + \Lambda_+^2$ where $\hat{U}$ denotes the blowing up of $U$ at $(u_0,v_0)$ (over $\C$).
\end{proposition}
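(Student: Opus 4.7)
The plan is to perform a chart-by-chart comparison on the two standard affine charts covering the blowup.  Over $\Lambda$, $\mathrm{Bl}_{(u_0,v_0)}(\Lambda^2)$ is covered by $\mathrm{Ch}_1 \cong \Lambda^2_{(u,t)}$ with blowdown $\pi_1(u,t)=(u,\,t(u-u_0)+v_0)$ and $\mathrm{Ch}_2 \cong \Lambda^2_{(s,v)}$ with blowdown $\pi_2(s,v)=(s(v-v_0)+u_0,\,v)$, glued by $s=1/t$ on the overlap $\{t \neq 0\}$.  Over $\C$, the complex blowup $\hat{U}$ is covered analogously by $\hat{U}_i := \pi_i^{-1}(U) \cap \C^2$.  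Since the transitions over $\Lambda$ are given by the same polynomial expressions as over $\C$, consistency on chart overlaps will be automatic, so it suffices to prove the chart-wise equality.

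For the inclusion $\hat{U} + \Lambda_+^2 \subset \pi^{-1}(U + \Lambda_+^2)$ we compute directly: a point $(u_c+u_+,\,t_c+t_+) \in \hat{U}_1 + \Lambda_+^2$ with $(u_c,t_c)\in\hat{U}_1$ and $u_+,t_+\in\Lambda_+$ is sent by $\pi_1$ to $(u_c,\, t_c(u_c-u_0)+v_0)$ modulo $\Lambda_+^2$, and the latter point lies in $U$ by definition of $\hat{U}_1$.  Chart 2 works symmetrically.

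For the reverse inclusion, take a point $P$ of $\pi^{-1}(U+\Lambda_+^2)$ lying in $\mathrm{Ch}_1$, so $P=(u,t)$ with $\pi_1(P)\in U+\Lambda_+^2\subset\Lambda_0^2$.  Writing $u=u_c+u_+$ and $v:=t(u-u_0)+v_0 = v_c+v_+$ with $u_c,v_c\in\C$, $u_+,v_+\in\Lambda_+$, and $(u_c,v_c)\in U$, we split into three cases.  If $u_c\neq u_0$, then $u-u_0\in\Lambda_0^\times$, so $t=(v-v_0)/(u-u_0)\in\Lambda_0$ has constant part $t_c=(v_c-v_0)/(u_c-u_0)$, and $(u_c,t_c)\in\hat{U}_1$ is immediate.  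If $u_c=u_0$ and $t\in\Lambda_0$, then $v=tu_++v_0\in v_0+\Lambda_+$ forces $v_c=v_0$, and the hypothesis $(u_0,v_0)\in U$ yields $(u_0,t_c)\in\hat{U}_1$ for $t_c$ the constant part of $t$.  Finally, if $u_c=u_0$ and $\val(t)<0$, we switch to $\mathrm{Ch}_2$: here $s:=1/t\in\Lambda_+$ and $v\in\Lambda_0$, and the constant part $(0,v_c)$ satisfies $\pi_2(0,v_c)=(u_0,v_c)\in U$, so $(0,v_c)\in\hat{U}_2$ and $P\in\hat{U}_2+\Lambda_+^2$.

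The main subtlety, concentrated in the last two cases, is that the constant part of $P$ sits on the exceptional divisor; the hypothesis $(u_0,v_0)\in U$ is precisely what guarantees that the entire exceptional fibre is contained in $\hat{U}$, and this is exactly what allows the Novikov extension $\hat{U}+\Lambda_+^2$ to absorb every such point.  All other points are handled by the invertibility of $u-u_0$ in $\Lambda_0$, and consistency on the chart overlap is trivial, completing the proof.
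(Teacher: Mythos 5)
Your proof is correct. Note that the paper states this proposition without an explicit argument (only a ``Similarly'' after the corollary preceding it, which is in turn derived from Proposition \ref{prop:C^2} and Corollary \ref{cor:MisA_n}), so there is no paper proof to compare against; your write-up supplies a direct argument that is the natural one given Definition \ref{def:blowup} and Definition \ref{def:NovExt}, and it correctly isolates the essential point: because $(u_0,v_0)\in U$, the whole exceptional $\mathbb{P}^1$ lies in $\hat{U}$, which is what lets the Novikov extension absorb the points whose constant part lies on the exceptional curve.

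Two small stylistic remarks, neither a gap. First, the opening sentence ``it suffices to prove the chart-wise equality'' is not quite what you actually prove: your case $u_c=u_0$, $\val(t)<0$ starts with a point in $\mathrm{Ch}_1$ but lands it in $\hat{U}_2+\Lambda_+^2$, so the equality $\hat{U}_1+\Lambda_+^2 = \pi^{-1}(U+\Lambda_+^2)\cap\mathrm{Ch}_1$ is in fact false as a chart-wise statement. What you do prove is both inclusions of the global sets, covering one side by charts, which is exactly what is needed; I would simply drop the claim of ``chart-wise equality'' and say you verify both inclusions by working in charts. Second, the reverse inclusion is stated only for $P\in\mathrm{Ch}_1$; the single remaining point-class $(s,v)$ with $s=0$ (lying in $\mathrm{Ch}_2\setminus\mathrm{Ch}_1$) is handled by the symmetric argument and is in effect already contained in your case analysis (the constant part $(0,v_c)$ satisfies $\pi_2(0,v_c)=(u_0,v_c)\in U$), but it would be cleaner to say ``chart $2$ is treated symmetrically'' explicitly.
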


For the purpose of the next section, it is useful to have another description of the above total space of an $A_n$-resolution in terms of the usual repeated blowing-up at a point.

\begin{proposition} \label{prop:mult-blowup}
	The resolution $\check{X}_\Lambda$ of $A_n \cap (\Lambda_0^2 \times (1+\Lambda_0^\times))$ (where the resolution does nothing for $n=0$) is equal to $M'$ constructed as follows.  Take the blowing-up of $\Lambda_0 \times (1+\Lambda_0^\times)=(\C_{u,\tilde{z}}^2 - \{\tilde{z}=1\}) + \Lambda_+^2$ at $(u,\tilde{z})\in (0,0)$, and repeatedly take the blowing-up again at a point in the new exceptional curve $n$ times, so that we have $(n+1)$ exceptional curves in total.  $M'$ is defined to be the complement of $(Z' + \Lambda_+^2)$, where $Z'$ (over $\C$) is
	the strict transform of the $\tilde{z}$-axis $\{u=0\} \subset \C_{u,\tilde{z}}^2 - \{\tilde{z}=1\}$.  
\end{proposition}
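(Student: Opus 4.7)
The plan is to identify $M'$ chart-by-chart with the minimal resolution $\hat{A}_n$ of the $A_n$-singularity $\{uv=\tilde{z}^{n+1}\}$ minus the anticanonical divisor $\{\tilde{z}=1\}$, then match the resulting space with the Floer description of $\check{X}_\Lambda$ given by Corollary~\ref{cor:MisA_n}.

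Over $\C$ first, the key geometric observation is that $\hat{A}_n$ arises as the resolution of the indeterminacy of the rational map $\C^2_{u,\tilde{z}}\dashrightarrow A_n$ defined by $(u,\tilde{z})\mapsto(u,\tilde{z}^{n+1}/u,\tilde{z})$. Blowing up the origin $(n+1)$ times, each time at the intersection of the newly created exceptional curve with the strict transform of $\{u=0\}$, produces an iterated blowup covered by $n+2$ affine charts: the ``deep'' chart $A_j$ after $j$ blowups (with coordinates $(u_j^{M'},\tilde{z})$ obeying $u=u_j^{M'}\tilde{z}^j$) and the ``alternate'' chart $B_j$ (with coordinates $(u_{j-1}^{M'},t_j)$ obeying $\tilde{z}=t_j u_{j-1}^{M'}$ and $u=(u_{j-1}^{M'})^j t_j^{j-1}$). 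Setting $u_j:=u_{j-1}^{M'}$ and $v_j:=t_j$, one has $\tilde{z}=u_j v_j$, and the gluings induced by the blowups read $u_{j+1}=v_j^{-1}$ and $v_{j+1}=u_j v_j^{2}$, which are exactly the toric transitions for $\hat{A}_n$. The strict transform $\tilde{D}$ of $\{u=0\}$ appears only in the $A_j$ charts (in $B_j$, $u=(u_{j-1}^{M'})^j t_j^{j-1}$ vanishes only along exceptional components), and the chart $A_{n+1}$, once $\tilde{D}=\{u_{n+1}^{M'}=0\}$ is removed, embeds into $B_{n+1}$ via $(u_n^{M'},t_{n+1})=(u_{n+1}^{M'}\tilde{z},\,1/u_{n+1}^{M'})$. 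Hence $M'=\bigcup_{j=1}^{n+1}B_j$, and the transitions do not involve the formal parameter $\bT$.

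Passing to the Novikov extension, each chart $B_j$ becomes $\{(u_j,v_j)\in\Lambda_0^2 : u_jv_j\in 1+\Lambda_0^\times\}$ by Proposition~\ref{prop:C^2} combined with the starting condition $\tilde{z}\in 1+\Lambda_0^\times$. I then compare with $\check{X}_\Lambda$: by Corollary~\ref{cor:MisA_n}, $\check{X}_\Lambda$ is the union of the torus chart $L_0=\{(u_1,\tilde{z})\in\Lambda_0^\times\times(1+\Lambda_0^\times)\}$ and the immersed-sphere charts $L_i=\{(u_i,v_i)\in\Lambda_0^2:\val(u_iv_i)>0\}$ for $i=1,\ldots,n+1$, with the same toric transitions. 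The inclusion $\check{X}_\Lambda\subset M'$ is immediate: for $i\ge 1$, $\val(u_iv_i)>0$ implies $u_iv_i\in\Lambda_+\subset 1+\Lambda_0^\times$, so $L_i\subset B_i$; for $L_0$, one sets $v_1=\tilde{z}/u_1\in\Lambda_0$ and notes $u_1v_1=\tilde{z}\in 1+\Lambda_0^\times$, so $L_0\subset B_1$. For the reverse inclusion, given $(u_j,v_j)\in\Lambda_0^2$ with $u_jv_j\in 1+\Lambda_0^\times$, either $\val(u_jv_j)>0$ (in which case $(u_j,v_j)\in L_j$), or $\val(u_jv_j)=0$, forcing $\val u_j=\val v_j=0$; in the latter case $u_1=u_j^j v_j^{j-1}\in\Lambda_0^\times$ and $\tilde{z}=u_jv_j\in 1+\Lambda_0^\times$, so the point lies in $L_0$. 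This yields $M'=\check{X}_\Lambda$.

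The main technical obstacle is the coordinate bookkeeping in the iterated blowup: tracking the four systems $(u,\tilde{z})$, $(u_j^{M'},\tilde{z})$, $(u_{j-1}^{M'},t_j)$, and $(u_j,v_j)$, verifying that $\tilde{D}$ intrudes only into the $A_j$ charts, and confirming that the induced gluings coincide with the toric transitions of $\hat{A}_n$. Once this is in place, the valuation analysis matching the chart $B_j$ with $L_0\cup L_j$ follows the same pattern as in the proof of Corollary~\ref{cor:MisA_n}.
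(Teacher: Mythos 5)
The paper states Proposition~\ref{prop:mult-blowup} without giving a proof, so there is nothing in the source to compare against line by line. Your argument is correct and is precisely the computation the paper leaves to the reader: a chart-by-chart identification of the iterated blowup minus $Z'$ with the toric charts $(u_j,v_j)$ of the $A_n$-resolution, followed by passing to the Novikov extension via Proposition~\ref{prop:C^2} and matching the resulting charts $\{u_jv_j \in 1+\Lambda_0^\times\}$ with $L_0 \cup L_j$ exactly as in the proof of Corollary~\ref{cor:MisA_n}. The coordinate bookkeeping you carry out (the relations $u=(u_{j-1}^{M'})^j t_j^{j-1}$, $\tilde z=u_{j-1}^{M'}t_j$, the induced transitions $u_{j+1}=v_j^{-1}$, $v_{j+1}=u_jv_j^2$, and the observation that $\tilde D$ only appears in the deep chart, which then embeds into $B_{n+1}$ after removal) checks out.

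One point worth making explicit, since it is glossed over in the statement of the Proposition itself: the phrase ``at a point in the new exceptional curve'' does not by itself determine the blowup center, and a generic choice would \emph{not} give the claimed result. Your choice --- the intersection of the new exceptional curve with the strict transform of $\{u=0\}$ --- is the one for which the computation works (the opposite choice, intersecting with the strict transform of $\{\tilde z=0\}$, gives the same surface with the roles of $u$ and $v$ exchanged). This is consistent with the repeated-blowup description in Theorem~\ref{thm:mir-del-Pezzo} and should be recorded as part of the construction. A second cosmetic remark: the opening sentence that ``$\hat A_n$ arises as the resolution of the indeterminacy of the rational map'' is slightly imprecise, since $v=\tilde z^{n+1}/u$ still has a pole along $Z'$ on the full blowup; it is only \emph{after} deleting $Z'$ that the rational map becomes the morphism to $A_n$. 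Your subsequent chart analysis makes the correct statement anyway, so this does not affect the proof.
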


%*** Smoothing is $\bS^1$-equivariant.   No obstruction to obtain global complex structure.  Later on, use $\bS^1$-equivariant Moser argument to identify a chart of the global smoothing with this picture.

\begin{remark}
	We have a global analytic function $\tilde{z}: \check{X}_\Lambda \to \Lambda - \{1\}$.
	By taking $\tilde{z}=0$ (that is, the holonomy variable $z=-1+\tilde{z}=-1$) and gluing the valuation images of $(\val(u_i),\val(v_i))$ in all the charts,
	we get the `skeleton' as shown in Figure \ref{fig:An-skeleton}.
\end{remark}

\begin{figure}[h]
	\begin{center}
		\includegraphics[scale=0.3]{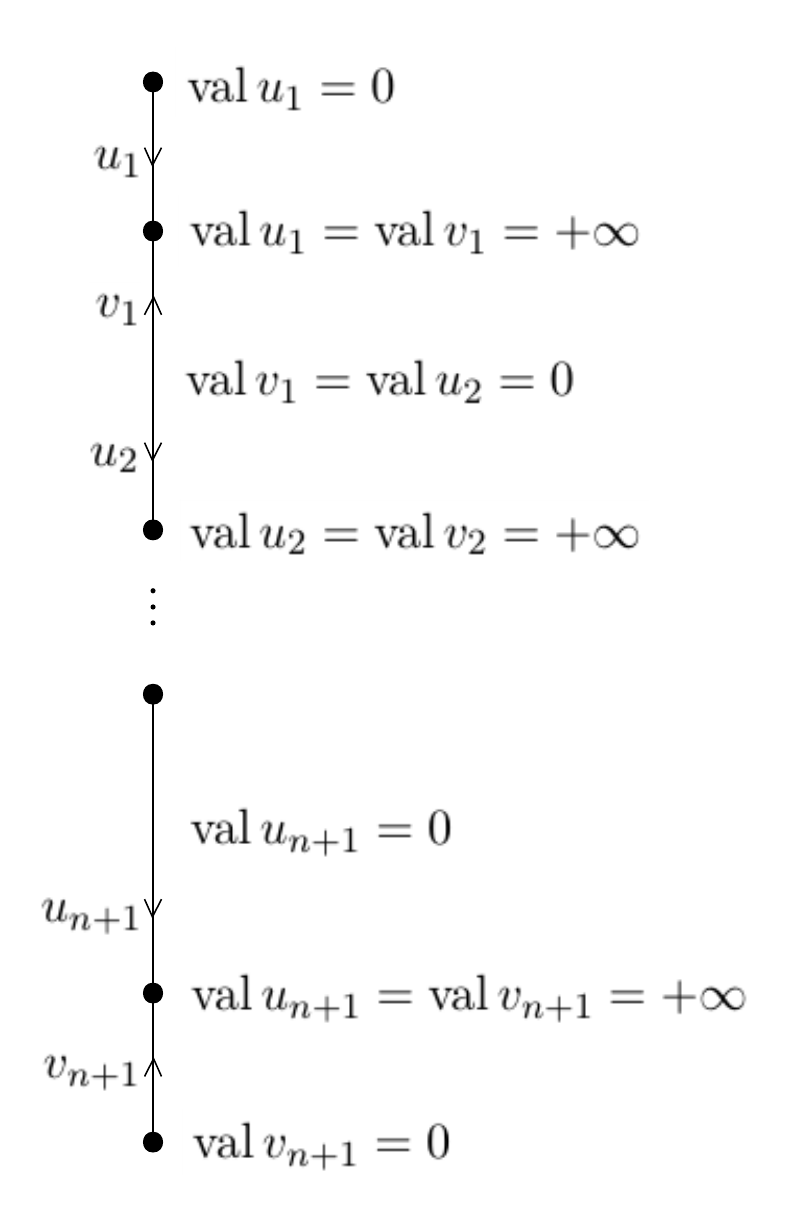}
		\caption{A skeleton of $\check{X}$ obtained by taking valuation image.}\label{fig:An-skeleton}
	\end{center}
\end{figure}

\subsection{Application to Del Pezzo surfaces of degree higher than two}

Let's consider toric Gorenstein Fano surfaces.  Their smoothings give del Pezzo surfaces of degree $\geq 3$.  $A_n$ singularities occur at the toric fixed points.  The gluing method using Fukaya isomorphisms from the last subsection can be used to construct their Landau--Ginzburg mirrors.
The mirror pairs we construct in this way are summarized by Figure \ref{fig:delPezzo}.

Take a toric Gorenstein Fano surface $M_0$ equipped with a toric K\"ahler form, such that its moment-map polytope is integral and contains the origin as the unique interior lattice point.  Let $\bL_0$ be the moment map torus fibre over the origin.  Then $M_0$ is a monotone symplectic manifold, and $\bL_0$ is a monotone Lagrangian torus.

Let's denote by $F$ the set of toric fixed points.
Each $p\in F$ corresponds to a maximal cone of the fan, whose dual cone gives a toric chart containing the fixed point.  The toric chart is given by 
$$S_0 := \{(X,Y,Z)\in \C^3: XY=Z^{n+1} \}$$ 
when the toric fixed point $0$ is an $A_n$ singularity (and $n=0$ if it is a smooth point).

If necessary (when $n\geq 1$), we take a smoothing family $S_\epsilon := \{(X,Y,Z)\in \C^3: XY = \prod_{i=1}^{n+1} (Z- \epsilon_i) \}$ as in the last subsection.  This is a family over $\C^{n+1}$, where $\epsilon$ lives.  Such a smoothing is $\bS^1$-equivariant, where $\bS^1$ acts by $(X,Y,Z)\mapsto (e^{i\theta}X, e^{-i\theta}Y,Z)$.  This can be understood as an $\bS^1$-equivariant symplectic fibration over $\C^{n+1}$ (where $S_\epsilon$ is equipped with the restricted standard symplectic form of $\C^3$).  By symplectic parallel transport, $U_\epsilon - \bigcup_{i=1}^n \bS_i$ is symplectomorphic to $U_0 - \{0\}$, where $\bS_i$ are the vanishing spheres whose images in the reduced base are line segments in the real line joining $Z=\epsilon_i$ and $Z=\epsilon_{i+1}$; $U_\epsilon$ and $U_0$ are certain $\bS^1$-invariant neighborhoods of $\bigcup_{i=1}^n \bS_i \subset S_\epsilon$ and $0\in S_0$ respectively.  Moreover, by the Moser argument, $(S_0,\omega_{M_0})-\{0\}$ is symplectomorphic to an open subset of $(S_0,\omega_{\C^3})-\{0\}$.  Combining these, we have an $\bS^1$-equivariant symplectomorphism between $(U_\epsilon - \bigcup_{i=1}^n \bS_i,\omega_{\C^3})$ and $(S_0-\{0\},\omega_{M_0})$ for some neighborhoods $U_\epsilon \subset S_\epsilon$.

By gluing the patches $U_\epsilon$ and $M_0-\{0\}$ using the above symplectomorphism, we obtain a (partial) symplectic smoothing of $M_0$.  Repeating the surgery at all singular toric fixed points, we obtain a symplectic del Pezzo surface which is denoted by $X$.

Similarly to the $\bS^1$-equivariance, we also have $\Z_2$-equivariance for the anti-symplectic involution on $S_\epsilon$: $(X,Y,Z) \mapsto (\bar{Y},\bar{X},\bar{Z})$, and the above symplectomorphisms are made to be $\Z_2$-equivariant.  

In particular, the monotone Lagrangian torus $\bL_0 \subset (S_0,\omega_{M_0})-\{0\}$ is sent via the symplectomorphism to a corresponding Lagrangian torus, which is denoted by $L_0$, in $U_\epsilon - \bigcup_{i=1}^n \bS_i \subset X$ .  $L_0$ is invariant under $\bS^1$ and the anti-symplectic involution.  This matches the setting for $L_0$ in the last subsection.  Thus for the $i$-th toric fixed point (ordered counterclockwise around the moment-map polytope), which is an $A_{n_i}$-singularity for $n_i\geq 0$, the construction gives Lagrangian immersed spheres $\bL_{i,k}$ for $k=1,\ldots,n_i+1$ in $X$.

\begin{figure}[h]
	\begin{center}
		\includegraphics[scale=0.245]{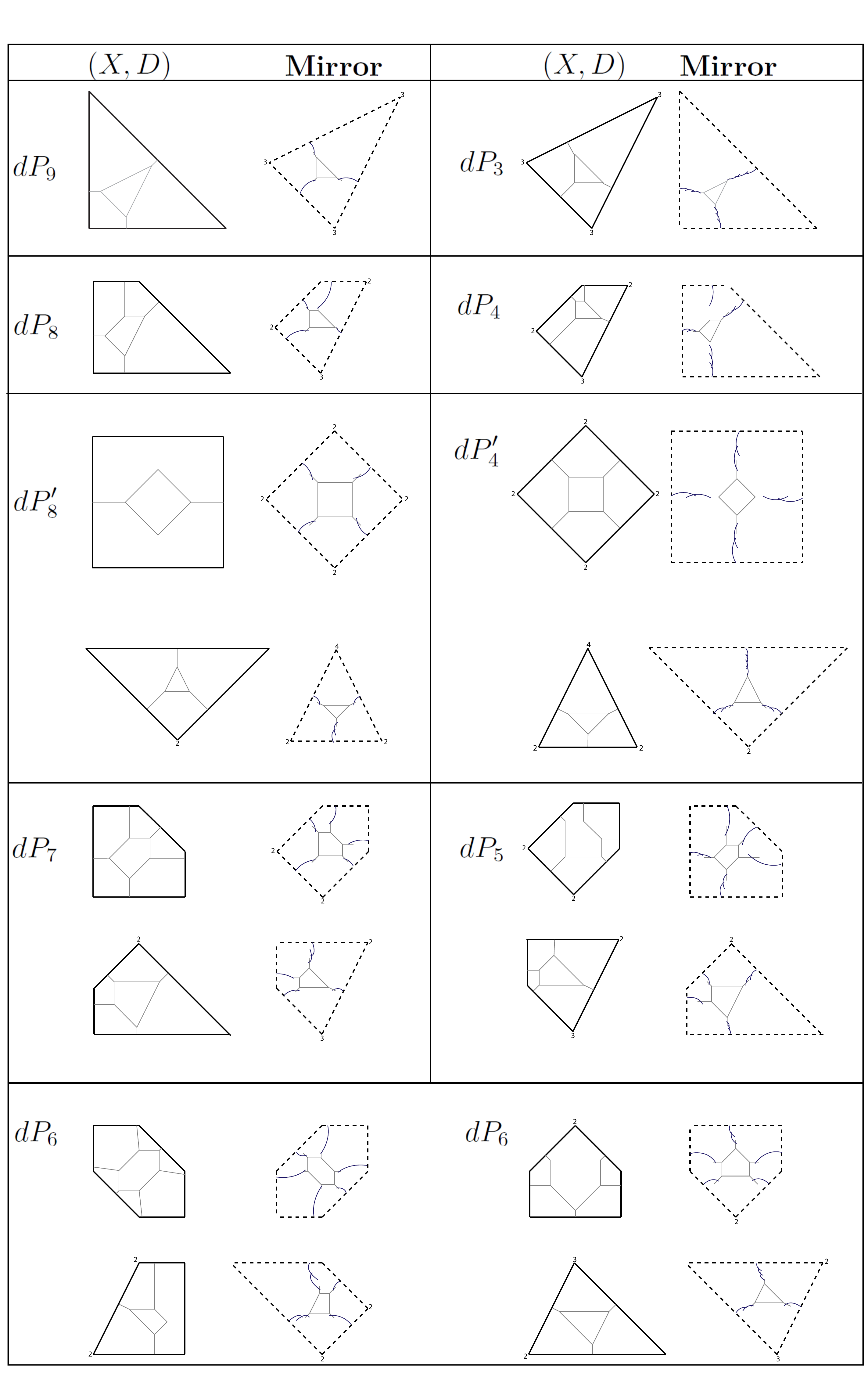}
		\caption{Del Pezzo surfaces that come from smoothings of toric Gorenstein surfaces and their mirrors.}\label{fig:delPezzo}
	\end{center}
\end{figure}

\begin{lemma}
	$X$ and $\bL_0 \subset X$ are monotone.
\end{lemma}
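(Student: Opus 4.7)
The plan is to reduce both monotonicity statements to the corresponding ones for $(M_0,\omega_{M_0})$ and $\bL_0\subset M_0$, by observing that the only new $H_2$-classes created by the smoothing are carried by the Lagrangian vanishing spheres $\bS_{i,k}$ of the $A_{n_i}$ Milnor fibers, on which both the symplectic form and $c_1$ evaluate to zero.

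For monotonicity of $X$, I would let $V:=\bigcup_i U_{\epsilon_i}$ be the union of the smoothing neighborhoods, pick one interior point inside each $U_{\epsilon_i}$, and let $W$ be the complement of these points in $X$. The construction provides a symplectomorphism $(W,\omega_X|_W)\cong (M_0\setminus F,\omega_{M_0})$. Since each Milnor fiber $U_{\epsilon_i}$ deformation-retracts onto the chain $\bS_{i,1},\ldots,\bS_{i,n_i}$, a Mayer--Vietoris argument for $X=V\cup W$ yields a surjection
\[
H_2(M_0\setminus F;\Z)\oplus\bigoplus_{i,k}\Z\langle\bS_{i,k}\rangle \twoheadrightarrow H_2(X;\Z).
\]
On classes transported from $H_2(M_0\setminus F;\Z)$, the symplectomorphism, together with homotopy of compatible almost complex structures, identifies $\omega_X$ with $\omega_{M_0}$ and $c_1(X)$ with $c_1(M_0)$, so the given relation $[\omega_{M_0}]=\lambda\,c_1(M_0)$ transports directly. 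On each vanishing sphere, $\int_{\bS_{i,k}}\omega_X=0$ by the Lagrangian condition, and $\langle c_1(X),[\bS_{i,k}]\rangle=0$ because any Lagrangian $L\subset X$ satisfies $TX|_L\cong TL\otimes_\R\C$, whose first Chern class is $2$-torsion and therefore pairs trivially with $[\bS^2]\in H_2(\bS^2;\Z)\cong\Z$. Combined, this gives $[\omega_X]=\lambda\, c_1(X)$.

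For monotonicity of $L_0=\bL_0\subset X$, I would invoke the long exact sequence
\[
H_2(X;\Z)\to H_2(X,L_0;\Z)\to H_1(L_0;\Z)\to H_1(X;\Z)=0
\]
and study the homomorphism $\Phi(\beta):=\omega_X(\beta)-\tfrac{\lambda}{2}\,\mu_{L_0}(\beta)$ on $H_2(X,L_0;\Z)$. On the image of $H_2(X;\Z)$, $\Phi$ vanishes by the monotonicity of $X$ just established and the identity $\mu_{L_0}=2c_1$ on sphere classes; hence $\Phi$ descends to a homomorphism on $H_1(L_0;\Z)$. Since $L_0\subset W$ and $H_1(W)=0$ (as $W\cong M_0\setminus F$ with $M_0$ simply connected and $F$ of codimension four), every class in $H_1(L_0)$ admits a relative disc representative in $W$; under the symplectomorphism, such a disc corresponds to a relative disc for $\bL_0$ in $M_0$, and the monotonicity of $\bL_0\subset M_0$ forces the descended $\Phi$ to vanish. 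Thus $L_0\subset X$ is monotone with constant $\lambda/2$.

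The main subtlety is verifying that the symplectomorphism $W\cong M_0\setminus F$ matches first Chern classes, which follows by pulling back a compatible almost complex structure across the identification and using that any two compatible almost complex structures are homotopic. Everything else—the Mayer--Vietoris computation, the vanishing of $\omega$ and $c_1$ on Lagrangian spheres, and the long exact sequence argument—is routine; the geometric content is concentrated in the fact that the surgery only introduces Lagrangian sphere classes.
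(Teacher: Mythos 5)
Your overall strategy — split $H_2(X;\Z)$ into classes coming from $M_0$ plus vanishing-sphere classes, and check the proportionality $[\omega]=\lambda c_1$ separately on each piece — is the same idea the paper uses, stated in two sentences rather than via Mayer--Vietoris. The observations that $\omega$ and $c_1$ both vanish on a Lagrangian sphere (the latter because $c_1$ of a complexified real bundle is $2$-torsion and $H^2(S^2;\Z)$ is torsion-free) are correct, as is the long-exact-sequence argument reducing monotonicity of $\bL_0$ to the vanishing of a descended homomorphism on $H_1(\bL_0)$.

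However, there is a genuine error in how you set up the decomposition. You define $W$ to be $X$ minus one point in each smoothing neighborhood and then claim that $(W,\omega_X|_W)$ is symplectomorphic to $(M_0\setminus F,\omega_{M_0})$. This cannot hold: $W$ still contains all the vanishing spheres, so $H_2(W;\Z)\cong H_2(X;\Z)$, whereas $H_2(M_0\setminus F;\Z)\cong H_2(M_0;\Z)$, and these groups differ (smoothing an $A_n$ point adds $n$ new sphere classes). What the construction actually provides, and what the paper states, is a symplectomorphism between $X\setminus\bigcup\bS_{i,k}$ (the complement of the vanishing spheres) and $M_0\setminus F$. With this correction, $V\cap W$ becomes the Milnor fibers minus their vanishing spheres, whose $H_1$ is finite ($\cong\bigoplus_i\Z/(n_i+1)$ via the lens-space link), so it dies with $\R$-coefficients; the cohomological Mayer--Vietoris sequence then shows $H^2(X;\R)\hookrightarrow H^2(V;\R)\oplus H^2(W;\R)$, and you can check $[\omega_X]-\lambda c_1(X)=0$ on each factor exactly as you intended. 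The same repair is needed in the $\bL_0$ argument, where you again invoke $W\cong M_0\setminus F$; with the corrected $W$ the torsion in $H_1(W)$ is harmless since $H_1(\bL_0)\cong\Z^2$ is free and $\Phi$ is $\R$-valued.

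Finally, note that the paper's proof bypasses all of this bookkeeping: it simply observes that curve classes in $\pi_2(M_0)$ (respectively disc classes in $\pi_2(M_0,\bL_0)$) retain their areas and Maslov indices under the smoothing, and the new matching spheres contribute area zero and Maslov index zero, so monotonicity is preserved. Your homological route is a legitimate (longer) way to make that precise, once the $W$ is fixed.
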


\begin{proof}
	The curve classes in $\pi_2(M_0)$ have symplectic areas and Maslov indices unchanged under smoothing.  Moreover, the matching spheres in the smoothing have both Maslov index and area being zero.  Thus $X$ is still monotone.
	
	Similarly, the basic disc classes in $\pi_2(M_0,\bL_0)$ have areas and Maslov indices unchanged after smoothing, and hence $\bL_0$ remains monotone.
\end{proof}

\begin{lemma}
	$\bL_0$ and the immersed Lagrangians $\bL_{i,j}$ have minimal Maslov index two.  In particular $m_0^b$ is proportional to the unit.
\end{lemma}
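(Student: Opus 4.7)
The plan is to apply the Auroux Maslov-index formula, reducing both claims to (a) positivity of intersection with the anti-canonical divisor $D$ and (b) an explicit production of Maslov-$2$ holomorphic discs.

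First I would invoke \cite{2007-Auroux-morror-symmetry-and-t-duality-in-the-complement-of-an-anticanonical-divisor}*{Lemma 3.1}, extended to the immersed setting as in \cite{2010-Akaho-Joyce-immersed-lagrangian-floer-theory}, to obtain $\mu(\beta) = 2\,\beta\cdot [D]$ for any (possibly cornered) disc class $\beta\in\pi_2(X,\bL)$ with $\bL\in\{\bL_0,\bL_{i,j}\}$.  The formula applies because $D$ is disjoint from each such $\bL$ by hypothesis, so the meromorphic two-form with simple pole along $D$ restricts to a nowhere-vanishing holomorphic volume form on $X\setminus D$ that grades the Lagrangians (with $\deg U_{i,j}=\deg V_{i,j}=1$ as fixed in Section~\ref{sec:An}). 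Positivity of intersection with the complex divisor $D$ gives $\beta\cdot [D]\ge 0$ for every $J$-holomorphic (immersed) disc, hence $\mu(\beta)\in 2\mathbb{Z}_{\ge 0}$. Combining this with the monotonicity of $\bL$ just proved (which gives $\omega(\beta)=\lambda\,\mu(\beta)$ with $\lambda>0$), the vanishing $\mu(\beta)=0$ forces $\omega(\beta)=0$, which by positivity of energy occurs only for constant discs; hence every non-constant holomorphic disc has $\mu(\beta)\ge 2$.

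Next I would exhibit a Maslov-$2$ disc in each case. For $\bL_0$, I would transport the Cho--Oh basic discs of the monotone toric torus in $M_0$ through the symplectomorphism used to construct $\bL_0\subset X$; those basic discs are supported in a neighborhood of the moment-map torus disjoint from the surgery regions, so they persist in $X$, meet $D$ exactly once, and have $\mu=2$.  For the immersed spheres $\bL_{i,j}$, I would use the reduced-base picture of Section~\ref{sec:An}: the circle $C_{i,j}$ and $\rho(C)$ bound two lune-shaped strips, and the conic fibration $Z$ trivializes over each lune (as in the proof of Theorem~\ref{thm:iso-An}). Lifting a small disc in one lune through $\epsilon_{i,j}$ yields a holomorphic disc in $X$ with a single corner at the immersed sector $U_{i,j}$ (or $V_{i,j}$) and intersection number $1$ with $D$, giving $\mu=2$.

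For the ``in particular'' clause, since the only non-constant discs contributing to $m_0^b$ have Maslov index exactly $2$, the associated moduli space with one boundary marked point has virtual dimension $\dim\bL$, so evaluation at the marked point produces a top-degree cycle on $\bL$. The standard monotone Lagrangian Floer argument (\cite{CHL-glue}; cf.~\cite{2009-Fukaya-Oh-Ohta-Ono-lagrangian-intersection-floer-theory-anomaly-and-obstruction-part-i}) then gives $m_0^b = W(b)\cdot\mathbf{1}_{\bL}$ for the strict unit $\mathbf{1}_{\bL}$ and a scalar $W(b)\in\Lambda_0$, which is the superpotential evaluated on the deformation $b$. The main obstacle is verifying Auroux's formula in the immersed case with the chosen grading: one must confirm that each immersed corner $U_{i,j}$ or $V_{i,j}$ contributes zero to the Maslov-index defect, so that $\mu(\beta)=2\beta\cdot[D]$ is independent of the number of corners. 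This is a sign-and-dimension check using the Akaho--Joyce conventions, made possible by the fact that the immersed sectors sit in degree $1$ and that $D$ has been chosen to avoid every $\bL_{i,j}$.
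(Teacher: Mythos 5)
Your route is genuinely different from the paper's. The paper transfers the Maslov-bound for $\bL_0$ from the toric picture via the symplectomorphism $M_0 - \{\text{fixed pts}\} \cong X - \{\text{vanishing spheres}\}$, then handles $\bL_{i,j}$ by wall-crossing (disc classes of $\bL_{i,j}$ are disc classes of $\bL_0$ plus Maslov-zero corrections), and finally gets $m_0^b \propto \mathbf{1}$ by a pure degree count. You instead use Auroux's index formula $\mu(\beta)=2\,\beta\cdot[D]$ together with positivity of intersection and monotonicity, then construct Maslov-two discs explicitly; this is attractive because it pins down the exact value two rather than only a lower bound.

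There is, however, a real gap in the step ruling out Maslov-zero discs for $\bL_{i,j}$. You write ``Combining this with the monotonicity of $\bL$ just proved ... the vanishing $\mu(\beta)=0$ forces $\omega(\beta)=0$,'' but the preceding lemma proves monotonicity only for $\bL_0$, not for the immersed spheres $\bL_{i,j}$. In fact, taken literally (you allow ``possibly cornered'' disc classes $\beta$), $\bL_{i,j}$ is \emph{not} monotone: the lune/teardrop polygons with corners at the immersed sectors $U_{i,j}, V_{i,j}$ are non-constant holomorphic curves of strictly positive area with Maslov index zero and $\beta\cdot[D]=0$ --- these are precisely the objects that drive the wall-crossing used everywhere else in Section~\ref{sec:An}. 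So the chain ``$\mu=0 \Rightarrow \omega=0 \Rightarrow$ constant'' fails for exactly the discs that matter. The paper's wall-crossing route is designed to avoid this: the Maslov-zero polygons are absorbed into the $b$-deformed operations and then handled by the degree argument, rather than being excluded a priori. Your argument can be salvaged for corner-free disc classes by noting that $\hat{L}_{i,j}\cong S^2$ is simply connected, so elements of $\pi_2(X,\bL_{i,j})$ cap off to sphere classes and inherit monotonicity from the monotonicity of $X$; but you would then still need to separately address the cornered polygons entering $m_0^b$, which is where the paper's degree-count is doing the real work. The secondary concern you flag --- verifying the Auroux formula in the immersed setting and that the corners contribute zero defect --- is a legitimate piece of bookkeeping that should be carried out; for the lunes it is consistent ($\mu=0$, $\beta\cdot D=0$), but this does not by itself rescue the monotonicity step.
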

\begin{proof}
	We have a symplectomorphism $M_0 - \{\textrm{toric fixed points}\} \cong X - \{\textrm{vanishing spheres}\}$.  In  $M_0 - \{\textrm{toric fixed points}\}$, we already know that $\bL_0$ has minimal Maslov index two, since it is special with respect to the toric meromorphic volume form.  In the smoothing of each toric chart around a singular fixed point, the non-constant holomorphic discs bounded by $\bL_0$ have Maslov indices greater than zero.  Since any disc class bounded by $\bL_0$ is a linear combination of disc classes in the charts, $\bL_0$ has minimal Maslov index two.
	
	Holomorphic polygon classes bounded by the immersed Lagrangian $\bL_{i,j}$ is a sum of non-constant holomorphic disc classes corresponding to those of $\bL_0$ with constant polygon classes that has Maslov index zero.  Thus $\bL_{i,j}$ also has minimal Maslov index two.
	
	$m_0^b$ has degree two.  Since the minimal Maslov index is two, the degree of an output in $m_0^b$ cannot be bigger than zero, which can only be the unit.
\end{proof}

By the isomorphisms between the Lagrangian objects $\bL_0$ and $\bL_{i,j}$ for $p \in F$ and $j=1,\ldots,n_i+1$ given in the last subsection, we obtain a manifold glued from the formal deformation spaces of the Lagrangians.  We call it the Floer-theoretical mirror space.

Let $\check{\Delta}$ be the polar dual polytope of $\Delta$, and $X_{\check{\Delta}}$ be the corresponding toric variety over $\C$.  
For each toric chart corresponding to a fixed point of $X_{\check{\Delta}}$, we have the points $(-1,0)$ and $(0,-1)$ lying in the toric divisors, which are invariant under toric change of coordinates.  We call these special points in $X_{\check{\Delta}}$.  It is easy to see that there is a natural one-to-one correspondence between the collection of special points in $X_{\check{\Delta}}$ and the toric fixed points of $X_{\Delta}$.

We consider the toric variety over $\Lambda$, $X_{\check{\Delta}} + \Lambda_+^2$ (see Definition \ref{def:NovExt}).

\begin{remark}
	A toric variety over $\Lambda$ can also be defined as a GIT quotient of $\Lambda^n$ over $\Lambda^\times$ (like $\mathbb{P}^1_{\Lambda}$ in Definition \ref{def:blowup}).  When the corresponding fan picture is complete, it agrees with the extension of the toric variety over the Novikov ring.
\end{remark}

To make notations precise, recall that the toric fixed points of $X_{\Delta}$ are labeled by $i$.  Toric fixed points of $X_{\Delta}$ correspond to toric prime divisors of $X_{\check{\Delta}}$.  We denote the toric coordinates on these toric divisors by $z_i$.   The special points in the divisors are $z_i=-1$. 

\begin{theorem} \label{thm:mir-del-Pezzo}
	The Floer-theoretical mirror space is equal to $\check{X}_\Lambda = \check{X}+\Lambda_+^2$, where $\check{X}$ is given as follows.  First, we take the (multiple) blowing up of the toric variety $X_{\check{\Delta}}$ at every special point $z_i=-1$ in the $i$-th toric divisor for $(n_i+1)$-times (where $n_i+1$ is the multiplicity of  the $i$-th toric fixed point of $X_\Delta$).  Then we define $\check{X}$ to be the complement of the strict transform of all the toric divisors of $X_{\check{\Delta}}$.
\end{theorem}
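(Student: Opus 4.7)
The plan is to assemble the local mirror computations at each toric fixed point of $X_\Delta$ into the global toric picture of $X_{\check{\Delta}}$, and then to invoke the $A_n$-gluing results of Section \ref{sec:An}. The starting point is the observation that $\bL_0$, being the monotone moment-map fibre over the interior lattice point of the polytope $\Delta$, has unobstructed formal deformation space equal to $(\Lambda_0^\times)^2$; this is precisely the big torus inside the toric variety $X_{\check{\Delta}} + \Lambda_+^2$.

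First, I would fix a toric fixed point $p_i \in X_\Delta$ with local $A_{n_i}$-singularity and analyze the neighborhood surgery used to construct $X$. By the $\bS^1$-equivariant symplectomorphism between $(U_\epsilon - \bigcup_k \bS_k, \omega_{\C^3})$ and $(S_0 - \{0\}, \omega_{M_0})$ built earlier in this section, the restriction of $\bL_0$ near $p_i$ is identified with the torus $L_0$ of Section \ref{sec:An}, and $\bL_{i,1}, \ldots, \bL_{i,n_i+1}$ with $L_1, \ldots, L_{n_i+1}$ there. Theorem \ref{thm:iso-An} then supplies the isomorphism transitions, which are exactly the toric transitions for the resolution of the $A_{n_i}$-singularity. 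Corollary \ref{cor:MisA_n} together with Proposition \ref{prop:mult-blowup} identifies the glued local piece with the $(n_i+1)$-fold iterated blowup at the point $\tilde{z} = 0$ (equivalently $z = -1$) in $(\C^2_{u,\tilde{z}} - \{\tilde{z} = 1\}) + \Lambda_+^2$, with the strict transform of $\{u = 0\}$ removed.

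Second, I would match this local picture to the toric chart of $X_{\check{\Delta}}$ around the toric fixed point $q_i \in X_{\check{\Delta}}$ dual to $p_i$. The two toric prime divisors meeting at $q_i$ correspond to the two edges of the cone at $p_i$, and the Floer-theoretic coordinate $\tilde{z} = z + 1 = u_1 v_1$ from Theorem \ref{thm:iso-An} agrees with the toric coordinate $z_i + 1$ on the $i$-th toric prime divisor; in particular the blowup locus $\tilde{z} = 0$ is precisely the special point $z_i = -1$ singled out in the theorem. The strict transform of $\{u = 0\}$ removed in the local model corresponds globally to the strict transform of the toric boundary of $X_{\check{\Delta}}$. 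Performing this simultaneously at every $p_i$ produces $\check{X}$ on the nose, and combining the charts via the Novikov extension formalism of Definition \ref{def:NovExt} and Proposition \ref{prop:C^2} yields $\check{X} + \Lambda_+^2$ as claimed; the existence of the $A_\infty$ functor and the injectivity assertion are then inherited from the general machinery of \cite{CHL-glue}.

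The main difficulty lies in verifying that the local Floer-theoretic coordinate $\tilde{z}$ agrees globally with the toric coordinate $z_i + 1$ on the $i$-th divisor of $X_{\check{\Delta}}$, and hence that the iterated blowups always occur at $z_i = -1$. This is forced by the normalization in Theorem \ref{thm:iso-An} together with the compatibility between the holonomy coordinates on $\bL_0$ and the toric coordinates on $X_{\check{\Delta}}$, and it is consistent with the superpotential perspective of Corollary \ref{cor:trivial-period-points}. A secondary issue is ruling out stray holomorphic polygon contributions beyond those counted in Section \ref{sec:An}; this is handled by the fact that the relevant Lagrangians remain invariant under the local $\bS^1$-action and the anti-symplectic involution, so the Floer invariants localize to the neighborhoods already analyzed.
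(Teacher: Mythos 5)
Your high-level strategy — reduce to the local $A_n$ model of Section \ref{sec:An}, apply Corollary \ref{cor:MisA_n} and Proposition \ref{prop:mult-blowup} to identify each local piece as an iterated blowup at $\tilde{z}=0$ with the $u$-axis removed, then assemble the local pieces into $\check{X}+\Lambda_+^2$ — is the same as the paper's. However, the toric combinatorics in your second step is misstated in a way that obscures the actual gluing. In the polar duality, a toric fixed point $p_i$ of $X_\Delta$ (a vertex of $\Delta$) corresponds to the $i$-th toric \emph{prime divisor} $\check{D}_i$ of $X_{\check{\Delta}}$ (a facet of $\check{\Delta}$), not to a toric fixed point $q_i \in X_{\check{\Delta}}$. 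Accordingly, the chart contributed by $\bL_0, \bL_{i,1},\ldots,\bL_{i,n_i+1}$ is $\C^\times_{z_i}\times\C_{u_i}$, which is a toric chart of $X_{\check{\Delta}}$ centered along the interior of $\check{D}_i$ (with the toric fixed points removed), not a full chart $\C^2$ around a fixed point of $X_{\check{\Delta}}$. Your sentence about ``the two toric prime divisors meeting at $q_i$'' corresponding to ``the two edges of the cone at $p_i$'' does not apply here.

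More substantively, the crux of the proof — showing that the charts $\C^\times_{z_i}\times\C_{u_i}$ for varying $i$ glue up to $X_{\check{\Delta}}\setminus\{\text{toric fixed points}\}$ — is precisely the point you flag as ``the main difficulty,'' but you do not carry it out: you merely assert it is ``forced by the normalization in Theorem \ref{thm:iso-An} together with the compatibility between the holonomy coordinates on $\bL_0$ and the toric coordinates.'' The paper's proof resolves this by identifying the $z_i$-direction concretely: it is the holonomy direction dual to the vanishing cycle at the $i$-th corner, which is the orthogonal complement of $\xi_i+\eta_i$ where $\xi_i,\eta_i$ are the edge directions of the moment polytope at that corner. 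This computation is what makes the ray added by the chart $\C^\times_{z_i}\times\C_{u_i}$ match the $i$-th ray of the fan $\Sigma_{\check{\Delta}}$, and without it the global gluing claim is unsupported. (Your closing remark about $\bS^1$-invariance and the anti-symplectic involution controlling stray polygons is not needed here — the issue is purely one of gluing charts, already settled by Corollary \ref{cor:MisA_n} — and does not substitute for the missing identification.)
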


\begin{proof}
	By Corollary \ref{cor:MisA_n} and Proposition \ref{prop:mult-blowup}, fixing $i$, the deformation spaces of $\bL_0$ and $\bL_{i,j}$ for $j=1,\ldots,n_i+1$ glue up to the Novikov extension of a multiple blowing up of $\C^\times_{z_i} \times \C_{u_i}$ at $(u_i,z_i)=(0,-1)$, with the strict transform of the $z_i$-axis removed, where $z_i$ is the holonomy variable corresponding to the vanishing cycles of $\bL_{i,j}$.  (In the notation of the previous section, $z_i=\tilde{z}_i-1$.)  The direction of $z_i$ determines the toric compactification $\C^\times_{z_i} \times \C_{u_i}$ of $(\C^\times)^2$ (which corresponds to adding a ray dual to $z_i$ in the fan picture).  These holonomy directions are obtained by taking the orthogonal complement of $\xi_i + \eta_i$, where $\xi_i$ and $\eta_i$ are the edge directions of the $i$-th corner of the moment-map polytope of $M_0$.  From this, we see that the charts $\C^\times_{z_i} \times \C_{u_i}$ glue to the toric variety $X_{\check{\Delta}} - \{\textrm{toric fixed points of } X_{\check{\Delta}}\}$.  It follows that the entire space glued from $\bL_0$ and $\bL_{i,j}$ for all $i,j$ is equal to $\check{X}+\Lambda_+^2$.
\end{proof}

\begin{remark}
	$\check{X}_\Lambda$ can also be constructed by taking the (multiple) blowing up of $X_{\check{\Delta}} + \Lambda_+^2$ using Definition \ref{def:blowup}, and then taking the complement of $\hat{D} + \Lambda_+^2$ for all the toric divisors $D \subset X_{\check{\Delta}}$ (where $\hat{D}$ denotes the strict transform).
	
	The coordinates $z_i$ form analytic maps $\check{X}_\Lambda \to \mathbb{P}^1_\Lambda$ (since the strict transform of the toric divisors, and in particular the toric fixed points, have been removed).  The union of the valuation images of $(u_{i,j},v_{i,j})$ on $z_i=-1$ form a skeleton.  See Figure \ref{fig:dP-skeleton} for an example.
\end{remark}

\begin{figure}[h]
	\begin{center}
		\includegraphics[scale=0.5]{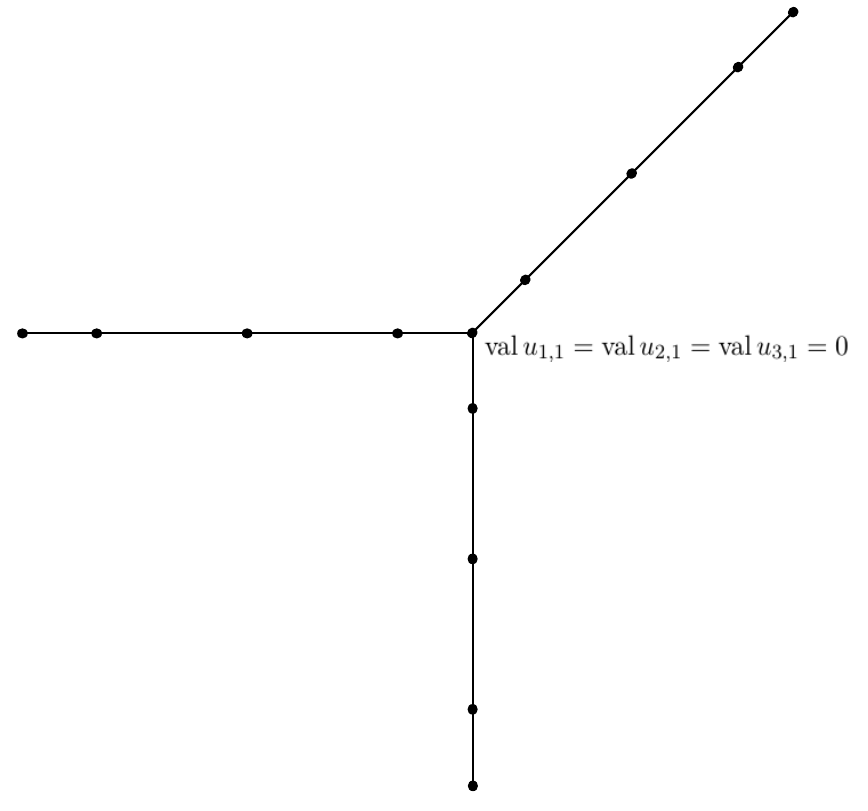}
		\caption{A skeleton of mirror of $dP_3 - E$ formed by the valuation image.}\label{fig:dP-skeleton}
	\end{center}
\end{figure}

By Theorem \ref{thm:mir-del-Pezzo}, the space $\widehat{X_{\check{\Delta}}} - D_{\check{\Delta}}$ is obtained by gluing according to isomorphisms of Lagrangians.  Thus the disc potential $W_{L_0}$ of $L_0$ must extend as a well-defined function to the whole $\widehat{X_{\check{\Delta}}} - D_{\check{\Delta}}$.

\begin{corollary}
	The disc potential $W_{\bL_0}$ extends as a well-defined function over $\widehat{X_{\check{\Delta}}} - D_{\check{\Delta}}$.
\end{corollary}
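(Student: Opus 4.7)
The plan is to invoke the invariance of the weak-bounding cochain potential under Fukaya isomorphism, applied to each pair of Lagrangians whose deformation charts have been glued in Theorem \ref{thm:mir-del-Pezzo}. Write $W_{\bL_0}$ for the disc potential of $(\bL_0,\nabla^{(z,w)})$ and $W_{\bL_{i,j}}$ for the disc potential of the immersed sphere $(\bL_{i,j},b_{i,j})$. Since $\bL_0$ is monotone and the $\bL_{i,j}$ have minimal Maslov index two, each is weakly unobstructed and $m_0^{b}=W\cdot 1_{\bL}$ for the respective potentials. Moreover, monotonicity forces the output of $m_0^b$ to consist only of Maslov-two contributions, so each $W_{\bL_{i,j}}$ is in fact a Laurent polynomial with $\C$-coefficients on the chart $\{(u_{i,j},v_{i,j})\in\Lambda_0^2:\val(u_{i,j}v_{i,j})>0\}$, and $W_{\bL_0}$ is a Laurent polynomial on $(\Lambda_0^\times)^2$.

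Next, I would use Theorem \ref{thm:iso-An} together with the analogous chain of isomorphisms $\alpha_0,\alpha_1,\ldots,\alpha_{n_i}$ at the $i$-th toric fixed point to obtain, under the gluing transition map, degree-zero morphisms $\alpha \in \CF^0((\bL_0,\nabla),(\bL_{i,j},b_{i,j}))$ satisfying $m_1^{\nabla,b_{i,j}}(\alpha)=0$. The basic $A_\infty$ identity for weakly unobstructed pairs,
\begin{equation*}
m_1^{\nabla,b_{i,j}}\circ m_1^{\nabla,b_{i,j}}=(W_{\bL_{i,j}}-W_{\bL_0})\cdot\mathrm{id},
\end{equation*}
then forces $W_{\bL_{i,j}}=W_{\bL_0}$ as functions on the overlap of the two charts in $\check{X}_\Lambda$, once the gluing substitution from Theorem \ref{thm:iso-An} is applied. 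Because isomorphism is an equivalence relation and the chain $\bL_0\leftrightarrow\bL_{i,1}\leftrightarrow\cdots\leftrightarrow\bL_{i,n_i+1}$ relates every chart of $\check{X}_\Lambda$ back to that of $\bL_0$, all the local potentials agree on overlaps.

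I would then conclude by a sheaf-theoretic gluing argument. The charts for $\bL_0$ and the $\bL_{i,j}$ cover $\check{X}_\Lambda=\check{X}+\Lambda_+^2$ by the proof of Theorem \ref{thm:mir-del-Pezzo}, and on each chart the potential is a Laurent polynomial with $\C$-coefficients, hence extends to a regular function on the corresponding open subset of $\widehat{X_{\check{\Delta}}}-D_{\check{\Delta}}$ (using Proposition \ref{prop:C^2} and Definition \ref{def:NovExt} to descend from $\Lambda_+^2$-extensions to the underlying complex variety). The agreement on overlaps established above then assembles these local Laurent polynomials into a single regular function on $\widehat{X_{\check{\Delta}}}-D_{\check{\Delta}}$.

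The main obstacle I foresee is verifying that the immersed-Lagrangian potentials $W_{\bL_{i,j}}$ are indeed $\C$-valued Laurent polynomials, rather than formal Novikov series; this requires checking that only finitely many Maslov-two disc classes contribute to $m_0^{b_{i,j}}$ and that their $\bT$-exponents cancel against the $\bT$-weights of $(u_{i,j},v_{i,j})$ owing to the nice energy-level choice of the Lagrangians (the same mechanism that produces a $\C$-valued glued space rather than a purely $\Lambda$-valued one in Theorem \ref{thm:mir-del-Pezzo}). Once this explicit monotone disc count is in hand, the sheafification step and the isomorphism-invariance of $W$ are essentially formal.
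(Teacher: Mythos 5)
Your proposal is correct and takes essentially the same approach as the paper: the paper's own ``proof'' is the single sentence preceding the corollary, namely that the mirror space is glued by Fukaya isomorphisms of Lagrangian objects, and hence the disc potential automatically descends to a well-defined function. You have simply (and correctly) unwound the standard mechanism behind that remark — the identity $m_1^{b,b'}\circ m_1^{b,b'}=(W'-W)\cdot\mathrm{id}$ forcing equality of potentials whenever a nonzero degree-zero cocycle $\alpha$ with $m_1(\alpha)=0$ exists — together with the routine observation that local agreement on overlaps of the charts from Theorem~\ref{thm:mir-del-Pezzo} assembles into a global function.
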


Let $v_1,\ldots,v_p$ be the primitive inward normal vectors to the facets of the moment-map polytope of the toric variety $M_0$, which are ordered counterclockwisely.  (Thus $(v_i,v_{i+1})$ agrees with the standard orientation of the plane.)  The multiplicities at toric fixed points are equal to $n_i = \det (v_i,v_{i+1})$.  The toric fixed points are $A_{n_i-1}$-singularities.  Then $u_{i,j} = (jv_i + (n_i-j)v_{i+1})/n_i$ for $j\in [2,n_i-1] \cap \Z$ are integral vectors.

\begin{proposition}
	The disc potential $W_{\bL_0}$ is equal to
	$$ W_{\bL_0} = \sum_i z^{v_i} + \sum_i \sum_{j=2}^{n_i-1} {n_i \choose j} z^{u_{i,j}}.$$
\end{proposition}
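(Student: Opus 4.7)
The plan is to decompose $W_{\bL_0}$ into local contributions and compute each using the conic fibration of the local smoothing. Away from the vanishing spheres, the symplectomorphism $X - \bigcup_{i,k}\bS_{i,k} \cong M_0 - F$ identifies $\bL_0$ with the monotone toric fibre in $M_0$, and the Cho--Oh formula for toric Fano varieties identifies those Maslov-$2$ discs that miss every singular neighbourhood with one basic disc per facet of the moment polytope; these contribute $\sum_i z^{v_i}$.

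For each $i$ with $n_i\geq 2$ (an $A_{n_i-1}$ singular fixed point), the remaining Maslov-$2$ discs bounded by $\bL_0$ are localised inside the smoothing $S_\epsilon^{(n_i-1)}$ by the energy bound $\omega(u) = \mu_L(u)/2 \leq 1$ that holds for Maslov-$2$ classes on a monotone Lagrangian. I would count them using the Hamiltonian $\bS^1$-action and the reduction map $Z\colon S_\epsilon^{(n_i-1)} \to \C$: each such disc is presented as a branched double cover of a holomorphic disc in $\C$ bounded by the circle $C = Z(L_0)$, branched over a subset $I \subseteq \{\epsilon_1,\ldots,\epsilon_{n_i}\}$ of critical values. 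The explicit branched cover $t_1 = f_\pm(t_2)$ (analogous to that in Section~\ref{subsection:period-calculation}) makes the construction concrete, and a single intersection with the anti-canonical divisor $Z^{-1}\{p\}$ forces Maslov index $2$. The resulting boundary class in $H_1(\bL_0;\Z)$ depends only on $j=|I|$ and equals $u_{i,j}$, while for each $j$ the number of admissible subsets is $\binom{n_i}{j}$. The boundary values $j\in \{0,1,n_i\}$ either reproduce the two basic toric discs already recorded in Step~1 (for $j=0$ and $j=n_i$, where the boundary class degenerates to $v_{i+1}$ or $v_i$) or are obstructed in the Maslov-$2$ stratum, leaving the stated range $2\leq j\leq n_i-1$. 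Summing over $i,j$ yields the claimed correction term.

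The hard part is the local disc count: rigorously establishing the bijection between subsets $I$ and Maslov-$2$ holomorphic discs, identifying the correct boundary class in $H_1(\bL_0;\Z)$, and carefully treating the boundary cases $j\in\{0,1,n_i\}$ so as to avoid double-counting with Step~1 (in the spirit of the computations in \cite{LLW,Chan-An}). A cleaner alternative uses the corollary preceding the proposition: since $W_{\bL_0}$ extends to a regular function on $\widehat{X_{\check{\Delta}}} - D_{\check{\Delta}}$, and is a Laurent polynomial in the toric coordinates of $X_{\check{\Delta}}$ with Newton polytope contained in $\check{\Delta}$ by monotonicity, its coefficients are forced by the Fukaya isomorphism transitions of Theorem~\ref{thm:iso-An} (which encode the multiple blowing-up at each special point $z_i=-1$). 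A Pascal-type recursion along each edge of $\check{\Delta}$ then pins down the binomial factors $\binom{n_i}{j}$ uniquely from the vertex values $z^{v_i}$, recovering the claimed formula.
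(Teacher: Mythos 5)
Your proposal takes a genuinely different route from the paper's. The paper's proof is terse: it quotes the local disc potential of $\bL_0$ in the $A_{n_i}$-smoothing of the $i$-th toric chart (in factored binomial form), invokes the invariance of the disc potential of a monotone Lagrangian under deformations of the almost complex structure \cite{EP} to localize the count, and assembles the chart contributions. You instead attempt a direct re-count of the Maslov-two discs: Cho--Oh basic discs away from the singular corners plus a branched-cover count inside each local smoothing, in the style of \cites{LLW,Chan-An}. This is a legitimate and more self-contained line of attack, though it carries a heavier burden of proof (the bijection between branching subsets $I$ and actual holomorphic discs with the claimed boundary classes, which you rightly flag as the hard part but do not establish).

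There is, however, a concrete gap in your handling of the case $j=1$. You assert that $j=1$ "is obstructed in the Maslov-two stratum," with no argument, purely to reproduce the stated range $2\le j\le n_i-1$. That claim is false, and is contradicted by the paper's own Table~\ref{tab:W0}: the potential of $dP_5$ contains $2x$ and $2y$, and that of $dP_4'$ contains $2(x+y+x^{-1}+y^{-1})$, and these are precisely the $\binom{2}{1}z^{u_{i,1}}$ contributions at the $n_i=2$ corners; likewise $dP_3$ exhibits all six terms with coefficient $3=\binom{3}{1}=\binom{3}{2}$, requiring both $j=1$ and $j=2$. The inner sum should run from $j=1$, as the full binomial expansion of the local potential dictates; the stated lower limit $j=2$ is an indexing slip in the proposition. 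Your alternative "Pascal-type recursion" argument would necessarily also produce the $\binom{n_i}{1}$ terms, so it cannot land on the range $[2,n_i-1]$ either. Once the range is corrected, only $j=0$ and $j=n_i$ remain to be identified with the vertex terms $z^{v_{i+1}}$ and $z^{v_i}$; the double-counting concern you raise there is genuine, and the paper's phrase "sum of the potentials in the various charts" also glosses over it.
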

\begin{proof}
	The disc potential for $\bL_0$ in the $A_{n_i}$-smoothing of the $i$-th toric chart corresponding to $(v_i,v_{i+1})$ is equal to
	$$ z^{v_i} + z^{v_{i+1}}+ \sum_{j=2}^{n_i-1} {n_i \choose j} z^{u_{i,j}}.  $$
	
	$$ z^{v_i}(1 + z^{u_{i,2}-v_i})^{n_i}$$
	
	Since $\bL_0$ is monotone, the disc potential is invariant under deformation of complex structures \cite{EP}.
	We show below that a Maslov-two holomorphic disc bounded by $\bL_0$ must be entirely contained in one of the charts.  Then it follows that the disc potential of $\bL_0$ in $X$ is a sum of the potentials in the various charts.
	
	Let's choose an embedding of the open torus orbit $(\C^\times)^2$ of the toric degeneration to $X$, whose complement $C$ is a union of Lagrangian vanishing spheres together with chosen branches of degenerate conics in the charts.  In dimension two, $C$ represents the anti-canonical class.  This can be easily seen by deforming $X$ to a resolution of singularities in which the vanishing spheres become holomorphic.  It follows that a holomorphic disc of Maslov-index two intersects the anti-canonical class $C$ once.   Thus it is contained in a chart.
\end{proof}

Explicit expressions of $W_{L_0}$ are given in Table \ref{tab:W0}.  Comparing to Figure \ref{fig:delPezzo}, for each $dP_n$, we only write down the disc potential for the monotone torus associated to the toric variety shown as the first figure, since the others can be obtained via wall-crossing.  They give different realizations of the \emph{same mirror space} as the complement of a (non-toric) blowing-up of a toric variety. 

\begin{ex}
	Consider $dP_4'$ obtained as smoothings of the two toric varieties shown in Figure \ref{fig:delPezzo}.  The disc potential associated to the first one is equal to 
	$$W_0 =xy+xy^{-1}+x^{-1}y+x^{-1}y^{-1}+2(x+y+x^{-1}+y^{-1}).$$
	It is related to the disc potential associated to the second one by wall-crossing
	$$ y' = y(x+x^{-1}+2);\,\,x' = x.$$
	The potential becomes
	\begin{align*}
		&(x+x^{-1}+2)y^{-1}+2x^{-1}+2x+y(x+x^{-1}+2)\\
		=&(x'+(x')^{-1}+2)^2 (y')^{-1} + 2x^{-1}+2x+y'\\ =& (x')^2(y')^{-1}+(x')^{-2}(y')^{-1}+y'+2x'+2(x')^{-1}+4\left(x'(y')^{-1}+(x')^{-1}(y')^{-1}\right)+6(y')^{-1}.
	\end{align*}
	The coordinate change between $y'$ and $y$ is composed of crossing two parallel walls in the $A_1$ configuration.  Namely, $y' = yx^{-1}(1+x)^2 = y(x+x^{-1}+2)$.
\end{ex}

\begin{figure}[h]
	\begin{center}
		\includegraphics[scale=0.5]{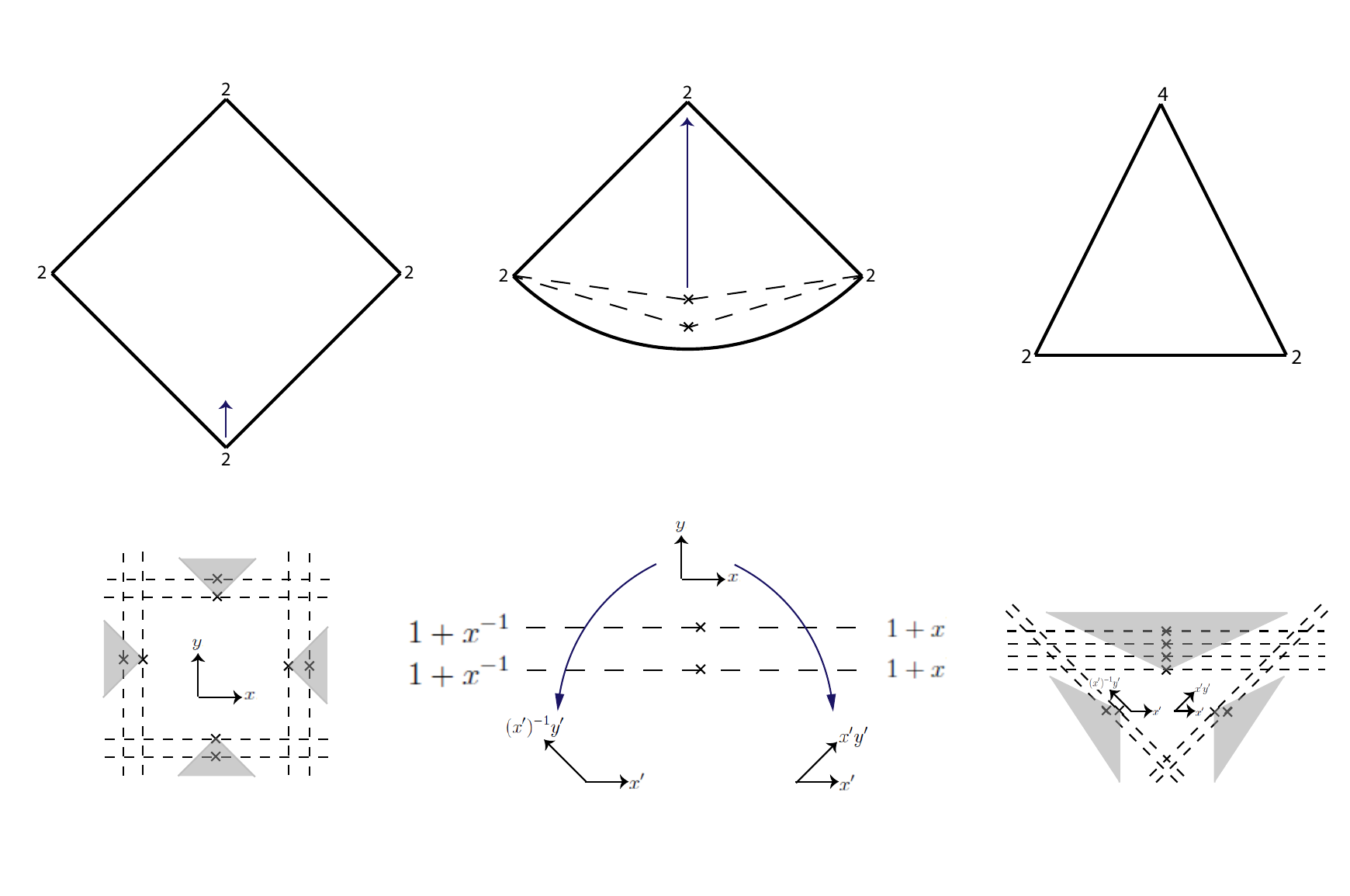}
		\caption{Wall crossing for $dP_4'$.}\label{fig:wcdP}
	\end{center}
\end{figure}

\begin{table}[h]
	\centering
	\begin{tabular}{| l | l | } 
		\hline
		$dP_9$ & $W_0 = x + y + (xy)^{-1}$ \\
		\hline
		$dP_8$ & $W_0 = x + y + y^{-1} + x^{-1}y^{-1}$ \\
		\hline
		$dP_8'$ & $W_0 = x + y + x^{-1} + y^{-1}$ \\
		\hline
		$dP_7$ & $W_0 = x + y + x^{-1} + y^{-1} + x^{-1}y^{-1}$  \\
		\hline
		$dP_6$ & $W_0 = x + y + xy + x^{-1} + y^{-1} + x^{-1}y^{-1}$ \\
		\hline
		$dP_5$ & $W_0 = xy + x^{-1} y + xy^{-1} + x^{-1} + y^{-1} + 2x + 2y$ \\
		\hline
		$dP_4$ & $W_0 =xy+xy^{-1}+y^{-1}+x^{-2}y+2x+2x^{-1}+3y+3x^{-1}y$ \\
		\hline	
		$dP_4'$ & $W_0 =xy+xy^{-1}+x^{-1}y+x^{-1}y^{-1}+2(x+y+x^{-1}+y^{-1})$ \\
		\hline
		$dP_3$ & $W_0 = xy + x^{-2}y + x y^{-2} + 3(x+xy^{-1}+y+x^{-1}y+x^{-1}+y^{-1})$\\
		\hline
	\end{tabular}
	
	\caption{The disc potentials of toric monotone tori in del Pezzo surfaces.}
	\label{tab:W0}
\end{table}

\appendix
\section[]{Construction of rational elliptic surfaces with an 
\texorpdfstring{\(I_{d}\)}{} fibre and trivial periods} \label{ext res}

In the appendix, by
reverse engineering the algorithm in proof of 
\cite{2015-Gross-Hacking-Keel-moduli-of-surfaces-with-an-anti-canonical-cycle}*{Proposition 1.3}, 
we construct the rational surface with an $I_d$ fibre 
and trivial period explicitly below. 

The del Pezzo surfaces \(\mathrm{dP}_{d}\) of degree \(d=6,7,8,9\) are toric varieties.
Applying Batyrev's toric mirror construction
and taking a further blow-up yield the desired rational elliptic surfaces.
We thus focus on the case \(1\le d\le 5\).

\subsection{$d=5$} Recall that the del Pezzo surface of degree seven \(\mathrm{dP}_{7}\)
is a toric surface whose fan 
is generated by $(-1,0), (0,-1), (1,0), (1,1), (0,1)$. 
Denote by $D_i$, $i=1,\ldots 5 $, the corresponding toric divisors. 
Then the del Pezzo surface of degree five $Y_5$ can be realized as the 
blow-up of \(\mathrm{dP}_{7}\) at the point ``\(-1\)'' 
on the toric divisors \(D_{1}\) and \(D_{2}\). 
Then $\check{Y}_e$ is the blow-up along the 
preimage of ``\(-1\)'' on 
each of $D_i$ in $Y_t$ and $\check{D}_e$ is the 
proper transform of $\sum_{i=1}^{5}D_i$. 

\subsection{\(d=4\)}
We start with the toric variety $\mathbf{P}^1\times \mathbf{P}^1$. Let
$D_i$, $i=1,\ldots 4$, be the irreducible toric boundaries. 
Let $Y'_4$ be the blow up of \(\mathbf{P}^{1}\times\mathbf{P}^{1}\)
at ``$-1$'' on each \(D_{i}\). Then $\check{Y}_e$ is the blow up 
of the intersection of the exceptional divisors with the proper transform of $D_i$. 
Then $\check{Y}$ is a blow-up of $\mathbf{P}^1\times \mathbf{P}^1$ at 
eight points and thus a rational elliptic surface. The $I_4$ fibre is the 
proper transform of $\sum_i D_i$. 
In this case the singular configuration is $I^*_1I_4I_1$. 

\subsection{$d=3$}
We begin with $\mathbf{P}^2$ and irreducible toric boundaries $D_i$, $i=1,2,3$. 
Let $Y'_3$ be the blow-up of $\mathbf{P}^2$ along ``$-1$'' on each toric boundary. 
Let $Y''_3$ be the blow-up of $Y'_3$ along the intersection of the exceptional divisors 
with the proper transform of $D_i$. Then $\check{Y}_e$ is the blow-up of $Y''_3$ along the intersections 
of the exceptional divisors of $Y''_3\rightarrow Y'_3$ 
and the proper transform of $D_i$. The boundary divisor is then the proper transform of $D_i$ to $\check{Y}_e$. Then $\check{Y}_e$ is the blow up of $\mathbf{P}^2$ at nine points and thus a rational elliptic surface. 
In this case, the singular configuration is $\mathrm{IV}^*\mathrm{I}_3\mathrm{I}_1$. 

\subsection{\(d=2\)}
Recall that the rays in the fan of $\mathbf{P}^1\times \mathbf{P}^1$ are
generated by $(1,0)$, $(0,1)$, $(-1,0)$, $(0,-1)$. 
Denote by $D_i$, $i=1,\ldots, 4$, the corresponding toric divisor. Let $Y'_2$ be obtained from
blowing up \(\mathbf{P}^{1}\times\mathbf{P}^{1}\) at ``$-1$'' on $D_2$ and $D_4$ respectively
and then contract the proper transform of $D_2$, $D_4$. 
Then $Y'_2$ is a minimal rational surface of second Betti number $2$. 
Since the proper transform of $\{-1\}\times \mathbf{P}^1\subseteq \mathbf{P}^1\times \mathbf{P}^1$ 
(which is homologous to $D_1$ or $D_3$) is a $(-2)$-curve, we have $Y'_2\cong \mathbb{F}_2$. 
Then $\check{Y}_2$ is 
the four-time iterated blow up at $(-1)$ of $D_1,D_3$. Then $\check{Y}_e$ is the blow up of $Y'_2$ at the points corresponding to $-1$ on the proper transform of $D_2,D_4$.  $\check{D}_e$ is the proper transform of $D_1,D_3$. Then
  $\check{Y}_2$ is a rational elliptic surface with singular configuration $\mathrm{III}^*\mathrm{I}_2\mathrm{I}_1$, where the component of the $\mathrm{III}^*$ fibre with multiplicity $4$ is the proper transform of $\mathbf{P}^1\times \{-1\}$ from $\mathbf{P}^1 \times \mathbf{P}^1$, which is homologous to $D_2,D_4$.  

\subsection{\(d=1\)}
Let $(x,t)$ and $(z,s)$ be two charts of $\mathcal{O}_{\mathbf{P}^1}(3)$ such that $x=1/z, t=x^3s$. Let $L$ be the compactification of the curve locally defined by $\{t+(x+1)^3=0\}$ in the Hirzebruch surface $\mathbb{F}_3\cong \mathbb{P}(\mathcal{O}\oplus \mathcal{O}_{\mathbf{P}^1}(3))$. Then $L$ intersect two toric boundary fibres at $-1$ and is tangent to the $(+3)$-toric boundary divisor at $-1$ with multiplicity $3$. Blowing up the intersection of $L$ with the two toric boundary fibres and blow down the two fibres and the original unique $-3$ curve leads to $\mathbf{P}^2$. The proper transform of the original $(+3)$-toric boundary divisor is a nodal cubic $C$ and is tangent to the proper transform of $L$ with multiplicity $3$ at $p$. Then $\check{Y}_1$ is the nine-times successive blow up at the point corresponding to $p$ on the proper transform of $C$. The proper transform of $L$ on $\check{Y}_1$ is the component of the $\mathrm{II}^*$ fibre with multiplicity $3$ and adjacent to the unique component of multiplicity $6$.

\section[]{Proof of Lemma \ref{initial ray b}}
\label{app-proof-lemma}
We retain the notation in \S\ref{sec:complex-affine-structure-of-slf}. We can compute
\begin{equation*}
\operatorname{Im}\left(\int_{\partial b(q)}\iota_{\partial/\partial q}\check{\Omega}\right)
<0~\mbox{for}~q\in (-4,0).
\end{equation*}
To prove the lemma, it suffices to show that there exists a constant \(\kappa>0\)
such that 
\begin{equation}
\label{eq:appendix-wts}
\operatorname{Im}\left(\int_{-\partial a(q)}\iota_{\mathrm{i}\partial/\partial q}\check{\Omega}\right)
\ge\frac{\kappa}{|q|},~\mbox{for all \(q\) large}.
\end{equation}
Let \(p_{1}\) and \(p_{2}\) be ramification points such that 
\(\operatorname{Re}p_{1}>0\), \(\operatorname{Im}p_{1}>0\) and 
\(\operatorname{Re}p_{2}>0\), \(\operatorname{Im}p_{2}<0\). 
(\(p_{1}\) and \(p_{2}\) are roots of \(t^2-(2+q)t+1=0\).)
We may write \(p_{1}=R\exp(\mathrm{i}\theta_{0})\) and
\(p_{2}=\epsilon \exp(-\mathrm{i}\theta_{0})\)
where \(R=|p_{1}|\) and \(\epsilon=|p_{2}|\). 
Note that \(R\), \(\epsilon\), and \(\theta_{0}\) indeed depend on \(q\)
but we shall drop out \(q\) in the notation for simplicity.
Let \(\gamma_{1,1}\) be a circular arc of radius \(R\)
connecting \(p_{1}\) and the positive real axis, 
\(\gamma_{1,2}\) be the line segment connecting \(\epsilon\) and \(R\) on 
the positive real axis, and \(\gamma_{1,3}\) be 
a circular arc of radius \(\epsilon\) connecting \(p_{2}\) and the 
positive real axis
(cf.~\textsc{Figure} \ref{fig:deform-the-path}). 
We can deform \(\gamma_{1}\) into \(\gamma_{1,1}\cup \gamma_{1,2}\cup\gamma_{1,3}\).
Note that 
\begin{equation*}
\sqrt{(t_{2}^{2}+1-qt_{2})^{2}-4t_{2}^{2}} = t_{2}\cdot
\sqrt{(t_{2}+t_{2}^{-1}-q)^{2}-4}.
\end{equation*}
on \(\gamma_{1,1}\cup \gamma_{1,2}\cup\gamma_{1,3}\). 
%({\color{red} will add some explanations here}.)
We shall compute 
\begin{equation*}
\int_{\gamma_{1,j}}\frac{\mathrm{d}t_{2}}{\sqrt{(t_{2}^{2}+1-qt_{2})^{2}-4t_{2}^{2}}}
=\int_{\gamma_{1,j}}\frac{1}{\sqrt{(t_{2}+t_{2}^{-1}-q)^{2}-4}}\frac{\mathrm{d}t_{2}}{t_{2}}.
\end{equation*}

\begin{figure}[!htbp]
\includegraphics[scale=1.1]{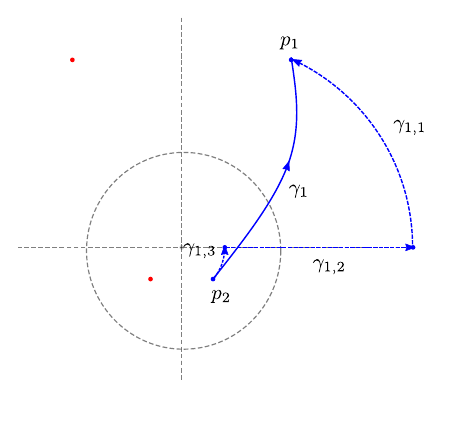}
\caption{\(p_{1}\) and \(p_{2}\) are ramification points.}
\label{fig:deform-the-path}
\end{figure}

\begin{lemma}
\label{lemma:countour-integrals}
For \(j=1,2,3\), we have
\begin{equation*}
\operatorname{Im}\left(
\int_{\gamma_{1,j}}\frac{1}{\sqrt{(t_{2}+t_{2}^{-1}-q)^{2}-4}}\frac{\mathrm{d}t_{2}}{t_{2}}\right)\ge 0.
\end{equation*}
\end{lemma}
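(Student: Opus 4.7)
The plan is to parameterize each piece $\gamma_{1,j}$ explicitly and analyze the sign of the imaginary part of the integrand pointwise. Throughout we work in the regime $q=\mathrm{i}\xi$ with $\xi\gg 0$, which is the relevant regime for \eqref{eq:appendix-wts}. The condition $p_{1}p_{2}=1$ (both roots of $t^{2}-(q+2)t+1=0$) forces $\epsilon=R^{-1}$, and the equation $p_{1}+p_{2}=q+2$ gives $(R+R^{-1})\cos\theta_{0}=2$ and $(R-R^{-1})\sin\theta_{0}=\xi$, so in particular $\theta_{0}\in(0,\pi/2)$. Recall that the principal branch of $\sqrt{\cdot}$ with cut on $(-\infty,0]$ sends the lower half-plane to itself, so $\operatorname{Im}(1/\sqrt{z})>0$ whenever $\operatorname{Im}z<0$, and $\operatorname{Re}(1/\sqrt{z})>0$ on its entire domain.

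On the real segment $\gamma_{1,2}$ with $t_{2}=r\in[\epsilon,R]$, direct substitution yields
\begin{equation*}
F^{2}-4=(r+r^{-1}-\mathrm{i}\xi)^{2}-4=\bigl((r+r^{-1})^{2}-\xi^{2}-4\bigr)-2\mathrm{i}\xi(r+r^{-1}),
\end{equation*}
whose imaginary part is strictly negative for $\xi>0$. Hence $\operatorname{Im}(1/\sqrt{F^{2}-4})>0$, and together with $\mathrm{d}t_{2}/t_{2}=\mathrm{d}r/r>0$ this gives $\operatorname{Im}(I_{2})\ge 0$.

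On the arcs $\gamma_{1,1}$ and $\gamma_{1,3}$, parameterize $t_{2}=R\mathrm{e}^{\mathrm{i}\phi}$ and $t_{2}=\epsilon\mathrm{e}^{-\mathrm{i}\phi}$ respectively, with $\phi\in[0,\theta_{0}]$. Writing $A=R+R^{-1}$ and $B=R-R^{-1}>0$, both cases give $t_{2}+t_{2}^{-1}-q=A\cos\phi+\mathrm{i}(B\sin\phi-\xi)$, and hence
\begin{equation*}
\operatorname{Im}(F^{2}-4)=2A\cos\phi\,(B\sin\phi-\xi).
\end{equation*}
Since $\xi=B\sin\theta_{0}$ and $B\sin\phi$ is strictly increasing on $[0,\theta_{0}]\subset[0,\pi/2)$, this quantity is non-positive, vanishing only at the ramification endpoint $\phi=\theta_{0}$. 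Thus $F^{2}-4$ lies in the open lower half-plane on each arc's interior, and $\operatorname{Re}(1/\sqrt{F^{2}-4})>0$ throughout. Matching the arc orientations to the deformation $\gamma_{1}\rightsquigarrow\gamma_{1,1}\cup\gamma_{1,2}\cup\gamma_{1,3}$ (so that $\mathrm{d}t_{2}/t_{2}=\pm\mathrm{i}\,\mathrm{d}\phi$ with the sign chosen consistently with the direction of integration), the imaginary part of each arc integral reduces to $\int_{0}^{\theta_{0}}\operatorname{Re}(1/\sqrt{F^{2}-4})\,\mathrm{d}\phi\ge 0$ for $j=1,3$.

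The main obstacle is branch-cut bookkeeping: one must confirm that along each piece $F^{2}-4$ never lands in $(-\infty,0]$ except at the ramification endpoints, so that the principal branch of the square root is globally well-defined and the sign conclusions hold uniformly on the interior. This is settled by the computations of $\operatorname{Im}(F^{2}-4)$ above, which are strictly negative away from the endpoints. The integrable singularity of $1/\sqrt{F^{2}-4}$ at the ramification endpoint does not affect the sign of the integral, so all three inequalities of the lemma follow simultaneously.
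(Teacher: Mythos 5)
Your proof is correct and follows essentially the same route as the paper: both establish the key inequality $\operatorname{Im}\left((t_{2}+t_{2}^{-1}-q)^{2}-4\right)\le 0$ on each $\gamma_{1,j}$ via the parametrizations $t_{2}=R\exp(\mathrm{i}\theta)$, $t_{2}=\epsilon\exp(-\mathrm{i}\theta)$, $t_{2}=r$ and the ramification relation $(R-R^{-1})\sin\theta_{0}=\xi$, then deduce the sign of the integral. You are merely more explicit than the paper's terse ``the lemma immediately follows from the claim'' in spelling out how the principal branch yields $\operatorname{Re}(1/\sqrt{\cdot})>0$ on the arcs and $\operatorname{Im}(1/\sqrt{\cdot})>0$ on the real segment.
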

\begin{proof}
We claim that 
\begin{equation}
\label{lem:claim}
\operatorname{Im}\left((t_{2}+t_{2}^{-1}-q)^{2}-4\right)\le 0,~\mbox{for}~t_{2}\in \gamma_{1,j}.
\end{equation}
The lemma immediately follows from the claim.

For \(j=2\), \eqref{lem:claim} is true since \(t_{2}\in\mathbb{R}_{+}\) 
and \(q\in\mathrm{i}\cdot\mathbb{R}_{+}\).
Let us consider the case \(j=1\). The case \(j=3\) is completely parallel.
For \(j=1\), substituting \(t_{2}=R\exp(\mathrm{i}\theta)\) and \(q=\mathrm{i}x\), we see that
\begin{align*}
(t_{2}+t_{2}^{-1}-q)^{2}-4&=(R\exp(\mathrm{i}\theta)+R^{-1}\exp(-\mathrm{i}\theta)-\mathrm{i}x)^{2}-4\\
&=((R+R^{-1})\cos\theta+\mathrm{i}((R-R^{-1})\sin\theta-x))^{2}-4.
\end{align*}
Therefore, 
\begin{equation*}
\operatorname{Im}\left((t_{2}+t_{2}^{-1}-q)^{2}-4\right)=2(R+R^{-1})\cos\theta
\left((R-R^{-1})\sin\theta-x\right)
\end{equation*}
and \(\operatorname{Im}\left((t_{2}+t_{2}^{-1}-q)^{2}-4\right)\le 0\) if and only if 
\begin{equation*}
(R-R^{-1})\sin\theta-x\le 0.
\end{equation*}
Recall that \(p_{1} = R\exp (\mathrm{i}\theta_{0})\).
Since \((R-R^{-1})\sin\theta_{0}-x=0\), we deduce that 
\((R-R^{-1})\sin\theta-x<0\) for all \(\theta\in [0,\theta_{0}]\) as desired.
\end{proof}
By virtue of Lemma \ref{lemma:countour-integrals}, we see that
\begin{align}
\label{eq:partial-contour}
\begin{split}
&\operatorname{Im}\left(\int_{-\partial a(q)}\iota_{\mathrm{i}\partial/\partial q}\check{\Omega}\right)\\
&=\sum_{j=1}^{3}
\operatorname{Im}\left(
\int_{\gamma_{1,j}}\frac{1}{\sqrt{(t_{2}+t_{2}^{-1}-q)^{2}-4}}\frac{\mathrm{d}t_{2}}{t_{2}}\right)\\
&\ge\operatorname{Im}\left(\int_{\mathcal{C}}\frac{1}{\sqrt{(t_{2}+t_{2}^{-1}-q)^{2}-4}}
\frac{\mathrm{d}t_{2}}{t_{2}}\right)
\end{split}
\end{align}
where \(\mathcal{C}\) is the counterclockwise oriented
contour \(R\exp(\mathrm{i}\theta)\) with \(\theta\in[0,\pi/4]\) for all \(|q|\) large.
This can be done since \(\theta_{0}>\pi/4\) for all \(q\) with \(|q|\gg 0\).

The third inequality holds as one can see
in the above proof.
Notice that 
\begin{equation*}
\frac{\mathrm{d}t_{2}}{t_{2}}=\mathrm{i}\mathrm{d}\theta~\mbox{on}~\mathcal{C}.
\end{equation*}
Thus it suffices to compute the real part of the integrand.

Put \(q=\mathrm{i}x\). We can easily compute \(p_{1}=((q+2)+\sqrt{q(q+4)})/2\) and
\begin{equation*}
\lim_{x\to\infty} \frac{|p_{1}|}{|q|}=\lim_{x\to\infty} \frac{R}{x}=1. 
\end{equation*}
%For \(|q|=x\gg 0\), we have \(3x/4\le R\le \sqrt{2}x\). (Solve \(p_{1}\), \(p_{2}\).)
For \(t_{2}=R\exp(\mathrm{i}\theta)\), we see that 
\begin{align*}
|(t_{2}+t_{2}^{-1}-q)^{2}-4|^{2}&=|(R\exp(\mathrm{i}\theta)+R^{-1}\exp(-\mathrm{i}\theta)-q)^{2}-4|^{2}\\
&\le \left(R+R^{-1}+|q|\right)^{2}+4\\
&\le \kappa_{1}|q|^{2}
\end{align*}
for some constant \(\kappa_{1}>0\) and for all \(|q|\) large.
On the other hand,
\begin{align*}
(t_{2}+t_{2}^{-1}-q)^{2}-4&=(R\exp(\mathrm{i}\theta)+
R^{-1}\exp(-\mathrm{i}\theta)-\mathrm{i}x)^{2}-4\\
&=\left[(R+R^{-1})\cos\theta+\mathrm{i}\left((R-R^{-1})\sin\theta-x\right)\right]^{2}-4.
\end{align*}
Let \(\eta=(R+R^{-1})\cos\theta+\mathrm{i}\left((R-R^{-1})\sin\theta-x\right)\).
Here we remind the reader that \(R\), and hence \(\eta\), depends on \(x=|q|\).
Since \(\theta\in[0,\pi/4]\), it follows that \((R-R^{-1})\sin\theta-x<0\). Moreover, 
there exists a positive constant \(C>0\) such that
\begin{align}
\label{eq:eta-bound}
\frac{(R-R^{-1})\sin\theta-x}{(R+R^{-1})\cos\theta}
=\frac{R-R^{-1}}{R+R^{-1}}\tan\theta-\frac{x}{(R+R^{-1})\cos\theta}\ge -C
\end{align}
for all \(\theta\in [0,\pi/4]\) and all \(|q|\gg 0\).

Our goal is to estimate
\begin{equation*}
\operatorname{Re}\left(\frac{1}{\sqrt{\eta^{2}-4}}\right)=
\operatorname{Re}\left(\frac{\overline{\sqrt{\eta^{2}-4}}}{|\eta^{2}-4|}\right)=
\operatorname{Re}\left(\frac{\sqrt{\eta^{2}-4}}{|\eta^{2}-4|}\right).
\end{equation*}

\begin{lemma}
For any \(\theta\in [0,\pi/4]\), there exists a constant \(\kappa_{3}>0\) such that
\begin{equation*}
|\eta|\ge \kappa_{3}x,~\mbox{for all}~x\gg 0.
\end{equation*}
\end{lemma}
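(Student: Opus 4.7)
The plan is to obtain the lower bound for $|\eta|$ directly from the real part, since over the range $\theta\in[0,\pi/4]$ the real part $(R+R^{-1})\cos\theta$ is already comparable to $x$, and then we do not need to fight with the imaginary part at all.

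First, I would use the trivial inequality $|\eta|\ge |\operatorname{Re}\eta|=(R+R^{-1})\cos\theta$. Since $\theta\in[0,\pi/4]$ we have $\cos\theta\ge 1/\sqrt{2}$, so
\begin{equation*}
|\eta|\ge \frac{R+R^{-1}}{\sqrt{2}}\ge \frac{R}{\sqrt{2}}.
\end{equation*}
This reduces matters to a lower bound on $R=|p_1|$ in terms of $x$.

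Next, I would invoke the asymptotic $\lim_{x\to\infty}R/x=1$ which was already computed from the explicit formula $p_1=\bigl((q+2)+\sqrt{q(q+4)}\bigr)/2$ with $q=\mathrm{i}x$. In particular, there exists $x_0>0$ such that $R\ge x/2$ for all $x\ge x_0$. Combining this with the previous display yields $|\eta|\ge x/(2\sqrt{2})$ for all $x\gg 0$, so we may take $\kappa_3=1/(2\sqrt{2})$ (or any constant strictly less than $1/\sqrt{2}$, with $x_0$ chosen accordingly).

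There is essentially no obstacle here: the bound on $\cos\theta$ is uniform in $\theta\in[0,\pi/4]$, and the bound on $R$ is uniform in $\theta$ since $R=|p_1|$ depends only on $x$. The only reason the lemma needs to be stated at all is that the complementary imaginary part $(R-R^{-1})\sin\theta-x$ can be close to zero (indeed, it vanishes at $\theta=\theta_0$), so a naive bound via $|\operatorname{Im}\eta|$ is useless; taking the real part circumvents this cleanly because $\theta\le\pi/4<\theta_0$ bounds $\cos\theta$ away from $0$.
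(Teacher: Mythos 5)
Your proof is correct, and it takes a genuinely more streamlined route than the paper's. The paper expands $|\eta|^2 = R^2+R^{-2}+2\cos 2\theta - 2x(R-R^{-1})\sin\theta + x^2$, drops the non-negative terms $R^{-2}+2\cos 2\theta$ and $2xR^{-1}\sin\theta$ (using $\theta\in[0,\pi/4]$), bounds $\sin\theta\le 1/\sqrt{2}$ to reach $|\eta|^2\ge R^2-\sqrt{2}xR+x^2$, and then appeals to the asymptotic $R/x\to 1$ to get $\ge \kappa_3 x^2$. (In fact that last quadratic completes the square to $(R-x/\sqrt{2})^2+x^2/2\ge x^2/2$ unconditionally, so the asymptotic is not strictly needed there, though the paper invokes it.) You instead observe that $|\eta|\ge|\operatorname{Re}\eta|=(R+R^{-1})\cos\theta\ge R/\sqrt{2}$ on $\theta\in[0,\pi/4]$, then feed in $R\sim x$. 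This sidesteps the full $|\eta|^2$ computation and, as you note, cleanly explains why the restriction to $\theta\le\pi/4$ matters: it keeps $\cos\theta$ bounded away from $0$, which is exactly what makes the real part alone sufficient. Both proofs rely on the same input ($\theta\le\pi/4$ and $R\asymp x$); yours isolates the minimal part of $\eta$ that drives the estimate.
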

\begin{proof}
We can compute
\begin{align*}
|\eta|^2 &= (R+R^{-1})^{2}\cos^{2}\theta+(R-R^{-1})^{2}\sin^{2}\theta-2x(R-R^{-1})\sin\theta+x^{2}\\
&=R^{2}+R^{-2}+2\cos2\theta-2x(R-R^{-1})\sin\theta+x^{2}\\
&\ge R^{2} - 2xR\sin\theta +x^{2}\\
&\ge R^{2} - \sqrt{2}xR +x^{2}.
\end{align*}
There exists a constant \(\kappa_{3}>0\) such that
\begin{equation*}
|\eta|^2 \ge R^{2} - 2xR\sin\theta +x^{2}\ge \kappa_{3}x^{2} 
\end{equation*}
for all \(x\gg 0\).
\end{proof}
In particular, the lemma above implies
\begin{equation*}
\lim_{x\to\infty}\frac{|\sqrt{\eta^{2}-4}|}{|\eta|}
=\lim_{x\to\infty}\left|\sqrt{\frac{\eta^{2}-4}{\eta^{2}}}\right|=1.
\end{equation*}
The first equality holds since \(\eta\) lies in the fourth quadrant.
Together with \eqref{eq:eta-bound}, it then follows that there exist 
constants \(\kappa_{4}>0\) and \(\kappa_{5}>0\) such that
\begin{equation*}
\operatorname{Re}(\sqrt{\eta^{2}-4})\ge \kappa_{4}\operatorname{Re}(\eta)\ge \kappa_{5}|q|
\end{equation*}
for all \(x\gg 0\); in other words, we have
\begin{equation*}
\operatorname{Re}\left(\frac{1}{\sqrt{\eta^{2}-4}}\right)
=\operatorname{Re}\left(\frac{\sqrt{\eta^{2}-4}}{|\eta^{2}-4|}\right)\ge 
\frac{\kappa_{5}|q|}{\kappa_{1}|q|^{2}}=
\frac{\kappa_{6}}{|q|},~~\kappa_{6}:=\kappa_{5}\kappa_{1}^{-1}
\end{equation*}
for all \(x\gg 0\).
%Then \(|\eta|\ge R/\sqrt{2}\). (Add some more details.)
%For all \(|q|=x\) large enough, 
%the argument of \(\eta^{2}-4\) is uniformly bounded away from \(-\pi\).
%It then follows that 
%\begin{align*}
%\operatorname{Re}\left(\sqrt{(t_{2}+t_{2}^{-1}-q)^{2}-4}\right)&\ge \kappa_{2}|q|
%\end{align*} 
%for some constant \(\kappa_{2}>0\) and for all large \(|q|\).
Thus, 
\begin{align*}
&\operatorname{Im}\left(
\int_{\mathcal{C}}\frac{1}{\sqrt{(t_{2}+t_{2}^{-1}-q)^{2}-4}}\frac{\mathrm{d}t_{2}}{t_{2}}\right)\\
&=\int_{\mathcal{C}}\operatorname{Re}\left(\frac{1}{\sqrt{(t_{2}+t_{2}^{-1}-q)^{2}-4}}\right)\mathrm{d}\theta\\
&\ge \int_{\mathcal{C}}\frac{\kappa_{6}}{|q|}\mathrm{d}\theta\ge \frac{\kappa}{|q|}
\end{align*}
for some constant \(\kappa>0\) and for all large \(|q|\). This proves
\eqref{eq:appendix-wts}.

\clearpage

\bibliographystyle{amsxport}
\bibliography{reference}
\end{document}